%
%
%
%
%

\documentclass[reqno]{amsart}

\pdfoutput=1

\usepackage[alphabetic]{amsrefs}
\pagestyle{myheadings}
\usepackage{amsmath}
\usepackage{amsfonts}
\usepackage{amssymb}
\usepackage{enumerate}
\usepackage{amstext}
\usepackage{amsbsy}
\usepackage{amsopn}
\usepackage{bbm,amsthm}
\usepackage{amscd}
\usepackage[pdftex]{color}
\usepackage{amsxtra}
\usepackage{upref}
\usepackage{graphicx,color}
\usepackage{epstopdf}
\usepackage{framed}
\usepackage{MnSymbol,wasysym}
\usepackage{keystroke,marvosym}
\usepackage{hyperref}
\usepackage{scalerel}

\DeclareGraphicsRule{.tif}{png}{.png}{`convert #1 `basename #1 .tif`.png}
\DeclareFontFamily{OML}{rsfs}{\skewchar\font'177}
\DeclareFontShape{OML}{rsfs}{m}{n}{ <5> <6> rsfs5 <7> <8> <9> rsfs7
  <10> <10.95> <12> <14.4> <17.28> <20.74> <24.88> rsfs10 }{}
\DeclareMathAlphabet{\mathfs}{OML}{rsfs}{m}{n}

\newtheorem{theorem}{Theorem}
\newtheorem{lemma}[theorem]{Lemma}
\newtheorem{proposition}[theorem]{Proposition}
\newtheorem{corollary}[theorem]{Corollary}

\theoremstyle{definition}

\theoremstyle{remark}
\newtheorem{remark}[theorem]{\bf Remark}

\numberwithin{equation}{section}
\numberwithin{theorem}{section}

\newcommand{\intav}[1]{\mathchoice {\mathop{\vrule width 6pt height 3 pt depth  -2.5pt
\kern -8pt \intop}\nolimits_{\kern -6pt#1}} {\mathop{\vrule width
5pt height 3  pt depth -2.6pt \kern -6pt \intop}\nolimits_{#1}}
{\mathop{\vrule width 5pt height 3 pt depth -2.6pt \kern -6pt
\intop}\nolimits_{#1}} {\mathop{\vrule width 5pt height 3 pt depth
-2.6pt \kern -6pt \intop}\nolimits_{#1}}}

\newcommand{\intavl}[1]{\mathchoice {\mathop{\vrule width 6pt height 3 pt depth  -2.5pt
\kern -8pt \intop}\limits_{\kern -6pt#1}} {\mathop{\vrule width 5pt
height 3  pt depth -2.6pt \kern -6pt \intop}\nolimits_{#1}}
{\mathop{\vrule width 5pt height 3 pt depth -2.6pt \kern -6pt
\intop}\nolimits_{#1}} {\mathop{\vrule width 5pt height 3 pt depth
-2.6pt \kern -6pt \intop}\nolimits_{#1}}}



\newcommand{\un}{\underline}
\newcommand{\ve}{\varepsilon}
\newcommand{\vf}{\varphi}

\newcommand{\R}{\mathbb{R}}
\newcommand{\N}{\mathbb{N}}
\newcommand{\Q}{\mathbb{Q}}
\newcommand{\Z}{\mathbb{Z}}
\newcommand{\T}{\mathbb{T}}

\renewcommand{\exp}[1]{{\rm exp}_{#1}}

\newcommand{\inj}{{\rm inj}}
\newcommand{\Hol}[1]{{\rm Hol}_{#1}}

\newcommand{\vertiii}[1]{{\left\vert\kern-0.2ex\left\vert\kern-0.2ex\left\vert #1 
    \right\vert\kern-0.2ex\right\vert\kern-0.2ex\right\vert}}

\begin{document}

\title[Symbolic dynamics for nonuniformly hyperbolic systems]{Symbolic dynamics for nonuniformly\\ hyperbolic systems}
\author{Yuri Lima}
\thanks{The author is supported by CNPq and Instituto Serrapilheira, grant ``Jangada Din\^amica:
Impulsionando Sistemas Din\^amicos na Regi\~ao Nordeste''.}

\address{Yuri Lima, Departamento de Matem\'atica, Universidade Federal do Cear\'a (UFC), Campus do Pici,
Bloco 914, CEP 60440-900. Fortaleza -- CE, Brasil}
\email{yurilima@gmail.com}

\date{\today}
\keywords{Markov partition, Pesin theory, symbolic dynamics}
\subjclass[2020]{37-02, 37B10, 37C05, 37C35, 37C83, 37D25, 37D35}


\maketitle

\begin{abstract}
This survey describes the recent advances in the construction of Markov partitions
for nonuniformly hyperbolic systems. One important feature of this development comes
from a finer theory of nonuniformly hyperbolic systems, which we also describe. The 
Markov partition defines a symbolic extension that is finite-to-one and onto a non-uniformly
hyperbolic locus, and this provides dynamical and statistical consequences
such as estimates on the number of closed orbits and properties of equilibrium measures.  
The class of systems includes diffeomorphisms, flows, and maps with singularities.
\end{abstract}

\tableofcontents

\section*{Introduction}
Markov partitions are a powerful tool in the modern theories of dynamical systems and ergodic theory.
They were introduced to these fields at the end of the sixties, see the foundational works of Adler \& Weiss and
Sina{\u\i} \cite{Adler-Weiss-1967,Adler-Weiss-Similarity-Toral-Automorphisms,Sinai-MP-U-diffeomorphisms,Sinai-Construction-of-MP} and references
therein, and have played a crucial role ever since.
Roughly speaking, a Markov partition is a partitioning of the phase space of a system into pieces, that allows to
represent trajectories by paths on a graph. The dynamics of paths on 
a graph is much simpler to understand, and many of its statistical properties can therefore be pushed
to the original dynamical system. This approach was extensively developed in the late sixties and 
early seventies to uniformly hyperbolic systems, and its consequences include many breakthroughs
in smooth ergodic theory.
The method developed by Bowen \cite{Bowen-LNM} will be of particular importance to us:
locally representing the dynamics as a small perturbation of a hyperbolic matrix, he used the theory of pseudo-orbits
used by Anosov \cite{Anosov-Certain} and by
himself \cite{Bowen-Topological-entropy,Bowen-Periodic-points,Bowen-Periodic-orbits} to
obtain a Markov cover and then refine it to a Markov partition. Due to uniform hyperbolicity,
the Markov partitions are finite.

\medskip
Following Bowen's philosophy, Katok showed that a hyperbolic ergodic measure
that is invariant under a $C^{1+\beta}$ diffeomorphism has horseshoes approximating its entropy.
A measure is hyperbolic if its Lyapunov exponents are non-zero, and this introduces the concept
of {\em nonuniform hyperbolicity}: the hyperbolicity is not necessarily observed at every iteration but only
on the average. In the late seventies, Pesin developed a global theory to treat $C^{1+\beta}$ nonuniformly hyperbolic systems \cite{Pesin-Izvestia-1976,Pesin-Characteristic-1977,Pesin-Geodesic-1977},
nowadays known as {\em Pesin theory}. See the book \cite{Barreira-Pesin-2007}.
Pesin's idea was to construct local charts, nowadays called {\em Pesin charts},
to represent the dynamics of a nonuniformly hyperbolic diffeomorphism
again as a small perturbation of a hyperbolic matrix. The difference from the uniformly hyperbolic
situation is that the domain of the Pesin chart is no longer uniform in size and depends on the quality of hyperbolicity
at the point. In \cite{Katok-IHES}, Katok combined Pesin theory with a fine theory of pseudo-orbits and, to avoid the possible degeneracy of
Pesin charts, restricted the analysis to {\em Pesin blocks}, which are non-invariant subsets of the phase space
where nonuniform hyperbolicity is essentially uniform. For details, see the supplementary chapter by Katok \&
Mendoza \cite{Katok-Hasselblatt-Book}.
Since a horseshoe naturally carries a Markov partition, Katok's result can be seen as the construction
of finite Markov partitions that approximate the topological entropy. 
The applications using the now called {\em Katok horseshoes}
are countless. Nevertheless, this approach is not genuinely nonuniformly hyperbolic,
since it does not treat at once regions where the degeneracy of Pesin charts occurs. In other words,
a single Pesin block  does not encompass the whole dynamics (for instance, it usually does not have
full topological entropy).

\medskip
This difficulty stood unsolved for many years, until Sarig recently bypassed it,
constructing countable Markov partitions with full topological entropy for $C^{1+\beta}$
surface diffeomorphisms \cite{Sarig-JAMS}.
His methods are more suitable for adaptations and generalizations, and
are now being further refined to settings in which the previous theory was not able
to reach, such as billiard maps. Here are some of the developments:
\begin{enumerate}[$\circ$]
\item Lima and Sarig for three dimensional flows without fixed points \cite{Lima-Sarig}.
\item Lima and Matheus for surface maps with discontinuities \cite{Lima-Matheus}.
\item Ben Ovadia for diffeomorphisms in any dimension \cite{Ben-Ovadia-2019}.
\end{enumerate}
Now, the Markov partitions are countable. This is unavoidable to treat the regions
where the Pesin charts degenerate. 
Not only these latter results are stronger than the previous ones in the literature, 
they also cover much broader classes of examples such as 
geodesic flows on surfaces with nonpositive curvature and Bunimovich stadia.
Using them, many dynamical and statistical properties were established:
counting on the number of closed orbits
\cite{Sarig-JAMS,Lima-Sarig,Lima-Matheus,Baladi-Demers,Ben-Ovadia-2019,Buzzi-2019},
counting on the number of measures of maximal entropy
\cite{Sarig-JAMS,Lima-Sarig,Ben-Ovadia-2019,Buzzi-Crovisier-Sarig},
ergodic properties of equilibrium measures \cite{Sarig-Bernoulli-JMD,Ledrappier-Lima-Sarig},
the almost Borel structure of surface diffeomorphisms \cite{Boyle-Buzzi}, and the generic
simplicity of Lyapunov spectrum of nonuniformly hyperbolic diffeomorphisms \cite{BLPV}.

\medskip
Generally speaking, countable Markov partitions are indeed necessary
to code nonuniformly hyperbolic systems: while the set of topological entropies
associated to finite Markov partitions is countable, the set of topological entropies
of nonuniformly hyperbolic systems is $[0,\infty)$. This occurs e.g. among $C^\infty$
diffeomorphisms in surfaces, where the topological entropy is continuous
\cite[Thm. 6]{Newhouse-Entropy}. For instance, consider the two-dimensional disc:
the identity map has zero entropy, and Smale's horseshoe has topological entropy
equal to $\log 2$ (see Section \ref{Subsec-UH-definitions} for details on this latter example).
Since these maps are homotopic, the set of values for the topological entropy contains 
$[0,\log 2]$ and thus, taking powers, it is equal to $[0,\infty)$.
The same occurs for $C^\infty$ diffeomorphisms in the two-dimensional torus:
in the one-parameter family of standard maps $f_k(x,y)=(-y+2x+k\sin(2\pi x),x)$,
the topological entropy reaches arbitrarily large values \cite{Duarte-standard-maps}.

\medskip
As already mentioned, Markov partitions provide many dynamical and statistical
consequences because the dynamics of paths on 
a graph is simple to understand. In general, any partition generates
a symbolic representation of the system, given by the shift map acting on a subset of
the symbolic space of paths on a graph. For Markov partitions, such symbolic representation
is defined not only on a subset but on the whole space of paths on the graph. This is already a big advantage, 
but for effectiveness of applications it is important to understand the {\em coding map},
that relates real trajectories to paths on the graph. If, for instance, the coding map is finite-to-one
(i.e. every point has finitely many pre-images) then measures on the original system are related to
measures on the symbolic space, and the relation preserves entropy (by the Abramov-Rokhlin formula).
This happens for uniformly hyperbolic systems almost automatically, but constitutes a
major difficulty for nonuniformly hyperbolic ones. Indeed, all previous attempts before Sarig
failed exactly at this point. Sarig did not prove that the coding map is finite-to-one, but that
it is {\em morally} finite-to-one: after passing to recurrent subsets (defined by some recurrence assumptions),
the coding map is finite-to-one.
This was the motivation to perform Pesin theory in a much finer way,
which has a central importance in the recent constructions of Markov partitions.
Having this in mind, this survey has two main goals:
\begin{enumerate}[$\circ$]
\item Discuss the theory of nonuniformly hyperbolic systems.
\item Use this theory to construct countable Markov partitions that generate finite-to-one
coding maps.
\end{enumerate}
Since the main reason to construct Markov partitions and finite-to-one codings is to understand
dynamical and statistical properties of smooth dynamical system, we also provide applications in this
context. 

\medskip
Two words of caution. Firstly, we do not provide a historical account on the
developments of Markov partitions.
Secondly, we do not discuss symbolic dynamics
in great extent, but only finite-to-one codings for systems with uniform and nonuniform hyperbolicity.
Away from these contexts, there are various tools in symbolic dynamics that are
important on their own and provide far reaching conclusions, such as Milnor-Thurston's theory
of kneading sequences \cite{Milnor-Thurston}, Hofbauer towers
\cite{Takahashi-isomorphisms,Hofbauer-beta-shifts,Hofbauer-intrinsic-I,Hofbauer-intrinsic-II},
symbolic extensions \cite{Boyle-Downarowicz-Inventiones,Burguet-Inventiones,Downarowicz-Book},
Yoccoz puzzles \cite{Yoccoz-Puzzles}, Young towers \cite{Young-towers}, and more.

%
%
%

\medskip
We divide the survey into three parts. In Part 1, we discuss the theory of invariant manifolds for
uniformly and nonuniformly hyperbolic systems, including the construction of local charts and
graph transforms. For simplicity of exposition, most of the arguments will be discussed in dimension two,
both for diffeomorphisms and maps with discontinuities, but we also sketch how to make the constructions
in higher dimension. In Part 2, we extend this theory to pseudo-orbits, and explain how to use them
to construct Markov partitions and finite-to-one coding maps. 
In Part 3, we provide applications.

\section*{Acknowledgements}

These notes grew up from a series of minicourses the author has given in the past three years, in various institutions:
ICERM (Providence, USA), Jagiellonian University (Krakow, Poland), ICTP (Trieste, Italy),
Brigham Young University (Provo, USA), PUC-Chile (Santiago, Chile),
University of Warwick (Coventry, UK), and Tsinghua University (Beijing, China).
The author would like to thank the institutions and local organizers for the invitations, and
Mark Demers, Gerhard Knieper for valuable suggestions. 
The first version of this document was prepared while the author was supported by CNPq and
by Instituto Serrapilheira, grant Serra-1709-20498.

\part{Charts, graph transforms, and invariant manifolds}\label{Part-1}

We introduce tools that allow to pass from the infinitesimal information given by the
assumption on the derivative of the system to a representation of its local dynamics.
The main goal is to introduce three tools:
\begin{enumerate}[$\circ$]
\item Local charts, which locally represent the dynamics as a small perturbation of a hyperbolic matrix.
\item Graph transforms, which explore the hyperbolic feature of the local representation to
identify points that remain close to trajectories.
\item Invariant manifolds, which provide dynamical coordinates and 
allow to separate the future and past behavior of
the system.
\end{enumerate}
For methodological reasons, the discussion is divided into sections, each of them treating
a different class of systems. In Section \ref{Section-UH-Systems} we deal with
uniformly hyperbolic diffeomorphisms. In Section \ref{Section-NUH-systems} we consider
nonuniformly hyperbolic diffeomorphisms. In the last two sections, 
we discuss nonuniformly hyperbolic surface maps with discontinuities: in Section
\ref{Section-NUH-dis1} we assume bounded derivative (e.g. Poincar\'e return maps
of flows without fixed points), and in Section \ref{Section-NUH-dis2} we allow the derivative to
grow polynomially fast to infinity (e.g. billiard maps). The discussion in each new section
emphasizes the new input that is necessary to make the construction work, so we recommend the
reader to follow the text as presented here.

\section{Uniformly hyperbolic systems}\label{Section-UH-Systems}

Uniformly hyperbolic systems are at the heart of the great developments that tailored the
beginning of the modern theories of dynamical systems and ergodic theory, and constitute
one of the nicest situations in which a system shows chaos in almost any sense of the word:
exponential divergence of the trajectories, denseness of periodic orbits, among others. 
The study of uniformly hyperbolic systems now has a long history, that began already
in the 19th century with the study of geodesic flows on surfaces of constant negative
curvature by Hadamard \cite{Hadamard-1898}.
This topic was extensively developed between 1920 and 1940, among which we mention the work
of Morse \cite{Morse-PNAS}, Hedlund \cite{Hedlund-1939}, and Hopf \cite{Hopf-1939,Hopf-1940}.
About 1940, it became clear that geodesic flows were a particular case of the real
setup of interest, and Anosov realized that the theory goes through under a more general condition, that
he called the (U)-{\em condition}. In his own words, a system satisfies the (U)-condition
if it has ``exponential dichotomy of solutions'' \cite[pp. 22]{Anosov-Geodesic-Flows}.
Anosov made fundamental contributions to the study of (U)-systems, including their
ergodicity \cite[Thm. 4]{Anosov-Geodesic-Flows}. Nowadays, (U)-systems are called
{\em Anosov systems}, and the assumption on exponential dichotomy of solutions is called
{\em hyperbolicity}.

\medskip
While the Russian school focused on the probabilistic aspects of dynamical systems,
the American school led by Smale focused on the topological aspects. Smale discovered the {\em horseshoe},
which is the first example of a system shown to have infinitely many periodic points and yet being structurally stable.
The history of the discovery is explained in \cite{Smale-horseshoe-1998}, where Smale claims
that ``the horseshoe is a natural consequence of a geometrical way of
looking at the equations of Cartwright-Littlewood and Levinson''.
A horseshoe has similar properties to Anosov systems, because the recurrent (but not all) trajectories
are hyperbolic. For the purpose of dynamics,
this is satisfactory because a non-recurrent trajectory is uninteresting for dynamical purposes.
Having this in mind, Smale introduced the notion of {\em Axiom A systems}, where hyperbolicity
is required to hold only on the non-wandering set.
For transitive Anosov systems, the notions of Anosov and Smale coincide, but there are
Axiom A systems that are not Anosov. What we call {\em uniformly hyperbolic}
are Anosov and Axiom A systems. Nowadays there are great textbooks describing such
systems, see e.g. \cite{Brin-Stuck-Book,Shub-Book,Katok-Hasselblatt-Book}.

\medskip
The main result of this section is the existence of local invariant manifolds.
It holds for $C^1$ uniformly hyperbolic systems,
see e.g. \cite{Shub-Book},  but to keep an analogy with the nonuniformly 
hyperbolic context to be discussed in Section \ref{Section-NUH-systems},
we will assume for most of the time that the system is $C^{1+\beta}$,
see definition in Section \ref{Subsec-UH-preliminaries}.

\subsection{Definitions and examples}\label{Subsec-UH-definitions}

Let $M$ be a closed (compact without boundary) connected
smooth Riemannian manifold, and let $f:M\to M$ be a $C^1$ diffeomorphism.

\medskip
\noindent
{\sc Anosov diffeomorphism:} We call $f$ an {\em Anosov diffeomorphism} if there is
a continuous splitting $TM=E^s\oplus E^u$ and constants $C>0$, $\kappa<1$ s.t.:
\begin{enumerate}[(1)]
\item {\sc Invariance}: $df (E^{s/u}_x)=E^{s/u}_{f(x)}$ for all $x\in M$.
\item {\sc Contraction:}
\begin{enumerate}[$\circ$]
\item Vectors in $E^s$ contract in the future: $\|df^n v\|\leq C\kappa^n\|v\|$ for all $v\in E^s$, $n\geq 0$.
\item Vectors in $E^u$ contract in the past: $\|df^{-n} v\|\leq C\kappa^n\|v\|$ for all $v\in E^u$, $n\geq 0$.
\end{enumerate}
\end{enumerate}

\medskip
A closed $f$--invariant set $\Lambda$ satisfying the above properties is called {\em uniformly hyperbolic} or simply
{\em hyperbolic}, hence a diffeomorphism is Anosov if the whole phase space $M$ is hyperbolic.
The continuity condition of the splitting in the definition
indeed follows from the other assumptions, see e.g.
\cite[Proposition 5.2.1]{Brin-Stuck-Book}. As a matter of fact, the splitting is H\"older continuous, as proved
by Anosov \cite{Anosov-tangential}, see also the appendix of \cite{Ballmann-lecture-notes} for a simpler proof.
Condition (2) is the exponential dichotomy of solutions
mentioned by Anosov. Usually, the above assumptions are rather restrictive because they require
the properties on all of $M$, and sometimes parts of $M$ are not dynamically relevant. The set
where interesting dynamics can occur is called the non-wandering set.

\medskip
\noindent
{\sc Non-wandering set $\Omega(f)$:} The {\em non-wandering set} of $f$, denoted by $\Omega(f)$,
is the set of all $x\in M$ s.t. for every neighborhood $U\ni x$ there
exists $n\neq 0$ s.t. $f^n(U)\cap U\neq\emptyset$.

\medskip
In other words, a point is non-wandering if, no matter how small we choose a neighborhood, 
it does self-intersect in the future or in the past. In particular, every periodic point is non-wandering.
Let ${\rm Per}(f)$ denote the set of periodic points of $f$.

\medskip
\noindent
{\sc Axiom A diffeomorphism:} We call $f$ an {\em Axiom A diffeomorphism} if:
\begin{enumerate}[(1)]
\item {\sc Denseness of periodic orbits:} $\overline{{\rm Per}(f)}=\Omega(f)$.
\item {\sc Hyperbolicity:} $\Omega(f)$ is hyperbolic, i.e. there exists a continuous splitting $T_{\Omega(f)}M=E^s\oplus E^u$
and constants $C>0$, $\kappa<1$ s.t.:
\begin{enumerate}[$\circ$]
\item $df (E^{s/u}_x)=E^{s/u}_{f(x)}$ for all $x\in \Omega(f)$.
\item $\|df^n v\|\leq C\kappa^n\|v\|$ for all $v\in E^s$, $n\geq 0$.
\item $\|df^{-n} v\|\leq C\kappa^n\|v\|$ for all $v\in E^u$, $n\geq 0$.
\end{enumerate}
\end{enumerate}

\medskip
Every Anosov diffeomorphism is Axiom A, but the converse is false. 
Now let $\vf:M\to M$ be a flow generated by a vector field $X$ of class $C^1$.
The definitions of uniformly hyperbolic flows are similar to the ones above, having in mind
that in the flow direction there is no contraction nor expansion. Below,
$\langle X\rangle$ represents the subbundle generated by $X$, whose vector space
at $x$ is the line generated by $X_x$.

\medskip
\noindent
{\sc Anosov flow:} We call $\vf$ an {\em Anosov flow} if $X\neq 0$ everywhere and if there is
a continuous splitting $TM=E^s\oplus \langle X\rangle\oplus E^u$ and constants $C>0$, $\kappa<1$ s.t.:
\begin{enumerate}[(1)]
\item {\sc Invariance}: $d\vf^t (E^{s/u}_x)=E^{s/u}_{\vf^t(x)}$ for all $x\in M$, $t\in\R$.
\item {\sc Contraction:}
\begin{enumerate}[$\circ$]
\item Vectors in $E^s$ contract in the future: $\|d\vf^t v\|\leq C\kappa^t\|v\|$ for all $v\in E^s$, $t\geq 0$.
\item Vectors in $E^u$ contract in the past: $\|d\vf^{-t} v\|\leq C\kappa^t\|v\|$ for all $v\in E^u$, $t\geq 0$.
\end{enumerate}
\end{enumerate}

\medskip
Similarly, a closed $f$--invariant set $\Lambda$ satisfying the above properties is
called {\em uniformly hyperbolic} or simply {\em hyperbolic}.

\medskip
\noindent
{\sc Non-wandering set $\Omega(\vf)$:} The {\em non-wandering set} of $\vf$, denoted by $\Omega(\vf)$,
is the set of all $x\in M$ s.t. for every neighborhood $U\ni x$ and for every $t>0$ there
exists $T\in\R$ with $|T|> t$ s.t. $\vf^T(U)\cap U\neq\emptyset$.

\medskip
The above definition is natural, since $\vf^t(U)\cap U\neq \emptyset$
for any $t$ sufficiently small. Let ${\rm Per}(\vf)$ denote the set of periodic points of $\vf$.

\medskip
\noindent
{\sc Axiom A flow:} We call $\vf$ an {\em Axiom A flow} if $X\neq 0$ on $\Omega(\vf)$ and:

\begin{enumerate}[(1)]
\item {\sc Denseness of periodic orbits:} $\overline{{\rm Per}(\vf)}=\Omega(\vf)$.
\item {\sc Hyperbolicity:} $\Omega(\vf)$ is hyperbolic, i.e. there exists a continuous splitting
$T_{\Omega(f)}M=E^s\oplus \langle X\rangle\oplus E^u$ and constants $C>0$, $\kappa<1$ s.t.:
\begin{enumerate}[$\circ$]
\item $d\vf^t (E^{s/u}_x)=E^{s/u}_{\vf(x)}$ for all $x\in \Omega(f)$, $t\in\R$.
\item $\|d\vf^t v\|\leq C\kappa^t\|v\|$ for all $v\in E^s$, $t\geq 0$.
\item $\|d\vf^{-t} v\|\leq C\kappa^t\|v\|$ for all $v\in E^u$, $t\geq 0$.
\end{enumerate}
\end{enumerate}

\medskip
We call a system {\em uniformly hyperbolic} if it is either Anosov or Axiom A.
Here are three classical examples.

\medskip
\noindent
1. Every hyperbolic matrix\footnote{A matrix is hyperbolic if none of its eigenvalues
lie on the unit circle.} $A\in {\rm SL}(n,\mathbb R)$ induces an Anosov diffeomorphism
$f=f_A:\mathbb T^n\to \mathbb T^n$ on the $n$--dimensional torus $\mathbb T^n=\R^n/\Z^n$.
For $A=\left[\begin{array}{cc}2&1\\ 1&1\end{array}\right]$, the Anosov diffeomorphism $f$ is
known as {\em Arnold's cat map} or simply {\em cat map}. Although the dynamics of $A$ is simple,
the dynamics of $f$ as seen on the canonical fundamental
domain $[0,1]^2$ of $\mathbb T^2$ is rather complicated, see Figure \ref{figure-cat-map}.
\begin{figure}[hbt!]
\centering
\def\svgwidth{8cm}
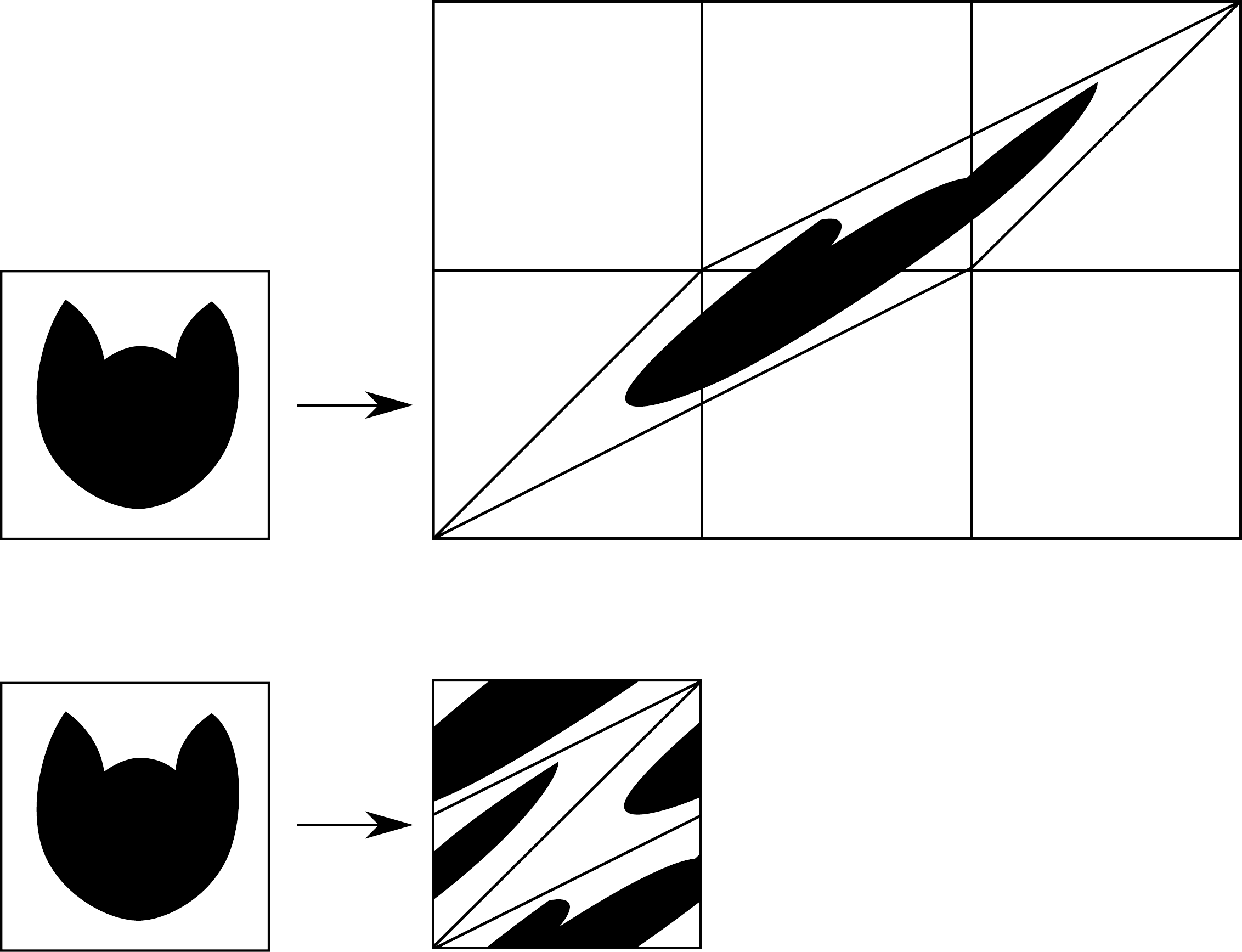
\caption{Arnold's cat map.}\label{figure-cat-map}
\end{figure}
See also \cite[\S 1.7]{Brin-Stuck-Book}.

\medskip
\noindent
2. Smale's horseshoe, generated by the geometrical configuration in Figure \ref{figure-horseshoe} below.
See \cite{Shub-Book,Brin-Stuck-Book}.
\begin{figure}[hbt!]
\centering
\def\svgwidth{7cm}
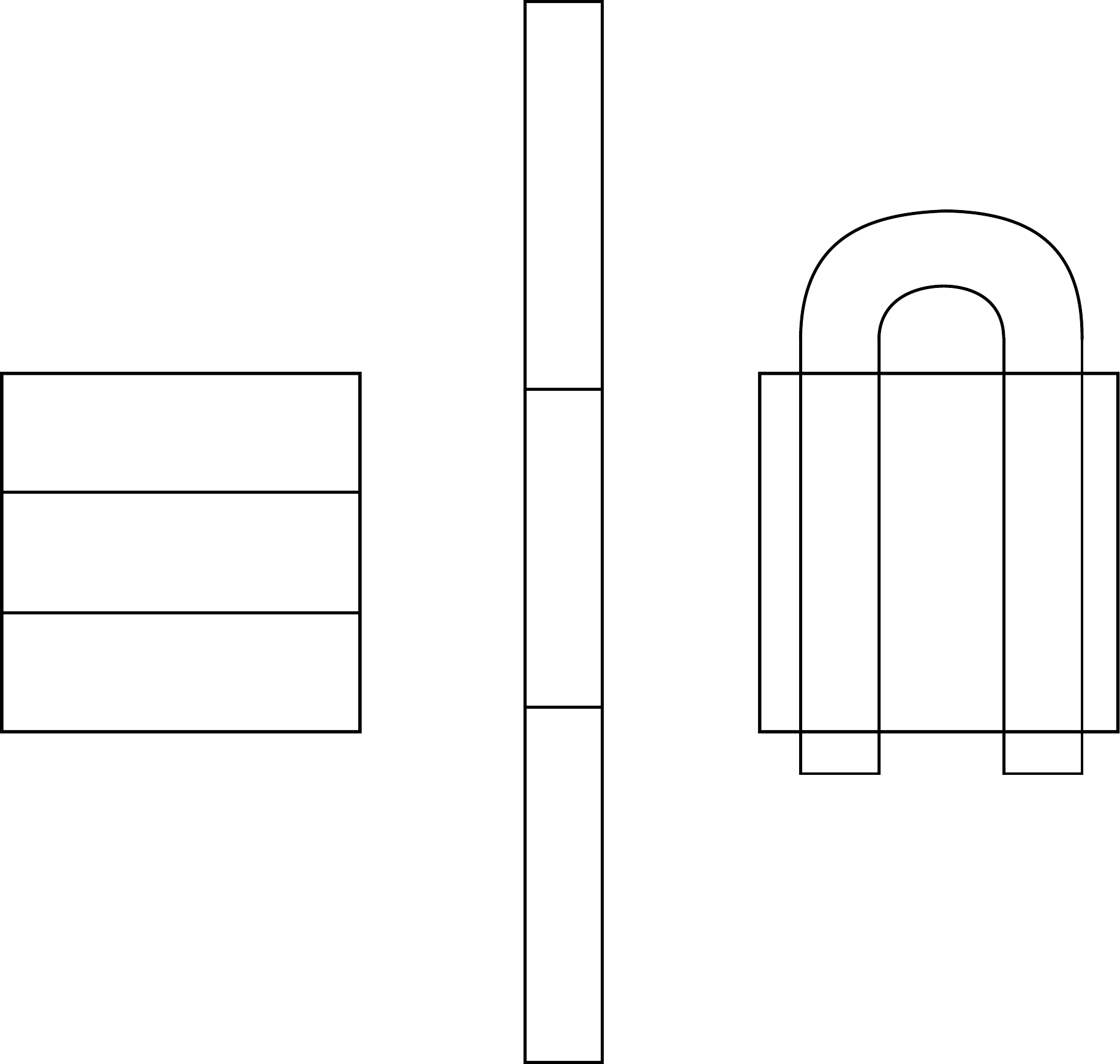
\caption{The geometrical mechanism in the creation of a horseshoe.}\label{figure-horseshoe}
\end{figure}

\medskip
\noindent
3. The geodesic flow on a closed manifold with negative sectional curvature is Anosov,
see Figure \ref{figure-hyperbolic-surface}.
Its hyperbolicity is more complicated to describe, since it is defined on the (unit) tangent bundle
of the manifold and its derivative in the tangent bundle of this (unit) tangent bundle.
We refer the reader to \cite[Chapter 1]{Barreira-Pesin-new-book}
for a discussion on the two-dimensional case with constant curvature,
to \cite{Eberlein} for a more general discussion,
and to \cite[Section 1.3]{Knieper-Handbook-Chapter} for a proof of hyperbolicity.
\begin{figure}[hbt!]
\centering
\def\svgwidth{10cm}
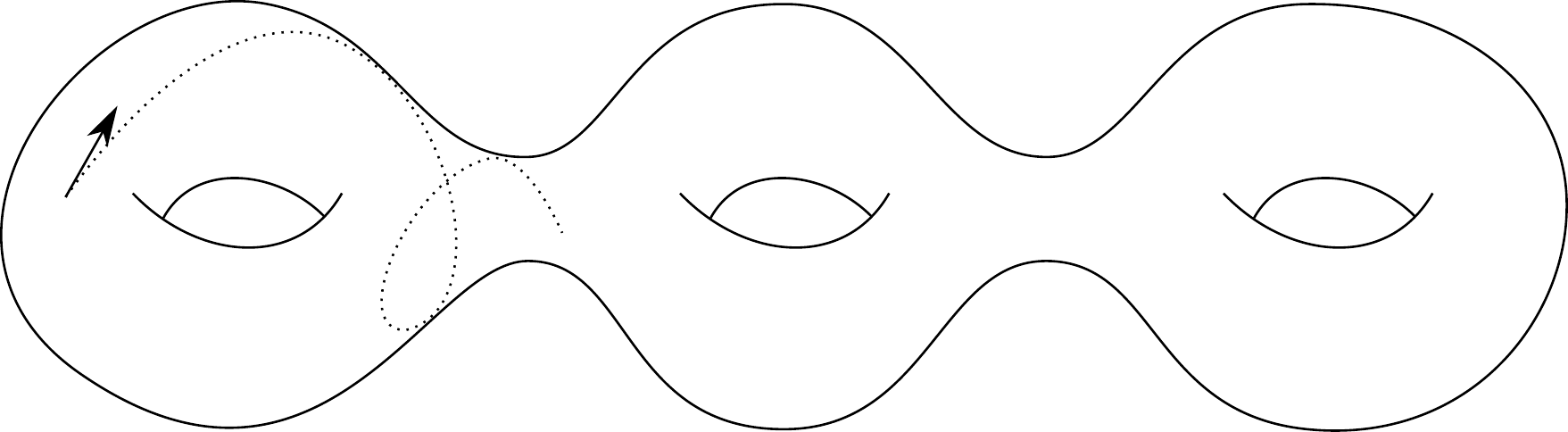
\caption{A surface with negative curvature, whose geodesic flow
is Anosov. Attention: the figure is somewhat misleading because
it induces to think that the curvature in the extreme left and right hand sides is positive.}\label{figure-hyperbolic-surface}
\end{figure}

\subsection{Preliminaries on the geometry of $M$}\label{Subsec-UH-preliminaries}

It is easy to define H\"older continuity for maps on euclidean spaces.
For instance, $f:U\subset \R^n\to \R^m$ is $\beta$--H\"older if there is $\mathfrak K>0$ s.t.
$\|f(x)-f(x)\|\leq \mathfrak K\|x-y\|^\beta$ for all $x,y\in U$. Similarly, $f$ is $C^{1+\beta}$ if
it is $C^1$ and there is $\mathfrak K>0$ s.t.$\|df^{\pm 1}_x-df^{\pm 1}_y\|\leq \mathfrak K\|x-y\|^\beta$
for all $x,y\in U$.
For general manifolds, this is slightly more complicated because the derivatives
are defined in different tangent spaces and so we need to compare the geometry of
nearby tangent spaces. For that we use the local charts provided by the exponential maps, which
are intrinsic of the geometry of $M$.
For the inexperienced reader, we suggest to make the calculations in the euclidean situation,
where all exponential maps are identity.

\medskip
We are assuming $M$ is a closed connected
smooth Riemannian manifold. We denote open balls in $M$ by $B(x,r)$.
Given $r>0$, let $B_x[r]\subset T_xM$ be the open ball with center 0 and radius $r$.
%
For each $x\in M$, let $\exp{x}: T_xM\to M$ be the {\em exponential map} at $x$, i.e.
$\exp{x}(v)=\gamma(1)$ where $\gamma$ is the unique geodesic s.t. $\gamma'(0)=v$.
Given $x\in M$, let $\inj(x)$ be the {\em radius of injectivity} at $x$,
i.e. $\inj(x)$ is the largest $r>0$ s.t. the restriction of $\exp{x}$ to $B_x[r]$ is a diffeomorphism onto 
its image. Choose $\mathfrak r_0>0$ s.t. for $D_x:=B(x,2\mathfrak r_0)$ the following holds:
\begin{enumerate}[$\circ$]
\item $\exp{x}:B_x[2\mathfrak r_0]\to M$ is a 2--bi-Lipschitz diffeomorphism onto its image. 
\item If $y\in D_x$ then $\inj(y)\geq 2\mathfrak r_0$ and $\exp{y}^{-1}:D_x\to T_yM$
is a 2--bi-Lipschitz diffeomorphism onto its image.
\end{enumerate}
Such $\mathfrak r_0>0$ exists because $d(\exp{x})_0={\rm Id}$ and $M$ is compact. 

\medskip
For $x,x'\in\ M$, let $\mathfs L _{x,x'}:=\{A:T_xM\to T_{x'}M:A\text{ is linear}\}$
and $\mathfs L _x:=\mathfs L_{x,x}$. 
If $d(x,y)<\inj(x)$, then there is a unique radial geodesic $\gamma$
joining $x$ to $y$, and the parallel transport $P_{x,y}$ along this geodesic is in
$\mathfs L_{x,y}$.
Let $x,x'\in M$ and $y,z\in M$ s.t. $d(x,y)<\inj(x)$ and $d(x',z)<\inj(x')$. Given $A\in \mathfs L_{y,z}$,
let $\widetilde{A}\in\mathfs L_{x,x'}$, $\widetilde{A}:=P_{z,x'} \circ A\circ P_{x,y}$.
By definition, $\widetilde{A}$ depends on $x,x'$ but different base points define
a map that differs from $\widetilde{A}$ by pre and post composition with isometries.
In particular, $\|\widetilde{A}\|$ does not depend on the choice of $x,x'$.
Similarly, if $A_i\in\mathfs L_{y_i,z_i}$ then $\|\widetilde{A_1}-\widetilde{A_2}\|$ does
not depend on the choice of $x,x'$.
With this notation, we say that $f$ is a $C^{1+\beta}$ diffeomorphism if $f\in C^1$ and there exists
$\mathfrak K>0$ s.t.:
\begin{enumerate}[$\circ$]
\item If $y_1,y_2\in D_x$ and $f(y_1),f(y_2)\in D_{x'}$ then
$\|\widetilde{df_{y_1}}-\widetilde{df_{y_2}}\|\leq \mathfrak Kd(y_1,y_2)^\beta$.
\item If $y_1,y_2\in D_x$ and $f^{-1}(y_1),f^{-1}(y_2)\in D_{x''}$ then
$\|\widetilde{df_{y_1}^{-1}}-\widetilde{df_{y_2}^{-1}}\|\leq \mathfrak Kd(y_1,y_2)^\beta$.
\end{enumerate}

\subsection{Lyapunov inner product}\label{Subsec-UH-adapted}

By the definitions in Subsection \ref{Subsec-UH-definitions}, hyperbolicity implies that
the restriction of $df$ to $E^s$ is eventually a contraction, exactly when $n$ is large enough
so that $C\kappa^n<1$, and the same occurs to the restriction of $df^{-1}$ to $E^u$. It turns out
that we can define a new inner product, equivalent to the original, for which $df\restriction_{E^s}$ and
$df^{-1}\restriction_{E^u}$ are contractions already since the first iterate. Such inner product
is known as {\em adapted metric} or  {\em Lyapunov inner product}. For consistency
with the nonuniformly hyperbolic situation, we will use the later notation.
The idea of changing an eventual contraction to a contraction is a popular trick in dynamics,
and it appears in various contexts, from Picard's theorem on existence and uniqueness
of solutions of ordinary differential equations to the construction of invariant manifolds,
as we will see here. There are many different ways of defining such inner product, see e.g.
\cite[Proposition 4.2]{Shub-Book}. Here, we follow an approach similar to 
\cite[Proposition 5.2.2]{Brin-Stuck-Book}.

\medskip
We assume that $f:M\to M$ is a uniformly hyperbolic diffeomorphism, and we let
$\langle\cdot,\cdot\rangle$ be the Riemannian metric on $M$. For simplicity of notation,
we assume that $f$ is Anosov, with invariant splitting $TM=E^s\oplus E^u$ (for Axiom A,
the definitions are made inside $\Omega(f)$).
Fix $\kappa<\lambda<1$.

\medskip
\noindent {\sc Lyapunov inner product:} We define an inner product $\llangle \cdot,\cdot\rrangle$ on $M$,
called {\em Lyapunov inner product}, by the following identities:
\begin{enumerate}[$\circ$]
\item For $v_1^s,v_2^s\in E^s$:
$$
\llangle v^s_1,v^s_2\rrangle=2\sum_{n\geq 0}\lambda^{-2n}\langle df^nv^s_1,df^nv^s_2\rangle.
$$
\item For $v_1^u,v_2^u\in E^u$:
$$
\llangle v^u_1,v^u_2\rrangle=2\sum_{n\geq 0}\lambda^{-2n}\langle df^{-n}v^u_1,df^{-n}v^u_2\rangle.
$$
\item For $v^s\in E^s$ and $v^u\in E^u$:
$$
\llangle v^s,v^u\rrangle=0.
$$
\end{enumerate}

\medskip
\noindent
We can show, using the uniform hyperbolicity, that $\llangle \cdot,\cdot\rrangle$ is equivalent
to and as smooth as $\langle\cdot,\cdot\rangle$. That is why it is also called {\em adapted metric}.
Letting $\vertiii{\cdot}$ denote the norm induced by $\llangle \cdot,\cdot\rrangle$,
if $v^s\in E^s\backslash\{0\}$ then
$$
\vertiii{df v^s}^2=2\sum_{n\geq 0}\lambda^{-2n}\|df^n(df v^s)\|^2=
\lambda^2\left[\vertiii{v^s}^2-2\right]< \lambda^2\vertiii{v^s}^2,
$$
hence $\vertiii{df v^s}<\lambda \vertiii{v^s}$. 
Similarly, if $v^u\in E^u\backslash\{0\}$ then $\vertiii{df^{-1}v^u}<\lambda\vertiii{v^u}$.

\medskip
When $M$ is a surface, the information of the Lyapunov inner product at each $x\in M$ can be
recorded by three parameters $s(x),u(x),\alpha(x)$, which we now introduce.
The bundles $E^s,E^u$ are one-dimensional, so there are vectors
$e^s_x\in E^s_x$ and $e^u_x\in E^u_x$, unitary in the metric $\langle\cdot,\cdot\rangle$.
In the Lyapunov inner product $\llangle\cdot,\cdot\rrangle$,
we have that
$\vertiii{e^s_x},\vertiii{e^u_x}\in\left[\sqrt{2},C\lambda\sqrt{\tfrac{2}{\lambda^2-\kappa^2}}\right]$
are uniformly bounded away from zero and infinity\footnote{Indeed: $2<\vertiii{e^s_x}^2=
2\displaystyle\sum_{n\geq 0}\lambda^{-2n}\|df^ne^s_x\|^2\leq
2C^2\sum_{n\geq 0}\left(\tfrac{\kappa}{\lambda}\right)^{2n}
=\tfrac{2C^2\lambda^2}{\lambda^2-\kappa^2}$.}.

\medskip
\noindent
{\sc Parameters $s(x),u(x),\alpha(x)$:}
\begin{align*}
s(x)&=\vertiii{e^s_x}=\sqrt{2}\left(\sum_{n\geq 0}\lambda^{-2n}\|df^ne^s_x\|^2\right)^{1/2}\\
u(x)&=\vertiii{e^u_x}=\sqrt{2}\left(\sum_{n\geq 0}\lambda^{-2n}\|df^{-n}e^u_x\|^2\right)^{1/2}\\
\alpha(x)&=\angle(E^s_x,E^u_x).
\end{align*} 

\medskip
As observed, $s(x),u(x)$ are uniformly bounded away from zero and infinity. Since the splitting
$E^s\oplus E^u$ is continuous,  $\alpha(x)$ is also uniformly bounded away from zero and $\pi$.


\subsection{Diagonalization of derivative}\label{Subsec-UH-diagonalization}

For the ease of exposition, we continue assuming that $M$ is a surface.
We now use the Lyapunov inner product (or, more specifically, the parameters $s(x),u(x),\alpha(x)$)
to represent $df_x$ as a hyperbolic matrix. Let $e_1=\begin{bmatrix}1 \\ 0 \end{bmatrix}$
and $e_2=\begin{bmatrix}0 \\ 1 \end{bmatrix}$ be the canonical basis of $\R^2$.

\medskip
\noindent
{\sc Linear map $C(x):$} For $x\in M$, let
$C(x):\R^2\to T_xM$ be the linear map s.t.
$$
C(x):e_1\mapsto \frac{e^s_x}{s(x)}\ ,\ C(x): e_2\mapsto \frac{e^u_x}{u(x)}\cdot
$$

\medskip
The linear transformation $C(x)$ sends the canonical inner product on $\R^2$ to the Lyapunov inner product
$\llangle\cdot,\cdot\rrangle$ on $T_xM$.
For a geometer, this may be a simple description of $\llangle\cdot,\cdot\rrangle$,
but for practical reasons we do not explore such description. Instead, we study the
relation of $C(x)$ with the parameters $s(x),u(x),\alpha(x)$.
Given a linear transformation, let $\|\cdot\|$ denote its sup norm and $\|\cdot\|_{\rm Frob}$ its
Frobenius norm\footnote{The Frobenius norm
of a $2\times 2$ matrix $A=\left[\begin{array}{cc}a & b\\ c & d\end{array}\right]$ is
$\|A\|_{\rm Frob}=\sqrt{a^2+b^2+c^2+d^2}$.}. These two norms are equivalent,
with $\|\cdot\|\leq \|\cdot\|_{\rm Frob}\leq \sqrt{2}\|\cdot\|$.
The next lemma proves that $C$ diagonalizes $df$.
  
\begin{lemma}\label{Lemma-linear-reduction}
There is $\mathfs L>1$ s.t. the following holds for all $x\in M$:
\begin{enumerate}[{\rm (1)}]
\item $\|C(x)\|_{\rm Frob}\leq 1$
and $\|C(x)^{-1}\|_{\rm Frob}=\tfrac{\sqrt{s(x)^2+u(x)^2}}{|\sin\alpha(x)|}$, with
$\|C(x)\|,\|C(x)^{-1}\|\leq \mathfs L$.
\item $C(f(x))^{-1}\circ df_x\circ C(x)$ is a diagonal matrix with diagonal entries $A,B\in\R$
s.t. $|A|,|B^{-1}|<\lambda$.
\end{enumerate}
\end{lemma}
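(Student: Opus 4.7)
The plan is to perform a direct computation in a Riemannian orthonormal basis, exploiting the fact that $df_x$ respects the splitting $E^s \oplus E^u$.

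For part (1), I would fix $x \in M$ and choose an orthonormal basis $\{f_1,f_2\}$ of $T_xM$ with respect to $\langle\cdot,\cdot\rangle$. Writing $e^s_x = \cos\theta_1 f_1 + \sin\theta_1 f_2$ and $e^u_x = \cos\theta_2 f_1 + \sin\theta_2 f_2$ with $|\theta_2-\theta_1|=\alpha(x)$, the matrix representation of $C(x)$ in the bases $\{e_1,e_2\}$ and $\{f_1,f_2\}$ has columns $e^s_x/s(x)$ and $e^u_x/u(x)$. A direct calculation then gives
$$\|C(x)\|_{\rm Frob}^2 = \frac{1}{s(x)^2}+\frac{1}{u(x)^2},$$
which is at most $1$ since $s(x),u(x)\geq \sqrt{2}$ by the lower bound established in Subsection \ref{Subsec-UH-adapted}. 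The determinant is $\pm \sin\alpha(x)/(s(x)u(x))$, and for any invertible $2\times 2$ matrix $A$ one has the identity $\|A^{-1}\|_{\rm Frob}=\|A\|_{\rm Frob}/|\det A|$; substituting gives the stated formula for $\|C(x)^{-1}\|_{\rm Frob}$. The uniform bound $\mathfs L$ then follows from the fact that $s(x),u(x)$ are bounded above (by the same estimate from Subsection \ref{Subsec-UH-adapted}) and $\alpha(x)$ is uniformly bounded away from $0$ and $\pi$ by continuity of the splitting on the compact set $M$, together with $\|\cdot\|\leq \|\cdot\|_{\rm Frob}$.

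For part (2), the key observation is that the columns of $C(x)$ are scalar multiples of the vectors in $E^s_x$ and $E^u_x$, and $df_x$ preserves this splitting. So write $df_x e^s_x = \mu_s(x) e^s_{f(x)}$ and $df_x e^u_x = \mu_u(x) e^u_{f(x)}$. Then
$$df_x\, C(x)\,e_1 = \frac{\mu_s(x)}{s(x)}\, e^s_{f(x)} = \frac{\mu_s(x)\, s(f(x))}{s(x)} \cdot C(f(x))\,e_1,$$
so $C(f(x))^{-1} df_x C(x) e_1 = A\,e_1$ with $A=\mu_s(x) s(f(x))/s(x)$; the analogous computation yields $C(f(x))^{-1} df_x C(x) e_2 = B\, e_2$ with $B=\mu_u(x) u(f(x))/u(x)$. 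For the size estimates I would invoke the contraction property of the Lyapunov inner product established in Subsection \ref{Subsec-UH-adapted}: $\vertiii{df_x e^s_x} < \lambda \vertiii{e^s_x}$ translates to $|\mu_s(x)|\,s(f(x)) < \lambda\, s(x)$, giving $|A|<\lambda$, and similarly $\vertiii{df^{-1}_{f(x)} e^u_{f(x)}} < \lambda \vertiii{e^u_{f(x)}}$ rewrites as $u(x)/|\mu_u(x)| < \lambda\, u(f(x))$, giving $|B^{-1}|<\lambda$.

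I do not anticipate any substantial obstacle: the whole statement reduces to linear algebra once one has the parameters $s(x),u(x),\alpha(x)$ and their bounds. The only mildly delicate point is keeping track of which norm (Riemannian or Lyapunov, sup or Frobenius) is being used at each step, since the matrix $C(x)$ is precisely the device that intertwines the canonical inner product on $\R^2$ with the Lyapunov inner product on $T_xM$, and this must be handled consistently throughout.
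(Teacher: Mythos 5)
Your proof is correct and follows essentially the same route as the paper: represent $C(x)$ by its columns in an orthonormal basis of $T_xM$ to compute the Frobenius norms, then observe that $e_1,e_2$ are eigenvectors of $C(f(x))^{-1}\circ df_x\circ C(x)$ because $df_x$ preserves the splitting. The two small efficiencies you introduce — using the $2\times 2$ identity $\|A^{-1}\|_{\rm Frob}=\|A\|_{\rm Frob}/|\det A|$ instead of writing out $C(x)^{-1}$ explicitly, and invoking the contraction $\vertiii{df\,v^s}<\lambda\vertiii{v^s}$, $\vertiii{df^{-1}v^u}<\lambda\vertiii{v^u}$ already established in Subsection \ref{Subsec-UH-adapted} rather than re-deriving it by resumming the series — are legitimate and make the argument a bit more economical, but they are not a genuinely different proof.
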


\begin{proof}
(1) In the basis $\{e_1,e_2\}$ of $\R^2$ and the basis $\{e^s_x,(e^s_x)^\perp\}$ of $T_xM$, $C(x)$ takes the
form $\left[\begin{array}{cc}\tfrac{1}{s(x)}& \tfrac{\cos\alpha(x)}{u(x)}\\ 0&\tfrac{\sin\alpha(x)}{u(x)}\end{array}\right]$,
hence $\|C(x)\|_{\rm Frob}^2=\tfrac{1}{s(x)^2}+\tfrac{1}{u(x)^2}\leq 1$. The inverse of
$C(x)$ is
$\left[\begin{array}{cc}s(x)& -\tfrac{s(x)\cos\alpha(x)}{\sin\alpha(x)}\\ 0&\tfrac{u(x)}{\sin\alpha(x)}\end{array}\right]$,
therefore $\|C(x)^{-1}\|_{\rm Frob}=\tfrac{\sqrt{s(x)^2+u(x)^2}}{|\sin\alpha(x)|}$.
Finally, since
$s(x),u(x),\alpha(x)$ are uniformly bounded away from zero and infinity, there is
$\mathfs L>1$ s.t. $\|C(x)\|,\|C(x)^{-1}\|\leq \mathfs L$ for all $x\in M$.

\medskip
\noindent
(2) It is clear that $e_1,e_2$ are eigenvectors of $C(f(x))^{-1}\circ df_x\circ C(x)$.
We start calculating the eigenvalue of $e_1$ .
Since $dfe^s_x=\pm\|dfe^s_x\|e^s_{f(x)}$,
$[df_x\circ C(x)](e_1)= df_x\left[\tfrac{e^s_x}{s(x)}\right]=\pm\tfrac{\|dfe^s_x\|}{s(x)}e^s_{f(x)}$, hence
$[C(f(x))^{-1}\circ df_x\circ C(x)](e_1)=\pm\|dfe^s_x\|\tfrac{s(f(x))}{s(x)}e_1$.
Thus $A:=\pm\|dfe^s_x\|\tfrac{s(f(x))}{s(x)}$ is the eigenvalue of $e_1$. Since
\begin{align*}
s(f(x))^2&=\tfrac{2\lambda^2}{\|dfe^s_x\|^2}\sum_{n\geq 1}\lambda^{-2n}\|df^ne^s_x\|^2
=\tfrac{\lambda^2}{\|dfe^s_x\|^2}(s(x)^2-2)<\tfrac{\lambda^2s(x)^2}{\|dfe^s_x\|^2},
\end{align*}
we have $|A|<\lambda$. Similarly, $B:=\pm\|dfe^u_x\|\tfrac{u(f(x))}{u(x)}$ is the eigenvalue of $e_2$.
Observing that $\|df^{-1}e^u_{f(x)}\|\cdot \|df e^u_x\|=1$, we have
\begin{align*}
u(f(x))^2&=2+\sum_{n\geq 1}\lambda^{-2n}\|df^{-n}e^u_{f(x)}\|^2=
2+\tfrac{u(x)^2}{\lambda^2\|df e^u_x\|^2}=2+\tfrac{u(f(x))^2}{B^2\lambda^2}>\tfrac{u(f(x))^2}{B^2\lambda^2},
\end{align*}
and so $|B|>\lambda^{-1}$.
\end{proof}

Although $s,u,\alpha,C$ depend on the choice of $\lambda$, we will not emphasize
this dependence because all calculations will be made for some a priori fixed
$\lambda$.

\subsection{Lyapunov charts, change of coordinates}\label{Subsec-UH-charts}

From now on, we assume that $f$ is $C^{1+\beta}$.
The next step is to compose the linear transformation $C(x)$ with the exponential map
to obtain a local chart of $M$ in which $f$ itself becomes a small perturbation of a hyperbolic
matrix. Since this is a natural consequence of the use of the Lyapunov inner product, we will call these
charts {\em Lyapunov charts}, as in \cite[Section 6.4.2]{Barreira-Pesin-new-book}.
Fix a small number $\ve\in (0,\mathfrak r_0)$ (how small depends on a finite number
of inequalities that $\ve$ has to satisfy). Let $Q=\ve^{3/\beta}$. 

\medskip
\noindent
{\sc Lyapunov chart:} The {\em Lyapunov chart} at $x$ is the map $\Psi_x:[-Q,Q]^2\to M$
defined by $\Psi_x:=\exp{x}\circ C(x)$.
\begin{figure}[hbt!]
\centering
\def\svgwidth{11cm}
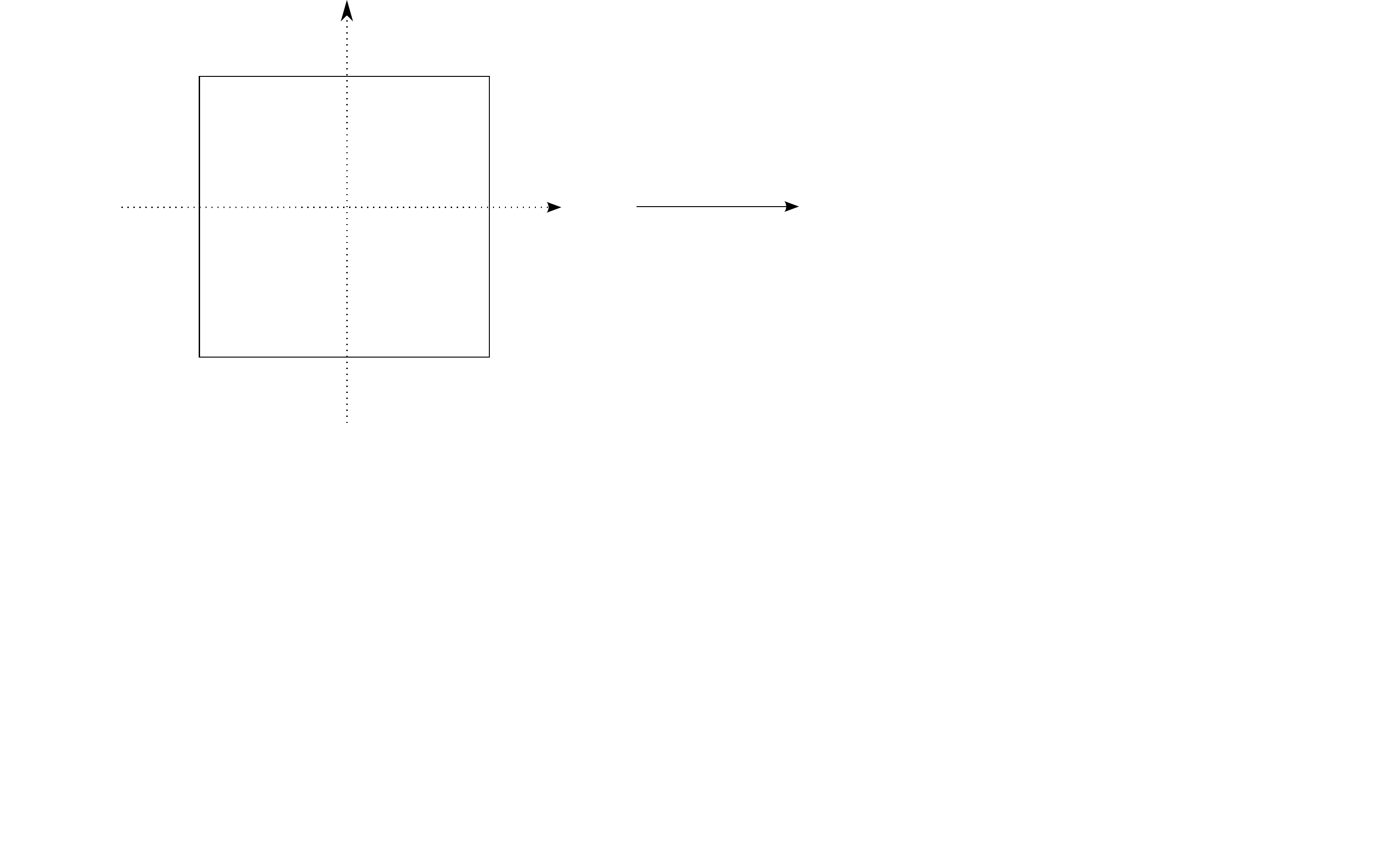
\caption{The Lyapunov chart $\Psi_x$ at $x$.}\label{figure-chart}
\end{figure}

\medskip
Since $Q<\ve<\mathfrak r_0$ and $C(x)$ is a contraction, we
have $C(x)([-Q,Q]^2)\subset B_x[2\mathfrak r_0]$ and so $\Psi_x$ is a diffeomorphism
onto its image. By Lemma \ref{Lemma-linear-reduction}(1), $\Psi_x$ is $2$--Lipschitz
and its inverse is $2\mathfs L$--Lipschitz.
In Lyapunov charts, $f$ takes the form $f_x:=\Psi_{f(x)}^{-1}\circ f\circ \Psi_x$.
The next theorem shows that $f_x$ is a small perturbation of a hyperbolic matrix.

\begin{theorem}\label{Thm-Lyapunov-chart}
The following holds for all $\ve>0$ small enough.
\begin{enumerate}[{\rm (1)}]
\item $d(f_x)_0=C(f(x))^{-1}\circ df_x\circ C(x)=\left[\begin{array}{cc}A & 0\\ 0 & B\end{array}\right]$
with $|A|,|B^{-1}|<\lambda$, cf. Lemma \ref{Lemma-linear-reduction}.
\item $f_x(v_1,v_2)=(Av_1+h_1(v_1,v_2),Bv_2+h_2(v_1,v_2))$ for $(v_1,v_2)\in [-Q,Q]^2$ where:
\begin{enumerate}[{\rm (a)}]
\item $h_1(0,0)=h_2(0,0)=0$ and $\nabla h_1(0,0)=\nabla h_2(0,0)=0$.
\item $\|h_1\|_{1+\beta/2}<\ve$ and $\|h_2\|_{1+\beta/2}<\ve$, where the norms are taken in $[-Q,Q]^2$.
\end{enumerate}
\end{enumerate}
A similar statement holds for $f_x^{-1}:=\Psi_x^{-1}\circ f^{-1}\circ \Psi_{f(x)}$.
\end{theorem}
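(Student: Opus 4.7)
The plan is to reduce (1) to a chain-rule computation and (2) to a first-order Taylor expansion whose remainder is controlled by the H\"older regularity of $df$.

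For (1), I would first observe that $\Psi_x(0)=\exp{x}(0)=x$ and, since $d(\exp{x})_0=\mathrm{Id}$, that $d(\Psi_x)_0=C(x)$ (and similarly $d(\Psi_{f(x)}^{-1})_{f(x)}=C(f(x))^{-1}$). The chain rule applied to $f_x=\Psi_{f(x)}^{-1}\circ f\circ\Psi_x$ at the origin then gives
\[
d(f_x)_0 = C(f(x))^{-1}\circ df_x\circ C(x),
\]
and Lemma \ref{Lemma-linear-reduction}(2) identifies this as a diagonal matrix with entries $A,B$ satisfying $|A|,|B^{-1}|<\lambda$.

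For (2), define $h_1,h_2$ by $f_x(v)=d(f_x)_0 v+(h_1(v),h_2(v))$. Part (a) is then immediate from the definitions. The core of (b) is the following uniform $C^{1+\beta}$ estimate on $[-Q,Q]^2$: there exists $K>0$, independent of $x$, such that
\[
\|d(f_x)_v-d(f_x)_w\|\leq K\|v-w\|^\beta \qquad \text{for all } v,w\in[-Q,Q]^2.
\]
I would establish this by combining the $C^{1+\beta}$ hypothesis on $f$ (phrased in Subsection \ref{Subsec-UH-preliminaries} through parallel transport and governed by the constant $\mathfrak K$), the uniform bounds $\|C(x)^{\pm 1}\|\leq\mathfs L$ from Lemma \ref{Lemma-linear-reduction}(1), and the uniform smoothness of $\exp{x}$ and $\exp{x}^{-1}$ on balls of radius $2\mathfrak r_0$ (guaranteed by compactness of $M$). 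With this estimate, setting $w=0$ yields $\|\nabla h_i(v)\|\leq K(Q\sqrt2)^\beta$, and integrating along the segment $[0,v]$ gives $\|h_i\|_\infty\leq K(Q\sqrt2)^{1+\beta}$; the $\beta/2$-H\"older seminorm of $\nabla h_i$ is then controlled by the elementary upgrade $\|v-w\|^\beta\leq(2Q\sqrt2)^{\beta/2}\|v-w\|^{\beta/2}$.

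Since $Q=\ve^{3/\beta}$, each of $\|h_i\|_\infty$, $\|\nabla h_i\|_\infty$, and the $\beta/2$-H\"older seminorm of $\nabla h_i$ is $O(\ve^{3/2})$, so the full $C^{1+\beta/2}$ norm on $[-Q,Q]^2$ is smaller than $\ve$ once $\ve$ is chosen small enough. The analogous statement for $f_x^{-1}$ follows by applying the same argument to $f^{-1}$, using the second $C^{1+\beta}$ estimate of Subsection \ref{Subsec-UH-preliminaries} together with Lemma \ref{Lemma-linear-reduction} applied to $df^{-1}$.

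The main obstacle is the uniform $C^{1+\beta}$ transfer from $f$ to the chart representation $f_x$. The definition in Subsection \ref{Subsec-UH-preliminaries} compares derivatives of $f$ at different points by parallel transport, whereas in the chart we compare derivatives of $f_x$ at points of $\R^2$; translating between the two requires carefully unwinding the nonlinearity of the exponential maps and verifying that the correction terms fit inside the same $O(\|v-w\|^\beta)$ bound. Once this bookkeeping is done, the rest of the proof is a routine Taylor/integration estimate.
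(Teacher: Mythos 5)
Your proposal is correct and follows essentially the same route as the paper's proof: obtain (1) from the chain rule at the origin together with Lemma~\ref{Lemma-linear-reduction}(2), and for (2) derive a uniform $\beta$--H\"older bound on $v\mapsto d(f_x)_v$ by factoring through $\exp{f(x)}^{-1}$, $f$, $\exp{x}$ and the matrices $C(\cdot)^{\pm1}$, then trade $\beta$ for $\beta/2$ to absorb the constants via $Q=\ve^{3/\beta}$. The paper simply makes the factorization and triangle-inequality bookkeeping explicit (writing the derivative as a product $A_iB_iC_i$ and producing the constant $24\mathfrak K\mathfs H\mathfs L$), while you make the $C^0$ and $C^1$ pieces of the $C^{1+\beta/2}$ norm explicit instead; the underlying argument is the same.
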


\begin{proof}
Property (1) is clear since $d(\Psi_x)_0=C(x)$ and $d(\Psi_{f(x)})_0=C(f(x))$.
By Lemma \ref{Lemma-linear-reduction},
$d(f_x)_0=\left[\begin{array}{cc}A & 0 \\ 0 & B\end{array}\right]$ with $|A|,|B^{-1}|<\lambda$.
Define $h_1,h_2:[-Q,Q]^2\to\R $ by 
$f_x(v_1,v_2)=(Av_1+h_1(v_1,v_2),Bv_2+h_2(v_1,v_2))$. Then (a) is automatically
satisfied. It remains to prove (b), which will follow estimating 
$\|d(f_x)_{w_1}-d(f_x)_{w_2}\|$. For the inexperienced reader, we suggest to make the calculation
in the euclidean situation (hence all exponential maps are identity). Below we do the general case.
For $i=1,2$, define
$$
A_i= \widetilde{d(\exp{f(x)}^{-1})_{(f\circ \exp{x})(w_i)}}\,,\
B_i=\widetilde{df_{\exp{x}(w_i)}}\,,\ C_i=\widetilde{d(\exp{x})_{w_i}}.
$$
We first estimate $\|A_1 B_1 C_1-A_2 B_2 C_2\|$. Note that:
\begin{enumerate}[$\circ$]
\item $A_1,A_2$ are derivatives of the map $\exp{f(x)}^{-1}$ at nearby points, and so 
$\|A_1-A_2\|\leq \mathfs 2\mathfs H\|w_1-w_2\|$, where $\mathfs H>0$ is a constant that only
depends on the regularity of exponential maps and their inverses.
\item $B_1,B_2$ are derivatives of $f$ at nearby points, so $\|B_1-B_2\|\leq 2\mathfrak K\|w_1-w_2\|^\beta$.
\item $C_1,C_2$ are derivatives of exponential maps at nearby points, so
$\|C_1-C_2\|\leq \mathfs H\|w_1-w_2\|$.
\end{enumerate}
Applying some triangle inequalities, we obtain that
$$
\|A_1 B_1 C_1-A_2 B_2 C_2\|\leq 24\mathfrak K \mathfs H\|w_1-w_2\|^\beta.
$$
Now we estimate $\|d(f_x)_{w_1}-d(f_x)_{w_2}\|$:
\begin{align*}
&\,\|d(f_x)_{w_1}-d(f_x)_{w_2}\|\leq \|C(f(x))^{-1}\| \|A_1 B_1 C_1-A_2 B_2 C_2\| \|C(x)\|\\
&\leq 24\mathfrak K\mathfs H\mathfs L\|w_1-w_2\|^\beta.
\end{align*}
Since $\|w_1-w_2\|<4Q$, if $\ve>0$ is small enough then
$24\mathfrak K\mathfs H\mathfs L\|w_1-w_2\|^{\beta/2}
\leq 96\mathfrak K\mathfs H\mathfs L\ve^{3/2}<\ve$,
hence $\|d(f_x)_{w_1}-d(f_x)_{w_2}\|\leq \ve \|w_1-w_2\|^{\beta/2}$.
\end{proof}

\subsection{Graph transforms: construction of invariant manifolds}\label{Subsect-UH-graph}

A consequence of the hyperbolic behavior of $f_x$ is that
it sends curves that are almost parallel to the vertical axis to curves with the same property;
similarly, the inverse map $f_x^{-1}$ sends curves that are almost parallel to the horizontal axis
to curves with the same property. This geometrical feature allows to construct
local stable and unstable manifolds. According to Anosov \cite[pp. 23]{Anosov-Geodesic-Flows},
this construction has been more or less known to Darboux, Poincar\'e and Lyapunov, but their proofs
required additional assumptions on the system. Hadamard and Perron were the ones
to observe that hyperbolicity is a sufficient condition. Below, we explain the method of Hadamard.
The idea is to find  the local invariant manifolds among 
graphs of functions, which we call {\em admissible manifolds}. The maps $f_x^{\pm 1}$ define
operators on the spaces of admissible manifolds, called {\em graph transforms}, and the local
invariant manifolds are  limit points of compositions of such operators. As already mentioned, usually
$f$ is only assumed to be $C^1$, but we take $f\in C^{1+\beta}$ to keep the analogy with 
the remaining of the text, and make use of the Lyapunov charts constructed in Subsection \ref{Subsec-UH-charts}.


\medskip
\noindent
{\sc Admissible manifolds:} An {\em $s$--admissible manifold} at $\Psi_x$ is a set
of the form $V^s=\Psi_x\{(t,F(t)):|t|\leq Q\}$, where $F:[-Q,Q]\to\R$ is a $C^1$ function s.t.
$F(0)=0,F'(0)=0$ and $\|F'\|_{C^0}\approx 0$. Similarly, a {\em $u$--admissible manifold} at
$\Psi_x$ is a set of the form $V^u=\Psi_x\{(G(t),t):|t|\leq Q\}$, where $G:[-Q,Q]\to\R$ is a $C^1$
function s.t. $G(0)=0,G'(0)=0$ and $\|G'\|_{C^0}\approx 0$.

\medskip
We call $F,G$ the {\em representing functions} of $V^s,V^u$ respectively. We prefer not to specify
the quantifier for the condition  $\|F'\|_{C^0},\|G'\|_{C^0}\approx 0$.
Instead, think of an $s/u$--admissible as an almost
horizontal/vertical curve that is tangent to the horizontal/vertical axis at the origin. 
Let $\mathfs M^s_x,\mathfs M^u_x$ be the space of all $s,u$--admissible manifolds at $\Psi_x$
respectively.
Introduce a metric on $\mathfs M^s_x$ as follows: for $V_1,V_2\in\mathfs M^s_x$ with representing functions
$F_1,F_2$, let
$$
{\rm dist}(F_1,F_2):=\|F_1-F_2\|_{C^0}.
$$ A similar definition holds for $\mathfs M^u_x$.

\medskip
\noindent
{\sc Graph transforms $\mathfs F^s_x,\mathfs F^u_x$:} 
The {\em stable graph transform} $\mathfs F^s_x:\mathfs M^s_{f(x)}\to\mathfs M^s_x$
is the map that sends $V^s\in \mathfs M^s_{f(x)}$ to the unique $\mathfs F^s_x[V^s]\in \mathfs M^s_x$
with representing function $F$ s.t. $\Psi_x\{(t,F(t)):|t|\leq Q\}\subset f^{-1}(V^s)$.
Similarly, the {\em unstable graph transform}
$\mathfs F^u_x:\mathfs M^u_x\to\mathfs M^u_{f(x)}$ is the map that sends
$V^u\in \mathfs M^u_x$ to the unique $\mathfs F^u_x[V^u]\in \mathfs M^u_{f(x)}$
with representing function $G$ s.t. $\Psi_{f(x)}\{(G(t),t):|t|\leq Q\}\subset f(V^u)$.

\medskip
In other words, $\mathfs F^s_x$ sends an $s$--admissible manifold at $\Psi_{f(x)}$
with representing function $F$ to an $s$--admissible manifold at $\Psi_x$
whose graph of the representing function is contained in the graph of $f_x^{-1}\circ F$,
and $\mathfs F^u_x$ sends a $u$--admissible manifold at $\Psi_x$
with representing function $G$ to a $u$--admissible manifold at $\Psi_{f(x)}$
whose graph of the representing function is contained in the graph of $f_x\circ G$.
See Figure \ref{figure-graph-transform} below.
\begin{figure}[hbt!]
\centering
\def\svgwidth{13cm}
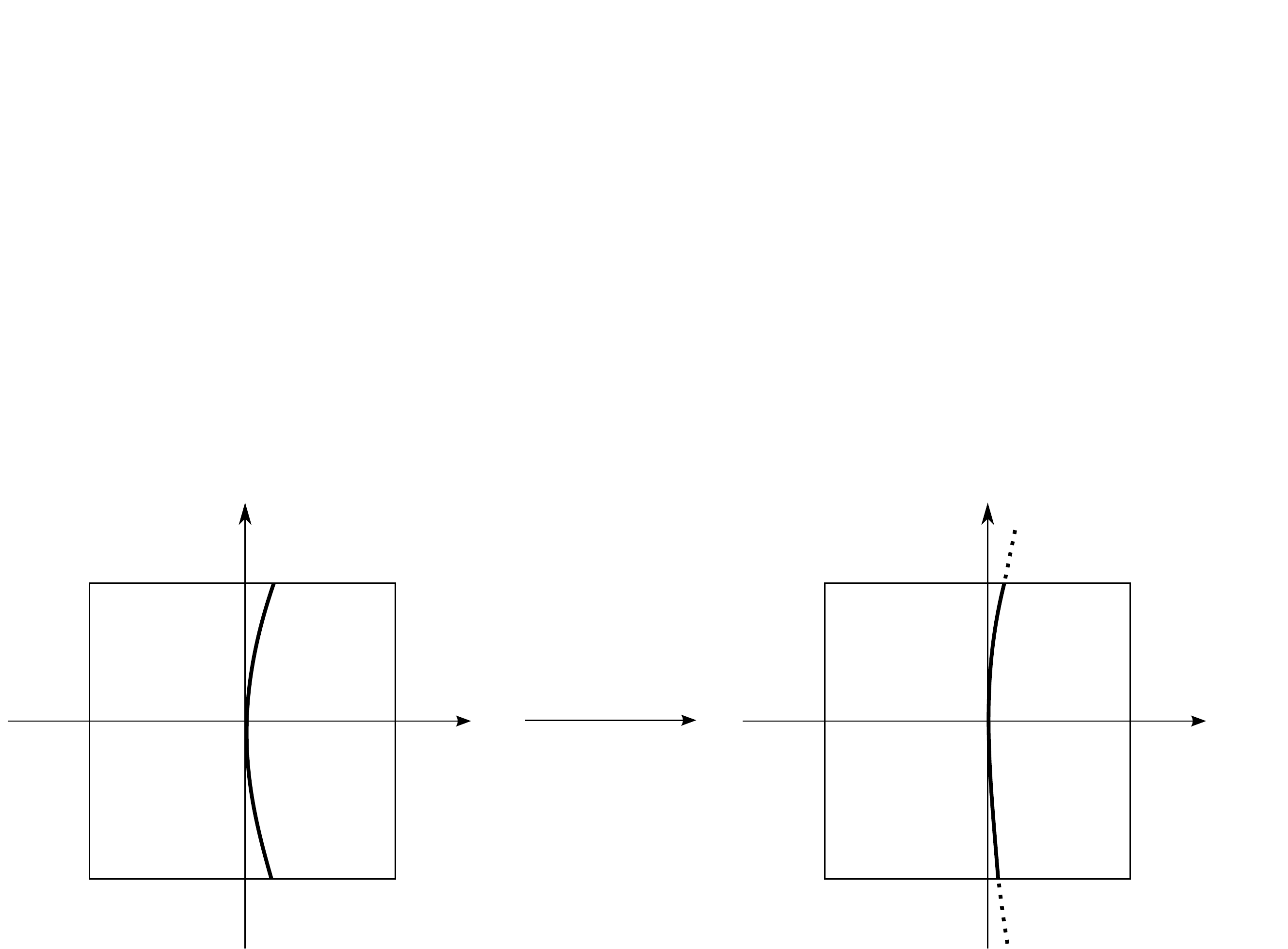
\caption{Graph transforms: the stable graph transform $\mathfs F_x^s$ sends an
$s$--admissible manifold at $\Psi_{f(x)}$
to an $s$--admissible manifold at $\Psi_x$, while the unstable graph transform
$\mathfs F_x^u$ sends a $u$--admissible manifold at $\Psi_x$ to a $u$--admissible
manifold at $\Psi_{f(x)}$.}\label{figure-graph-transform}
\end{figure}

\begin{theorem}\label{Thm-UH-graph-transform}
$\mathfs F^s_x$ and $\mathfs F^u_x$ are well-defined contractions.
\end{theorem}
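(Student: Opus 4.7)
I would establish the theorem first for the unstable graph transform $\mathfs F^u_x$; the statement for $\mathfs F^s_x$ then follows from the same argument applied to $f^{-1}$, whose local representation takes the form of Theorem \ref{Thm-Lyapunov-chart} with the two coordinates interchanged.

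\medskip
Given $V^u\in \mathfs M^u_x$ with representing function $G$, I would compute in the chart at $f(x)$
$$
f_x(G(\tau), \tau) = \bigl(AG(\tau) + h_1(G(\tau), \tau),\, \vartheta(\tau)\bigr),\qquad \vartheta(\tau) := B\tau + h_2(G(\tau), \tau),
$$
and show that $\vartheta$ is a $C^1$ diffeomorphism of $[-Q,Q]$ whose image contains $[-Q,Q]$. This uses $|B|>\lambda^{-1}$ and $|\partial_i h_j|\le\ve$ on $[-Q,Q]^2$ to bound $|\vartheta'(\tau)|\ge |B|-\ve(1+\|G'\|_{C^0})>1$, together with the refined estimate $|h_2(v)|\le\ve|v|^{1+\beta/2}$ (from $\nabla h_2(0,0)=0$ and $h_2\in C^{1+\beta/2}$) and $Q=\ve^{3/\beta}$ to obtain $|\vartheta(\pm Q)|\ge|B|Q-\ve(\sqrt{2}Q)^{1+\beta/2}\ge Q$ for small $\ve$. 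Then define
$$
\widetilde G(s) := AG(\tau) + h_1(G(\tau), \tau),\qquad \tau=\vartheta^{-1}(s),
$$
which is the unique representing function of $\mathfs F^u_x[V^u]$.

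\medskip
Next I would check that $\widetilde G$ is $u$--admissible. The identities $\widetilde G(0)=0$ and $\widetilde G'(0)=0$ follow directly from $G(0)=G'(0)=0$, $h_i(0,0)=0$, $\nabla h_i(0,0)=0$, and the chain rule. For the size of $\widetilde G'$, differentiating gives
$$
\widetilde G'(s) = \frac{AG'(\tau)+\partial_1 h_1(G(\tau),\tau)G'(\tau)+\partial_2 h_1(G(\tau),\tau)}{B+\partial_1 h_2(G(\tau),\tau)G'(\tau)+\partial_2 h_2(G(\tau),\tau)},
$$
so $\|\widetilde G'\|_{C^0}\le\bigl[(|A|+\ve)\|G'\|_{C^0}+\ve\bigr]/\bigl[|B|-\ve(1+\|G'\|_{C^0})\bigr]$. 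Because $|A|/|B|<\lambda^2$, this quantity maps the admissible range of $\|G'\|_{C^0}$ strictly into itself for small $\ve$, giving the cone-invariance that keeps the image in $\mathfs M^u_{f(x)}$. This is the one place where the hyperbolicity of $\operatorname{diag}(A,B)$ and the perturbation bound $\ve$ interact nontrivially, and I expect it to be the main obstacle of the proof.

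\medskip
For the contraction estimate, I would take $G_1,G_2$ representing $V_1^u,V_2^u$, fix $s\in[-Q,Q]$, and let $\tau_i:=\vartheta_i^{-1}(s)$. Subtracting $\vartheta_1(\tau_1)=\vartheta_2(\tau_2)$ yields
$$
B(\tau_1-\tau_2) = h_2(G_2(\tau_2),\tau_2)-h_2(G_1(\tau_1),\tau_1),
$$
and combining $|\partial h_2|\le\ve$ with $|G_1(\tau_1)-G_2(\tau_2)|\le\|G_1'\|_{C^0}|\tau_1-\tau_2|+\|G_1-G_2\|_{C^0}$ produces $|\tau_1-\tau_2|\le O(\ve)\|G_1-G_2\|_{C^0}$. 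Substituting this bound into
$$
\widetilde G_1(s)-\widetilde G_2(s) = A\bigl(G_1(\tau_1)-G_2(\tau_2)\bigr)+h_1(G_1(\tau_1),\tau_1)-h_1(G_2(\tau_2),\tau_2)
$$
and applying the same Lipschitz estimates yields $\|\widetilde G_1-\widetilde G_2\|_{C^0}\le(|A|+O(\ve))\|G_1-G_2\|_{C^0}$, which is a strict contraction for small $\ve$ since $|A|<\lambda<1$.
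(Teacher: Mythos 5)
Your proof is correct and carries out precisely the graph-transform (Hadamard) argument that the paper alludes to but omits: using the hyperbolic normal form of Theorem \ref{Thm-Lyapunov-chart}, you verify that the expanding coordinate of $f_x$ restricted to an admissible graph is a diffeomorphism whose image covers $[-Q,Q]$, check the cone-invariance and tangency conditions for admissibility, and extract the $C^0$-contraction from $|A|<\lambda$. This is the same approach the paper intends when it says the theorem ``follows from the hyperbolicity of $f_x$.''
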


The proof of the theorem follows from the
hyperbolicity of $f_x$. Using it, we can construct
local stable and unstable manifolds. 

\medskip
\noindent
{\sc Stable/unstable manifolds:} The {\em stable manifold} of $x\in M$ is
the $s$--admissible manifold $V^s[x]\in \mathfs M^s_x$ defined by
$$
V^s[x]:=\lim_{n\to\infty}(\mathfs F^s_x\circ\cdots\circ \mathfs F^s_{f^{n-1}(x)})[V_n]
$$
for some (any) sequence $\{V_n\}_{n\geq 0}$ with $V_n\in\mathfs M^s_{f^n(x)}$.
The {\em unstable manifold} of $x\in M$ is the $u$--admissible manifold $V^u[x]\in \mathfs M^u_x$ defined by
$$
V^u[x]:=\lim_{n\to-\infty}(\mathfs F^u_{f^{-1}(x)}\circ\cdots\circ \mathfs F^u_{f^{n}(x)})[V_n]
$$
for some (any) sequence $\{V_n\}_{n\leq 0}$ with $V_n\in\mathfs M^u_{f^{n}(x)}$.

\medskip
The sets $V^s[x]$ and $V^u[x]$ are well-defined because the graph transforms are contractions
(Theorem \ref{Thm-UH-graph-transform} above), and they are indeed admissible curves.
Note that $V^s[x]$ only depends on the future
$\{f^n(x)\}_{n\geq 0}$, while $V^u[x]$ only depends on the past
$\{f^n(x)\}_{n\leq 0}$. Also, since $\Psi_x$ and its inverse have uniformly bounded norms
(Lemma \ref{Lemma-linear-reduction}(a)), the stable and unstable manifolds have uniform sizes.

\subsection{Higher dimensions}\label{Subsect-UH-higher-dim}

We sketch how to make the construction in higher dimension.
As we have done, the definition of Lyapunov inner product works in any dimension,
but not the definition of $C(x)$. Since this matrix is used to send the canonical inner product of $\R^2$
to the Lyapunov inner product on $T_xM$, in arbitrary dimension we can similarly define
$C(x):\R^n\to T_xM$ to be a linear transformation s.t.
$\langle v,w\rangle_{\R^n}=\llangle C(x)v,C(x)w\rrangle$ for all $v,w\in\R^n$, i.e.
$C(x)$ is an isometry between $(\R^n,\langle\cdot,\cdot\rangle_{\R^n})$ and $(T_xM,\llangle\cdot,\cdot\rrangle)$.
Let $d_s,d_u\in\N$ be the dimensions of $E^s,E^u$.
The map $C(x)$ is not uniquely defined, and we can assume that it sends
$\R^{d_s}\times\{0\}$ to $E^s_x$ and $\{0\}\times \R^{d_u}$
to $E^u_x$. Doing this for all $x\in M$, we obtain a family $\{C(x)\}_{x\in M}$ of linear transformations.
Although the splitting $TM=E^s\oplus E^u$ is continuous,
we cannot always take $x\in M\mapsto C(x)$ continuously, because
$E^s$ and $E^u$ may have non-real exponents, causing rotations inside them.
But for our purpose, what matters is the behavior of the sequence $\{C(f^n(x))\}_{n\in\Z}$ for each $x\in M$.
For what it is worth, $x\in M\mapsto C(x)$ can be chosen measurably, see e.g.
\cite[Footnote at page 48]{Ben-Ovadia-2019}.

\medskip
The composition $C(f(x))^{-1}\circ df_x\circ C(x)$ takes the block form
$$
C(f(x))^{-1}\circ df_x\circ C(x)=
\left[
\begin{array}{cc}
D_s & 0 \\
0 & D_u 
\end{array}
\right],
$$
where $D_s$ is a $d_s\times d_s$ matrix s.t. $\|D_s v\|\leq \lambda\|v\|$ for all $v\in \R^{d_s}$,
and $D_u$ is a $d_u\times d_u$ matrix s.t. $\|D_u^{-1} w\|\leq \lambda\|w\|$ for all $w\in \R^{d_u}$.
This is the higher dimensional counterpart of Lemma \ref{Lemma-linear-reduction}(2).
Define the Lyapunov chart $\Psi_x$ as in Subsection \ref{Subsec-UH-charts},
which satisfies a higher dimensional version of Theorem \ref{Thm-Lyapunov-chart} with respect to
the above block form.
Defining an $s$--admissible manifold at $\Psi_x$ as a set
of the form $V^s=\Psi_x\{(t,F(t)):t\in [-Q,Q]^{d_s}\}$, where $F:[-Q,Q]^{d_s}\to\R^{d_u}$ is a
$C^1$ function s.t. $F(0)=0,F'(0)=0$ and $\|F'\|_{C^0}\approx 0$, and similarly $u$--admissible manifolds
at $\Psi_x$, Theorem \ref{Thm-UH-graph-transform} holds. Hence,
every $x\in M$ has local stable and unstable manifolds.

\section{Nonuniformly hyperbolic systems}\label{Section-NUH-systems}

As discussed in Section \ref{Section-UH-Systems}, uniformly hyperbolic systems
had a big impact on the development of dynamical systems and ergodic theory.
Unfortunately, uniform hyperbolicity is a condition that is hard to be satisfied. For instance, 
if a three dimensional manifold admits an Anosov flow then its fundamental group 
has exponential growth \cite{Plante-Thurston}.
During the seventies, new notions of hyperbolicity were proposed. These notions
substitute the uniform assumption to weaker ones. One of them, called
{\em nonuniform hyperbolicity}, was introduced by Pesin \cite{Pesin-Izvestia-1976,Pesin-Characteristic-1977,Pesin-Geodesic-1977}. In contrast to uniform hyperbolicity, which requires
hyperbolicity to hold every time (for all $n\in\mathbb Z$ or $t\in\mathbb R$) and everywhere (for all $x\in M$),
the notion of nonuniform hyperbolicity assumes an asymptotic hyperbolicity (on the average)
not necessarily in the whole phase space (almost everywhere), i.e. hyperbolicity
occurs but in a nonuniform way. 

\medskip
Here is a simplistic way of comparing uniform and nonuniform hyperbolicity. To prepare the dough
for bread, a baker needs to contract and stretch repeatedly the dough. An ideal baker would make
such operation at every movement, all over the dough. This is uniform hyperbolicity. 
On the other hand, a real-life baker performs nonuniform hyperbolicity: he does not make the
operation at every movement (he might get tired from time to time) and he can forget some tiny
parts of the dough. As it turns out, it is the notion of nonuniform hyperbolicity that allows for
applications outside of Mathematics. 

\medskip
At the same time that nonuniform hyperbolicity is weak enough to include many new examples and applications,
it is strong enough to recover many of the properties of uniformly hyperbolic systems, such
as stable manifolds and graph transforms. This is one of the reasons of the success of the theory of nonuniformly
hyperbolic systems, known as {\em Pesin theory}. Since its beginning, it constitutes
an important tool for the understanding of ergodic and statistical properties of smooth
dynamical systems.
Nowadays, Pesin theory is classical and there are great textbooks on the topic, see e.g.
\cite{Fathi-Herman-Yoccoz,Katok-Mendoza,Barreira-Pesin-2007}.
For the applications to symbolic dynamics in Part 2, we follow the modern
approach that was recently developed by Sarig \cite{Sarig-JAMS}, which has been
slightly improved in the past two years or so \cite{Lima-Sarig,Lima-Matheus,Ben-Ovadia-2019}.

\medskip
Now, we make {\em essential} use of the $C^{1+\beta}$ regularity. Indeed, the theory is just not true 
under $C^1$ regularity, see e.g. Pugh's example in \cite[Chapter 15]{Barreira-Pesin-new-book}.

\subsection{Definitions and examples}\label{Subsec-NUH-definitions}

Let $M$ be a closed (compact without boundary) connected
smooth Riemannian manifold, and let $f:M\to M$ be a $C^1$ diffeomorphism.
The objects that identify the asymptotic hyperbolicity are the {\em Lyapunov exponents}.

\medskip
\noindent
{\sc Lyapunov exponent:} For a nonzero vector  $v\in TM$, the {\em Lyapunov exponent}
of $f$ at $v$ is defined by
$$
\chi(v):=\lim_{n\to+\infty}\tfrac{1}{n}\log\|df^n v\|
$$
when the limit exists.

\medskip
The mere existence of the limit should not be taken for granted, even for
uniformly hyperbolic systems. It comes from the 
{\em Oseledets theorem}, which is a measure-theoretic statement that we now explain.
Assume that $\mu$ is a probability measure on $M$, invariant under $f$. For simplicity,
assume that $\mu$ is ergodic. The Oseledets theorem proves that, for $\mu$--a.e.
$x\in M$, the Lyapunov exponents of every nonzero $v\in T_xM$ exist \cite{Oseledets}.
Furthermore, they exists and are equal for future or past iterations. In our context, we state it as follows.

\begin{theorem}[Oseledets]\label{Thm-Oseledets}
Let $(f,\mu)$ be as above. Then there is a $f$--invariant subset $\widetilde M\subset M$ with
$\mu\left[\widetilde M\right]=1$, real numbers $\chi_1<\chi_2<\cdots<\chi_k$, and a splitting
$T\widetilde M=E^1\oplus E^2\oplus\cdots\oplus E^k$ satisfying:
\begin{enumerate}[{\rm (1)}]
\item {\sc Invariance:} $df(E^i_{x})=E^i_ {f(x)}$ for all $x\in \widetilde M$ and $i=1,\ldots,k$.
\item {\sc Lyapunov exponents:} For all $x\in\widetilde M$ and all nonzero $v\in E^i_x$,
$$
\chi(v)=\lim_{n\to\pm\infty}\tfrac{1}{n}\log\|df^n v\|=\chi_i.
$$
\end{enumerate}
\end{theorem}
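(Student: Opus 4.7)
The approach is to derive Oseledets' theorem from Kingman's subadditive ergodic theorem, applied first to the derivative cocycle and then to its exterior powers. The proof splits naturally into three steps: existence of the top exponent and of the full spectrum; construction of the invariant filtrations from both past and future; and identification of the splitting as the intersection of these filtrations.

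First, observe that $\vf_n(x) := \log\|df^n_x\|$ is subadditive: $\vf_{n+m}(x) \leq \vf_m(f^n(x)) + \vf_n(x)$, because $\|df^{n+m}_x\| \leq \|df^m_{f^n(x)}\| \cdot \|df^n_x\|$. Since $M$ is compact and $f$ is $C^1$, the function $\log\|df\|$ is bounded, so $\vf_n$ is integrable. Kingman's theorem combined with ergodicity of $\mu$ yields a constant $\chi^* \in \R$ with $\tfrac{1}{n}\log\|df^n_x\| \to \chi^*$ a.e. I would then apply the same argument to each exterior power cocycle $\wedge^j df^n_x$ for $j = 1,\ldots, d = \dim M$, obtaining a.e.\ constant limits $\Lambda_j$. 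Since the Lyapunov exponents of the $j$-th exterior power are the sums of the $j$ largest exponents of $df$, setting $\chi_i := \Lambda_i - \Lambda_{i-1}$ (with $\Lambda_0 := 0$) and grouping equal values produces the distinct spectrum $\chi_1 < \cdots < \chi_k$ with multiplicities $d_1, \ldots, d_k$ summing to $d$.

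Next, for a.e.\ $x$ define the forward filtration
\[
T_xM = V_1(x) \supset \cdots \supset V_{k+1}(x) = \{0\}, \quad
V_i(x) := \Bigl\{ v : \limsup_{n \to +\infty} \tfrac{1}{n}\log\|df^n v\| \leq \chi_{i-1} \Bigr\},
\]
with $\chi_0 := -\infty$, and analogously a backward filtration $\{0\} = W_0(x) \subset \cdots \subset W_k(x) = T_xM$ by applying the whole construction to $f^{-1}$. A singular-value argument using the convergence of $\tfrac{1}{n}\log$ of the singular values of $df^n_x$ (known from the exterior-power step) shows $\dim V_i = d_i + \cdots + d_k$ and $\dim W_i = d_1 + \cdots + d_i$. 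Set $E^i(x) := V_i(x) \cap W_i(x)$; invariance under $df$ is immediate from invariance of each filtration, and the exponent along $E^i$ equals $\chi_i$ in both time directions, because the forward $\limsup$ is at most $\chi_i$ while the corresponding backward $\limsup$ of $\tfrac{1}{n}\log\|df^{-n}v\|$ is at most $-\chi_i$, and the elementary inequality $\|v\| \leq \|df^{-n}_{f^n(x)}\| \cdot \|df^n v\|$ forces equality in both.

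The main obstacle is showing that $V_i(x) \cap W_i(x)$ has the correct dimension $d_i$, equivalently that the two filtrations intersect transversally. This rests on an angle estimate $\angle(V_{i+1}(x), W_i(x)) > 0$, proved by exploiting the spectral gap $\chi_{i+1} - \chi_i > 0$ together with the tempered (i.e.\ subexponentially growing or decaying) behavior of the singular vectors of $df^n_x$ along the orbit of $x$; this tempering property is itself a corollary of Birkhoff's ergodic theorem applied to $\log\|df\|$ and $\log\|df^{-1}\|$. Once the transversality is established, a dimension count yields $\dim E^i = d_i$ and the desired splitting $T_{\widetilde M} = E^1 \oplus \cdots \oplus E^k$ on the full-measure invariant set $\widetilde M$ where all the preceding limits exist.
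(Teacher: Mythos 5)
The paper does not prove this theorem; it is stated as the classical Oseledets multiplicative ergodic theorem and the reader is referred to \cite{Oseledets} and to Filip's survey \cite{Filip-survey}. So there is no proof in the paper to compare against. What you have written is the standard modern route (Kingman's subadditive ergodic theorem on $\log\|df^n\|$ and its exterior powers to produce the singular-value asymptotics, then forward and backward Oseledets filtrations, then transversality to get the splitting), and the outline is sound.

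Two concrete issues. First, your indexing is internally inconsistent: with $\chi_1 < \cdots < \chi_k$ and $\chi_0 = -\infty$, the set $V_i(x) = \{v : \limsup\tfrac{1}{n}\log\|df^n v\| \leq \chi_{i-1}\}$ gives $V_1 = \{0\}$ and $V_{k+1} = T_xM$, which is the reverse of the displayed chain $T_xM = V_1 \supset \cdots \supset V_{k+1} = \{0\}$; the dimensions $\dim V_i = d_i + \cdots + d_k$ are likewise off by the same reversal. This is bookkeeping, but it should be fixed.

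Second, and more seriously, the transversality step is the hard part of the two-sided theorem and your justification of it does not hold as stated. Saying that the tempering of the singular vectors (or of the angles between the candidate Oseledets subspaces) "is a corollary of Birkhoff's ergodic theorem applied to $\log\|df\|$ and $\log\|df^{-1}\|$" conflates two different things. Birkhoff applied to $\log\|df^{\pm 1}\|$ controls the asymptotic norm of the cocycle, not the angles between the subspaces produced by singular value decomposition. What is actually needed is a tempering argument for the measurable function $x \mapsto \angle(V_{i+1}(x), W_i(x))$ along the orbit — typically obtained via a tempering kernel lemma of Pesin type (compare the argument for Lemma \ref{Lemma-same-measures-1} in the paper, which uses Birkhoff \emph{together with} Poincar\'e recurrence, not Birkhoff alone), or via the Ragunathan/Ruelle approach of showing that the singular spaces of $df^n_x$ actually converge measurably and that the limits of the forward and backward directions are complementary. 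In addition, your appeal to the inequality $\|v\| \leq \|df^{-n}_{f^n(x)}\| \cdot \|df^n v\|$ only yields $\liminf\tfrac{1}{n}\log\|df^n v\| \geq -\chi^*(f^{-1}) = \chi_1$ (the smallest exponent), not $\geq \chi_i$; to pin the exponent of a vector in $E^i$ to exactly $\chi_i$ you must use that $v \in W_i(x) \setminus W_{i-1}(x)$, i.e.\ the precise backward filtration level, not a crude operator-norm bound. These are the points where a complete proof needs genuine work.
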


When $\mu$ is not ergodic, we can apply a standard argument of ergodic decomposition
to conclude that $\chi(v)$ exists $\mu$--a.e., but now its value depends on $x$.
Theorem \ref{Thm-Oseledets} above follows from the general version of the Oseledets theorem
on cocycles satisfying an {\em integrability condition}, see e.g. the recent survey of Filip \cite{Filip-survey}.
In our setting, the integrability condition is that $\log \|df^{\pm 1}\|\in L^1(\mu)$.
Since $f$ is a diffeomorphism on a closed manifold, this condition is automatically satisfied.

\medskip
The notion of {\em nonuniform hyperbolicity} also depends on a measure. Let
$(f,\mu)$ be as above, where $\mu$ is not necessarily ergodic.

\medskip
\noindent
{\sc Nonuniformly hyperbolic diffeomorphism:} The pair $(f,\mu)$
is called {\em nonuniformly hyperbolic} if for $\mu$--a.e. $x\in M$
we have $\chi(v)\neq 0$ for all nonzero $v\in T_xM$. In this case, $\mu$ is called
a {\em hyperbolic} measure.

\medskip
If $f$ is uniformly hyperbolic (see the notation of Subsection \ref{Subsec-UH-definitions}),
then $\chi(v)\leq \log\kappa < 0$ for nonzero $v\in E^s$, and 
$\chi(v)\geq -\log\kappa > 0$ for nonzero $v\in E^u$, wherever the Lyapunov exponents exist.
Thus a uniformly hyperbolic diffeomorphism is nonuniformly hyperbolic, for 
any invariant probability measure. Here is another example: if $f$ is a diffeomorphism
and $p\in M$ is a hyperbolic periodic point with period $n$, then $\mu=\tfrac{1}{n}\sum_{k=0}^{n-1}\delta_{f^k(p)}$
is a hyperbolic measure. 
We will usually assume that the Lyapunov exponents are bounded away from zero.

\medskip
\noindent
{\sc $\chi$--hyperbolic measure:} Given $(f,\mu)$ nonuniformly hyperbolic
and $\chi>0$, $\mu$ is called {\em $\chi$--hyperbolic} if for $\mu$--a.e. $x\in M$
we have $|\chi(v)|>\chi$ for all nonzero $v\in T_xM$.

\medskip
In the latter example, $\mu$ is $\chi$--hyperbolic for all $\chi$ smaller
than the multiplier of $p$.
Now let $\vf:M\to M$ be a flow generated by a vector field $X$ of class $C^1$, and let
$\mu$ be a $\vf$--invariant probability measure on $M$. Since 
$d\vf^t\circ X=X\circ\vf^t$, we have $\chi(X_x)=0$ for all $x\in M$, hence the assumption of 
nonzero exponents is required along the remaining directions. 

\medskip
\noindent
{\sc Nonuniformly hyperbolic flow:} The pair $(\vf,\mu)$
is called {\em nonuniformly hyperbolic} if for $\mu$--a.e. $x\in M$
we have $\chi(v)\neq 0$ for all $v\in T_xM$ not proportional to $X_x$.
When this happens, the measure $\mu$ is called {\em hyperbolic}.

\medskip
Again, it is easy to see that a uniformly hyperbolic flow is nonuniformly hyperbolic
for any invariant probability measure, and that the Dirac measure
of a hyperbolic closed orbit is hyperbolic. The notion of $\chi$--hyperbolic
measure is defined accordingly. 

\medskip
\noindent
{\sc $\chi$--hyperbolic measure:} Given $(\vf,\mu)$ nonuniformly hyperbolic
and $\chi>0$, $\mu$ is called {\em $\chi$--hyperbolic} if for $\mu$--a.e. $x\in M$
we have $|\chi(v)|>\chi$ for all nonzero $v\in T_xM$ transverse to $X_x$.

\medskip
Let us mention some classical examples.

\medskip
\noindent
1. The slow down of $f_A:\T^2\to\T^2$,
see \cite{Katok-Bernoulli} and \cite[\S 1.3]{Barreira-Pesin-new-book}.

\medskip
\noindent
2. Let $f$ be a $C^1$ surface diffeomorphism,
and let $\mu$ be an ergodic $f$--invariant probability measure.
Let $h=h_\mu(f)$ be Kolmogorov-Sina{\u\i} entropy, and assume that $h>0$.
Then $(f,\mu)$ is nonuniformly hyperbolic, as consequence of
the Ruelle inequality applied to $f$ and to $f^{-1}$: 
\begin{enumerate}[$\circ$]
\item $f$ has a positive Lyapunov exponent $\chi^+\geq h>0$.
\item We have $h_\mu(f^{-1})=h$, and the Lyapunov spectrum of $(f^{-1},\mu)$
is minus the Lyapunov spectrum of $(f,\mu)$, hence $f$
has a Lyapunov exponent $\chi^{-}$ such that $-\chi^{-}\geq h$, i.e. $\chi^{-}\leq -h<0$.
If in addition $h>\chi$, then $\mu$ is $\chi$--hyperbolic. 
\end{enumerate}

\medskip
\noindent
3. Let $N$ be a closed manifold with nonpositive sectional curvature, e.g. the surface
in Figure \ref{figure-nonpositive} containing a flat cylinder between two regions
of negative curvature.
\begin{figure}[hbt!]
\centering
\def\svgwidth{10cm}
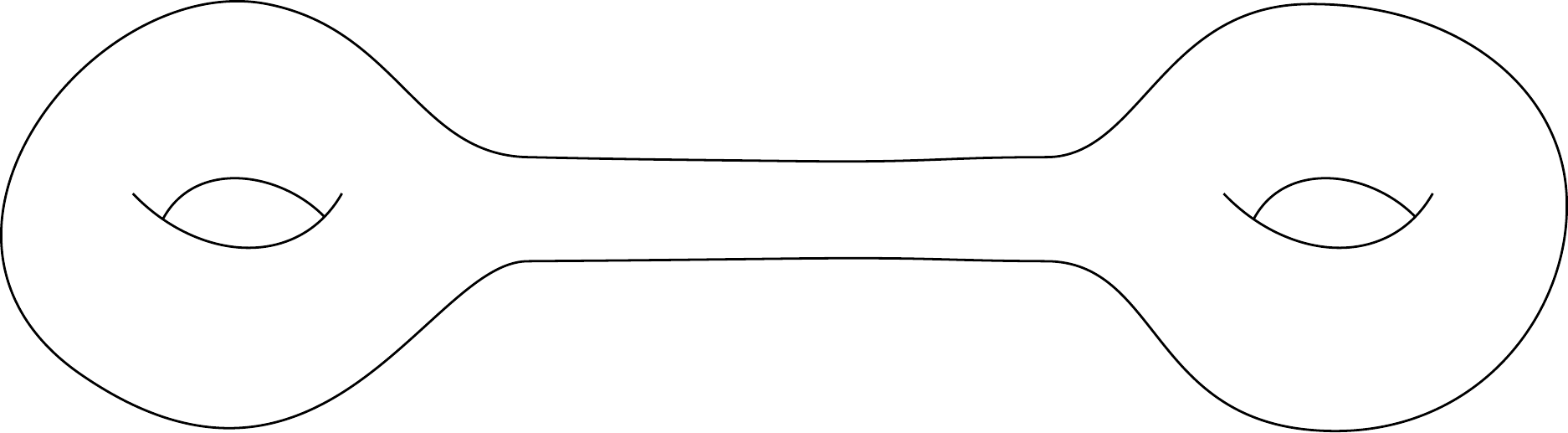\caption{Example of a surface with nonpositive curvature.}
\label{figure-nonpositive}
\end{figure}
The geodesic flow on $N$, which is defined on $M=T_1N$,
has a natural invariant volume mesure $\mu$.
Pesin showed that if the trajectory of a vector $x\in M$ spends a positive fraction of time in regions
of negative sectional curvature, then $\chi(v)\neq 0$ for all $v\in T_xM$ transverse to $X_x$,
see \cite[Thm 10.5]{Pesin-Characteristic-1977}.
The underlying philosophy (although not entirely correct), is that
in regions of negative sectional curvature the derivative behaves like in a uniformly hyperbolic
flow, and in regions of zero sectional curvature it only varies linearly, so the overall
exponential behavior beats the linear one.
Therefore, if $\mu$ is ergodic then it is hyperbolic. 
Unfortunately, the ergodicity of $\mu$ is still an open problem (even when $N$ is a surface).

\subsection{The nonuniformly hyperbolic locus ${\rm NUH}_\chi$}\label{Subsec-NUH-locus}

As we have done above, the notion of nonuniform hyperbolicity 
is an almost-everywhere statement that depends on a measure. Due to the Oseledets theorem,
we can still get almost-everywhere statements if we only consider
{\em Lyapunov regular} points, which are points that satisfy Theorem \ref{Thm-Oseledets}
and a non-degeneracy assumption on the angles $\angle(E^i,E^j)$ between the invariant subbundles.
For some applications, this restriction is cumbersome.
For example, if $x,y$ are Lyapunov regular, then most likely points in $W^s(x)\cap W^u(y)$
are {\em not} Lyapunov regular (this happens e.g. when $x,y$ have
different Lyapunov exponents).

\medskip  
In what follows, we employ a different approach. We fix some $\chi>0$ and consider the set
of points satisfying a weaker notion of nonuniform hyperbolicity, that still allows to construct
local invariant manifolds. This perspective appeared in an essential way in the work in preparation of the author
with Buzzi and Crovisier \cite{BCL}. Independently and simultaneously, Ben Ovadia recently obtained
a similar characterization in higher dimensions \cite{Ben-Ovadia-codable}.

\medskip
Let $f:M\to M$ be a $C^1$ diffeomorphism.
As in Section \ref{Section-UH-Systems}, we start assuming that $M$ is a closed surface.
Let $\chi>0$.

\medskip
\noindent
{\sc The nonuniformly hyperbolic locus ${\rm NUH}_\chi$:} It is the set of points \label{Def-NUH}
$x\in M$ for which there are transverse unitary vectors $e^s_x,e^u_x\in T_xM$ s.t.:
\begin{enumerate}[(NUH1)]
\item $e^s_x$ contracts in the future at least $-\chi$ and expands in the past:
$$\limsup_{n\to+\infty}\tfrac{1}{n}\log \|df^n e^s_x\|\leq -\chi\ \text{ and } 
\ \liminf_{n\to+\infty}\tfrac{1}{n}\log \|df^{-n} e^s_x\|>0.
$$ 
\item $e^u_x$ contracts in the past at least $-\chi$ and expands in the future:
$$\limsup_{n\to+\infty}\tfrac{1}{n}\log \|df^{-n} e^u_x\|\leq -\chi\text{ and } 
\liminf_{n\to+\infty}\tfrac{1}{n}\log \|df^n e^u_x\|>0.$$
\item The parameters $s(x),u(x)$ below are finite:
\begin{align*}
s(x)&=\sqrt{2}\left(\sum_{n\geq 0}e^{2n\chi}\|df^ne^s_x\|^2\right)^{1/2}\in [\sqrt{2},\infty)\\
u(x)&=\sqrt{2}\left(\sum_{n\geq 0}e^{2n\chi}\|df^{-n}e^u_x\|^2\right)^{1/2}\in [\sqrt{2},\infty).
\end{align*} 
\end{enumerate}

\medskip
Clearly, ${\rm NUH}_\chi$ is invariant by $f$.
Observe that the definitions of $s(x),u(x)$ are the same as those given in Subsection \ref{Subsec-UH-adapted},
where we change $\lambda$ to $e^{-\chi}$.
The first two conditions above guarantee that $e^s_x,e^u_x$ are defined up to a sign, and
the last condition guarantees asymptotic contractions of rates at least $-\chi$.
These conditions are weaker than Lyapunov regularity, hence
${\rm NUH}_\chi$ contains all Lyapunov regular points with exponents greater than $\chi$
in absolute value. In particular, ${\rm NUH}_\chi$ carries all $\chi$--hyperbolic measures.
But ${\rm NUH}_\chi$ might contain points with some Lyapunov exponents equal to $\pm\chi$,
and even non-regular points, where the contraction
rates oscillate infinitely often. Usually, ${\rm NUH}_\chi$ is a non-compact subset of $M$.
Observe that if (NUH3) holds, then the first conditions of (NUH1)--(NUH2) hold as well.
In practice, this is how we will show that $x\in{\rm NUH}_\chi$.

\medskip
The quality of hyperbolicity can be measured from the parameters $s(x),u(x)$ and from the
angle $\alpha(x)=\angle(e^s_x,e^u_x)$. More specifically, 
$x\in{\rm NUH}_\chi$ has bad hyperbolicity when at least one of the following situations occur:
\begin{enumerate}[$\circ$]
\item $s(x)$ is large: it takes a long time to see forward contraction along $e^s_x$.
\item $u(x)$ is large: it takes a long time to see backward contraction along $e^u_x$.
\item $\alpha(x)$ is small: it is hard to distinguish the stable and unstable directions.
\end{enumerate}
None of these situations happen for uniformly hyperbolic systems: as we have seen in 
Subsection \ref{Subsec-UH-adapted}, for uniformly hyperbolic systems the parameters $s,u,\alpha$
are uniformly bounded away from zero and infinity. For nonuniformly hyperbolic
systems, the behaviour is more complicated.
Another reason for complication is that,
contrary to uniformly hyperbolic systems, the maps
$x\in{\rm NUH}_\chi\mapsto e^s_x,e^u_x$ are usually not more than just measurable.

\subsection{Diagonalization of derivative}\label{Subsec-NUH-diagonalization}

As in Subsection \ref{Subsec-UH-diagonalization}, we define linear maps
$C(x)$ that diagonalize $df$, the difference being that we only take 
$x\in{\rm NUH}_\chi$.

\medskip
\noindent
{\sc Linear map $C(x):$} For $x\in {\rm NUH}_\chi$, let
$C(x):\R^2\to T_xM$ be the linear map s.t.
$$
C(x):e_1\mapsto \frac{e^s_x}{s(x)}\ ,\ C(x): e_2\mapsto \frac{e^u_x}{u(x)}\cdot
$$

\medskip
Above, $\{e_1,e_2\}$ is the canonical basis for $\R^2$.
If, for each $x\in {\rm NUH}_\chi$, we define a Lyapunov inner product
$\llangle\cdot,\cdot\rrangle$ on $T_xM$, then 
$C(x)$ sends the canonical metric on $\R^2$ to $\llangle\cdot,\cdot\rrangle$.
Lemma \ref{Lemma-linear-reduction} remains valid, except for the uniform bound on
$\|C(x)^{-1}\|$. This result is known as
{\em Oseledets-Pesin reduction}, see e.g. \cite[Theorem 6.10]{Barreira-Pesin-new-book}.

\begin{lemma}[Oseledets-Pesin reduction]\label{Lemma-linear-reduction-2}
The following holds for all $x\in {\rm NUH}_\chi$:
\begin{enumerate}[{\rm (1)}]
\item $\|C(x)\|_{\rm Frob}\leq 1$
and $\|C(x)^{-1}\|_{\rm Frob}=\tfrac{\sqrt{s(x)^2+u(x)^2}}{|\sin\alpha(x)|}$.
\item $C(f(x))^{-1}\circ df_x\circ C(x)$ is a diagonal matrix with diagonal entries $A,B\in\R$
s.t. $|A|,|B^{-1}|<e^{-\chi}$.
\end{enumerate}
\end{lemma}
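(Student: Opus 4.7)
The plan is to mirror the proof of Lemma \ref{Lemma-linear-reduction} almost verbatim, observing that the only place uniformity was used in that lemma was in bounding $\|C(x)\|$ and $\|C(x)^{-1}\|$ by a single constant $\mathfs L$. The present statement makes no such claim, so the pointwise computations carry over once two preliminary facts are established: (i) $s(x),u(x)<\infty$, which is built into (NUH3); and (ii) the vectors $e^s_x,e^u_x$ are uniquely determined up to sign, so that $df_x$ respects the stable and unstable directions. For (ii), I would argue that any other unit vector $v\in T_xM$ satisfying the stable part of (NUH1) and not collinear with $e^s_x$ would span $T_xM$ together with $e^s_x$, forcing $e^u_x$ to be a linear combination of two vectors that contract exponentially under $df^n$ as $n\to+\infty$, contradicting the expansion clause of (NUH2); uniqueness of $e^u_x$ is symmetric. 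Consequently $df_x e^s_x=\pm\|df_x e^s_x\|\,e^s_{f(x)}$ and $df_x e^u_x=\pm\|df_x e^u_x\|\,e^u_{f(x)}$.

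For part (1), I would express $C(x)$ in the basis $\{e_1,e_2\}$ of $\R^2$ and $\{e^s_x,(e^s_x)^\perp\}$ of $T_xM$, obtaining the matrix
$$
\begin{bmatrix} 1/s(x) & \cos\alpha(x)/u(x) \\ 0 & \sin\alpha(x)/u(x)\end{bmatrix}.
$$
The Frobenius norm squared is then $1/s(x)^2+1/u(x)^2\leq 1$ because $s(x),u(x)\geq\sqrt{2}$, and a direct inversion of this matrix yields $\|C(x)^{-1}\|_{\rm Frob}=\sqrt{s(x)^2+u(x)^2}/|\sin\alpha(x)|$.

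For part (2), the invariance of the stable and unstable directions established above makes $e_1,e_2$ eigenvectors of $C(f(x))^{-1}\circ df_x\circ C(x)$, with eigenvalues $A=\pm\|df_x e^s_x\|\,s(f(x))/s(x)$ and $B=\pm\|df_x e^u_x\|\,u(f(x))/u(x)$. The bound $|A|<e^{-\chi}$ would follow from the identity
$$
s(f(x))^2=\frac{e^{-2\chi}}{\|df_x e^s_x\|^2}\bigl(s(x)^2-2\bigr),
$$
obtained by reindexing the series defining $s(f(x))^2$ and using $e^s_{f(x)}=\pm df_x e^s_x/\|df_x e^s_x\|$; this rearranges to $A^2=e^{-2\chi}(1-2/s(x)^2)<e^{-2\chi}$. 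The bound $|B^{-1}|<e^{-\chi}$ follows symmetrically from the analogous recursion
$$
u(f(x))^2=2+\frac{e^{2\chi}u(x)^2}{\|df_x e^u_x\|^2},
$$
which yields $B^2=2\|df_x e^u_x\|^2/u(x)^2+e^{2\chi}>e^{2\chi}$.

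The main conceptual obstacle, compared to the uniformly hyperbolic setting, is precisely the absence of any uniform bound on $s,u,\alpha$: the identities above are tight and pointwise, and cannot be upgraded to a global $\mathfs L$ with $\|C(x)^{-1}\|\leq\mathfs L$ on ${\rm NUH}_\chi$. That is the reason why the subsequent Lyapunov chart construction will have to operate with chart sizes $Q(x)$ that depend on $x$ and degenerate when $\|C(x)^{-1}\|$ blows up, rather than with the fixed size $Q$ used in Section \ref{Section-UH-Systems}.
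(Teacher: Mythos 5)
Your proposal is correct and follows exactly the route the paper takes: the paper's proof of Lemma~\ref{Lemma-linear-reduction-2} consists of a single sentence pointing back to the proof of Lemma~\ref{Lemma-linear-reduction}, and your computations are precisely that proof carried out with $\lambda$ replaced by $e^{-\chi}$ (the paper has already noted, after the definition of ${\rm NUH}_\chi$, that conditions (NUH1)--(NUH2) pin down $e^s_x,e^u_x$ up to sign, so your uniqueness remark duplicates that observation rather than adding new content). Your identities for $s(f(x))^2$ and $u(f(x))^2$ and the resulting bounds on $A,B$ match the intended argument, and your closing comment about the loss of a uniform $\mathfs L$ is exactly the conceptual point the paper makes in Subsection~\ref{Subsec-NUH-charts}.
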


The proof is the same as in Lemma \ref{Lemma-linear-reduction}.

\subsection{Pesin charts, the parameter $Q(x)$, change of coordinates}\label{Subsec-NUH-charts}

From now on, we assume that $f$ is $C^{1+\beta}$. Remember we are also assuming that
$M$ is a closed surface. Fix $\ve\in (0,\mathfrak r_0)$ small.

\medskip
\noindent
{\sc Pesin chart:} The {\em Pesin chart} at $x\in{\rm NUH}_\chi$ is the map
$\Psi_x:\left[-\ve^{3/\beta},\ve^{3/\beta}\right]^2\to M$ defined by $\Psi_x:=\exp{x}\circ C(x)$.

\medskip
This is exactly the same definition of Lyapunov chart given in Subsection \ref{Subsec-UH-charts},
but we call it Pesin chart for historical reasons. The map $\Psi_x$ is well-defined for
each $x\in{\rm NUH}_\chi$. In Pesin charts, $f$ takes the form
$f_x:=\Psi_{f(x)}^{-1}\circ f\circ\Psi_x$. Unfortunately, we might not be able to see
hyperbolicity for $f_x$, but only for a restriction: while in the uniformly hyperbolic situation $C(x),C(x)^{-1}$
are uniformly bounded, now the parameters
$s,u,\alpha$ can degenerate
and so $\|C(f(x))^{-1}\|$ can be arbitrarily large, causing a big distortion. To decrease the domain
of definition of $f_x$, we multiply its current size by a large negative power of $\|C(f(x))^{-1}\|$.

\medskip
\noindent
{\sc Parameter $Q(x)$:} For $x\in{\rm NUH}_\chi$,
define $Q(x)= \ve^{3/\beta}\|C(f(x))^{-1}\|^{-12/\beta}_{\rm Frob}$.\\

\medskip
The choice of the powers $3/\beta$ and $12/\beta$ is not canonical
but just an artifact of the proof, and any choice of powers bigger than these would 
also make the proof work.
This more complicated definition of $Q(x)$ is the price we pay
to detect hyperbolicity among nonuniformly hyperbolic systems,
as stated in the theorem below. 

\begin{theorem}[Pesin]\label{Thm-non-linear-Pesin}
The following holds for all $\ve>0$ small. If $x\in{\rm NUH}_\chi$ then:
\begin{enumerate}[{\rm (1)}]
\item $d(f_x)_0=C(f(x))^{-1}\circ df_x\circ C(x)=\left[\begin{array}{cc}A & 0\\ 0 & B\end{array}\right]$
with $|A|,|B^{-1}|<e^{-\chi}$, cf. Lemma \ref{Lemma-linear-reduction-2}.
\item $f_x(v_1,v_2)=(Av_1+h_1(v_1,v_2),Bv_2+h_2(v_1,v_2))$, $(v_1,v_2)\in [-10Q(x),10Q(x)]^2$, where:
\begin{enumerate}[{\rm (a)}]
\item $h_1(0,0)=h_2(0,0)=0$ and $\nabla h_1(0,0)=\nabla h_2(0,0)=0$.
\item $\|h_1\|_{1+\beta/2}<\ve$ and $\|h_2\|_{1+\beta/2}<\ve$, with norms taken in
$[-10Q(x),10Q(x)]^2$.
\end{enumerate}
\end{enumerate}
A similar statement holds for $f_x^{-1}=\Psi_x^{-1}\circ f^{-1}\circ \Psi_{f(x)}$.
\end{theorem}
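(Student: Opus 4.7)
The plan is to follow the same template as Theorem \ref{Thm-Lyapunov-chart}, with the Oseledets--Pesin reduction (Lemma \ref{Lemma-linear-reduction-2}) replacing Lemma \ref{Lemma-linear-reduction}; the one new ingredient is that the factor $\|C(f(x))^{-1}\|_{\rm Frob}^{-12/\beta}$ in the definition of $Q(x)$ must absorb the (now unbounded) norm $\|C(f(x))^{-1}\|$ appearing in the distortion estimate. First I would check that $f_x=\Psi_{f(x)}^{-1}\circ f\circ\Psi_x$ is well-defined on $[-10Q(x),10Q(x)]^2$: by Lemma \ref{Lemma-linear-reduction-2}(1) we have $\|C(x)\|\le 1$, so the image under $\Psi_x$ lies in $B(x,10\ve^{3/\beta})\subset D_x$, and the Lipschitz property of $f$ together with the shrinking factor in $Q(x)$ guarantees the image under $f$ lies inside $D_{f(x)}$.

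Part (1) is then immediate from the chain rule applied at the origin: $d(f_x)_0=C(f(x))^{-1}\circ df_x\circ C(x)$, which by Lemma \ref{Lemma-linear-reduction-2}(2) is diagonal with entries $|A|,|B^{-1}|<e^{-\chi}$. Defining $h_1,h_2$ by the decomposition in the statement yields (2)(a) automatically: $h_i(0,0)=0$ since $f_x(0)=0$, and $\nabla h_i(0,0)=0$ since $d(f_x)_0$ already contains the linear part.

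The substantive work lies in (2)(b), the $C^{1+\beta/2}$ estimate on $h_i$, for which we mimic the argument in the proof of Theorem \ref{Thm-Lyapunov-chart}. For $w_1,w_2\in [-10Q(x),10Q(x)]^2$, decompose $d(f_x)_w$ as the composition $C(f(x))^{-1}\circ A(w)B(w)C(w)\circ C(x)$, with $A(w),B(w),C(w)$ the parallel-transport representatives of $d(\exp{f(x)}^{-1})$, $df$, $d(\exp{x})$ at the relevant points. The same telescoping triangle inequalities as before, using that $f\in C^{1+\beta}$ and that exponential maps and their inverses are $C^\infty$ on the uniform domains $D_{(\cdot)}$, yield a constant $K_0=K_0(\mathfrak K,\mathfs H)$ with
\[
\|A(w_1)B(w_1)C(w_1)-A(w_2)B(w_2)C(w_2)\|\le K_0\|w_1-w_2\|^\beta.
\]
Combined with $\|C(x)\|\le 1$ (Lemma \ref{Lemma-linear-reduction-2}(1)) this gives
\[
\|d(f_x)_{w_1}-d(f_x)_{w_2}\|\le K_0\,\|C(f(x))^{-1}\|_{\rm Frob}\,\|w_1-w_2\|^\beta.
\]

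The key step is now to split $\|w_1-w_2\|^\beta=\|w_1-w_2\|^{\beta/2}\cdot\|w_1-w_2\|^{\beta/2}$ and to absorb one factor of $\|w_1-w_2\|^{\beta/2}$ into the definition of $Q(x)$: since $\|w_1-w_2\|\le 20\sqrt{2}\,Q(x)=20\sqrt{2}\,\ve^{3/\beta}\|C(f(x))^{-1}\|_{\rm Frob}^{-12/\beta}$, we get
\[
\|w_1-w_2\|^{\beta/2}\le (20\sqrt{2})^{\beta/2}\,\ve^{3/2}\,\|C(f(x))^{-1}\|_{\rm Frob}^{-6},
\]
so that
\[
\|d(f_x)_{w_1}-d(f_x)_{w_2}\|\le K_0(20\sqrt{2})^{\beta/2}\,\ve^{3/2}\,\|C(f(x))^{-1}\|_{\rm Frob}^{-5}\,\|w_1-w_2\|^{\beta/2}.
\]
Since $\|C(f(x))^{-1}\|_{\rm Frob}\ge\sqrt{2}>1$ by Lemma \ref{Lemma-linear-reduction-2}(1), the residual negative power is harmless, and for $\ve$ small enough the prefactor is $<\ve$. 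This gives $\|d(f_x)_{w_1}-d(f_x)_{w_2}\|\le \ve\|w_1-w_2\|^{\beta/2}$, proving (2)(b). The $f_x^{-1}$ version is obtained by interchanging $x$ and $f(x)$. The main conceptual obstacle is the precise choice of the exponent $12/\beta$ in the definition of $Q(x)$: it must be large enough to leave a strictly negative net power of $\|C(f(x))^{-1}\|_{\rm Frob}$ after the cancellation, yet we want $Q(x)$ to be as large as possible so that the charts cover a useful portion of $M$; the value $12/\beta$ is the minimal clean choice that achieves this.
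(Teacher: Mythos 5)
Your proof is correct and follows essentially the same route as the paper: reuse the telescoping H\"older estimate from the uniformly hyperbolic case, then absorb the unbounded factor $\|C(f(x))^{-1}\|$ by trading half of the H\"older exponent against the $\|C(f(x))^{-1}\|^{-12/\beta}$ built into $Q(x)$. The only cosmetic difference is that you spell out the well-definedness of $f_x$ on the domain (which the paper leaves implicit) and use the sharper diagonal bound $20\sqrt{2}\,Q(x)$ in place of the paper's $40Q(x)$; the one loose remark is that $12/\beta$ is not actually minimal for this theorem alone (anything $>2/\beta$ would cancel the stray $\|C(f(x))^{-1}\|$), and the paper itself notes the exponent is ``just an artifact of the proof.''
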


The above statement and proof below are similar to \cite[Thm 2.7]{Sarig-JAMS}, see also 
\cite[ Thm 5.6.1]{Barreira-Pesin-2007}.

\begin{proof}
We proceed as in the proof of Theorem \ref{Thm-Lyapunov-chart}. The main difficulty resides
on part (2)(b). We still have the estimate
$$
\|A_1 B_1 C_1-A_2 B_2 C_2\|\leq 24\mathfrak K \mathfs H\|w_1-w_2\|^\beta,
$$
but now
\begin{align*}
&\,\|d(f_x)_{w_1}-d(f_x)_{w_2}\|\leq \|C(f(x))^{-1}\| \|A_1 B_1 C_1-A_2 B_2 C_2\| \|C(x)\|\\
&\leq 24\mathfrak K\mathfs H \|C(f(x))^{-1}\|\|w_1-w_2\|^\beta.
\end{align*}
If $w_1,w_2\in[-10Q(x),10Q(x)]^2$ then $\|w_1-w_2\|<40Q(x)$, hence for $\ve>0$ small
$$
24\mathfrak K\mathfs H \|C(f(x))^{-1}\|\|w_1-w_2\|^{\beta/2}\leq 
200\mathfrak K\mathfs H\ve^{3/2}\|C(f(x))^{-1}\|^{-5}\leq 200\mathfrak K\mathfs H\ve^{3/2}<\ve.
$$
This completes the proof.
\end{proof}

Therefore, at a smaller scale that depends on the quality of hyperbolicity at $x$,
the map $f_x$ is again the perturbation of a hyperbolic matrix.

\subsection{Temperedness and the parameter $q(x)$}\label{Subsec-q}

After successfully detecting hyperbolicity for $f_x$, the next step is 
to define graph transforms.
As seen in Section \ref{Section-UH-Systems}, for uniformly hyperbolic systems the domains of all
Lyapunov charts have
the same size. Since forward images of $u$--admissible manifolds and backward images
of $s$--admissible manifolds grow essentially $\lambda^{-1}$, their images do cross
the successive domains from one side to the other, see Figure \ref{figure-graph-transform}.
By Theorem \ref{Thm-non-linear-Pesin}, for nonuniformly hyperbolic systems the
forward images of $u$--admissible manifolds and backward images of $s$--admissible manifolds grow essentially 
$e^{\chi}$. Therefore we face a problem when the ratio $\tfrac{Q(f(x))}{Q(x)}$ is far from 1:
\begin{enumerate}[$\circ$]
\item If $Q(f(x))\gg Q(x)$, then the image of a $u$--admissible manifold at $x$ does not
cross the domain of $\Psi_{f(x)}$ from top to bottom.
\item If $Q(f(x))\ll Q(x)$, then the image of an $s$--admissible manifold at $f(x)$ does not
cross the domain of $\Psi_x$ from left to right.
\end{enumerate}
The parameters $s(x),u(x),\alpha(x)$ and $s(f(x)),u(f(x)),\alpha(f(x))$ differ roughly
by the action of $df_x$, so there is a constant $\mathfs C=\mathfs C(f)>1$ s.t. 
$\mathfs C^{-1}\leq \tfrac{Q(f(x))}{Q(x)}\leq \mathfs C$ for all $x\in{\rm NUH}_\chi$.
Nevertheless, this control is yet not enough to rule out the above problems, since we can still have $\mathfs C\gg e^{\chi}$.
To solve this issue, we need to further reduce the domains of Pesin charts, introducing a parameter
that varies regularly. \\

\medskip
\noindent
{\sc Parameter $q(x)$:} For $x\in{\rm NUH}_\chi$, define $q(x)= \inf\{e^{\ve |n|}Q(f^n(x)):n\in\Z\}$.\\

\medskip
While $Q(x)$ (essentially) does not depend on $\ve$, the parameter
$q(x)$ does and, if positive, it does behave nicely along orbits:
$$
e^{-\ve}\leq \frac{q(f(x))}{q(x)}\leq e^{\ve}.
$$
The above definition is motivated by the proof of Lemma 1.1.1 in \cite{Pesin-Izvestia-1976}, and provides
the optimal value for $q(x)\leq Q(x)$ satisfying the above inequalities.
This is known as the {\em tempering kernel lemma}, see e.g. \cite[Lemma 6.11]{Barreira-Pesin-new-book}.
We remark that there are other proofs of the tempering kernel lemma, but
that do not provide optimal $q(x)$, see e.g. \cite[Lemma 3.5.7]{Barreira-Pesin-2007}.

\medskip
Since $q(x)\leq Q(x)$, the restriction of $f_x$ to the smaller domain
$[-q(x),q(x)]^2$ is a small
perturbation of a hyperbolic matrix. Now we are safe: restricting $\Psi_x$
to $[-q(x),q(x)]^2$, if $\ve>0$ is small enough then the growth of $u/s$--admissible manifolds
beats the possible increase/decrease of domains. Motivated by this,
we consider the subset of ${\rm NUH}_\chi$ where $q$ is positive.

\medskip
\noindent
{\sc The nonuniformly hyperbolic locus ${\rm NUH}^*_\chi$:}
$$
{\rm NUH}^*_\chi=\{x\in {\rm NUH}_\chi: q(x)>0\}.
$$

\medskip
By the next lemma, ${\rm NUH}^*_\chi$ carries the same finite invariant
measures as ${\rm NUH}_\chi$.

\begin{lemma}\label{Lemma-same-measures-1}
If $\mu$ is an $f$--invariant probability measure supported on ${\rm NUH}_\chi$, then 
$\mu$ is supported on ${\rm NUH}^*_\chi$.
\end{lemma}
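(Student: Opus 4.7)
The plan is to show that $q(x)>0$ for $\mu$-a.e.\ $x\in{\rm NUH}_\chi$. Since ${\rm NUH}_\chi$ is $f$-invariant and $Q>0$ everywhere on it, the infimum $q(x)=\inf_{n\in\mathbb Z}e^{\ve|n|}Q(f^nx)$ is positive as soon as $e^{\ve|n|}Q(f^nx)\to\infty$ as $|n|\to\infty$. Using the definition of $Q$ together with the Oseledets--Pesin reduction formula $\|C^{-1}\|_{\rm Frob}=\sqrt{s^2+u^2}/|\sin\alpha|$, this amounts to controlling $s(f^nx)$, $u(f^nx)$ and $|\sin\alpha(f^nx)|^{-1}$ with subexponential growth of rate much smaller than $\ve\beta/12$ along the orbit.

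The natural tool is Oseledets' multiplicative ergodic theorem, whose integrability hypothesis is immediate on the compact manifold $M$. Oseledets gives, on a set of full $\mu$-measure, a Lyapunov-regular splitting $T_xM=E^1_x\oplus E^2_x$ (in the surface setting) with exponents $\chi_1\le\chi_2$. Conditions (NUH1)--(NUH2) place $e^s_x$ in $E^1_x$ with $\chi_1\le-\chi$ and $e^u_x$ in $E^2_x$ with $\chi_2\ge\chi$. Condition (NUH3) then rules out the boundary cases $\chi_1=-\chi$ or $\chi_2=\chi$ on a full-measure subset, since at a Lyapunov-regular point with, say, $\chi_1=-\chi$ one has $\|df^ke^s_x\|=e^{-k\chi}\eta_k$ with $\eta_k$ only subexponential, making the series $\sum e^{2k\chi}\|df^ke^s_x\|^2$ that defines $s(x)^2$ diverge on a full-measure set; hence $\mu$-a.e.\ one has $\chi_1<-\chi<\chi<\chi_2$ strictly.

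Now invoke the tempered form of Oseledets: for each $\delta>0$ there is a measurable $K_\delta:M\to(0,\infty)$, finite $\mu$-a.e., with $K_\delta(fx)/K_\delta(x)\in[e^{-\delta},e^\delta]$, such that
\begin{equation*}
\|df^k e^s_x\|\le K_\delta(x)e^{k(\chi_1+\delta)},\quad \|df^{-k}e^u_x\|\le K_\delta(x)e^{-k(\chi_2-\delta)},\quad |\sin\alpha(x)|^{-1}\le K_\delta(x).
\end{equation*}
Choosing $\delta$ so small that $\chi_1+\chi+\delta<0<\chi_2-\chi-\delta$ and summing the geometric series in the definitions of $s,u$, one obtains $s(y),u(y)\le C\,K_\delta(y)$. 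Using temperedness of $K_\delta$ along the orbit, we get
\begin{equation*}
s(f^nx),\ u(f^nx),\ |\sin\alpha(f^nx)|^{-1}\le C\,K_\delta(x)\,e^{\delta|n|}\qquad(n\in\mathbb Z),
\end{equation*}
so $\|C(f^nx)^{-1}\|_{\rm Frob}\le C'K_\delta(x)e^{3\delta|n|}$. Taking $\delta$ further smaller than $\ve\beta/36$, this forces $e^{\ve|n|}Q(f^nx)\to\infty$, whence $q(x)>0$.

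The main obstacle is the upgrade from the non-strict (NUH1)--(NUH2) to the strict inequalities $\chi_1<-\chi<\chi<\chi_2$ on a full-measure set: this is the content distinguishing the summability condition (NUH3) from the weaker $\limsup$ bounds in (NUH1)--(NUH2), and needs a careful analysis of how the subexponential corrections coming from Lyapunov regularity interact with the geometric series defining $s,u$. The remaining steps are standard manipulations within Pesin theory.
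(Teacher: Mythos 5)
Your proposal takes a genuinely different route from the paper, and it has a real gap at its central step. The paper's argument is much lighter: it never invokes Oseledets' theorem. Instead it observes (this is stated just before the lemma, in Subsection 2.5) that since $\|df^{\pm 1}\|$ is uniformly bounded, the hyperbolicity parameters $s,u,\alpha$ at $x$ and at $f(x)$ differ by a uniformly bounded factor, so there is $\mathfs C>1$ with $\mathfs C^{-1}\le Q(f(x))/Q(x)\le\mathfs C$ on ${\rm NUH}_\chi$. Hence $\varphi:=\log Q\circ f-\log Q$ is a \emph{bounded} function, in particular $\varphi\in L^1(\mu)$. Birkhoff's ergodic theorem then gives that $\varphi_n/n=\tfrac{1}{n}(\log Q\circ f^n-\log Q)$ converges $\mu$-a.e., and Poincar\'e recurrence gives $\liminf_n|\varphi_n|<\infty$ $\mu$-a.e.; together these force $\lim_n\tfrac{1}{n}\log Q(f^n(x))=0$, hence $q(x)>0$. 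No Lyapunov exponent asymptotics, no tempering kernel, and no regularity of the Oseledets splitting enter at any point.

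The gap in your proof is the step where you claim that (NUH3) rules out the boundary cases $\chi_1=-\chi$ and $\chi_2=\chi$ on a full-measure set, and your justification — that a subexponential factor $\eta_k$ forces $\sum_k\eta_k^2$ to diverge — is false as stated. Subexponentiality means only $\tfrac{1}{k}\log\eta_k\to 0$, which is perfectly compatible with $\sum_k\eta_k^2<\infty$ (take $\eta_k=1/k$). Indeed the paper explicitly warns that ``${\rm NUH}_\chi$ might contain points with some Lyapunov exponents equal to $\pm\chi$.'' The almost-everywhere conclusion you want \emph{is} true, but the correct reason is different and measure-theoretic: writing $\log\eta_k$ as the Birkhoff sum of the zero-mean integrable function $\log\|df\,e^s\|+\chi$, Birkhoff's theorem and Poincar\'e recurrence prevent $\log\eta_k\to-\infty$, so $\limsup_k\eta_k\ge 1$ $\mu$-a.e.\ and hence $\sum\eta_k^2=\infty$ a.e.\ on $\{\chi_1=-\chi\}$ — contradicting $s<\infty$. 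You acknowledge at the end that this step ``needs a careful analysis,'' but as written the proposal neither completes it nor gives a correct heuristic; and filling that gap would still leave you with a proof that routes through Oseledets, tempering kernels, and Lyapunov regularity, all of which the paper's two-line argument with $\varphi=\log Q\circ f-\log Q$ avoids entirely.
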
 

\begin{proof}
By assumption, $\mu[{\rm NUH}_\chi]=1$. Clearly, if $\lim_{n\to\pm\infty}\tfrac{1}{n}\log Q(f^n(x))=0$ 
then $q(x)>0$. We will prove that
$\lim_{n\to\pm\infty}\tfrac{1}{n}\log Q(f^n(x))=0$ for $\mu$--a.e. $x\in{\rm NUH}_\chi$.
Define the function $\varphi:{\rm NUH}_\chi\to\R$
by
$$
\varphi(x):=\log\left[\tfrac{Q(f(x))}{Q(x)}\right]=\log Q(f(x))-\log Q(x).
$$
Since $\mathfs C^{-1}\leq \tfrac{Q(f(x))}{Q(x)}\leq \mathfs C$ for $x\in{\rm NUH}_\chi$,
we have $\varphi\in L^1(\mu)$. Let $\varphi_n=\log(Q\circ f^n)-\log Q$ be
the $n$--th Birkhoff sum of $\varphi$.
By the Birkhoff ergodic theorem, 
$\lim_{n\to+\infty}\tfrac{\varphi_n(x)}{n}$ exists $\mu$--a.e.
Since by the Poincar\'e recurrence theorem we have
$\liminf_{n\to+\infty}|\varphi_n(x)|=\liminf_{n\to+\infty}|\log Q(f^n(x))-\log Q(x)|<\infty$ $\mu$--a.e,
it follows that $\lim_{n\to+\infty}\tfrac{\varphi_n(x)}{n}=0$ for $\mu$--a.e. $x\in{\rm NUH}_\chi$.
Proceeding in the same way for
$n\to-\infty$, we conclude that $\lim_{n\to\pm\infty}\tfrac{1}{n}\log Q(f^n(x))=0$ for $\mu$--a.e.
$x\in{\rm NUH}_\chi$.
\end{proof}



\subsection{Sizes of invariant manifolds: the parameters $q^s(x),q^u(x)$}\label{Subsec-q^s-q^u}

Using what we have done so far, we can proceed as in Section \ref{Section-UH-Systems}
to construct invariant manifolds:
define $s/u$--admissible manifolds as graphs of functions
$F:[-q(x),q(x)]\to \mathbb R$ satisfying some regularity assumptions (that we will explain later),
and define graph transforms $\mathfs F^s_x,\mathfs F^u_x$. Hence
Theorem \ref{Thm-UH-graph-transform} holds, so we can construct (local) stable and
unstable manifolds for every $x\in{\rm NUH}_\chi^*$. This is essentially what is done 
in Pesin theory, see e.g. \cite[Chapter 7]{Barreira-Pesin-2007}.

\medskip
In general, $q(x)$ is not the optimal size for the local invariant manifolds, and in some applications
we need bigger sizes for them. This is the case for the construction of countable Markov partitions that we will
discuss in Part \ref{Part-2}.
Observe that $q(x)$ might be small for two different reasons:
\begin{enumerate}[$\circ$]
\item There is $n>0$ for which $e^{\ve n}Q(f^n(x))$ is small.
\item There is $n>0$ for which $e^{\ve n}Q(f^{-n}(x))$ is small. 
\end{enumerate}
In the first case, the forward behavior of $Q(f^n(x))$ is bad,
so we expect to construct a small stable manifold; but we are also constructing a small unstable manifold,
i.e. the bad forward behaviour is influencing the size of the unstable manifold!
Since the unstable manifold only depends on the past, its size should not be affected
by the future. To deal with this, we introduce two new parameters
$q^s(x)$ and $q^u(x)$, the first controlling the future behavior and the second controlling the past behavior.
Then we use them to construct invariant manifolds with larger sizes.

\medskip
\noindent
{\sc Parameters $q^s(x)$ and $q^u(x)$:} For $x\in{\rm NUH}_\chi^*$, define
\begin{align*}
q^s(x)&= \inf\{e^{\ve n}Q(f^n(x)):n\geq 0\}\\
q^u(x)&= \inf\{e^{\ve n}Q(f^{-n}(x)):n\geq 0\}.
\end{align*}

\medskip
In other words, $q^s(x),q^u(x)$ are the one-sided versions
of $q(x)$. Just like $q$, the parameters $q^s,q^u$ depend on $\ve$.
We will use $q^s(x)$ as the scale for considering the stable graph transform and 
$q^u(x)$ as the scale for considering the unstable graph transform.

\begin{lemma}\label{q^s}
For all $x\in{\rm NUH}^*_\chi$, the following holds:
\begin{enumerate}[{\rm (1)}]
\item {\sc Good definition:} $q^s(x),q^u(x)>0$ and $q(x)=\min\{q^s(x),q^u(x)\}$.
\item {\sc Greedy algorithm:}
\begin{align*} 
q^s(x)&=\min\{e^\ve q^s(f(x)),Q(x)\}\\
q^u(x)&=\min\{e^\ve q^u(f^{-1}(x)),Q(x)\}.
\end{align*}
\end{enumerate}
\end{lemma}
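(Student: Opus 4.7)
The proof is essentially a set-theoretic manipulation of infima, so my plan is to separate the two parts and handle each by a straightforward splitting argument, using only the definitions of $q(x), q^s(x), q^u(x)$ and the fact that $x \in {\rm NUH}^*_\chi$ means $q(x) > 0$.

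For part (1), I would first observe that the defining families
\[
\{e^{\ve n}Q(f^n(x)) : n \geq 0\} \quad \text{and} \quad \{e^{\ve n}Q(f^{-n}(x)) : n \geq 0\}
\]
are both subfamilies of $\{e^{\ve|n|}Q(f^n(x)) : n \in \Z\}$. Hence $q^s(x), q^u(x) \geq q(x) > 0$, proving positivity. To obtain $q(x) = \min\{q^s(x), q^u(x)\}$, I would partition $\Z$ as $\{n \geq 0\} \cup \{n \leq 0\}$, noting that the overlap at $n = 0$ contributes $Q(x)$ to both subfamilies and causes no issue since infimum over a union equals the minimum of the infima over the pieces.

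For part (2), I would write
\[
q^s(x) = \inf_{n \geq 0} e^{\ve n}Q(f^n(x)) = \min\Bigl\{Q(x),\ \inf_{n \geq 1} e^{\ve n}Q(f^n(x))\Bigr\}
\]
by isolating the term $n = 0$. Then I would perform the substitution $m = n-1$ in the tail:
\[
\inf_{n \geq 1} e^{\ve n}Q(f^n(x)) = e^{\ve}\inf_{m \geq 0} e^{\ve m}Q(f^m(f(x))) = e^{\ve} q^s(f(x)),
\]
which gives the first greedy identity. The identity for $q^u$ is completely analogous: isolating $n = 0$ and then substituting $m = n - 1$ in $f^{-n}(x) = f^{-m}(f^{-1}(x))$ yields
\[
\inf_{n \geq 1} e^{\ve n}Q(f^{-n}(x)) = e^{\ve} q^u(f^{-1}(x)).
\]

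There is no substantive obstacle in this lemma; the content is just that $q^s, q^u$ arise as the natural one-sided versions of $q$, and the greedy recursion is the standard consequence of splitting an infimum over $\N$ into its first term and its shifted tail. The only small care needed is to verify that the subfamilies covering $\Z$ in part (1) genuinely exhaust the two-sided infimum, which follows from the fact that $|n| = n$ for $n \geq 0$ and $|n| = -n$ for $n \leq 0$.
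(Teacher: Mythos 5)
Your proof is correct and is exactly the direct calculation the paper has in mind: the paper gives no written proof, stating only ``The proofs are direct'' and pointing to \cite[Lemma 4.2]{Lima-Matheus}, which carries out precisely the splitting-of-infima argument you describe. Both parts are handled properly — the observation that the one-sided families are subfamilies of the two-sided one gives positivity, the union decomposition gives $q(x)=\min\{q^s(x),q^u(x)\}$, and isolating the $n=0$ term followed by reindexing the tail gives the greedy recursion — so there is nothing to add.
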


\medskip
The proofs are direct, see also \cite[Lemma 4.2]{Lima-Matheus}.

\subsection{Graph transforms: construction of invariant manifolds}\label{Subsec-NUH-graph}

There are dynamical explanations for Lemma \ref{q^s}(2). Let us discuss the first equality.
Assume that $s$--admissible manifolds at $x$ have representing functions defined in the interval $[-q^s(x),q^s(x)]$.
If $\ve>0$ is small enough, then the stable graph transform $\mathfs F^s_x$ takes the graph of
a representing function defined in $[-q^s(f(x)),q^s(f(x))]$
and expands it at least by a factor of $e^\ve$, so the new representing function is well-defined
in  $[-e^\ve q^s(f(x)),e^\ve q^s(f(x))]$. Since its domain of definition should not go
beyond $[-Q(x),Q(x)]$ (where we have a good control on $f_x$),
the best we can do is to define it in $[-q^s(x),q^s(x)]$.
In summary, $q^s$ provides maximal scales for the definition of stable graph transforms.
Similarly, $q^u$ provides maximal scales for the definition of unstable graph transforms.
With this in mind, we give a new definition of $s/u$--admissible manifolds.

\medskip
\noindent
{\sc Admissible manifolds:} An {\em $s$--admissible manifold} at $\Psi_x$ is a set
of the form $V^s=\Psi_x\{(t,F(t)):|t|\leq q^s(x)\}$, where $F:[-q^s(x),q^s(x)]\to\R$
is a $C^{1+\beta/3}$ function s.t. $F(0)=F'(0)=0$ and 
$\|F'\|_0+\Hol{\beta/3}(F')\leq\tfrac{1}{2}$, where the norms are taken in $[-q^s(x),q^s(x)]$.
Similarly, a {\em $u$--admissible manifold} at
$\Psi_x$ is a set of the form $V^u=\Psi_x\{(G(t),t):|t|\leq q^u(x)\}$, where
$G:[-q^u(x),q^u(x)]\to\R$ is a $C^{1+\beta/3}$ function s.t. $G(0)=G'(0)=0$ and 
$\|G'\|_0+\Hol{\beta/3}(G')\leq\tfrac{1}{2}$, with norms taken in $[-q^u(x),q^u(x)]$.

\medskip
As before, $F,G$ are called the {\em representing functions} of $V^s,V^u$ respectively,
and let $\mathfs M^s_x,\mathfs M^u_x$ be the space of all $s,u$--admissible manifolds at $\Psi_x$
respectively, which are metric spaces with the $C^0$ distance. Let $x\in{\rm NUH}_\chi^*$.

\medskip
\noindent
{\sc Graph transforms $\mathfs F^s_x,\mathfs F^u_x$:} 
The {\em stable graph transform} $\mathfs F^s_x:\mathfs M^s_{f(x)}\to\mathfs M^s_x$
is the map that sends $V^s\in \mathfs M^s_{f(x)}$ to the unique $\mathfs F^s_x[V^s]\in \mathfs M^s_x$
with representing function $F$ s.t. $\Psi_x\{(t,F(t)):|t|\leq q^s(x)\}\subset f^{-1}(V^s)$.
Similarly, the {\em unstable graph transform}
$\mathfs F^u_x:\mathfs M^u_x\to\mathfs M^u_{f(x)}$ is the map that sends
$V^u\in \mathfs M^u_x$ to the unique $\mathfs F^u_x[V^u]\in \mathfs M^u_{f(x)}$
with representing function $G$ s.t. $\Psi_{f(x)}\{(G(t),t):|t|\leq q^u(f(x))\}\subset f(V^u)$.

\medskip
The difference from the previous definition is that the stable and unstable graph transforms
are defined at different scales, see Figure \ref{figure-graph-transform-new} below.
\begin{figure}[hbt!]
\centering
\def\svgwidth{13cm}
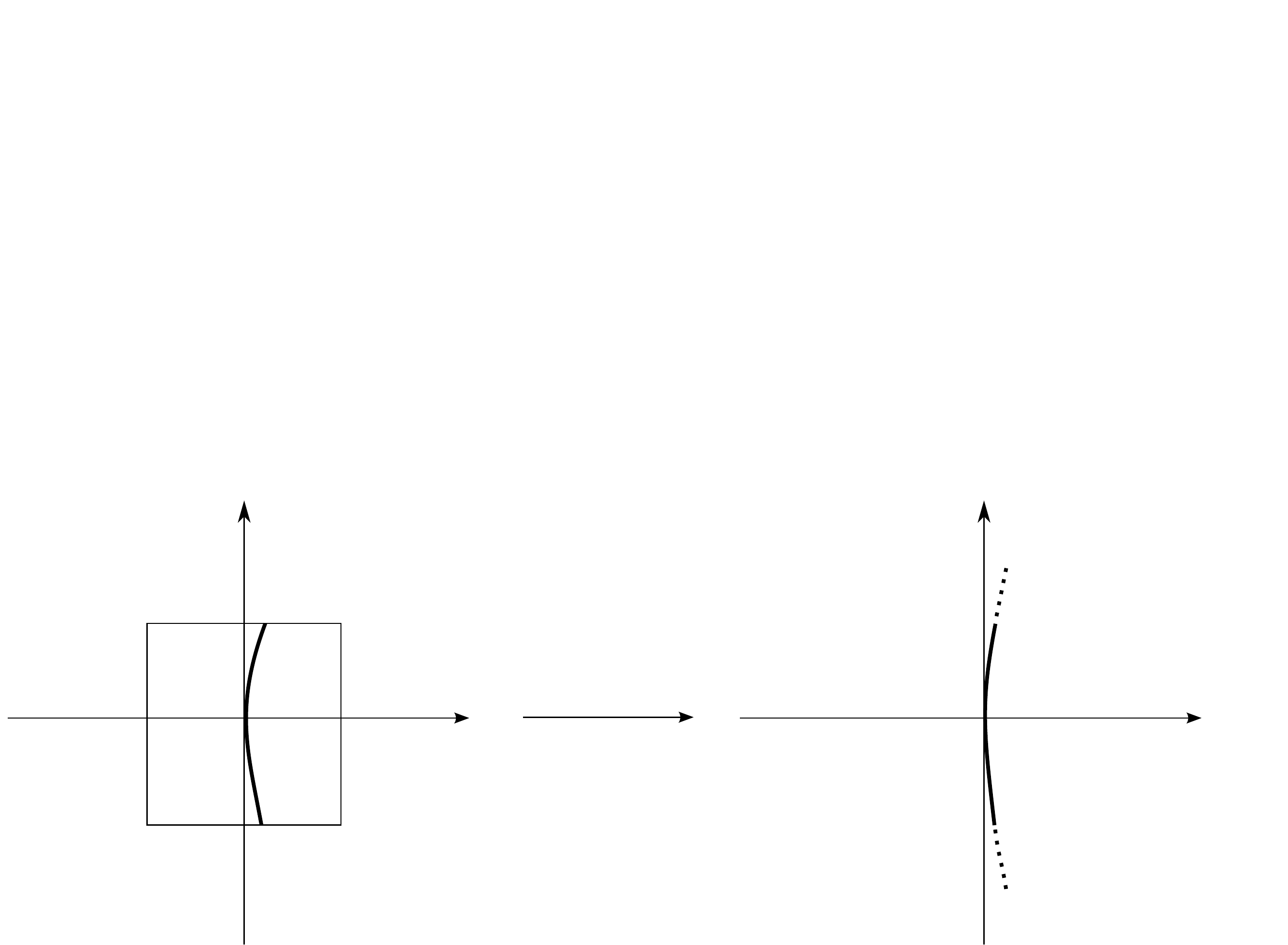
\caption{The stable graph transforms are defined at scales $q^s$, and 
the unstable graph transforms at scales $q^u$.}\label{figure-graph-transform-new}
\end{figure}

\begin{theorem}\label{Thm-NUH-graph-transform}
$\mathfs F^s_x$ and $\mathfs F^u_x$ are well-defined contractions.
\end{theorem}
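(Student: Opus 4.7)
The plan is to establish both statements via the implicit function theorem together with a Banach fixed point argument in Pesin chart coordinates, mirroring the uniformly hyperbolic scheme in Theorem \ref{Thm-UH-graph-transform} but keeping track of the new scales $q^s(x),q^u(x)$. I treat $\mathfs F^s_x$; the unstable case is symmetric using $f_x^{-1}$. Given $V^s\in\mathfs M^s_{f(x)}$ with representing function $\widetilde F:[-q^s(f(x)),q^s(f(x))]\to\R$, the equation $f_x(t,F(t))\in V^s$ reads, using Theorem \ref{Thm-non-linear-Pesin}(2),
\begin{equation*}
F(t)=B^{-1}\bigl[\widetilde F(At+h_1(t,F(t)))-h_2(t,F(t))\bigr],
\end{equation*}
so $F$ is a fixed point of an operator $T_{\widetilde F}$ acting on the space $\mathcal X$ of admissible candidate functions on $[-q^s(x),q^s(x)]$.

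First I would check that $T_{\widetilde F}$ maps $\mathcal X$ into itself. The domain compatibility step uses Lemma \ref{q^s}(2): for $|t|\le q^s(x)$,
\begin{equation*}
|At+h_1(t,F(t))|\le (e^{-\chi}+\ve)q^s(x)\le (e^{-\chi}+\ve)e^{\ve}\,q^s(f(x))\le q^s(f(x)),
\end{equation*}
provided $\ve$ is small compared with $\chi$, so that $\widetilde F$ can be evaluated. The conditions $T_{\widetilde F}F(0)=0$ and $(T_{\widetilde F}F)'(0)=0$ follow immediately from $\widetilde F(0)=\widetilde F'(0)=0$, $h_i(0,0)=0$ and $\nabla h_i(0,0)=0$. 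The admissibility bound $\|(T_{\widetilde F}F)'\|_0+\Hol{\beta/3}((T_{\widetilde F}F)')\le \tfrac12$ is the key regularity step: differentiating implicitly and estimating each factor by $|B|^{-1}<e^{-\chi}$, by $\|h_i\|_{1+\beta/2}<\ve$, and by the induction hypothesis $\|\widetilde F'\|_0+\Hol{\beta/3}(\widetilde F')\le \tfrac12$, one obtains after absorbing the small $h_i$ contributions (again using $\ve\ll \chi$) a bound of the form $e^{-\chi}(\tfrac12+O(\ve))+O(\ve)\le\tfrac12$.

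Next, the contraction property. For $\widetilde F_1,\widetilde F_2\in \mathfs M^s_{f(x)}$ inducing $F_1,F_2\in\mathfs M^s_x$, subtracting the defining equations and using $\|h_i\|_{C^1}<\ve$ gives
\begin{equation*}
|F_1(t)-F_2(t)|\le |B|^{-1}\bigl[\|\widetilde F_1-\widetilde F_2\|_0+\tfrac12|\alpha_1-\alpha_2|+\ve|F_1(t)-F_2(t)|\bigr],
\end{equation*}
with $\alpha_i=At+h_1(t,F_i(t))$ satisfying $|\alpha_1-\alpha_2|\le \ve|F_1(t)-F_2(t)|$. Rearranging yields $\|F_1-F_2\|_0\le \kappa\|\widetilde F_1-\widetilde F_2\|_0$ with $\kappa=|B|^{-1}(1-O(\ve))^{-1}<e^{-\chi/2}<1$. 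Applying the same estimate with $\widetilde F_1=\widetilde F$ fixed and iterating shows that $T_{\widetilde F}$ itself is a $C^0$-contraction on $\mathcal X$, hence has a unique fixed point which, combined with the invariance of $\mathcal X$, is the representing function of $\mathfs F^s_x[V^s]$.

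The main obstacle I anticipate is the preservation of the Hölder bound $\|F'\|_0+\Hol{\beta/3}(F')\le\tfrac12$: this forces the careful calibration $Q(x)=\ve^{3/\beta}\|C(f(x))^{-1}\|^{-12/\beta}_{\rm Frob}$ used in Theorem \ref{Thm-non-linear-Pesin} (which controls $\|h_i\|_{1+\beta/2}$ by $\ve$ on a domain of size $Q(x)$), and must interact cleanly with the tempered decay of $q^s$ given by Lemma \ref{q^s}(2). Once this regularity step is in hand, well-definedness and the contraction estimate are routine consequences of the hyperbolicity $|A|,|B^{-1}|<e^{-\chi}$ of $d(f_x)_0$.
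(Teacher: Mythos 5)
Your proposal is correct and follows essentially the same approach the paper points to (Pesin's original Hadamard-Perron argument, streamlined with the scales $q^s,q^u$ as in Sarig, Prop.~4.12): write the pullback equation $F = B^{-1}[\widetilde F\circ(A\cdot + h_1) - h_2]$ in Pesin chart coordinates, verify that the operator $T_{\widetilde F}$ preserves the admissibility class (using the domain compatibility $q^s(x)\le e^{\ve}q^s(f(x))$ from Lemma~\ref{q^s}(2) together with $|A|<e^{-\chi}$ and $\|h_i\|_{1+\beta/2}<\ve$ from Theorem~\ref{Thm-non-linear-Pesin}), establish $C^0$-completeness of the candidate space via Arzel\`a--Ascoli, and then run the Banach fixed point argument twice: once fixing $\widetilde F$ to get well-definedness, and once varying $\widetilde F$ to get the contraction constant $\approx e^{-\chi}(1+O(\ve))<1$. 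You also correctly flag the one genuinely delicate point, namely that the H\"older seminorm $\Hol{\beta/3}((T_{\widetilde F}F)')$ must close back under $\tfrac12$, which is where the gap between the $\beta/2$-regularity of the $h_i$'s and the $\beta/3$-admissibility class, together with the contracting composition $\widetilde F'\circ\alpha$ with $\mathrm{Lip}(\alpha)<1$, is exploited.

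Two small points of hygiene, neither a gap: first, the domain estimate $|At+h_1(t,F(t))|\le(e^{-\chi}+\ve)q^s(x)$ silently uses $|F(t)|\le\tfrac12|t|$ from $F(0)=0$, $\|F'\|_0\le\tfrac12$, so the constant is really $e^{-\chi}+\tfrac{3}{2}\ve$; harmless since $\ve\ll\chi$. Second, when you ``iterate with $\widetilde F$ fixed,'' what actually happens is that the $\|\widetilde F_1-\widetilde F_2\|_0$ term drops out of the same estimate, leaving $\|T_{\widetilde F}F_1-T_{\widetilde F}F_2\|_0\le O(\ve)\|F_1-F_2\|_0$, a much stronger contraction; it is cleaner to state this directly rather than by ``iterating.''
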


For nonuniformly hyperbolic systems, this theorem was first proved by Pesin,
see \cite[Thm. 2.2.1]{Pesin-Izvestia-1976}.
The proof is similar to the proof of Theorem \ref{Thm-UH-graph-transform}.
In its present form, with scales $q^s$ and $q^u$,
the above result is a special case of \cite[Prop. 4.12]{Sarig-JAMS}.
For $x\in{\rm NUH}_\chi^*$, let $V^s[x]$ and $V^u[x]$ be the stable and unstable manifolds of $x$,
defined as in Subsection \ref{Subsect-UH-graph}. Then $V^s[x]$ is the image under $\Psi_x$
of the graph of a function defined in $[-q^s(x),q^s(x)]$, while $V^u[x]$ is the image under
$\Psi_x$ of the graph of a function defined in  $[-q^u(x),q^u(x)]$.

\subsection{Higher dimensions}

Now consider diffeomorphisms in any dimension.
The discussion follows \cite{Ben-Ovadia-2019} and in some sense \cite{Ben-Ovadia-codable}.
We can no longer perform the construction using only the parameters $s(x),u(x),\alpha(x)$,
because now the spaces $E^s,E^u$ are higher dimensional, and each vector defines
its own parameter. More specifically, consider the following definition, for each fixed $\chi>0$.

\medskip
\noindent
{\sc The nonuniformly hyperbolic locus ${\rm NUH}_\chi$:} It is the set of points \label{Def-NUH-highdim}
$x\in M$ for which there is a splitting $T_xM=E^s_x\oplus E^u_x$ s.t.:
\begin{enumerate}[(NUH1)]
\item Every $v\in E^s_x$ contracts in the future at least $-\chi$ and expands in the past:
$$\limsup_{n\to+\infty}\tfrac{1}{n}\log \|df^n v\|\leq -\chi\ \text{ and } 
\ \liminf_{n\to+\infty}\tfrac{1}{n}\log \|df^{-n} v\|>0.
$$ 
\item Every $v\in E^u_x$ contracts in the past at least $-\chi$ and expands in the future:
$$\limsup_{n\to+\infty}\tfrac{1}{n}\log \|df^{-n} v\|\leq -\chi\text{ and } 
\liminf_{n\to+\infty}\tfrac{1}{n}\log \|df^n v\|>0.$$
\item The parameters $s(x)=\sup_{v\in E^s_x\atop{\|v\|=1}}S(x,v)$ and
$u(x)=\sup_{w\in E^u_x\atop{\|w\|=1}}U(x,w)$ are finite, where:
\begin{align*}
S(x,v)&=\sqrt{2}\left(\sum_{n\geq 0}e^{2n\chi}\|df^n v\|^2\right)^{1/2},\\
U(x,w)&=\sqrt{2}\left(\sum_{n\geq 0}e^{2n\chi}\|df^{-n}w\|^2\right)^{1/2}.
\end{align*} 
\end{enumerate}
In \cite{Ben-Ovadia-codable}, this definition is similar to the definition
of the set $\chi$--summ. Again, 
${\rm NUH}_\chi$ is $f$--invariant, and for each $x\in {\rm NUH}_\chi$ we can
define a linear transformation $C(x):\R^n\to T_xM$ that sends the canonical
metric on $\R^n$ to the Lyapunov inner product $\llangle\cdot,\cdot\rrangle$ on $T_xM$.
We again have the block representation
$$
C(f(x))^{-1}\circ df_x\circ C(x)=
\left[
\begin{array}{cc}
D_s & 0 \\
0 & D_u 
\end{array}
\right],
$$
where $D_s$ is a $d_s\times d_s$ matrix s.t. $\|D_s v\|\leq e^{-\chi}\|v\|$ for all $v\in \R^{d_s}$
and $D_u$ is a $d_u\times d_u$ matrix s.t. $\|D_u^{-1} w\|\leq e^{-\chi}\|w\|$ for all $w\in \R^{d_u}$.
This is the higher dimensional Oseledets-Pesin reduction, see Lemma \ref{Lemma-linear-reduction-2}(2).
Define the Pesin chart $\Psi_x$ as in Subsection \ref{Subsec-NUH-charts},
and the parameter $Q(x)=\mathfs H\|C(f(x))^{-1}\|^{-48/\beta}$, 
where $\mathfs H=\mathfs H(\beta,\ve)$ is a constant that allows to keep the estimates
of order $\ve$ and to absorb multiplicative constants.
Then a higher dimensional version of Theorem \ref{Thm-non-linear-Pesin} holds,
see \cite[Thm 1.13]{Ben-Ovadia-2019}. From now on, we can repeat the two-dimensional
construction, defining the parameters $q(x),q^s(x),q^u(x)$, the nonuniformly hyperbolic locus
${\rm NUH}_\chi^*$, an $s$--admissible manifold at $\Psi_x$ as a set
of the form $V^s=\Psi_x\{(t,F(t)):t\in [-q^s(x),q^s(x)]^{d_s}\}$, where $F:[-q^s(x),q^s(x)]^{d_s}\to\R^{d_u}$
is a $C^{1+\beta/3}$ function s.t. $F(0)=F'(0)=0$ and 
$\|F'\|_0+\Hol{\beta/3}(F')\leq\tfrac{1}{2}$, where the norms are taken in $[-q^s(x),q^s(x)]^{d_s}$, and
similarly $u$--admissible manifolds. Then Theorem \ref{Thm-UH-graph-transform} holds, see
\cite[Prop. 2.8]{Ben-Ovadia-2019}, and so every $x\in{\rm NUH}_\chi^*$ has local stable and
unstable manifolds.

\section{Maps with discontinuities and bounded derivative}\label{Section-NUH-dis1}

In the previous section, we considered diffeomorphisms defined on closed (compact without boundary)
surfaces. There are natural examples that do not fit into this context, for example
Poincar\'e return maps of flows and billiard maps. Their common feature is the presence
of discontinuities, and the possible explosion of derivatives.
In the next two sections, we will discuss how to adapt the methods of Section \ref{Section-NUH-systems}
to cover these examples, focusing on the changes that are needed to make the 
arguments work. We start dealing with surface maps with discontinuities and bounded derivative.
The reference is \cite{Lima-Sarig}.

\subsection{Definitions and examples}\label{Subsec-NUH-dis1-definitions}

Let $M$ be a compact surface, possibly with boundary.
To avoid multiplicative constants in the calculations,
we assume that $M$ has diameter smaller than one.
Let $\mathfs D^+,\mathfs D^-$ be closed subsets of $M$, and consider a map
$f:M\backslash\mathfs D^+\to M$ with inverse $f^{-1}:M\backslash\mathfs D^-\to M$.
Let $\mathfs D:=\mathfs D^+\cup \mathfs D^-$ be the {\em set of discontinuities of $f$}.
We require $f,f^{-1}$ to be local $C^{1+\beta}$ diffeomorphisms.

\medskip
\noindent
{\sc Regularity of $f$:} There is a constant $\mathfs L>0$ with the following property:
\begin{enumerate}[$\circ$]
\item For every $x\in M\backslash\mathfs D^+$
there is an open set $U\ni x$ s.t. $f\restriction_U$ is a diffeomorphism onto its image with
$C^{1+\beta}$ norm at most $\mathfs L$.
\item For every $x\in M\backslash\mathfs D^-$
there is an open set $V\ni x$ s.t. $f^{-1}\restriction_V$ is a diffeomorphism onto its image with
$C^{1+\beta}$ norm at most $\mathfs L$.
\end{enumerate}

\medskip
In particular, $\|df^{\pm 1}\|$ is bounded away from zero and infinity, so 
the integrability condition in the Oseledets theorem holds for any $f$--invariant probability 
measure.
The main difficulty when dealing with $f$ as above is that, as $x$ approaches $\mathfs D$, 
the open sets $U,V$ become smaller, hence the domains of Pesin charts also need to be smaller.
To avoid this issue, we only consider trajectories that do not approach $\mathfs D$ exponentially fast.

\medskip
Here is the example to have in mind.
Let $N$ be a three dimensional closed Riemannian manifold, let $X$ be a $C^{1+\beta}$
vector field on $N$ s.t. $X(p)\neq 0$ for all $p\in N$, and let $\varphi=\{\varphi^t\}_{t\in\R}$
be the flow generated by $X$. We can reduce the dynamics of $\varphi$
to the dynamics of a surface map by constructing a global Poincar\'e section
$M$ for $\varphi$ as follows:
\begin{enumerate}[$\circ$]
\item Fix $\ve>0$ small enough.
\item For each $p\in N$, consider a closed differentiable disc $D(p)$ centered at $p$ with
diameter smaller than $\ve$ s.t. $\angle (T_qD(p),X(q))>\tfrac{\pi}{2}-\ve$ for all $q\in D(p)$.
\item Let ${\rm FB}(p):=\bigcup_{|t|\leq\ve}\varphi^t[D(p)]$ be the {\em flow box} defined by $D(p)$.
Using that $X\neq 0$, we see that ${\rm FB}(p)$ contains an open ball centered at $p$.
\item By compactness, $N$ is covered by finitely many flow boxes
${\rm FB}(p_1),\ldots,{\rm FB}(p_\ell)$.
\end{enumerate} 
Therefore $M=D(p_1)\cup \cdots \cup D(p_\ell)$ is a global Poincar\'e section for $\varphi$. 
With some extra work, we can make the discs $D(p_1),\ldots,D(p_\ell)$ to be pairwise disjoint,
hence the return time function $\mathfrak t:M\to(0,\infty)$ is bounded away from zero and infinity.
See \cite[Section 2]{Lima-Sarig} for details.

\medskip
Let $f:M\to M$ be
the Poincar\'e return map of $M$, i.e. $f(x)=\vf^{\mathfrak t(x)}(x)$. The map $f$ has discontinuities, with
$\mathfs D^{\pm}=\{x\in M:f^{\pm 1}(x)\in\partial M\}$.\footnote{Observe that in this case
$f$ is defined on all of $M$,
but $f^{\pm 1}$ is discontinuous on $\mathfs D^{\pm}$.} See Figure \ref{figure-return-map}.
\begin{figure}[hbt!]
\centering
\def\svgwidth{9cm}
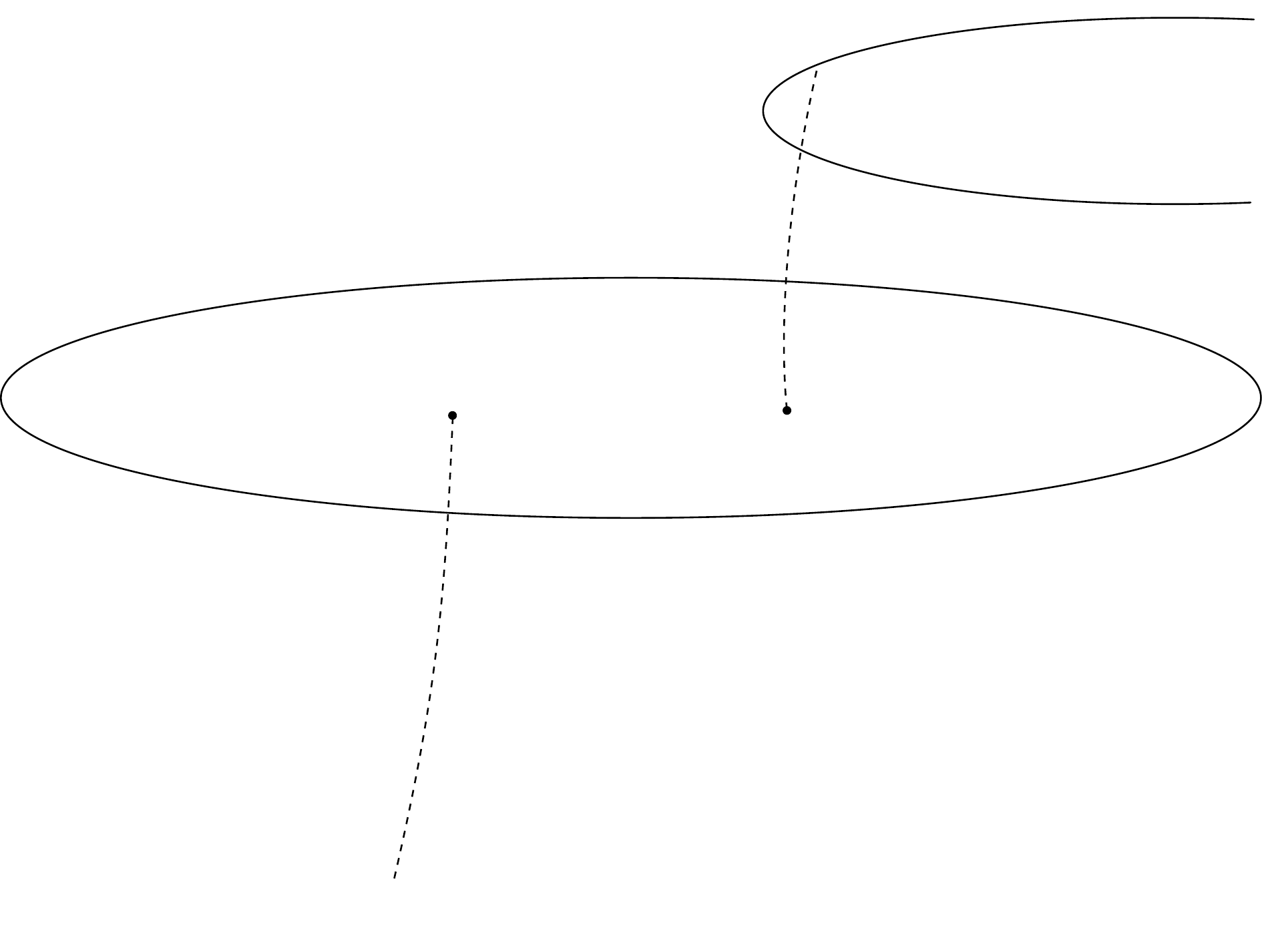
\caption{Discontinuities for $f$: in the picture, $x\in\mathfs D^+$ and $y\in\mathfs D^-$.}\label{figure-return-map}
\end{figure}
Nevertheless, where $f^{\pm 1}$ is continuous, its $C^{1+\beta}$ norm is uniformly bounded.
This occurs because, at continuity points, $f^{\pm 1}$ has the form $\varphi^{\tau}$ where $\tau$
has uniformly bounded $C^{1+\beta}$ norm,
see \cite[Lemma 2.5]{Lima-Sarig} for details.

\subsection{Nonuniform hyperbolicity}\label{Subsec-NUH-dis1-locus}

To apply the methods of
Section \ref{Section-NUH-systems}, we only consider trajectories
that do not approach $\mathfs D$ exponentially fast. Let $d$ be the distance in $M$.

\medskip
\noindent
{\sc The nonuniformly hyperbolic locus ${\rm NUH}^*_\chi$:} It is the set of points
$x\in M$ satisfying conditions (NUH1)--(NUH3) of page \pageref{Def-NUH} and the additional condition:
\begin{enumerate}[(NUH4)]
\item {\sc Subexponential convergence to $\mathfs D$:}
$$
\lim_{n\to\pm\infty}\tfrac{1}{n}\log d(f^n(x),\mathfs D)=0.
$$
\end{enumerate}

\medskip
The idea of looking at trajectories satisfying condition (NUH4) is not new. It goes back
to Sina{\u\i} in the context of billiards \cite{Sinai-billiards}, which we will discuss in Section \ref{Section-NUH-dis2}.
See also the section ``Overcoming influence of singularities'' in \cite{Katok-Strelcyn}.
At the level of invariant measures, (NUH4) is related to the following notion. 

\medskip
\noindent
{\sc $f$--adapted measure:} An $f$--invariant measure on $M$ is called {\em $f$--adapted}
if the function $\log d(x,\mathfs D)\in L^1(\mu)$. A fortiori $\mu(\mathfs D)=0$.

\medskip
By the Birkhoff ergodic theorem, if $\mu$ is $f$--adapted then (NUH4) holds $\mu$--a.e.
If in addition $\mu$ is $\chi$--hyperbolic, then (NUH1)--(NUH3) hold $\mu$--a.e. and therefore
 $\mu$ is carried by ${\rm NUH}_\chi$, i.e. $\mu[{\rm NUH}_\chi]=1$.

\medskip
For each $x\in{\rm NUH}_\chi$, the linear map $C(x)$ can be defined as before, and
Lemma \ref{Lemma-linear-reduction-2} remains valid with the same proof. 
To define the Pesin chart $\Psi_x$, we just need to adjust its domain of definition,
according to the distance of $x$ to $\mathfs D$.
Let $\delta(x)=\ve^{3/\beta}d(x,\mathfs D)$.

\medskip
\noindent
{\sc Pesin chart:}  The {\em Pesin chart} at $x\in{\rm NUH}_\chi$ is the map
$\Psi_x:[-\delta(x),\delta(x)]^2\to M$ defined by $\Psi_x:=\exp{x}\circ C(x)$.

\medskip
We also redefine $Q(x)$ accordingly. 
Let $\rho(x):=d(\{f^{-1}(x),x,f(x)\},\mathfs D)$.


\medskip
\noindent
{\sc Parameter $Q(x)$:} For $x\in{\rm NUH}_\chi$,
let $Q(x)= \ve^{3/\beta}\min\{\|C(f(x))^{-1}\|^{-12/\beta}_{\rm Frob},\ve\rho(x)\}$.\\

With this definition\footnote{We take the chance to observe that the definition of 
$Q(x)$ in \cite{Lima-Sarig} has a small error, since it does not depend
on $d(f(x),\mathfs D)$ and so we cannot guarantee that $f_x$ is well-defined, see
\cite[Thm 3.2 and Corollary 3.6]{Lima-Sarig}. Nevertheless,
this can be easily fixed with the definition we give here.}\label{Footnote-correct-Q},
the representation of $f$ in Pesin charts  $f_x:=\Psi_{f(x)}^{-1}\circ f\circ\Psi_x$
is well-defined in $[-10Q(x),10Q(x)]^2$. Indeed, $Q(x)\leq \ve\delta(f(x))$ and so
$$
(f\circ\Psi_x)\left([-10Q(x),10Q(x)]^2\right)\subset \Psi_{f(x)}\left([-\delta(f(x)),\delta(f(x))]^2\right).
$$
Again, in the domain $[-10Q(x),10Q(x)]^2$ the map $f_x$ is a small perturbation
of a hyperbolic matrix.

\medskip
Now define the parameters $q,q^s,q^u$, the set ${\rm NUH}_\chi^*$,
and the graph transforms $\mathfs F^{s/u}_x$ as in the previous section, then
construct local invariant manifolds for each $x\in{\rm NUH}_\chi^*$. We finish
this section proving an analogue of Lemma \ref{Lemma-same-measures-1}.

\begin{lemma}\label{Lemma-same-measures-2}
If $\mu$ is an $f$--invariant probability measure supported on ${\rm NUH}_\chi$, then 
$\mu$ is supported on ${\rm NUH}^*_\chi$.
\end{lemma}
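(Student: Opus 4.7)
The plan is to mimic the proof of Lemma \ref{Lemma-same-measures-1}, showing that $\lim_{n\to\pm\infty}\tfrac{1}{n}\log Q(f^n(x)) = 0$ for $\mu$-a.e.\ $x \in {\rm NUH}_\chi$, which implies $q(x)>0$. The new difficulty is that $Q$ now has the composite form
\[Q(x) = \ve^{3/\beta}\min\bigl\{\|C(f(x))^{-1}\|_{\rm Frob}^{-12/\beta},\; \ve\rho(x)\bigr\},\]
so I would split the argument into two pieces corresponding to the two terms in the minimum.

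Write $A(x) := \|C(f(x))^{-1}\|_{\rm Frob}^{-12/\beta}$ and $B(x) := \ve\rho(x)$, so $Q = \ve^{3/\beta}\min\{A,B\}$. Using $\log\min\{a,b\}=\min\{\log a,\log b\}$, one sees that $\tfrac{1}{n}\log Q(f^n(x)) \to 0$ whenever both $\tfrac{1}{n}\log A(f^n(x)) \to 0$ and $\tfrac{1}{n}\log B(f^n(x)) \to 0$. Thus it suffices to establish these two limits separately for $\mu$-a.e.\ $x$ as $n\to\pm\infty$.

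The limit for $B$ follows immediately from condition (NUH4), which holds $\mu$-a.e.\ since $\mu$ is carried by ${\rm NUH}_\chi$. Indeed, $\rho(f^n(x)) = d(\{f^{n-1}(x),f^n(x),f^{n+1}(x)\},\mathfs D)$, so
\[\log\rho(f^n(x)) = \min_{k\in\{-1,0,1\}}\log d(f^{n+k}(x),\mathfs D),\]
and each $\tfrac{1}{n}\log d(f^{n+k}(x),\mathfs D)\to 0$ as $n\to\pm\infty$ by (NUH4), since $\tfrac{n+k}{n}\to 1$ for fixed $k$.

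The limit for $A$ is obtained by repeating the Birkhoff plus Poincar\'e recurrence argument of Lemma \ref{Lemma-same-measures-1}, now applied to $\psi(x) := \log\|C(f(x))^{-1}\|_{\rm Frob} - \log\|C(x)^{-1}\|_{\rm Frob}$. The key observation is that in this setting $df$ has uniformly bounded $C^{1+\beta}$ norm on its continuity set (see Subsection \ref{Subsec-NUH-dis1-definitions}), so combining Lemma \ref{Lemma-linear-reduction-2} (which controls the diagonal factor $D=C(f(x))^{-1}\circ df_x\circ C(x)$) with the uniform bound on $\|df_x^{\pm 1}\|$, exactly as in the proof of Lemma \ref{Lemma-same-measures-1}, one obtains a constant $\mathfs C>1$ with
\[\mathfs C^{-1}\leq \|C(f(x))^{-1}\|_{\rm Frob}/\|C(x)^{-1}\|_{\rm Frob}\leq \mathfs C\]
for every $x\in{\rm NUH}_\chi$. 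Hence $\psi\in L^\infty(\mu)\subset L^1(\mu)$, Birkhoff's theorem yields convergence of $\tfrac{1}{n}\psi_n(x)$ with $\psi_n(x)=\log\|C(f^n(x))^{-1}\|_{\rm Frob}-\log\|C(x)^{-1}\|_{\rm Frob}$, and Poincar\'e recurrence forces the limit to be $0$. Multiplying by $-12/\beta$ and shifting by one iterate, this gives $\tfrac{1}{n}\log A(f^n(x))\to 0$.

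The main obstacle compared to Lemma \ref{Lemma-same-measures-1} is that one cannot directly apply Birkhoff to $\log[Q\circ f/Q]$, because in the present setting this quantity is not globally bounded: $\rho(x)$ can drop by many orders of magnitude in a single iterate when the orbit approaches $\mathfs D$, and a priori $\log[Q\circ f/Q]$ need not be in $L^1(\mu)$. The point is that we do not need it to be: the $\rho$-contribution is controlled by the orbit-wise hypothesis (NUH4) built into ${\rm NUH}_\chi$, while the $\|C\|$-contribution is controlled exactly as in the diffeomorphism case thanks to the bounded-derivative hypothesis of this section.
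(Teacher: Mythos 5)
Your proof is correct and follows essentially the same route as the paper: both split $Q(x)=\ve^{3/\beta}\min\{\|C(f(x))^{-1}\|_{\rm Frob}^{-12/\beta},\,\ve\rho(x)\}$ into its two factors, dispose of the $\rho$-term directly via (NUH4), and handle the $\|C\|$-term by the Birkhoff plus Poincar\'e-recurrence argument of Lemma \ref{Lemma-same-measures-1}, which applies here because $\|df^{\pm1}\|$ is uniformly bounded. Your closing remark correctly identifies the reason one cannot run the original argument on $\log[Q\circ f/Q]$ wholesale, which is exactly what motivates the paper's introduction of the auxiliary $\widetilde Q$.
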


\begin{proof}
By assumption, $\lim_{n\to\pm\infty}\tfrac{1}{n}\log d(f^n(x),\mathfs D)=0$ for $\mu$--a.e.
$x\in M$.
Let $\widetilde Q(x)= \ve^{3/\beta}\|C(f(x))^{-1}\|^{-12/\beta}_{\rm Frob}$ be the ``old'' $Q$.
Since $df$ is uniformly bounded, we can proceed exactly as in Lemma \ref{Lemma-same-measures-1}
to conclude that $\lim_{n\to\pm\infty}\tfrac{1}{n}\log{\widetilde Q}(f^n(x))=0$ for $\mu$--a.e. $x\in M$.
But then $\lim_{n\to\pm\infty}\tfrac{1}{n}\log Q(f^n(x))=0$ for $\mu$--a.e. $x\in M$.
\end{proof}

\section{Maps with discontinuities and unbounded derivative}\label{Section-NUH-dis2}

Next, we consider surface maps with discontinuities and unbounded derivative.
Added to the difficulty that Pesin charts are defined in smaller domains, 
now $\|df^{\pm 1}\|$ can approach zero and infinity, so even the integrability condition
in the Oseledets theorem is no longer automatic.
The first development of Pesin theory in this context was \cite{Katok-Strelcyn}, whose 
interest was to apply it for billiard maps. 
Since its beginning, ergodic theory of billiard maps was mainly focused in a reference
Liouville measure. This is the case in \cite{Katok-Strelcyn}, where the authors construct invariant manifolds
Lebesgue almost everywhere and use them to prove ergodic theoretic properties such as 
the Ruelle inequality. Contrary to this, in the sequel we follow the same approach of Section
\ref{Section-NUH-systems}, not focusing on a particular measure
but rather on the set of points with some hyperbolicity.
The reference for this section is \cite{Lima-Matheus}.

\subsection{Definitions and examples}\label{Subsec-NUH-dis2-definitions}

Let $M$ be a compact surface, possibly with boundary. Again, 
we assume that $M$ has diameter smaller than one.
Let $\mathfs D^+,\mathfs D^-$ be closed subsets of $M$, and consider 
$f:M\backslash\mathfs D^+\to M$ with inverse $f^{-1}:M\backslash\mathfs D^-\to M$.
Let $\mathfs D:=\mathfs D^+\cup \mathfs D^-$ be the {\em set of discontinuities of $f$}.
We require the following conditions on $f$.

\medskip
\noindent
{\sc Regularity of $f$:} There are constants $0<\beta<1<a$ and $\mathfrak K>0$ s.t. for all
$x\in M\backslash\mathfs D$ there is $d(x,\mathfs D)^a<\mathfrak r(x)<d(x,\mathfs D)$ s.t.
if $D_x=B(x,\mathfrak r(x))$ then the following holds:
\begin{enumerate}[$\circ$]
\item If $y\in D_x$ then $\|df^{\pm 1}_y\|\leq d(x,\mathfs D)^{-a}$.
\item If $y_1,y_2\in D_x$ and $f(y_1),f(y_2)\in D_{x'}$ then
$\|\widetilde{df_{y_1}}-\widetilde{df_{y_2}}\|\leq \mathfrak Kd(y_1,y_2)^\beta$,
and if $y_1,y_2\in D_x$ and $f^{-1}(y_1),f^{-1}(y_2)\in D_{x''}$ then
$\|\widetilde{df_{y_1}^{-1}}-\widetilde{df_{y_2}^{-1}}\|\leq \mathfrak Kd(y_1,y_2)^\beta$.
\end{enumerate}

\medskip
The first assumption says that $df^{\pm 1}$ blows up at most polynomially fast, and the second 
says that $df^{\pm 1}$ is locally $\beta$--H\"older.
The examples to have in mind are billiard maps, as we now explain.
Given a compact domain $T\subset\R^2$ or $T\subset \mathbb T^2$ with piecewise $C^3$ boundary,
consider the straight line motion of a particle inside $T$, with specular reflections
in $\partial T$. The phase space of configurations is $M=\partial T\times[-\tfrac{\pi}{2},\tfrac{\pi}{2}]$
with the convention that $(r,\theta)\in M$ represents $r=$ collision position at $\partial T$ and $\theta=$
angle of collision. Given $(r,\theta)\in M$, let $(r^+,\theta^+)$ be the next collision and $(r^-,\theta^-)$ be
the previous collision. Let $\{r_1,\ldots,r_k\}$ be the break points of $\partial T$, and define:
\begin{align*}
\mathfs D^+&=\{r^+=r_i \text{ for some }i\}\cup \left\{\theta^+=\pm\tfrac{\pi}{2}\right\}\\
\mathfs D^-&=\{r^-=r_i \text{ for some }i\}\cup \left\{\theta^-=\pm\tfrac{\pi}{2}\right\}.
\end{align*}
The {\em billiard map} is $f:M\backslash\mathfs D^+\to M$ defined by $f(r,\theta)=(r^+,\theta^+)$,
with inverse $f:M\backslash\mathfs D^-\to M$ defined by $f(r,\theta)=(r^-,\theta^-)$.
Since $\partial T$ (usually) has two normal vectors at $r_i$, we cannot define $f^{\pm 1}(r,\theta)$ if $r^{\pm}=r_i$.
When $\theta^{\pm}=\pm\tfrac{\pi}{2}$, the trajectory has a grazing collision, and
$f^{\pm 1}$ is usually discontinuous on $(r,\theta)$. Furthermore, $df^{\pm 1}$ becomes arbitrarily large
in a neighborhood of $(r,\theta)$. This justifies the choice of $\mathfs D^{\pm}$ above.
See \cite{Chernov-Markarian} for details.

\medskip
Sina{\u\i} showed that $f$ has a natural invariant Liouville measure $\mu_{\rm SRB}=\cos\theta drd\theta$,
which is ergodic for dispersing billiards \cite{Sinai-billiards}.
Bunimovich constructed examples of ergodic nowhere dispersing
billiards \cite{Bunimovich-close-to-scattering,Bunimovich-ergodic-properties,Bunimovich-Nowhere-dispersing}.
These billiards, known as {\em Bunimovich billiards}, are nonuniformly hyperbolic.
See some examples in Figure \ref{figure-billiards}.
\begin{figure}[hbt!]
\centering
\def\svgwidth{12.5cm}
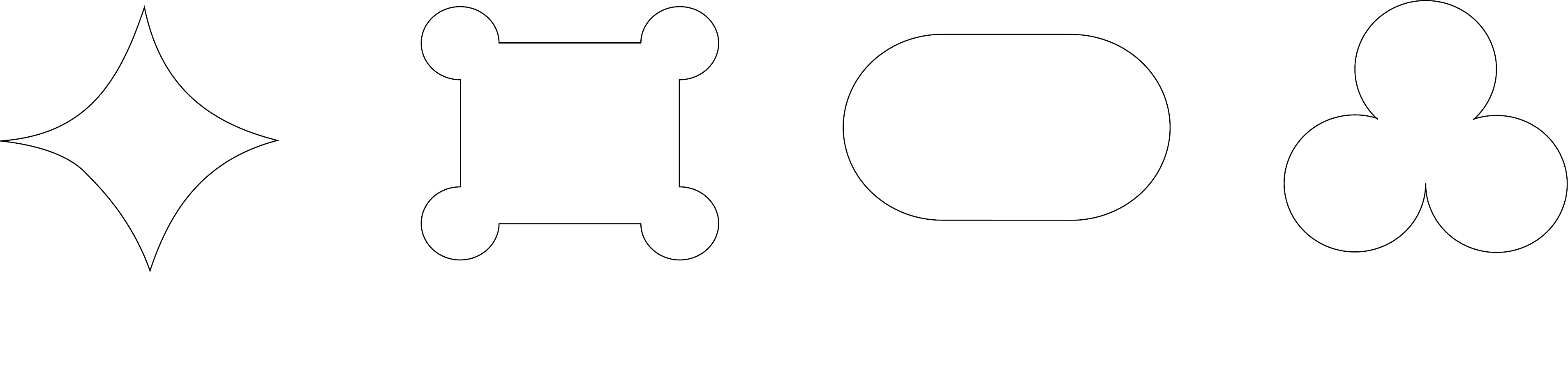
\caption{(1) is a Sina{\u\i} billiard table. The others are Bunimovich billiard tables:
(2) is the pool table with pockets, (3) is the stadium, (4) is the flower.}
\label{figure-billiards}
\end{figure}
Recently, Baladi and Demers constructed measures of maximal entropy
for some finite horizon periodic Lorentz gases \cite{Baladi-Demers}, see more on Section \ref{Section-periodic}.

\subsection{Nonuniform hyperbolicity}\label{Subsec-NUH-dis2-locus}

We continue only considering trajectories
that do not approach $\mathfs D$ exponentially fast, and define
the nonuniformly hyperbolic locus ${\rm NUH}_\chi$ as in 
Subsection \ref{Subsec-NUH-dis1-locus}. Similarly, an $f$--invariant measure
$\mu$ is called {\em $f$--adapted} if $\log d(x,\mathfs D)\in L^1(\mu)$.

\medskip
Since $\log \|df^{\pm 1}\|$ is usually unbounded, some measures might not satisfy
the integrability condition in the Oseledets theorem.
Due to the regularity of $f$, the functions $\log \|df^{\pm 1}\|$ and
$\log d(x,\mathfs D)$ are comparable, therefore
$\log \|df^{\pm 1}\|\in L^1(\mu)$ iff $\log d(x,\mathfs D)\in L^1(\mu)$. Hence the
Oseledets theorem holds for $f$--adapted measure, which shows that $f$--adaptability
is a natural assumption. In particular, if $\mu$ is $f$--adapted and $\chi$--hyperbolic,
then (NUH1)--(NUH4) hold $\mu$--a.e. and so $\mu$ is carried by ${\rm NUH}_\chi$.
For each $x\in{\rm NUH}_\chi$, we define $C(x)$ as before,
and Lemma \ref{Lemma-linear-reduction-2} remains valid with the same proof. 

\medskip
\noindent
{\sc Pesin chart:}  The {\em Pesin chart} at $x\in{\rm NUH}_\chi$ is
$\Psi_x:[-d(x,\mathfs D)^a,d(x,\mathfs D)^a]^2\to M$, defined by $\Psi_x:=\exp{x}\circ C(x)$.

\medskip
The definition of $Q(x)$ is more complicated. Let
$\rho(x)=d(\{f^{-1}(x),x,f(x)\},\mathfs D)$.

\medskip
\noindent
{\sc Parameter $Q(x)$:} \label{Def-dis2-Q}
For $x\in{\rm NUH}_\chi$, define
$$
Q(x)= \ve^{3/\beta}\min\left\{\|C(x)^{-1}\|_{\rm Frob}^{-24/\beta},\|C(f(x))^{-1}\|^{-12/\beta}_{\rm Frob}\rho(x)^{72a/\beta}\right\}.
$$

\medskip
As before, the choice of the powers is not canonical but just an artifact of the proof.
The above definition depends on $f^{-1}(x),x,f(x)$, and is strong enough
to construct local invariant manifolds, and to run the methods of Part \ref{Part-2}.
Firstly, in the domain $[-10Q(x),10Q(x)]^2$ the representation
of $f$ in Pesin charts $f_x:=\Psi_{f(x)}^{-1}\circ f\circ\Psi_x$ is a small perturbation
of a hyperbolic matrix, see \cite[Thm 3.3]{Lima-Matheus}. Defining the parameters $q,q^s,q^u$, the set ${\rm NUH}_\chi^*$,
and the graph transforms $\mathfs F^{s/u}_x$ as before,
we construct local invariant manifolds for each $x\in{\rm NUH}_\chi^*$. Finally, 
we establish an analogue of Lemmas \ref{Lemma-same-measures-1} and \ref{Lemma-same-measures-2}.

\begin{lemma}\label{Lemma-same-measures-3}
If $\mu$ is an $f$--adapted probability measure supported on ${\rm NUH}_\chi$, then 
$\mu$ is supported on ${\rm NUH}^*_\chi$.
\end{lemma}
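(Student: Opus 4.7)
The plan is to mirror the strategy of Lemmas \ref{Lemma-same-measures-1} and \ref{Lemma-same-measures-2}: show that $\tfrac{1}{n}\log Q(f^n(x))\to 0$ as $n\to\pm\infty$ for $\mu$-a.e. $x$, from which $q(x)>0$ follows directly from the definition $q(x)=\inf_{n\in\Z}e^{\ve|n|}Q(f^n(x))$. The complication compared to the previous two lemmas is that $\|df^{\pm 1}\|$ is no longer bounded, so the single-step increment $\log[Q(f(x))/Q(x)]$ is not a priori in $L^1(\mu)$ and the Poincar\'e recurrence trick used before does not apply verbatim. The idea is instead to bound $|\log Q|$ additively by quantities that can be controlled directly along orbits, using $f$-adaptedness and Oseledets' theorem.

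Decompose $Q(x)=\ve^{3/\beta}\min\{A(x),B(x)\}$ with
\[
A(x):=\|C(x)^{-1}\|_{\rm Frob}^{-24/\beta},\qquad B(x):=\|C(f(x))^{-1}\|_{\rm Frob}^{-12/\beta}\rho(x)^{72a/\beta}.
\]
Since $\|C(y)^{-1}\|_{\rm Frob}\ge\sqrt{s(y)^2+u(y)^2}\ge 2$ and $\rho(y)\le 1$ (because $\mathrm{diam}(M)<1$), both $A$ and $B$ are bounded above, and the elementary identity $\min\{\log A,\log B\}=\log A+\log B-\max\{\log A,\log B\}$ yields $|\log Q|\le|\log A|+|\log B|+O(1)$. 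It therefore suffices to establish, for $\mu$-a.e. $x$ and as $n\to\pm\infty$,
\[
\tfrac{1}{n}\log\|C(f^n(x))^{-1}\|_{\rm Frob}\longrightarrow 0\quad\text{and}\quad\tfrac{1}{n}\log\rho(f^n(x))\longrightarrow 0.
\]

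The limit for $\rho$ is the softer one. Because $\mu$ is $f$-adapted, $\log d(\cdot,\mathfs D)\in L^1(\mu)$, and since $|\log\rho(x)|\le\sum_{|j|\le 1}|\log d(f^j(x),\mathfs D)|$, $f$-invariance gives $\log\rho\in L^1(\mu)$. A Borel--Cantelli argument -- observe that $\sum_n\mu\{|g\circ f^n|>\delta n\}=\sum_n\mu\{|g|>\delta n\}\le\|g\|_1/\delta$ for any $g\in L^1(\mu)$ -- then forces $\tfrac{1}{n}g(f^n(x))\to 0$ $\mu$-a.e., in both time directions via $f^{-1}$-invariance of $\mu$. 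For the limit involving $\|C^{-1}\|_{\rm Frob}=\sqrt{s^2+u^2}/|\sin\alpha|$, I analyse the three factors $s,u,|\sin\alpha|$ separately using Oseledets' theorem, which is applicable here because $f$-adaptedness and the polynomial bound $\|df^{\pm 1}_y\|\le d(y,\mathfs D)^{-a}$ combine to give $\log\|df^{\pm 1}\|\in L^1(\mu)$. With $\chi^s(x)\le-\chi$ and $\chi^u(x)\ge\chi$ the Lyapunov exponents along $e^s_x,e^u_x$, the identities
\[
s(f^n(x))^2=\tfrac{2e^{-2n\chi}}{\|df^n e^s_x\|^2}\sum_{j\ge n}e^{2j\chi}\|df^j e^s_x\|^2,\quad|\sin\alpha(f^n(x))|=\tfrac{|\det df^n_x|\,|\sin\alpha(x)|}{\|df^n e^s_x\|\cdot\|df^n e^u_x\|},
\]
together with $\tfrac{1}{n}\log\|df^n e^{s,u}_x\|\to\chi^{s,u}(x)$ and $\tfrac{1}{n}\log|\det df^n_x|\to\chi^s(x)+\chi^u(x)$, produce after cancellation that $\tfrac{1}{n}\log s(f^n(x))\to 0$ and $\tfrac{1}{n}\log|\sin\alpha(f^n(x))|\to 0$ $\mu$-a.e.; the argument for $u$ is symmetric via $f^{-1}$.

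The most delicate step is the degenerate case $\chi^s(x)=-\chi$ in the formula for $s$, where the tail $T_n:=\sum_{j\ge n}e^{2j\chi}\|df^j e^s_x\|^2$ need not decay geometrically and its rate is not given by Oseledets directly. Here one pinches $T_n$ between $T_0=s(x)^2/2<\infty$ from above and its leading term $e^{2n\chi}\|df^n e^s_x\|^2=e^{o(n)}$ from below (the exponent being $2n(\chi+\chi^s(x))+o(n)=o(n)$), which yields $\tfrac{1}{n}\log T_n\to 0$ and closes the computation. Assembling all the limits gives $\tfrac{1}{n}\log Q(f^n(x))\to 0$ for $\mu$-a.e. $x$, whence $q(x)>0$ $\mu$-a.e., as desired.
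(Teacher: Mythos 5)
Your proof is correct, and it takes a genuinely different route from the paper's. The paper's proof isolates the ``old'' parameter $\widetilde Q(x)=\ve^{3/\beta}\|C(f(x))^{-1}\|_{\rm Frob}^{-12/\beta}$, uses the polynomial regularity of $df^{\pm 1}$ near $\mathfs D$ together with $f$--adaptedness to show that the cocycle increment $\log\left[\tfrac{\widetilde Q\circ f}{\widetilde Q}\right]$ lies in $L^1(\mu)$, and then repeats the Birkhoff--plus--Poincar\'e--recurrence argument of Lemma \ref{Lemma-same-measures-1} verbatim to conclude that $\tfrac{1}{n}\log\|C(f^n(x))^{-1}\|_{\rm Frob}\to 0$; the $\rho$ factor is dispatched in one line by appealing to (NUH4), which is already built into the hypothesis that $\mu$ is supported on ${\rm NUH}_\chi$. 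You instead unwind $\|C^{-1}\|_{\rm Frob}=\sqrt{s^2+u^2}/|\sin\alpha|$ into its three factors and establish subexponential behaviour of each along orbits directly from the Oseledets theorem, via the explicit cocycle identities for $s(f^n(x))$, $u(f^n(x))$, $|\sin\alpha(f^n(x))|$, with a pinching argument handling the borderline exponent $\chi^s(x)=-\chi$. This is a hands-on reproof of the relevant piece of Lyapunov regularity; it is more computational, but it is also more transparent about why the result is true and avoids the auxiliary $L^1$ estimate on $\log\left[\tfrac{\widetilde Q\circ f}{\widetilde Q}\right]$. The paper's route is shorter because it recycles the soft argument already set up in Lemma \ref{Lemma-same-measures-1}.

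Two minor remarks. First, your treatment of $\rho$ via Borel--Cantelli is sound but redundant: (NUH4) already holds pointwise on ${\rm NUH}_\chi$, so $\tfrac{1}{n}\log\rho(f^n(x))\to 0$ everywhere on the support of $\mu$ with no measure theory required. Second, your symmetry remark ``the argument for $u$ is symmetric via $f^{-1}$'' implicitly requires both time directions $n\to\pm\infty$ of the $s$--estimate as input (since $u$ for $f$ is $s$ for $f^{-1}$, and the forward limit for one becomes the backward limit for the other); your cocycle identity and pinching argument are in fact valid for all $n\in\Z$, so this is not a real gap, but it is worth stating explicitly that the computation is carried out for both $n\to+\infty$ and $n\to-\infty$.
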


\begin{proof}
It is enough to show that
$$
\lim_{n\to\pm\infty} \tfrac{1}{n}\log\|C(f^n(x))^{-1}\|_{\rm Frob}=
\lim_{n\to\pm\infty}\tfrac{1}{n} \log\rho(f^n(x))=0
$$
for $\mu$--a.e. $x\in M$.
By (NUH4), the second equality holds. For the first equality, 
let $\widetilde Q(x)= \ve^{3/\beta}\|C(f(x))^{-1}\|^{-12/\beta}_{\rm Frob}$ be the ``old'' $Q$,
and observe that the regularity assumption on $f$ implies that $\log \left[\tfrac{\widetilde Q\circ f}{\widetilde Q}\right]\in L^1(\mu)$
iff $\log d(x,\mathfs D)\in L^1(\mu)$.
Since $\mu$ is $f$--adapted, we get that $\log \left[\tfrac{\widetilde Q\circ f}{\widetilde Q}\right]\in L^1(\mu)$.
Now proceed as in Lemma \ref{Lemma-same-measures-1} to conclude that
$\lim_{n\to\pm\infty} \tfrac{1}{n}\log\|C(f^n(x))^{-1}\|_{\rm Frob}=0$ for $\mu$--a.e. $x\in M$.
\end{proof}

\part{Symbolic dynamics}\label{Part-2}

Symbolic dynamics is an important tool for the understanding of ergodic and statistical
properties of dynamical systems, both smooth and non-smooth.
The field of symbolic dynamics is enormous and covers various contexts, from the study of symbolic spaces
to the theory of complexity functions, see e.g. \cite{Kitchens-Book,Ferenczi-survey}.
Here, we only discuss the use of symbolic dynamics to
represent smooth dynamical systems. The main idea is simple,
and can be summarised in two steps:
firstly, divide the phase space of a system into finitely or countably many pieces, which we call {\em rectangles};
secondly, instead of describing the trajectory of a point by the exact positions in the phase space,
just record the sequence of rectangles that the trajectory visits.
We call the second step above a {\em coding}. This procedure can be made in a wide setting.
For instance, any partition defines a coding in the usual way. Such
flexibility allows its use in various contexts:
\begin{enumerate}[$\circ$]
\item Periodic points of continuous intervals maps: proof of the Sharkovsky theorem using
{\em Markov graphs}, see e.g. \cite{Burns-Hasselblatt}.
\item Milnor-Thurston's kneading theory of continuous intervals maps: description of the trajectory
of the critical point with respect to monotonicity intervals \cite{Milnor-Thurston}.
\item Geodesics on surfaces of constant negative curvature: Hadamard represented closed
geodesics using sequences of symbols \cite{Hadamard-1898},
see also \cite{Katok-Ugarcovici-Symbolic}.
\end{enumerate}
In this survey, we focus on symbolic dynamics for smooth systems with some hyperbolicity, uniform
and nonuniform. The final goal is to describe the invariant measures and ergodic theoretical
properties of such systems, and for that a mere coding is not enough: it is important
to recover codings, i.e. to know which trajectories are coded in the same way.
This reverse procedure is called {\em decoding}. Good codings are those 
for which we can satisfactorily decode. It has long been observed that uniform
expansion provides a good decoding: two different trajectories eventually stay
far apart and therefore cannot visit the same rectangles. This property, that different trajectories
eventually stay far apart, is known as {\em expansivity}. It also occurs for (U)-systems,
due to the exponential dichotomy of solutions mentioned in Section \ref{Section-UH-Systems}.

\medskip
Another required property on the coding is that the space of sequences coding the
trajectories should be as simple as possible and at the same time rich enough to reflect the structure
of the original smooth system. This property is certainly satisfied
if the rectangles have the {\em Markov property}, since in this case
every path on the graph is naturally associated to a genuine orbit.
Apart from some technical assumptions, when this occurs we call the partition a {\em Markov partition}.

\medskip
Let us give a simple example of its usefulness. Let $f:K\to K$ be the horseshoe
map described in Example 2 of Subsection \ref{Subsec-UH-definitions} (Smale's horseshoe).
Since $K$ has a fractal
structure, it seems rather complicated to understand its periodic points and
invariant measures. But the system has a Markov partition that induces a continuous bijection
$\pi:\Sigma\to K$ between the symbolic space $\Sigma=\{0,1\}^\Z$ and $K$, that commutes the shift map
on $\Sigma$ and the map $f$. Hence we can analyse the dynamical properties of $f$ by means of the
dynamical properties of the shift map.

\medskip
In the next sections, we explain how to construct Markov partitions for smooth systems
with some hyperbolicity, both uniform and nonuniform. The conclusion is the existence
of a {\em symbolic model}.
Let us give the definitions.
Let $\mathfs G=(V,E)$ be an oriented graph. We assume that $V$ is finite or countable,
and that for each $v,w\in V$ there is at most one edge $v\to w$. 

\medskip
\noindent
{\sc Topological Markov shift (TMS):} Let
$$
\Sigma=\left\{\{v_n\}_{n\in\Z}\in V^\Z:v_n\to v_{n+1},\forall n\in\Z\right\}$$
be the set of $\Z$--indexed paths on $\mathfs G$,
and let $\sigma:\Sigma\to\Sigma$ be the {\em left shift}. The pair $(\Sigma,\sigma)$ is called a
{\em topological Markov shift}.

\medskip
For short, we will write TMS. An element of $\Sigma$ is denoted by $\un v=\{v_n\}_{n\in \Z}$.
We endow $\Sigma$ with the distance $d(\un v,\un w):={\rm exp}[-\min\{|n|:n\in\Z\text{ s.t. }v_n\neq w_n\}]$.
Let $\Sigma^\#$ be the {\em recurrent set} of $\Sigma$, defined by
$$
\Sigma^\#=\left\{\{v_n\}_{n\in\Z}\in\Sigma:
\begin{array}{l}
\exists v,w\in V\text{ s.t. }v_n=v\text{ for infinitely many }n>0\\
\text{and }v_n=w\text{ for infinitely many }n<0
\end{array}
\right\}.
$$
When $V$ is finite, $\Sigma^\#=\Sigma$. Let $f:M\to M$ be a diffeomorphism.

\medskip
\noindent
{\sc Symbolic model for diffeomorphism:} A {\em symbolic model} for $f:M\to M$
is a triple $(\Sigma,\sigma,\pi)$ where $(\Sigma,\sigma)$
is a TMS and $\pi:\Sigma\to M$ is a H\"older continuous map s.t. $\pi\circ\sigma=f\circ\pi$
and the restriction $\pi\restriction_{\Sigma^\#}:\Sigma^\#\to \pi[\Sigma^\#]$ is finite-to-one.

\medskip
Hence a symbolic model is a TMS together with a projection map $\pi$ that commutes $f$
and $\sigma$, and that is finite-to-one on $\pi[\Sigma^\#]$. A diffeomorphism
can have many symbolic models. Some of them are bad, when $\pi[\Sigma^\#]$
is much smaller than the subsets where $f$ displays an interesting dynamics.
A good symbolic model is one for which $\pi[\Sigma^\#]$ contains the subset where $f$ displays
chaotic dynamics. For us, this occurs when $\pi[\Sigma^\#]$ carries $\chi$--hyperbolic measures.
To define a symbolic model for flows, we add the flow direction to the TMS.

\medskip
\noindent
{\sc Topological Markov flow (TMF):} \label{Def-TMF}
Given a TMS $(\Sigma,\sigma)$ and
a  H\"older continuous function $r:\Sigma\to\R$ with $0<\inf r\leq \sup r<\infty$, define the
{\em topological Markov flow} $(\Sigma_r,\sigma_r)$ by:
\begin{enumerate}[$\circ$]
\item $\Sigma_r=\{(\un v,t):\un v\in\Sigma, 0\leq t\leq r(\un v)\}$ with the identification
$(\un v,r(\un v))\sim(\sigma(\un v),0)$.
\item $\sigma_r=\{\sigma_r^t\}_{t\in\R}:\Sigma_r\to\Sigma_r$ the unit speed vertical flow on $\Sigma_r$,
called the {\em suspension flow}, see Figure \ref{figure-suspension}.
\end{enumerate}

\begin{figure}[hbt!]
\centering
\def\svgwidth{6cm}
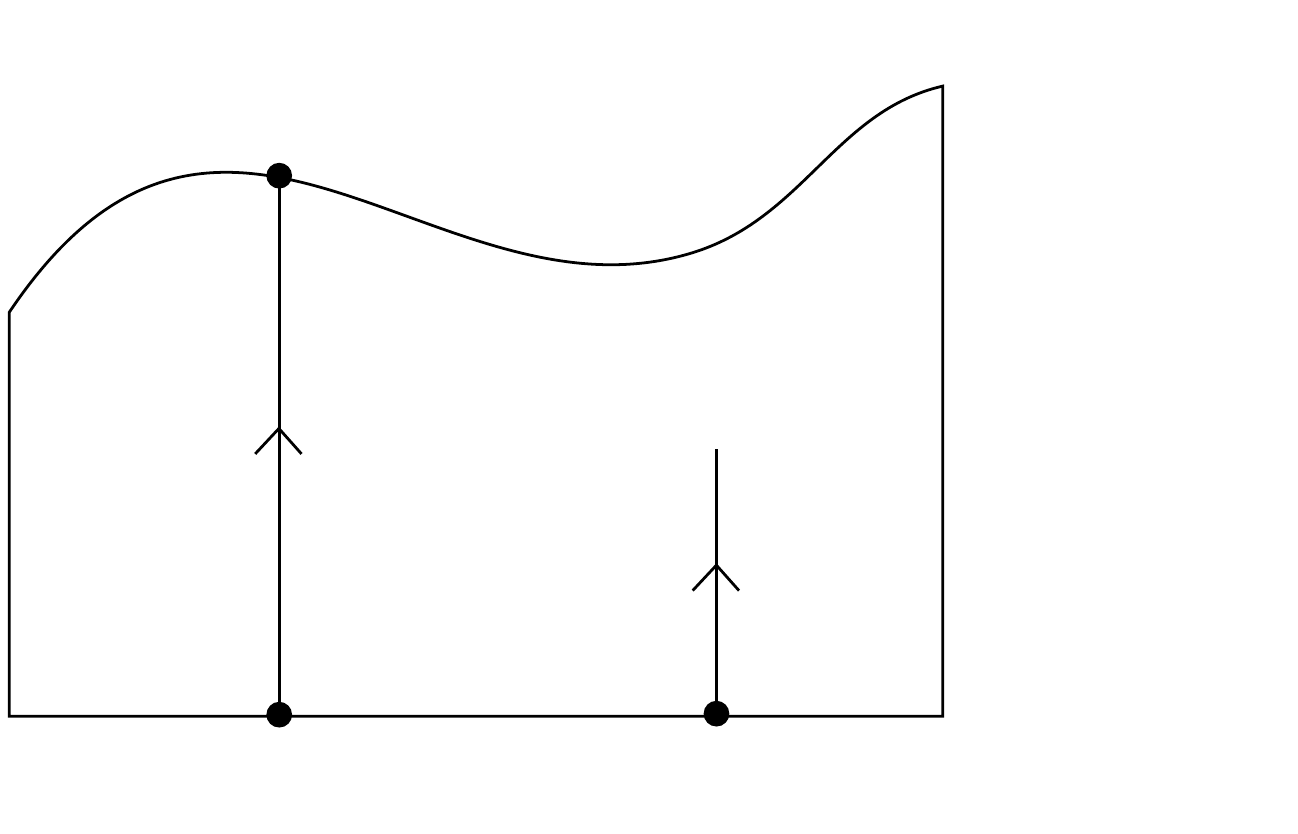
\caption{The suspension flow $\sigma_r$: starting at $(\un v,0)$, flow at unit speed until hitting the graph of $r$,
then return to the basis via the identification $(\un v,r(\un v))\sim(\sigma(\un v),0)$ and continue flowing.}
\label{figure-suspension}
\end{figure}

\medskip
An element of $\Sigma_r$ is denoted by $(\un v,t)$. Let
$\Sigma_r^\#$ be the {\em recurrent set} of $\Sigma_r$,
$$
\Sigma_r^\#=\left\{(\un v,t)\in\Sigma_r:\un v\in\Sigma^\#\right\}.
$$
See \cite{Lima-Sarig} for basic properties on $(\Sigma_r,\sigma_r)$.
Let $\vf:M\to M$ be a flow.

\medskip
\noindent
{\sc Symbolic model for flow:} A {\em symbolic model} for $\vf:M\to M$ 
is a triple $(\Sigma_r,\sigma_r,\pi_r)$ where $(\Sigma_r,\sigma_r)$
is a TMF and $\pi_r:\Sigma_r\to M$ is a H\"older continuous map s.t.
$\pi_r\circ\sigma_r^t=\vf^t\circ \pi_r$ for all $t\in \R$ for which the restriction
$\pi_r\restriction_{\Sigma_r^\#}:\Sigma_r^\#\to \pi_r[\Sigma_r^\#]$ is finite-to-one.


\section{Symbolic dynamics for uniformly hyperbolic systems}\label{Section-MP-UH}

There are at least two general ways of constructing Markov partitions for uniformly hyperbolic systems.
One of them, due to Sina{\u\i}, is called the {\em method of successive approximations}
\cite{Sinai-MP-U-diffeomorphisms,Sinai-Construction-of-MP}.
The second, due to Bowen, is called the {\em method of pseudo-orbits} \cite{Bowen-LNM}.
Their common feature is the use of the local invariant
manifolds constructed in Part \ref{Part-1} as dynamically defined systems of coordinates. 
For completeness and ease of understanding,
we also describe the construction of Adler and Weiss for hyperbolic toral
automorphisms \cite{Adler-Weiss-1967}. Since for nonuniformly hyperbolic systems
we will make use of the method of pseudo-orbits, we will only sketch the other techniques, 
and the details can be found in the original papers. We start defining Markov partitions
(and their flow counterpart) in the context of uniformly hyperbolic systems,
and explain how they generate symbolic models. It is important mentioning that, for uniformly
hyperbolic systems, the vertex set $V$ of the oriented graph $\mathfs G$ is finite.

\subsection{Markov partitions/sections}\label{Subsec-UH-MP}

Let $f:M\to M$ be a diffeomorphism. As already mentioned, our goal is to construct a partition
of $M$ so that the dynamics of $f$ can be represented by a TMS. Let $\mathfs G=(V,E)$
be the graph defining the TMS $(\Sigma,\sigma)$. The vertex set $V$ is the set of partition elements,
and each edge in $E$ will represent one possible transition
by the iteration of $f$. If $v_0\to v_1$ and $v_1\to v_2$ are edges,
then their concatenation is a path from $v_0$ to $v_2$. This property, translated to the dynamics of $f$,
is the {\em Markov property}. More precisely, if $R_0,R_1,R_2$ are the partition elements
associated to $v_0,v_1,v_2$, then there is a point $x\in R_0$ s.t. $f(x)\in R_1$ and there
is a point $y\in R_1$ s.t. $f(y)\in R_2$. The Markov property ensures that there is a point
$z\in R_0$ s.t. $f(z)\in R_1$ and $f^2(z)\in R_2$.

\medskip
Imagine, for a moment,
that $f$ is uniformly expanding. If we define edges $R_0\to R_1$
when $f(R_0)\cap R_1\neq\emptyset$, then the above property is not guaranteed. 
If instead we define $R_0\to R_1$ when $f(R_0)\supset R_1$, then  
the referred concatenation holds. When $f$ is a uniformly hyperbolic diffeomorphism, 
the definition of edges requires two inclusions, one for each invariant direction.
Let us give the definitions. The references for the discussion in this section are
\cite{Bowen-LNM} for diffeomorphisms and \cite{Bowen-Symbolic-Flows} for flows.
Let $f:M\to M$ be an Axiom A diffeomorphism, and fix $\ve>0$ small. By Part \ref{Part-1},
each $x\in\Omega(f)$ has a local stable manifold
$W^s_{\rm loc}(x)=V^s[x]$ and a local unstable manifold $W^u_{\rm loc}(x)=V^u[x]$.
By definition, $W^{s/u}_{\rm loc}(x)$ is tangent to $E^{s/u}_x$ at $x$, hence 
$W^s_{\rm loc}(x),W^u_{\rm loc}(x)$ are transversal at $x$. The
maps $x\in \Omega(f)\mapsto W^{s/u}_{\rm loc}(x)$
are continuous, see e.g. \cite[Thm 6.2(2)]{Shub-Book}. Hence, if $x,y\in \Omega(f)$
with ${\rm dist}(x,y)\ll 1$ then $W^s_{\rm loc}(x)$ and $W^u_{\rm loc}(y)$
intersect transversally at a single point. Fix $\delta\ll \ve$, and consider the following definition.

\medskip
\noindent
{\sc Smale bracket:} For $x,y\in\Omega(f)$ with $d(x,y)<\delta$, the
{\em Smale bracket} of $x$ and $y$ is defined by $\{[x,y]\}:=W^s_{\rm loc}(x)\cap W^u_{\rm loc}(y)$.

\medskip
For $R\subset \Omega(f)$, let $R^*$
denote the interior of $R$ in the induced topology of $\Omega(f)$.

\medskip
\noindent
{\sc Rectangle:} A subset $R\subset \Omega(f)$ is called a {\em rectangle} if it satisfies:
\begin{enumerate}[(1)]
\item {\sc Regularity:} $R=\overline{R^*}$ and ${\rm diam}(R)<\delta$.
\item {\sc Product structure:} $x,y\in R\Rightarrow [x,y]\in R$.
\end{enumerate}

\medskip
The product structure means that $R$ is a rectangle in the system of coordinates
given by the local invariant manifolds. Let $W^{s/u}(x,R):=W^{s/u}_{\rm loc}(x)\cap R$.
Regardless $W^{s/u}_{\rm loc}(x)$ are smooth manifolds,
since $\Omega(f)$ is usually a fractal set, then $W^{s/u}(x,R)$ are also usually fractal.
\medskip
It is easy to construct rectangles\footnote{Given $\rho>0$, let 
$W^{s/u}_\rho(x)=W^{s/u}_{\rm loc}(x)\cap B(x,\rho)$. If $x\in \Omega(f)$,
then $[W^u_\rho(x)\cap\Omega(f),W^s_\rho(x)\cap\Omega(f)]$ is a rectangle for
all $\rho>0$ small enough.}. Let $\mathfs R$ be a finite cover of $\Omega(f)$ by rectangles. 

\medskip
\noindent
{\sc Markov partition:} $\mathfs R$ is called a {\em Markov partition} for $f$ if it satisfies:
\begin{enumerate}[(1)]
\item {\sc Disjointness:} The elements of $\mathfs R$ can only intersect
at their boundaries\footnote{Boundaries are considered with respect to the relative topology
of $W^{s/u}(x,R)$, see \cite{Bowen-LNM}.}.
\item {\sc Markov property:} If $x\in R^*$ and $f(x)\in S^*$, then
$$
f(W^s(x,R))\subset W^s(f(x),S)\ \text{ and }\ f^{-1}(W^u(f(x),S))\subset W^u(x,R).
$$
\end{enumerate}

\medskip
If $\mathfs R$ only satisfies (2), we call it a {\em Markov cover}.
The two latter inclusions represent two Markov properties, one
for each invariant direction.
Geometrically, they assure that if two rectangles intersect, then the intersection occurs all the way
from one side to the other, with respect to the system of coordinates of the local invariant manifolds,
see Figure \ref{Figure-Markov-property}. We stress that, while here every rectangle has non-empty
interior, in the nonuniformly hyperbolic situation we will not be able to guarantee this. 
\begin{figure}[hbt!]
\centering
\def\svgwidth{12.9cm}
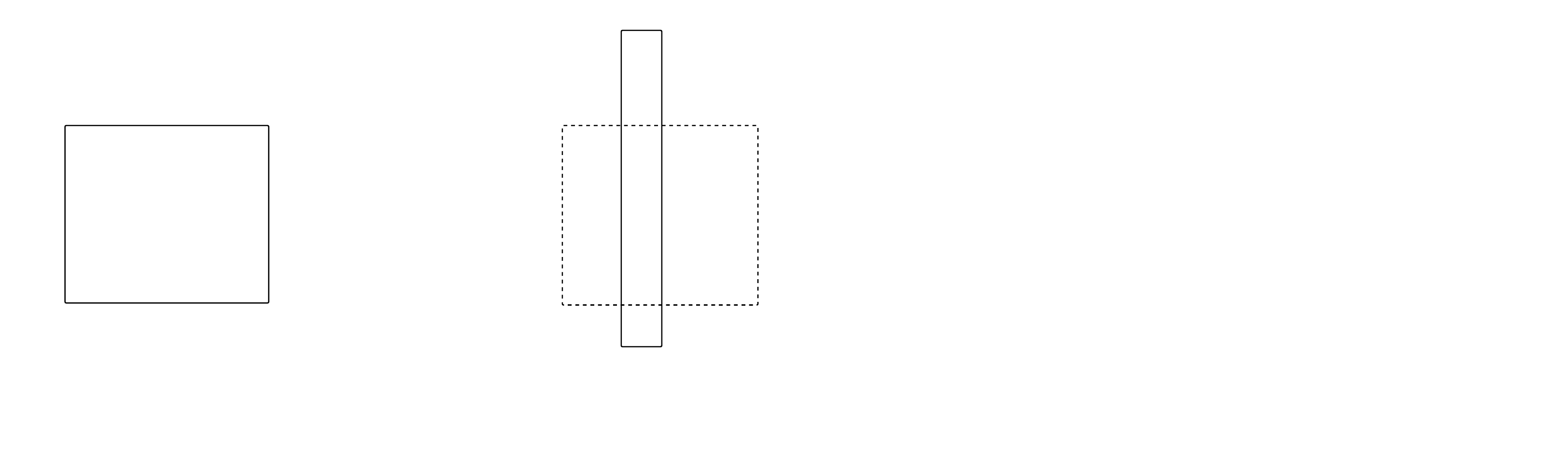
\caption{The Markov property: if $f(R)$ intersects $S$ non-trivially,
then $f(R)$ crosses $S$ completely all the way from one side to the other.}\label{Figure-Markov-property}
\end{figure}

\medskip
Now let $\vf:M\to M$ be an Axiom A flow. Recall the definitions 
of Subsection \ref{Subsec-NUH-dis1-definitions}.
Given an interval $I\subset \R$ and $Y\subset M$, let $\vf^I(Y):=\bigcup_{t\in I}\vf^t(Y)$.

\medskip
\noindent
{\sc Proper section:} A finite family $\mathfs M=\{B_1,\ldots,B_n\}$
is a {\em proper section of size $\alpha$} if there are closed differentiable discs $D_1,\ldots,D_n$ 
transverse to the flow direction s.t.:
\begin{enumerate}[(1)]
\item {\sc Closedness:} Each $B_i$ is a closed subset of $\Omega(\vf)$. 
\item {\sc Cover:} $\Omega(\vf)=\bigcup_{i=1}^n\vf^{[0,\alpha]}(B_i)$.
\item {\sc Regularity:} $B_i\subset {\rm int}(D_i)$ and $\overline{B_i^*}=B_i$, where $B_i^*$
is the interior of $B_i$ in the induced topology of $D_i\cap\Omega(\vf)$.
\item {\sc Partial order:} For $i\neq j$, at least one of the sets $D_i\cap \vf^{[0,4\alpha]}(D_j)$
and $D_j\cap\vf^{[0,4\alpha]}(D_i)$ is empty; in particular $D_i\cap D_j=\emptyset$.
\end{enumerate}

\medskip
For simplicity, denote $B_1\cup\cdots\cup B_n$ also by $\mathfs M$.
Let $f:\mathfs M\to\mathfs M$ be the
Poincar\'e return map of $\mathfs M$, and $\mathfrak t:\mathfs M\to (0,\infty)$
the return time function.
By properties (2) and (4),
$0<\inf\mathfrak t\leq \sup\mathfrak t\leq\alpha$.
By transversality, the stable/unstable directions
of $\vf$ project to stable/unstable directions of the Poincar\'e map $f$.
Also, local invariant manifolds of $f$ are projections, in the flow direction, of local invariant manifolds
of $\vf$, and we can similarly define the Smale bracket $[\cdot,\cdot]$ for $f$.

\medskip
The maps $f,\mathfrak t$ are not continuous, but they are continuous on the subset
$$
\mathfs M':=\left\{x\in\mathfs M:f^k(x)\in \bigcup B_i^*,\forall k\in\Z\right\}.
$$ 
Considering points in $\mathfs M'$ avoids
many problems, the first being the definition of the Markov property. We do not want to consider
a transition from $B_i$ to $B_j$ when $f(B_i)\cap B_j$ is a subset of $\partial B_j$.

\medskip
\noindent
{\sc Transitions:} We write $B_i\to B_j$ if there exists $x\in\mathfs M'$ s.t. $x\in B_i$,
$f(x)\in B_j$. When this happens, define $\mathfs T^s(B_i,B_j):=\overline{\{x\in\mathfs M':x\in B_i,f(x)\in B_j\}}$
and $\mathfs T^u(B_i,B_j):=\overline{\{y\in\mathfs M':y\in B_j,f^{-1}(y)\in B_i\}}$.

\medskip
\noindent
{\sc Markov section:} $\mathfs M$ is called a {\em Markov section}
of size $\alpha$ for $\vf$ if it is a proper section of size $\alpha$ with the following additional properties:
\begin{enumerate}[(1)]
\item[(5)] {\sc Product structure:} Each $B_i$ is a rectangle.
\item[(6)] {\sc Markov property:} If $B_i\to B_j$, then
\begin{align*}
x\in\mathfs T^s(B_i,B_j)&\Rightarrow W^s(x,B_i)\subset \mathfs T^s(B_i,B_j)\\
y\in\mathfs T^u(B_i,B_j)&\Rightarrow W^u(y,B_j)\subset\mathfs T^u(B_i,B_j).
\end{align*}
\end{enumerate}

\medskip
Above, $W^s(x,B_i)=\{[x,y]:y\in B_i\}$ is the intersection of the local stable manifold of $f$ at $x$
with $B_i$. The definition of $W^u(y,B_j)$ is similar.

\subsection{Markov partitions/sections generate symbolic models}\label{Subsec-MP-implies}

If $\mathfs R$ is a Markov partition for $f$, then $f$ has a symbolic model defined by:
\begin{enumerate}[$\circ$]
\item $\mathfs G=(V,E)$ with $V=\mathfs R$ and $E=\{R\to S:f(R^*)\cap S^*\neq\emptyset\}$.
\item $\pi:\Sigma\to\Omega(f)$ is defined for $\underline{R}=\{R_n\}_{n\in\Z}\in\Sigma$
by
$$
\{\pi(\underline{R})\}:=\bigcap_{n\geq 0} f^n(R_{-n})\cap\cdots\cap f^{-n}(R_n)
=\bigcap_{n\geq 0} \overline{f^n(R_{-n})\cap\cdots\cap f^{-n}(R_n)}.
$$
\end{enumerate}
Alternatively, $\pi(\underline{R})$ is the unique $x\in\Omega(f)$ s.t. $f^n(x)\in R_n$, $\forall n\in\Z$. 
The map $\pi$ is well-defined due to the Markov property and uniform hyperbolicity.
Clearly $f\circ\pi=\pi\circ\sigma$.
Additionally, $\pi$ is a finite-to-one continuous surjection that is one-to-one on a residual subset of $\Omega(f)$,
see \cite[Thm. 3.18]{Bowen-LNM} for details.

\medskip
If $\mathfs M$ is a Markov section for $\vf$, then $\vf$ has a symbolic model:
\begin{enumerate}[$\circ$]
\item $\mathfs G=(V,E)$ with $V=\mathfs M$ and $E=\{B_i\to B_j:\exists x\in\mathfs M'\text{ s.t. }
x\in B_i^*,f(x)\in B_j^*\}$.
\item $\pi:\Sigma\to \mathfs M$ is defined for $\underline{B}=\{B_n\}_{n\in\Z}\in\Sigma$
by
$$
\{\pi(\underline{B})\}:=\bigcap_{n\geq 0} f^n(B_{-n})\cap\cdots\cap f^{-n}(B_n)
=\bigcap_{n\geq 0} \overline{f^n(B_{-n})\cap\cdots\cap f^{-n}(B_n)}.
$$ 
\item $r:\Sigma\to\R$ is defined by $r:=\mathfrak t\circ\pi$.
\item $\pi_r:\Sigma_r\to \Omega(\vf)$ is defined by $\pi_r(\un B,t):=\vf^t[\pi(\un B)]$.
\end{enumerate}
Again, $\pi$ is well-defined because of the Markov property and uniform hyperbolicity,
and satisfies $f\circ\pi=\pi\circ\sigma$.
Also, $\pi$ is a finite-to-one continuous surjection that is one-to-one on $\mathfs M'$,
see \cite{Bowen-Symbolic-Flows} for details. 

\medskip
Therefore, to get symbolic models for uniformly hyperbolic symbolic systems,
it is enough to construct Markov partitions/sections.

\subsection{Markov partitions for two-dimensional hyperbolic toral automorphisms}\label{Subsec-Adler-Weiss}

This method, developed by Adler and Weiss \cite{Adler-Weiss-1967}, constructs 
finite Markov partitions for two-dimensional hyperbolic toral automorphisms. 
A particular case was constructed by Berg \cite{Berg-entropy-1967}.
Consider the cat map introduced in Example 1 of Subsection \ref{Subsec-UH-definitions},
and let $\vec{0}=(0,0)\in\mathbb T^2$. Clearly, $f(\vec{0})=\vec{0}$. Since the matrix $A$ is
hyperbolic, $\vec{0}$ has two eigendirections, call $W^s$ the contracting one and 
$W^u$ the expanding one. By linearity, $W^s$ and $W^u$ are
the (global) stable and unstable manifolds of $\vec{0}$. 

\medskip
The idea to obtain a Markov partition
is to construct a fundamental domain of $\mathbb T^2$ whose sides are pieces of $W^s$ and $W^u$,
and then subdivide this domain into finitely many rectangles satisfying the Markov property. 
In Figure \ref{figure-fundamental-domain},
we draw one possibility for the tesselation of $\R^2$ by one such fundamental domain.
\begin{figure}[hbt!]
\centering
\def\svgwidth{9cm}
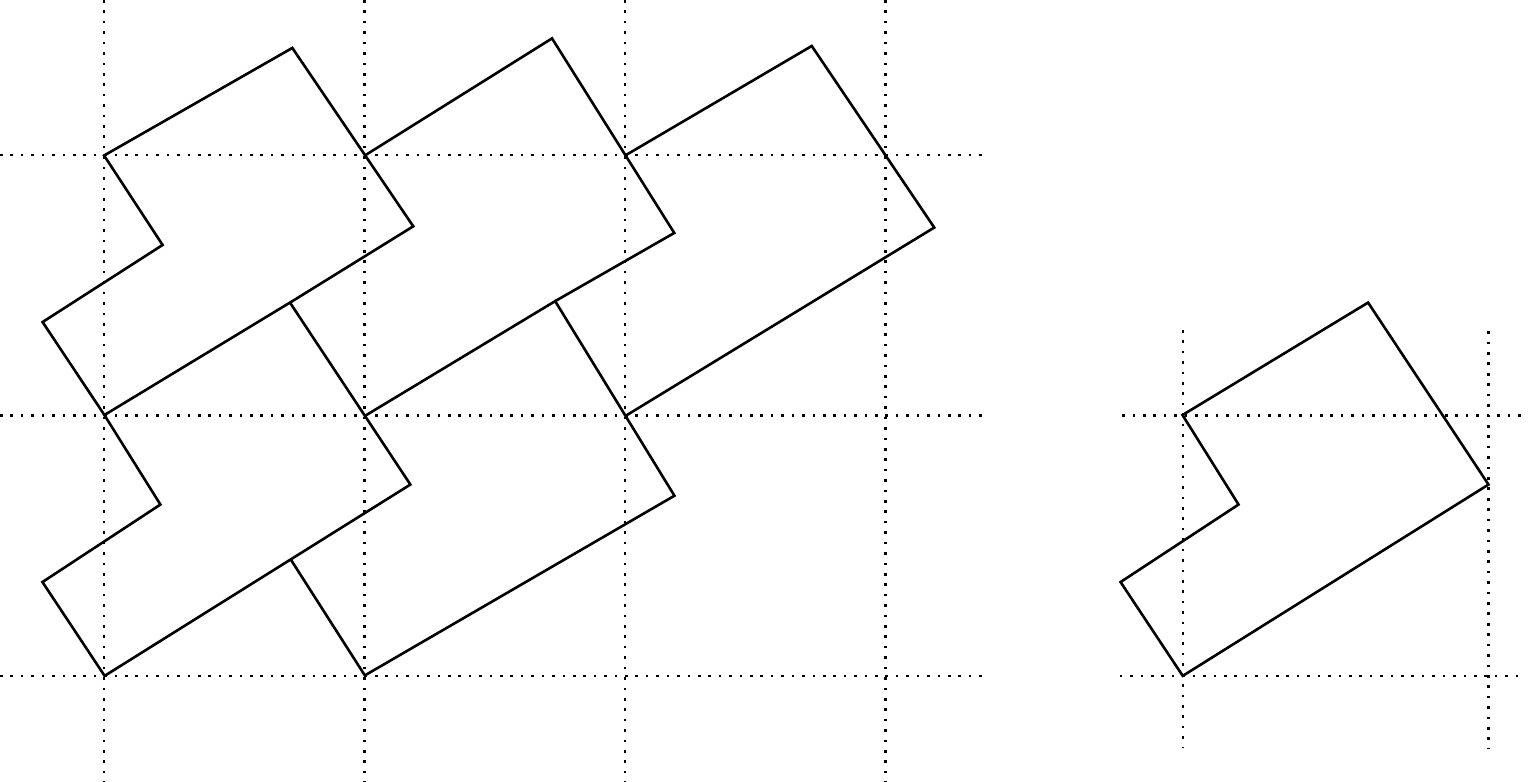
\caption{A tesselation of $\R^2$ by fundamental domains whose
sides are parallel to $W^s$ and $W^u$.}\label{figure-fundamental-domain}
\end{figure}
For a general two-dimensional hyperbolic toral automorphism, the construction
of the fundamental domain consists of two steps.

\medskip
\noindent
{\sc Step 1.} Take a cover $\mathfs R$ of $\T^2$ by finitely many rectangles whose sides
belong to $W^s$ and $W^u$ s.t. that every non-trivial intersection $f(R^*)\cap S^*$ is connected,
i.e. $f(R^*)$ does not intersect $S^*$ ``twice''.

\medskip
\noindent
{\sc Step 2.} Since $f$ contracts $W^s$, which is the stable manifold of the fixed
point $\vec{0}$, the stable boundary of $f(\mathfs R)$ is contained in $W^s$,
while its unstable boundary contains $W^u$. Partition $\mathfs R$ further by
adding the pre-image of the unstable segments of $f(\mathfs R)$.

\medskip
The final cover $\mathfs R$ is a finite Markov partition, see
\cite{Adler-Weiss-Similarity-Toral-Automorphisms} for details.
The projection map $\pi:\Sigma\to\T^2$ is a finite-to-one continuous surjection that is one-to-one
on the set $\{x\in\T^2:f^n(x)\in\bigcup_{R\in\mathfs R}R^*,\forall n\in\Z\}$.

\medskip
In our example, it is enough to divide the fundamental domain into three rectangles $R_1,R_2,R_3$
as in Figure \ref{figure-graph}. We leave as exercise to show that the images
$f(R_1),f(R_2),f(R_3)$ are as depicted in Figure \ref{figure-graph}, so that 
$\{R_1,R_2,R_3\}$ is a Markov partition.
The graph defining the TMS is also depicted in Figure \ref{figure-graph}. 
\begin{figure}[hbt!]
\centering
\def\svgwidth{12.5cm}
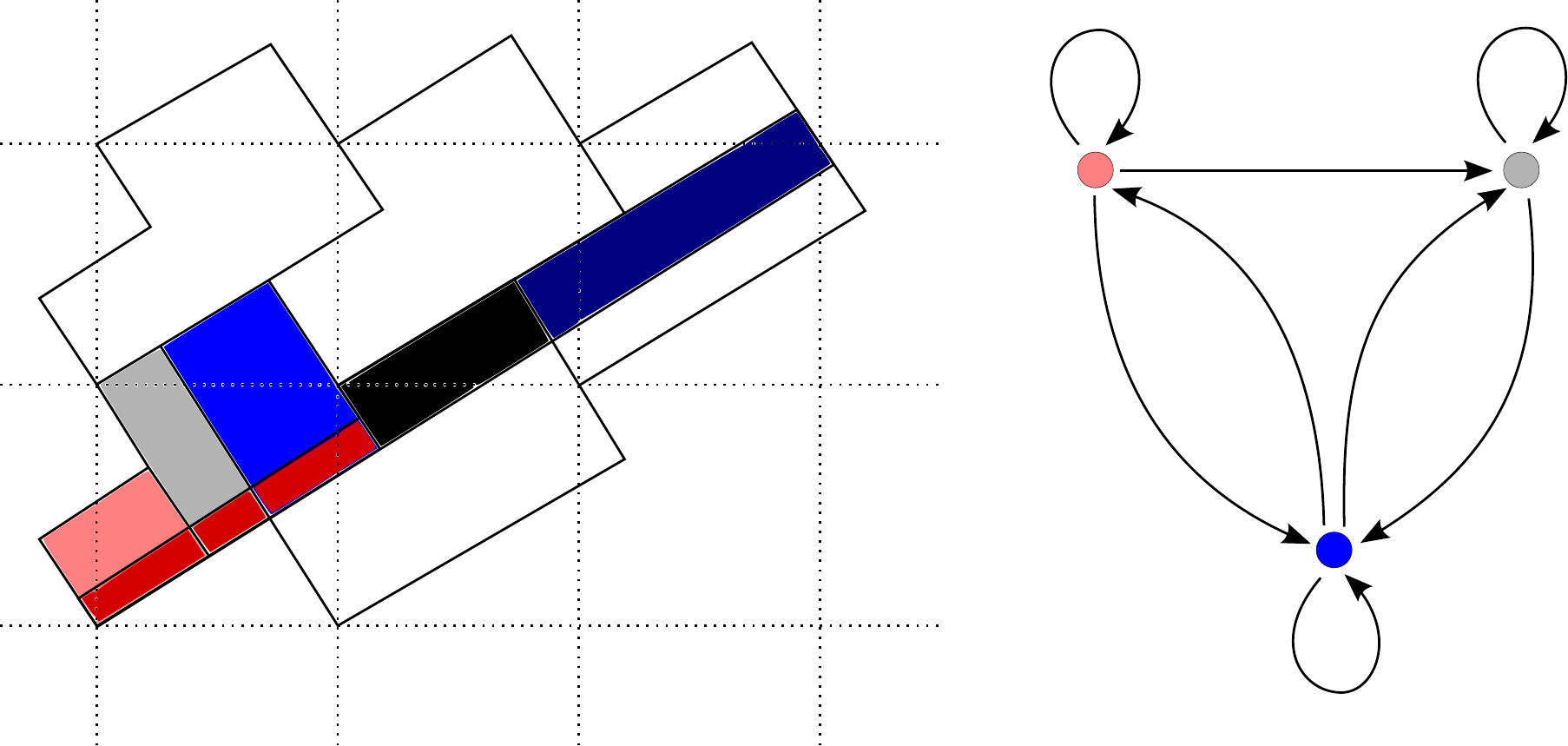
\caption{A Markov partition for the cat map by three rectangles $R_1,R_2,R_3$, and the graph
defining the respective TMS.}\label{figure-graph}
\end{figure}

\medskip
For higher dimensional hyperbolic toral automorphisms, a similar construction works, but
there is an important difference from the two-dimensional case: the boundary of a Markov
partition is not smooth \cite{Bowen-Not-smooth}.

\subsection{The method of successive approximations for diffeomorphisms}\label{Subsec-successive-diffeo}

This method, due to Sina{\u\i} \cite{Sinai-MP-U-diffeomorphisms,Sinai-Construction-of-MP},
provides Markov partitions for Anosov diffeomorphisms.
It was later modified by Bowen to also work for Axiom A diffeomorphisms \cite{Bowen-MP-Axiom-A}.
The construction consists of three main steps. Below, we explain them for Anosov diffeomorphisms.

\medskip
\noindent
{\sc Step 1 (Coarse graining).} Let $\mathfs T=\{T_i\}$ be a finite cover of $M$ by
rectangles (as we have argued in Subsection \ref{Subsec-UH-MP}, it is easy to build one such cover).

\medskip
\noindent
{\sc Step 2 (Successive Approximations).} Recursively define families $\mathfs S_k=\{S_{i,k}\}$ and
$\mathfs U_k=\{U_{i,k}\}$ of rectangles as follows:
\begin{enumerate}[$\circ$]
\item $S_{i,0}=U_{i,0}=T_i$.
\item If $\mathfs S_k,\mathfs U_k$ are defined, let
\begin{eqnarray*}
S_{i,k+1}:=&\displaystyle\bigcup_{x\in S_{i,k}}\{[y,z]:y\in S_{i,k}, z\in f(W^s(f^{-1}(x),S_{j,k}))\text{ for }f^{-1}(x)\in S_{j,k}\}\\
U_{i,k+1}:=&\displaystyle\bigcup_{x\in U_{i,k}}\{[z,y]:y\in U_{i,k}, z\in f^{-1}(W^u(f(x),U_{j,k}))\text{ for }f(x)\in U_{j,k}\}.
\end{eqnarray*}
\end{enumerate}
Let $S_i:=\bigcup_{k\geq 0}S_{i,k}$, $U_i:=\bigcup_{k\geq 0}U_{i,k}$, and $Z_i:=[\overline{U_i},\overline{S_i}]$.
Then $\mathfs Z=\{Z_i\}$ is a Markov cover.

\medskip
Let us understand the above definitions. Representing the stable direction by the horizontal
direction, we identify what are the horizontal components 
of $S_{i,k+1}\backslash S_{i,k}$, see Figure \ref{figure-successive}.
\begin{figure}[hbt!]
\centering
\def\svgwidth{9cm}
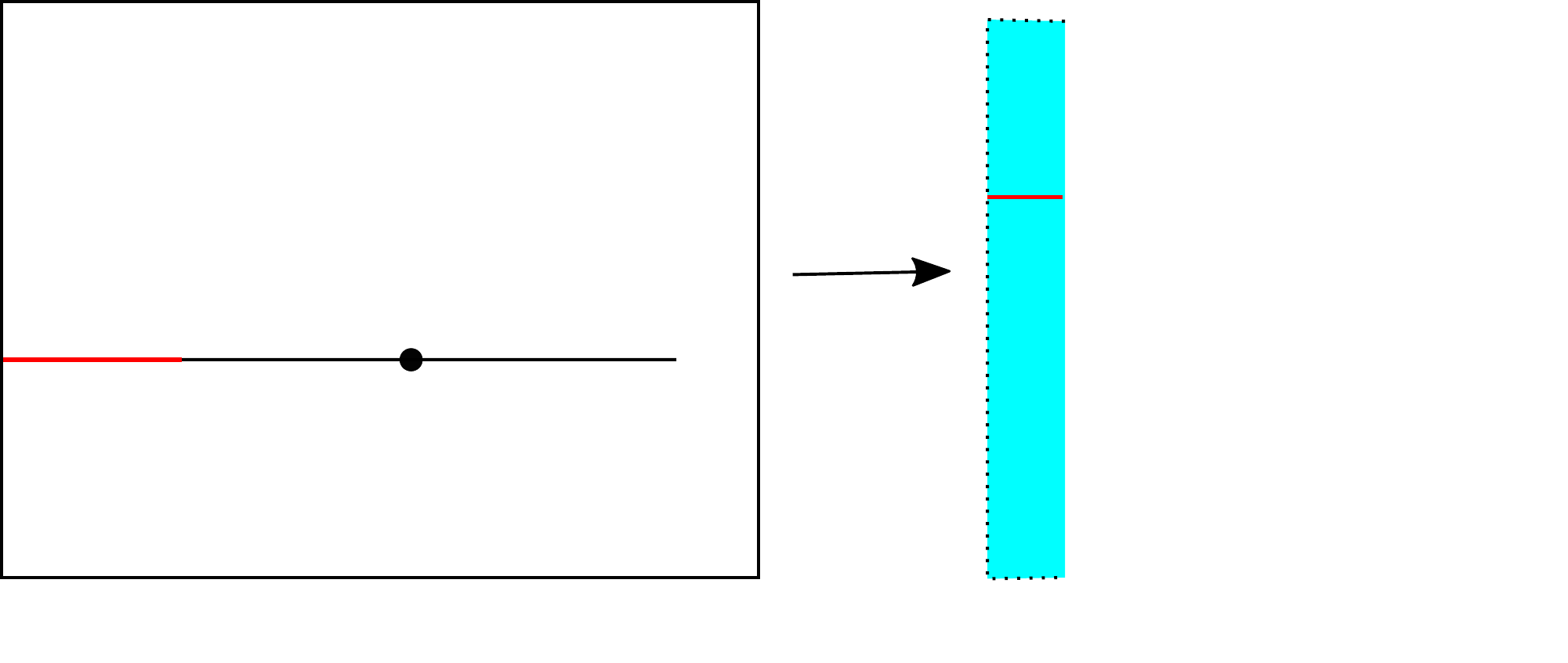
\caption{The horizontal component of $S_{i,k+1}\backslash S_{i,k}$ is the union of sets of the form $f(A)$,
where $A$ is a horizontal subset of $S_{j,k}\backslash S_{j,k-1}$.
Above, $A$ is composed of the two ticker segments on the left figure and the added set is in grey
in the right figure.}\label{figure-successive}
\end{figure}
Start observing that the horizontal component of $S_{i,1}\backslash S_{i,0}$ is the union of sets
of the form $f(A)$, where $A=W^s(f^{-1}(x),S_{j,0})\backslash f^{-1}(W^s(x,S_{i,0}))$.
Each such $A$ has diameter less than 1, hence $f(A)$ has diameter 
less than $\kappa$, thus $S_{i,1}$ equals the union of $S_{i,0}$ and a
set of horizontal diameter less than $\kappa$.  
Similarly, the horizontal component of $S_{i,2}\backslash S_{i,1}$ is the union of sets of the form
$f(A)$, where $A$ is a horizontal subset of $S_{j,1}\backslash S_{j,0}$, therefore
$S_{i,2}$ equals the union of $S_{i,1}$ and a
set of horizontal diameter less than $\kappa^2$.
By induction, $S_{i,k+1}$ equals the union of $S_{i,k}$ and a
set of horizontal diameter less than $\kappa^{k+1}$. This shows that each $S_i$ 
is well-defined, and the same occurs for each $U_i$.

\medskip
\noindent
{\sc Step 3 (Bowen-Sina{\u\i} refinement).} To destroy non-trivial intersections, refine $\mathfs Z$ as follows.
For $Z_i$, let $I_i=\{j:Z_i^*\cap Z_j^*\neq\emptyset\}$.
For $j\in I_i$, let $\mathfs E_{ij}=$ cover of $Z_i$ by rectangles (see Figure \ref{refinement}):
\begin{align*}
E_{ij}^{su}&:=\overline{\{x\in Z_i^*: W^s(x,Z_i)\cap Z_j^*\neq\emptyset,
W^u(x,Z_i)\cap Z_j^*\neq\emptyset\}}\\
E_{ij}^{s\emptyset}&:=\overline{\{x\in Z_i^*: W^s(x,Z_i)\cap Z_j^*\neq\emptyset,
W^u(x,Z_i)\cap Z_j=\emptyset\}}\\
E_{ij}^{\emptyset u}&:=\overline{\{x\in Z_i^*: W^s(x,Z_i)\cap Z_j=\emptyset,
W^u(x,Z_i)\cap Z_j^*\neq\emptyset\}}\\
E_{ij}^{\emptyset\emptyset}&:=\overline{\{x\in Z_i^*: W^s(x,Z_i)\cap Z_j=\emptyset,
W^u(x,Z_i)\cap Z_j=\emptyset\}}.
\end{align*}
\begin{figure}[hbt!]
\centering
\def\svgwidth{12.5cm}
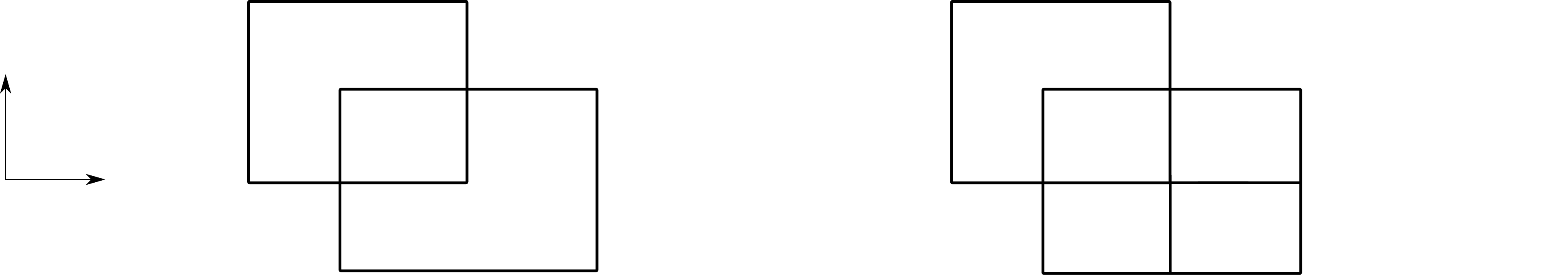\caption{$\mathfs E_{ij}=\{E_{ij}^{su},E_{ij}^{s\emptyset},E_{ij}^{\emptyset u},E_{ij}^{\emptyset\emptyset}\}$ is a cover of $Z_i$ by rectangles.}\label{refinement}
\end{figure}
Hence $\mathfs R:=$ cover defined by $\{\mathfs E_{ij}:Z_i\in \mathfs Z,j\in I_i\}$
is a Markov partition for $f$, and the induced $\pi:\Sigma\to M$ is a finite-to-one continuous surjection
that is one-to-one on $\{x\in M:f^n(x)\in \bigcup_{R\in\mathfs R}R^*,\forall n\in\Z\}$.

\subsection{The method of successive approximations for flows}\label{Subsec-successive-flow}

Ratner applied the method of successive approximations for three dimensional
Anosov flows \cite{Ratner-MP-three-dimensions}. Later she extended it for
higher dimensional Anosov flows \cite{Ratner-MP-n-dimensions},
and Bowen used it for Axiom A flows \cite{Bowen-Symbolic-Flows}. Below, we follow Bowen's
construction. As usual, the main difficulty when dealing with flows is the presence of discontinuities
for the Poincar\'e return map.

\medskip
Consider a proper section $\mathfs C$. Since the stable/unstable directions
of $\vf$ project to stable/unstable directions of the Poincar\'e map $f$,
it is easy to construct rectangles inside $\mathfs C$. Let $\mathfs R$ be a cover
of $\mathfs C\cap\Omega(\vf)$ by rectangles.
To apply successive approximations (Step 2 of the last subsection), proceed as follows:
\begin{enumerate}[$\circ$]
\item Take $L>0$ large s.t. for every $x\in R\in\mathfs R$ there are $C^+,C^-\in\mathfs C$ s.t.
$\vf^L(W^s_{\rm loc}(x))\subset \vf^{[-\alpha,\alpha]}(C^+)$ and
$\vf^{-L}(W^u_{\rm loc}(x))\subset \vf^{[-\alpha,\alpha]}(C^-)$. The existence of $L$ follows
from the uniform hyperbolicity of $\vf$.
\item For each such $x$, take a neighborhood $V\ni x$ small enough s.t. 
$\vf^L(V)\subset \vf^{[-\alpha,\alpha]}(C^+)$ and $\vf^{-L}(V)\subset \vf^{[-\alpha,\alpha]}(C^-)$,
and define $f^+_{V}:V\to C^+$ and $f^-_{V}:V\to C^-$ by:
$$
f^+_V:=(\text{projection to }C^+)\circ\vf^L\, ,\ \  f^-_V:=(\text{projection to }C^-)\circ\vf^{-L}.
$$
\item Pass to a finite collection of neighborhoods $V$ as above, and 
apply the method of successive approximations to the maps $f^+_V,f^-_V$.
The resulting cover by rectangles has a Markov property: for each $x\in\mathfs R$ there
are $k,\ell>0$ s.t. $x$ satisfies a stable Markov property at $f^k(x)$ and an unstable
Markov property at $f^{-\ell}(x)$.
\item The values of $k,\ell$ are uniformly bounded by some $N>0$.
\item To get the Markov property for $f$, apply a refinement procedure along
the iterates $-N,\ldots,N$ of $f$. The resulting partition $\mathfs M$ is a Markov section for $\vf$.
\end{enumerate}
See the details in \cite{Bowen-Symbolic-Flows}.

\subsection{The method of pseudo-orbits}\label{Subsec-pseudo-orbits}

Bowen provided an alternative method to construct Markov partitions for 
Axiom A diffeomorphisms \cite{Bowen-LNM}. His idea was to use the theory of pseudo-orbits
and shadowing, which explores the expected richness on the orbit structure
of uniformly hyperbolic systems. These notions appeared in the qualitative theory
of structural stability for uniformly hyperbolic systems. Indeed, Anosov considered
a version of pseudo-orbits for flows, which he called $\ve$--trajectories, and used them
to prove that Anosov flows are structurally stable, see  \cite[Thm. 1]{Anosov-Certain}.

\medskip
Let $f:M\to M$ be an invertible map. An orbit of $f$ is a sequence $\{x_n\}_{n\in\Z}$ s.t. $f(x_n)=x_{n+1}$
for all $n\in\Z$, while a {\em pseudo-orbit} is a sequence  $\{x_n\}_{n\in\Z}$ s.t.
$f(x_n)\approx x_{n+1}$ for all $n\in\Z$. In other words, a pseudo-orbit is an orbit up to small
errors at each iteration. This is exactly what a computer returns when we try to iterate a map:
due to roundoff errors, the sequence is not a real orbit but just a pseudo-orbit.
Since a hyperbolic matrix remains hyperbolic after a small perturbation, Theorem 
\ref{Thm-Lyapunov-chart} holds for pseudo-orbits, as we will see below: changing $f(x)$ to 
some nearby $y$, we can represent $f$ in the Lyapunov charts $\Psi_x$ and $\Psi_y$
and still obtain a small perturbation of a hyperbolic matrix. This is the main tool to introduce
the symbolic model. To maintain consistency with Part \ref{Part-1}, we continue assuming that $M$ is a surface.

\subsubsection{Pseudo-orbits}

Recall from Section \ref{Subsec-UH-charts} the definition of Lyapunov charts: for 
$\ve>0$ is small enough we let $Q=\ve^{3/\beta}$ and define, for each $x\in M$, its
Lyapunov chart $\Psi_x:[-Q,Q]^2\to M$. Recall that $\Psi_x$ is $2$--Lipschitz
and its inverse is $2\mathfs L$--Lipschitz.
The splitting $E^s\oplus E^u$ is continuous,
so there is $\delta=\delta(\ve)>0$ s.t. if $d(x,y)<\delta$ then 
$\|\Psi_ y^{-1}\circ \Psi_x-{\rm Id}\|_{1+\beta/2}<\ve^3$, where the norm is taken
in $[-Q,Q]^2$.\footnote{The composition $\Psi_y^{-1}\circ\Psi_x$ is well-defined
in $[-Q,Q]^2$. To see this, fix $\ve>0$ small enough so that each $\Psi_x$ is well-defined
in the larger domain $[-10\mathfs LQ,10\mathfs LQ]^2$.
Taking $\delta=\delta(\ve)>0$ small enough, if $d(x,y)<\delta$ then
$\Psi_x([-Q,Q]^2)\subset B(x,4Q)\subset B(y,5Q)\subset \Psi_y([-10\mathfs LQ,10\mathfs LQ]^2)$.
}

\medskip
\noindent
{\sc $\ve$--overlap:} Two Lyapunov charts $\Psi_x,\Psi_y$ are said to {\em $\ve$--overlap} if
$d(x,y)<\delta$. When this happens, we write $\Psi_x\overset{\ve}{\approx}\Psi_y$.

\medskip
Hence, if two points are close enough, the charts they define are essentially
the same.
This notation is somewhat redundant for uniformly
hyperbolic systems, but we prefer to state it as above because it helps understanding the
symbolic model and the difficulties when we consider nonuniformly 
hyperbolic systems.

\medskip
If $\Psi_{f(x)}\overset{\ve}{\approx}\Psi_y$, then we can write $f$ in the Lyapunov charts $\Psi_x$
and $\Psi_y$ as $f_{x,y}=\Psi_y^{-1}\circ f\circ \Psi_x$. Since
$f_{x,y}=\Psi_{y}^{-1}\circ\Psi_{f(x)}\circ f_x=:g\circ f_x$,
where $g:=\Psi_{y}^{-1}\circ\Psi_{f(x)}$ is a small perturbation of the identity,
the map $f_{x,y}$ is again a small perturbation of a hyperbolic matrix. The same reasoning
happens if $\Psi_{f^{-1}(y)}\overset{\ve}{\approx}\Psi_x$, in which case
$f_{x,y}^{-1}=\Psi_x^{-1}\circ f^{-1}\circ \Psi_y$, the representation of 
$f^{-1}$ in the Lyapunov charts $\Psi_x$ and $\Psi_y$, is also a small perturbation of a hyperbolic
matrix. This is summarized in the next theorem, which is the version of Theorem \ref{Thm-Lyapunov-chart}
in the present context.

\begin{theorem}\label{Thm-UH-pseudo-orbit}
The following hold for all $\ve>0$ small enough. If $\Psi_{f(x)}\overset{\ve}{\approx}\Psi_y$,
then $f_{x,y}$ is well-defined in $[-Q,Q]^2$ and can be written as
$f_{x,y}(v_1,v_2)=(Av_1+h_1(v_1,v_2),Bv_2+h_2(v_1,v_2))$ where: 
\begin{enumerate}[{\rm (1)}]
\item $|A|,|B^{-1}|<\lambda$, cf. Lemma \ref{Lemma-linear-reduction}.
\item $\|h_i\|_{1+\beta/3}<\ve$ for $i=1,2$.
\end{enumerate}
If $\Psi_{f^{-1}(y)}\overset{\ve}{\approx}\Psi_x$ then a
similar statement holds for $f_{x,y}^{-1}$.
\end{theorem}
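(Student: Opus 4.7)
The strategy is the decomposition $f_{x,y}=g\circ f_x$, where $g:=\Psi_y^{-1}\circ\Psi_{f(x)}$ and $f_x:=\Psi_{f(x)}^{-1}\circ f\circ\Psi_x$. The overlap hypothesis $\Psi_{f(x)}\overset{\ve}{\approx}\Psi_y$ says exactly that $\|g-\mathrm{Id}\|_{1+\beta/2}<\ve^{3}$, so $g$ is a tiny perturbation of the identity, while $f_x$ is the map already analysed in Theorem \ref{Thm-Lyapunov-chart}. The whole content of the theorem is therefore that composing the hyperbolic-like $f_x$ with a small identity perturbation $g$ preserves the hyperbolic-like form, at the cost of a harmless drop in the H\"older exponent from $\beta/2$ to $\beta/3$.

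First I would check that $f_{x,y}$ is well defined on $[-Q,Q]^{2}$. By Theorem \ref{Thm-Lyapunov-chart}, the image $f_x([-Q,Q]^{2})$ is contained in a ball of radius $O(Q)$ around the origin of $\R^{2}$. By the footnote accompanying the $\ve$-overlap definition, the charts $\Psi_y$ and $\Psi_{f(x)}$ are in fact defined on the larger domain $[-10\mathfs LQ,10\mathfs LQ]^{2}$, and so is the comparison $g$; in particular $g\circ f_x$ is defined on $[-Q,Q]^{2}$. Writing $g(u_1,u_2)=(u_1+g_1(u),u_2+g_2(u))$ with $\|g_i\|_{1+\beta/2}<\ve^{3}$, and $f_x(v)=(Av_1+\tilde h_1(v),Bv_2+\tilde h_2(v))$ with $|A|,|B^{-1}|<\lambda$ and $\|\tilde h_i\|_{1+\beta/2}<\ve$ from Theorem \ref{Thm-Lyapunov-chart}, one obtains
\[
f_{x,y}(v_1,v_2)=\bigl(Av_1+\tilde h_1(v)+g_1(f_x(v)),\,Bv_2+\tilde h_2(v)+g_2(f_x(v))\bigr).
\]
This realises the desired form with $A,B$ as in Lemma \ref{Lemma-linear-reduction} and
$h_i(v):=\tilde h_i(v)+g_i(f_x(v))$ for $i=1,2$.

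The remaining step, which is the only substantive one, is the bound $\|h_i\|_{1+\beta/3}<\ve$. The $C^{0}$ and $C^{1}$ parts are routine from $\|\tilde h_i\|_{1+\beta/2}<\ve$, $\|g_i\|_{1+\beta/2}<\ve^{3}$ and the chain-rule inequality $\|d(g_i\circ f_x)\|_{0}\le\|dg_i\|_{0}\,\|df_x\|_{0}\le\ve^{3}\cdot O(\mathfs L)$. For the $\beta/3$-H\"older bound on $d(g_i\circ f_x)$, the standard composition estimate first produces a $\beta/2$-H\"older bound, which I then convert to a $\beta/3$-H\"older bound on $[-Q,Q]^{2}$ via the trivial inequality $\|v_1-v_2\|^{\beta/2}\le(4Q)^{\beta/6}\|v_1-v_2\|^{\beta/3}$. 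Since $Q=\ve^{3/\beta}$, this yields the gain $(4Q)^{\beta/6}=O(\ve^{1/2})$, which combined with the $\ve^{3}$ from $g$ easily absorbs all constants coming from $f_x$.

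The assertion about $f_{x,y}^{-1}$ is completely symmetric: one writes $f_{x,y}^{-1}=\widetilde g\circ(f^{-1})_y$ with $\widetilde g:=\Psi_x^{-1}\circ\Psi_{f^{-1}(y)}$, uses the hypothesis $\Psi_{f^{-1}(y)}\overset{\ve}{\approx}\Psi_x$ to get $\|\widetilde g-\mathrm{Id}\|_{1+\beta/2}<\ve^{3}$, and applies the $f^{-1}$-part of Theorem \ref{Thm-Lyapunov-chart}. The main obstacle is the H\"older estimate for $g_i\circ f_x$: one must carefully track how the smallness $\ve^{3}$ of $g$ and the smallness $\ve^{1/2}$ coming from the H\"older-exponent downgrade interact with the $O(\mathfs L)$ Lipschitz and H\"older constants of $f_x$ to produce an overall bound strictly smaller than $\ve$, which is what forces the exact choice $Q=\ve^{3/\beta}$.
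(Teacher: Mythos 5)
Your proposal is correct and follows precisely the decomposition the paper itself sketches immediately before the theorem statement: write $f_{x,y}=g\circ f_x$ with $g=\Psi_y^{-1}\circ\Psi_{f(x)}$, use the $\ve$-overlap to get $\|g-\mathrm{Id}\|_{1+\beta/2}<\ve^3$, use Theorem \ref{Thm-Lyapunov-chart} for $f_x$, and drop the H\"older exponent from $\beta/2$ to $\beta/3$ to keep the final estimate of size $\ve$. Your identification of the gain $(4Q)^{\beta/6}=O(\ve^{1/2})$ coming from $Q=\ve^{3/\beta}$ is exactly the quantitative mechanism by which the downgrade absorbs the composition constants, which is what the paper alludes to in the remark immediately following the theorem.
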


Observe that, in contrast to Theorem \ref{Thm-Lyapunov-chart}, above we can only control 
the $C^{1+\beta/3}$ norm. This slight decrease is necessary to keep the estimates of size $\ve$.

\medskip
\noindent
{\sc Edge:} We write $\Psi_x\overset{\ve}{\to} \Psi_y$ if $\Psi_{f(x)}\overset{\ve}{\approx}\Psi_{y}$
and $\Psi_{f^{-1}(y)}\overset{\ve}{\approx}\Psi_{x}$.

\medskip
Conditions $\Psi_{f(x)}\overset{\ve}{\approx}\Psi_{y}$ and $\Psi_{f^{-1}(y)}\overset{\ve}{\approx}\Psi_{x}$
are called {\em nearest neighbor conditions}.

\medskip
\noindent
{\sc Pseudo-orbit:} A sequence of Lyapunov charts $\{\Psi_{x_n}\}_{n\in\Z}$
is called a {\em pseudo-orbit} if $\Psi_{x_n}\overset{\ve}{\to}\Psi_{x_{n+1}}$ for all $n\in\Z$.

\medskip
We point out that classically a pseudo-orbit is a sequence of points instead
of Lyapunov charts, but so far these notions coincide, since
$\Psi_{x_n}\overset{\ve}{\to}\Psi_{x_{n+1}}$ is equivalent to $d(f(x_n),x_{n+1})<\delta$ and
$d(f^{-1}(x_{n+1}),x_n)<\delta$.

\subsubsection{Graph transforms}

Assume that $\Psi_x\overset{\ve}{\to}\Psi_y$. Since $f_{xy}^{\pm 1}$ are perturbations of hyperbolic
matrices, we can proceed as in Subsection \ref{Subsect-UH-graph} and define
graph transforms between admissible manifolds at $\Psi_x$ and $\Psi_y$.
First, we need to redefine admissibility. For instance, we can no longer
require $F(0)=0$, since this property is not preserved by the maps $f_{x,y}^{\pm 1}$ (unless
$f(x)=y$). For ease of exposition, we continue not prescribing the precision in this definition.

\medskip
\noindent
{\sc Admissible manifolds:} An {\em $s$--admissible manifold} at $\Psi_x$ is a set
of the form $V^s=\Psi_x\{(t,F(t)):|t|\leq Q\}$, where $F:[-Q,Q]\to\R$ is a $C^1$ function s.t.
$F(0)\approx 0$ and $\|F'\|_{C^0}\approx 0$. Similarly, a {\em $u$--admissible manifold} at
$\Psi_x$ is a set of the form $V^u=\Psi_x\{(G(t),t):|t|\leq Q\}$, where $G:[-Q,Q]\to\R$ is a $C^1$
function s.t. $G(0)\approx 0$ and $\|G'\|_{C^0}\approx 0$.

\medskip
Let $\mathfs M^s_x,\mathfs M^u_x$ be the space of all $s,u$--admissible manifolds at $\Psi_x$
respectively, and introduce metrics on $\mathfs M^{s/u}_x$ as before. Assume that $\Psi_x\overset{\ve}{\to}\Psi_y$.

\medskip
\noindent
{\sc Graph transforms $\mathfs F^s_{x,y},\mathfs F^u_{x,y}$:} 
The {\em stable graph transform} $\mathfs F^s_{x,y}:\mathfs M^s_y\to\mathfs M^s_x$
is the map that sends $V^s\in \mathfs M^s_y$ to the unique $\mathfs F^s_{x,y}[V^s]\in \mathfs M^s_x$
with representing function $F$ s.t. $\Psi_x\{(t,F(t)):|t|\leq Q\}\subset f^{-1}(V^s)$.
Similarly, the {\em unstable graph transform}
$\mathfs F^u_{x,y}:\mathfs M^u_x\to\mathfs M^u_y$ is the map that sends
$V^u\in \mathfs M^u_x$ to the unique $\mathfs F^u_{x,y}[V^u]\in \mathfs M^u_y$
with representing function $G$ s.t. $\Psi_y\{(G(t),t):|t|\leq Q\}\subset f(V^u)$.

\medskip
Again, the hyperbolicity of $f_{x,y}^{\pm 1}$ implies the following result.

\begin{theorem}\label{Thm-UH-graph-pseudo-orbit}
If $\Psi_x\overset{\ve}{\to}\Psi_y$, then $\mathfs F^s_{x,y}$ and $\mathfs F^u_{x,y}$ are well-defined contractions.
\end{theorem}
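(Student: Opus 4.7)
The plan is to adapt the proof of Theorem \ref{Thm-UH-graph-transform} to the pseudo-orbit setting, using Theorem \ref{Thm-UH-pseudo-orbit} as a substitute for Theorem \ref{Thm-Lyapunov-chart}. Fix an edge $\Psi_x\overset{\ve}{\to}\Psi_y$ and write $f_{x,y}(v_1,v_2)=(Av_1+h_1(v_1,v_2),Bv_2+h_2(v_1,v_2))$ with $|A|<\lambda$, $|B^{-1}|<\lambda$, and $\|h_i\|_{1+\beta/3}<\ve$. I focus on $\mathfs F^u_{x,y}$, since the argument for $\mathfs F^s_{x,y}$ is entirely symmetric, using $f_{x,y}^{-1}$ in place of $f_{x,y}$.

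First I would check that $\mathfs F^u_{x,y}[V^u]$ is well-defined for every $V^u\in\mathfs M^u_x$. If $G$ is the representing function of $V^u$, then $f(V^u)$ read in the chart $\Psi_y$ is the image of $t\in[-Q,Q]\mapsto f_{x,y}(G(t),t)=(\varphi(t),\psi(t))$, where $\varphi(t)=AG(t)+h_1(G(t),t)$ and $\psi(t)=Bt+h_2(G(t),t)$. Since $|\psi'(t)|\geq|B|-\ve(1+\|G'\|_{C^0})>1$ for $\ve$ small, $\psi$ is an expanding diffeomorphism onto its image. The nearest-neighbor conditions built into $\Psi_x\overset{\ve}{\to}\Psi_y$ force $\psi(0)=h_2(G(0),0)$ to be much smaller than $Q$, so combined with the expansion factor $\geq|B|>\lambda^{-1}$ across the interval of length $2Q$, one concludes $\psi([-Q,Q])\supset[-Q,Q]$. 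Setting $\widetilde G:=\varphi\circ\psi^{-1}$ gives a candidate representing function on $[-Q,Q]$.

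Next I would verify the admissibility conditions $\widetilde G(0)\approx 0$ and $\|\widetilde G'\|_{C^0}\approx 0$. The chain-rule formula $\widetilde G'=(\varphi'/\psi')\circ\psi^{-1}$, together with the bounds $|\varphi'(t)|\leq|A|\|G'\|_{C^0}+\ve(1+\|G'\|_{C^0})$ and $|\psi'(t)|\geq|B|-2\ve$, yields $\|\widetilde G'\|_{C^0}\leq\lambda^2\|G'\|_{C^0}+O(\ve)$, which is still $\approx 0$. Similarly, $\widetilde G(0)=\varphi(\psi^{-1}(0))$ is small because $\psi^{-1}(0)$ is close to $0$ and $\varphi(0)=AG(0)+h_1(G(0),0)$ is small (again using the nearest-neighbor condition to control $h_1(G(0),0)$). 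Uniqueness of $\widetilde G$ with graph contained in $\Psi_y^{-1}(f(V^u))$ is automatic from the graph interpretation.

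Finally, to show $\mathfs F^u_{x,y}$ is a contraction in the $C^0$ metric, take $V^u_1,V^u_2\in\mathfs M^u_x$ with representing functions $G_1,G_2$, and let $\widetilde G_i=\varphi_i\circ\psi_i^{-1}$. A direct estimate using the Lipschitz bound $|A|+\ve$ on $G\mapsto\varphi$ and the Lipschitz bound $O(\ve/(|B|-2\ve))$ on $G\mapsto\psi^{-1}$ gives $\|\widetilde G_1-\widetilde G_2\|_{C^0}\leq(\lambda+O(\ve))\|G_1-G_2\|_{C^0}$, which is a strict contraction for $\ve$ small. The main obstacle is the first step—verifying $\psi([-Q,Q])\supset[-Q,Q]$—since this is where the nearest-neighbor conditions in $\Psi_x\overset{\ve}{\to}\Psi_y$ are genuinely needed: they are what ensures that the constant-term perturbations $f_{x,y}(0)$ and $f_{x,y}^{-1}(0)$ are negligible at the scale of $Q$, so that the hyperbolic expansion can propagate graphs all the way across the neighboring chart.
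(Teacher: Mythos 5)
The paper does not write out a proof of this theorem --- it merely asserts that it ``follows from the hyperbolicity of $f_{x,y}^{\pm 1}$'' and points to the analogous nonuniformly hyperbolic version in \cite[Prop.\ 4.12]{Sarig-JAMS}. Your argument is the standard Hadamard graph-transform computation that those references carry out, so it is the right approach and the estimates you state are correct: the vertical component $\psi$ expands by at least $|B|-O(\ve)>\lambda^{-1}-O(\ve)$, the new slope satisfies $\|\widetilde G'\|\le\lambda^2\|G'\|+O(\ve)$, and the contraction constant $|A|+O(\ve)<\lambda+O(\ve)$ comes out of the decomposition $\widetilde G_1(s)-\widetilde G_2(s)=[\varphi_1(\psi_1^{-1}(s))-\varphi_1(\psi_2^{-1}(s))]+[(\varphi_1-\varphi_2)(\psi_2^{-1}(s))]$ exactly as you indicate.

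One point worth tightening. You correctly flag $\psi([-Q,Q])\supset[-Q,Q]$ as the crux, and correctly attribute it to the nearest-neighbor condition, but the mechanism deserves one more sentence. The offset is
$h_2(0,0)=\bigl(\Psi_y^{-1}(f(x))\bigr)_2$, and the bound you actually use is $|\Psi_y^{-1}(f(x))|\le 2\mathfs L\,d(f(x),y)<2\mathfs L\delta$, not the bound $\|h_2\|_{1+\beta/3}<\ve$ from Theorem \ref{Thm-UH-pseudo-orbit} --- the latter only gives $|h_2|<\ve$, and since $Q=\ve^{3/\beta}\ll\ve$ that estimate is useless at scale $Q$. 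So the surjectivity step really does require choosing $\delta$ in the $\ve$-overlap small compared to $Q$ (e.g.\ $\delta<Q/(100\mathfs L)$), and this should be stated as part of the choice of $\delta(\ve)$ rather than attributed to the H\"older norm bound on $h_2$. This is a presentational matter rather than a gap, since $\delta$ is free to be taken as small as needed, but conflating the two bounds is the kind of slip that becomes a genuine error in the nonuniform setting, where $Q(x)$ varies and one must track exactly which scale controls which perturbation.
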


Consequently, each pseudo-orbit has local stable and unstable manifolds.

\medskip
\noindent
{\sc Stable/unstable manifolds:} The {\em stable manifold} of $\un v=\{\Psi_{x_n}\}_{n\in\Z}$
is the unique $s$--admissible manifold
$V^s[\un{v}]\in \mathfs M^s_{x_0}$ defined by
$$
V^s[\un{v}]:=\lim_{n\to\infty}(\mathfs F^s_{x_0,x_1}\circ\cdots\circ \mathfs F^s_{x_{n-1},x_n})[V_n]
$$
for some (any) sequence $\{V_n\}_{n\geq 0}$ with $V_n\in\mathfs M^s_{x_n}$.
The {\em unstable manifold} of $\un v$ is the unique $u$--admissible manifold
$V^u[\un{v}]\in \mathfs M^u_{x_0}$ defined by
$$
V^u[\un{v}]:=\lim_{n\to-\infty}(\mathfs F^u_{x_{-1},x_0}\circ\cdots\circ \mathfs F^u_{x_n,x_{n+1}})[V_n]
$$
for some (any) sequence $\{V_n\}_{n\leq 0}$ with $V_n\in\mathfs M^u_{x_n}$.

\medskip
The observations at the end of Subsection \ref{Subsect-UH-graph} can be repeated
ipsis literis. In particular, $V^s[\un{v}]$ only depends on the future $\{\Psi_{x_n}\}_{n\geq 0}$,
while  $V^u[\un{v}]$ only depends on the past $\{\Psi_{x_n}\}_{n\leq 0}$.

\medskip
\noindent
{\sc Shadowing:} $\un v=\{\Psi_{x_n}\}_{n\in\Z}$ is said to {\em shadow} $x$
if $f^n(x)\in \Psi_{x_n}([-Q,Q]^2)$, $\forall n\in\Z$.

\begin{theorem}[Shadowing Lemma]
Every pseudo-orbit $\un v$ shadows a unique point $\{x\}=V^s[\un v]\cap V^u[\un v]$.
\end{theorem}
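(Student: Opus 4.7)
The plan is to derive the shadowing lemma directly from the contraction of the graph transforms and the hyperbolic structure of $f_{x_n,x_{n+1}}$ supplied by Theorem \ref{Thm-UH-pseudo-orbit}. The pseudo-orbit $\un v = \{\Psi_{x_n}\}_{n \in \Z}$ provides two privileged objects at the basepoint $\Psi_{x_0}$: an $s$--admissible manifold $V^s[\un v] \in \mathfs M^s_{x_0}$ and a $u$--admissible manifold $V^u[\un v] \in \mathfs M^u_{x_0}$. I would first identify $x$ as their intersection point, then verify the shadowing property using the invariance built into the two graph transforms, and finally prove uniqueness via the hyperbolic expansion estimate.

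For existence, write $V^s[\un v] = \Psi_{x_0}\{(t,F(t)) : |t| \le Q\}$ and $V^u[\un v] = \Psi_{x_0}\{(G(t),t) : |t| \le Q\}$, with $F,G$ of very small $C^0$ norm of the derivative (and very small values at $0$). A point in $V^s[\un v] \cap V^u[\un v]$ corresponds to $(t_1,t_2) \in [-Q,Q]^2$ satisfying $t_2 = F(t_1)$ and $t_1 = G(t_2)$, that is, to a fixed point of the self-map $\Phi := G \circ F$ of $[-Q,Q]$. The admissibility bounds on $F,G$ make $\Phi$ a strict contraction, and $|\Phi(0)| \le |G(0)| + \|G'\|_{C^0}|F(0)| \ll Q$ yields $\Phi([-Q,Q]) \subset [-Q,Q]$, so the Banach fixed point theorem produces a unique point $x \in V^s[\un v] \cap V^u[\un v]$.

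To check shadowing, set $V^s_n := V^s[\sigma^n(\un v)]$ and $V^u_n := V^u[\sigma^n(\un v)]$. Because the limits defining these admissible manifolds can be started from any admissible initial data, we have the identities $V^s_n = \mathfs F^s_{x_n,x_{n+1}}[V^s_{n+1}]$ and $V^u_{n+1} = \mathfs F^u_{x_n,x_{n+1}}[V^u_n]$. By the very definition of the stable (resp.\ unstable) graph transform these imply $f(V^s_n) \subset V^s_{n+1} \subset \Psi_{x_{n+1}}([-Q,Q]^2)$ and $f^{-1}(V^u_{n+1}) \subset V^u_n \subset \Psi_{x_n}([-Q,Q]^2)$. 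Since $x \in V^s_0 \cap V^u_0$, induction in both directions produces $f^n(x) \in \Psi_{x_n}([-Q,Q]^2)$ for every $n \in \Z$.

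Uniqueness is where the hyperbolic structure is really used. Suppose $y$ is also shadowed and set $(a_n,b_n) := \Psi_{x_n}^{-1}(f^n(y)) \in [-Q,Q]^2$, so that $f_{x_n,x_{n+1}}(a_n,b_n) = (a_{n+1},b_{n+1})$. Let $F_n$ represent $V^s_n$ and consider the vertical deviation $\eta_n := b_n - F_n(a_n)$. A direct calculation with the splitting $f_{x_n,x_{n+1}}(v_1,v_2) = (Av_1 + h_1, Bv_2 + h_2)$ from Theorem \ref{Thm-UH-pseudo-orbit}, using $|B^{-1}| < \lambda$, $\|h_i\|_{1+\beta/3} < \varepsilon$ and the near-horizontality of $F_{n+1}$, gives $|\eta_{n+1}| \ge (\lambda^{-1} - O(\varepsilon))|\eta_n|$. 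Since $|\eta_n| \le 2Q$ for all $n \ge 0$, iterating forward forces $\eta_0 = 0$, hence $y \in V^s[\un v]$; the symmetric argument on the horizontal deviation from $V^u_n$ under backward iteration gives $y \in V^u[\un v]$, so $y = x$. The main obstacle is precisely this expansion estimate: one must control the perturbation terms produced by $h_1,h_2$ and by the slope of $F_n$ uniformly in $n$, but this control is already encoded in the admissibility of $V^s_n$ and is essentially the same cone-type computation used to prove the contraction statement in Theorem \ref{Thm-UH-graph-pseudo-orbit}.
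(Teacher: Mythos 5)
Your proof is correct and is a careful, complete fleshing out of the one-line justification given in the text ("This follows from the hyperbolicity of each $f_{x_n,x_{n+1}}^{\pm 1}$"). The existence step via the Banach fixed point on $\Phi = G\circ F$, the shadowing step via the identities $V^s_n = \mathfs F^s_{x_n,x_{n+1}}[V^s_{n+1}]$ and $V^u_{n+1} = \mathfs F^u_{x_n,x_{n+1}}[V^u_n]$, and the uniqueness step via the expansion estimate on the vertical deviation $\eta_n$ are exactly the standard ingredients that the paper's hyperbolicity remark points to, so this is the same approach, carried out in full.
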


\medskip
This follows from the hyperbolicity of each $f_{x_n,x_{n+1}}^{\pm 1}$.

\subsubsection{Construction of a Markov partition}\label{Subsub-MP-pseudo-orbits}

Now we explain how Bowen used the above tools to construct
a Markov partition for $f$. The construction involves two codings, the first being usually
infinite-to-one and the second finite-to-one. We divide the construction into three steps.
Let $f:M\to M$ be Axiom A, and let $L>1$ be a Lipschitz constant for $f^{\pm 1}$.

\medskip
\noindent
{\sc Step 1 (Coarse graining).} Fix a finite subset of $X\subset \Omega(f)$ that is
$\tfrac{\delta}{2L}$--dense in $\Omega(f)$,
and let $\mathfs A=\{\Psi_x:x\in X\}$. Let $\mathfs G=(V,E)$ be the oriented graph with vertex
set $V=\mathfs A$ and edge set $E=\{\Psi_x\overset{\ve}{\to}\Psi_y\}$, and let $(\Sigma,\sigma)$
be the TMS defined by $\mathfs G$. Observe that an element of $\Sigma$ is a pseudo-orbit.


\medskip
\noindent
{\sc Step 2 (Infinite-to-one extension).} Using the Shadowing Lemma, define a map
$\pi:\Sigma\to \Omega(f)$ by
$$
\{\pi(\un v)\}:=V^s[\underline v]\cap V^u[\underline v].
$$
The map $\pi$ has the following properties:
\begin{enumerate}[$\circ$]
\item $\pi$ is surjective: for every $x\in \Omega(f)$, choose $\{x_n\}_{n\in\Z}\subset X$
s.t. $d(f^n(x),x_n)<\tfrac{\delta}{2L}$. Then $\un{v}=\{\Psi_{x_n}\}_{n\in\Z}$ is a pseudo-orbit, since
\begin{align*}
&\ d(f(x_n),x_{n+1})\leq d(f(x_n),f^{n+1}(x))+d(f^{n+1}(x),x_{n+1})\\
&\leq Ld(x_n,f^n(x))+d(f^{n+1}(x),x_{n+1})<\tfrac{\delta}{2}+\tfrac{\delta}{2L}<\delta,
\end{align*}
and similarly $d(f^{-1}(x_{n+1}),x_n)<\delta$. Clearly, $\pi(\un v)=x$.
\item $\pi\circ\sigma=f\circ\pi$: this follows from the Shadowing Lemma, since if $\un v$ shadows $x$
then $\sigma(\un v)$ shadows $f(x)$.
\item $\pi$ is usually infinite-to-one: imagine, for example, that for some $x\in\Omega(f)$
there are $x_n,y_n\in X$ s.t. $d(f^n(x),x_n)<\tfrac{\delta}{2L}$ and $d(f^n(x),y_n)<\tfrac{\delta}{2L}$.
Any choice of $z_n\in\{x_n,y_n\}$ defines a pseudo-orbit $\{\Psi_{z_n}\}_{n\in\Z}$ that shadows $x$,
hence $\pi^{-1}(x)$ has cardinality at least $2^\Z$, which is uncountable.
\end{enumerate}
The third property above is, in general, unavoidable. Hence, $(\Sigma,\sigma,\pi)$ is {\em not}
a symbolic model for $f$. But the Markov structure of $\Sigma$ induces, via $\pi$, a cover of
$\Omega(f)$  satisfying a (symbolic) Markov property.

\medskip
\noindent
{\sc The Markov cover $\mathfs Z$:} Let $\mathfs Z:=\{Z(v):v\in\mathfs A\}$, where
$$
Z(v):=\{\pi(\un v):\un v\in\Sigma\text{ and }v_0=v\}.
$$

\medskip
In other words, $\mathfs Z$ is the family defined by the natural partition of $\Sigma$ into
cylinder at the zeroth position. In general, each $Z(v)$ is fractal.
Admissible manifolds allow us to
define {\em invariant fibres} inside each $Z\in\mathfs Z$. Let $Z=Z(v)$.

\medskip
\noindent
{\sc $s$/$u$--fibres in $\mathfs Z$:} Given $x\in Z$, let $W^s(x,Z):=V^s[\un v]\cap Z$
be the {\em $s$--fibre} of $x$ in $Z$ for some (any) $\un v=\{v_n\}_{n\in\Z}\in\Sigma$
s.t. $\pi(\un v)=x$ and $v_0=v$. Similarly, let $W^u(x,Z):=V^u[\un v]\cap Z$ be
the {\em $u$--fibre} of $x$ in $Z$.

\medskip
Observe that, while $V^{s/u}[\un v]$ are smooth manifolds, $W^{s/u}(x,Z)$ is usually fractal.
Below we collect the main properties of $\mathfs Z$.

\begin{proposition}\label{Prop-UH-Z}
The following holds for all $\ve>0$ small enough.
\begin{enumerate}[{\rm (1)}]
\item {\sc Covering property:} $\mathfs Z$ is a cover of $\Omega(f)$.
\item {\sc Product structure:} For every $Z\in\mathfs Z$ and every $x,y\in Z$, the intersection
$W^s(x,Z)\cap W^u(y,Z)$ consists of a single point, and this point belongs to $Z$.
\item {\sc Symbolic Markov property:} If $x=\pi(\un v)$ with $\un v=\{v_n\}_{n\in\Z}\in\Sigma$, then
$$
f(W^s(x,Z(v_0)))\subset W^s(f(x),Z(v_1))\, \text{ and }\, f^{-1}(W^u(f(x),Z(v_1)))\subset W^u(x,Z(v_0)).
$$
\end{enumerate}
\end{proposition}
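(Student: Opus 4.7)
The three claims share a common engine, which I would formulate first as a \emph{splicing lemma} for pseudo-orbits: since $V^s[\un v]$ is defined by iterating stable graph transforms along $\{\Psi_{v_n}\}_{n\geq 0}$, it depends only on the forward tail of $\un v$, and dually $V^u[\un v]$ depends only on the backward tail. Consequently, whenever two pseudo-orbits $\un v,\un w\in\Sigma$ satisfy $v_0=w_0$, the concatenation
$$\un z:=(\ldots,w_{-2},w_{-1},v_0,v_1,v_2,\ldots)$$
is again in $\Sigma$ (the only new edges to check are $w_{-1}\to v_0$ and $v_0\to v_1$, which are present in $\un w$ and $\un v$ respectively), and it satisfies $V^s[\un z]=V^s[\un v]$ and $V^u[\un z]=V^u[\un w]$. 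Property (1) is immediate from the surjectivity of $\pi$ proved in Step 2 and does not need this lemma: any $x\in\Omega(f)$ lies in $Z(v_0)$ for any $\un v\in\Sigma$ with $\pi(\un v)=x$.

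For property (2), pick $x,y\in Z=Z(v)$ with representatives $\un v,\un w$ satisfying $v_0=w_0=v$, and form the splice $\un z$. The Shadowing Lemma gives
$$\{\pi(\un z)\}=V^s[\un z]\cap V^u[\un z]=V^s[\un v]\cap V^u[\un w],$$
so this intersection is a single point, and it lies in $Z(v)$ because $z_0=v$. Intersecting with $Z$ yields the desired characterization of $W^s(x,Z)\cap W^u(y,Z)$.

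For property (3), take $y\in W^s(x,Z(v_0))$ with representative $\un w$ satisfying $w_0=v_0$, and splice so that the past is $\un w$ and the future is $\un v$. Both $y$ and $\pi(\un z)$ lie in $V^s[\un v]\cap V^u[\un w]$, which is a singleton by the Shadowing Lemma, so $y=\pi(\un z)$. Applying $\sigma$ gives $f(y)=\pi(\sigma\un z)\in Z(v_1)$, and since $\sigma\un z$ and $\sigma\un v$ share their forward tail, $V^s[\sigma\un z]=V^s[\sigma\un v]$; hence $f(y)\in W^s(f(x),Z(v_1))$. The unstable inclusion is obtained symmetrically, splicing with $\un v$ in the past and $\un w$ in the future and applying $\sigma^{-1}$.

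The main obstacle I foresee is not conceptual but book-keeping: the definitions of $W^s(x,Z)$ and $W^u(x,Z)$ read ``for some (any)'' choice of $\un v$, and therefore tacitly require independence of representative. That independence is itself an easy consequence of the splicing lemma applied with $\un v=\un w$ having different tails but the same projection, so isolating and proving the splicing lemma carefully at the outset is the main order of business, after which the three properties fall out with essentially no further work.
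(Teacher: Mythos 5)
Your proof is correct and follows essentially the same route as the paper: the paper's argument for (2) explicitly constructs the concatenated sequence $\un u$ with $u_n = v_n$ for $n \geq 0$ and $u_n = w_n$ for $n \leq 0$, and invokes the Shadowing Lemma to identify $\pi(\un u)$ with the intersection point, which is exactly your ``splicing lemma''; (1) is surjectivity of $\pi$ in both; and (3) the paper dispatches with ``follows from the Markov structure of $\Sigma$,'' which your $\sigma$-shift argument spells out. One small caveat: your remark that independence of the representative $\un v$ in the definition of $W^s(x,Z)$ is ``an easy consequence of the splicing lemma'' is a slight overclaim --- splicing alone does not force $V^s[\un v]=V^s[\un v']$ for two representatives with the same zeroth coordinate; one also needs the contraction property of the graph transform (equivalently, uniqueness of the local stable manifold through $x$ in $\mathfs M^s_{v_0}$), which is the hyperbolic input that makes the stable manifold depend only on the shadowed orbit and the zeroth chart, not on the particular pseudo-orbit.
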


Part (1) follows from the surjectivity of $\pi$. To prove part (2), we define a Smale bracket
$[\cdot,\cdot]_Z$ for each $Z\in\mathfs Z$ as follows. Write $Z=Z(v)$, and let $x=\pi(\un v),y=\pi(\un w)$
where $\un v=\{v_n\}_{n\in\Z},\un w=\{w_n\}_{n\in\Z}\in\Sigma$ with $v_0=w_0=v$.
Then $W^s(x,Z)\cap W^u(y,Z)$ consists of a unique point $z=\pi(\un u)$ where $\un u=\{u_n\}_{n\in\Z}$
is defined by:
$$
u_n=\left\{
\begin{array}{rl}
v_n&\text{, if }n\geq 0\\
w_n&\text{, if }n\leq 0.\\
\end{array}
\right.
$$
The equality $z=\pi(\un u)$ follows from the Shadowing Lemma. Observe that $z\in Z$.
We write $z=:[x,y]_Z$. Finally, part (3) follows from the Markov structure of $\Sigma$.
At this point, it is also important to show that the above definitions are 
compatible among the elements of $\mathfs Z$.

\begin{lemma}\label{Lemma-UH-compatibility}
The following holds for all $\ve>0$ small enough.
\begin{enumerate}[{\rm (1)}]
\item {\sc Compatibility:} If $x,y\in Z(v_0)$ and $f(x),f(y)\in Z(v_1)$ with
$v_0\overset{\ve}{\to} v_1$ then $f([x,y]_{Z(v_0)})=[f(x),f(y)]_{Z(v_1)}$.
\item {\sc Overlapping charts properties:} If $Z=Z(\Psi_x),Z'=Z(\Psi_y)\in\mathfs Z$
with $Z\cap Z'\neq \emptyset$ then:
\begin{enumerate}[{\rm (a)}]
\item $Z\subset \Psi_y([-Q,Q]^2)$.
\item If $x\in Z\cap Z'$ then $W^{s/u}(x,Z)\subset V^{s/u}(x,Z')$. 
\item If $x\in Z,y\in Z'$ then $V^s(x,Z)$ and $V^u(y,Z')$ intersect at a unique point. 
\end{enumerate}
\end{enumerate}
\end{lemma}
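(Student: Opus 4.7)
The plan is to reduce both parts to the equivariance relation $\pi \circ \sigma = f \circ \pi$ together with the near-identity behavior of the coordinate changes $\Psi_y^{-1} \circ \Psi_x$ for $\ve$-overlapping charts.

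For part (1), I would unfold the definition of the Smale bracket. Writing $x = \pi(\un v)$ and $y = \pi(\un w)$ with $v_0 = w_0$ and (after adjusting $\un v,\un w$) $v_1 = w_1$, the bracket $[x,y]_{Z(v_0)}$ equals $\pi(\un u)$, where $\un u$ concatenates the future of $\un v$ with the past of $\un w$. Applying $f$ and using $f \circ \pi = \pi \circ \sigma$ gives $f([x,y]_{Z(v_0)}) = \pi(\sigma \un u)$. A direct index computation shows that $\sigma \un u$ is precisely the concatenation of the future of $\sigma \un v$ with the past of $\sigma \un w$, which by definition represents $[f(x), f(y)]_{Z(v_1)}$.

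For part (2), the preliminary step is to observe that if $z \in Z(\Psi_x) \cap Z(\Psi_y)$, then $z$ lies in both $\Psi_x([-Q,Q]^2)$ and $\Psi_y([-Q,Q]^2)$; since each $\Psi_{\cdot}$ is $2$-Lipschitz, $d(x,y) \le 4Q = 4\ve^{3/\beta}$, which is much less than $\delta$ for $\ve$ small, so the charts $\ve$-overlap and $\Psi_y^{-1} \circ \Psi_x$ is $\ve^3$-close to the identity on $[-Q,Q]^2$. Statement (a) then follows by noting that every $w \in Z(\Psi_x)$ is the intersection of an $s$- and $u$-admissible manifold inside $\Psi_x$, hence its $\Psi_x$-coordinates actually lie in a box strictly smaller than $[-Q,Q]^2$ (since admissible graphs have $\|F\|_{C^0}, \|G\|_{C^0}$ much less than $Q$), so the $\ve^3$-perturbation keeps $\Psi_y^{-1}(w)$ inside $[-Q,Q]^2$.

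For (2)(b), I would use uniqueness of the fixed point of the graph transform: the admissible manifolds $V^s[\un v]$ and $V^s[\un w]$ (with $v_0 = \Psi_x$, $w_0 = \Psi_y$, $\pi(\un v) = \pi(\un w) = x$) arise as limits of the contractions $\mathfs F^s_{\cdot,\cdot}$. Transporting one construction into the other's chart via the near-identity coordinate change and using that admissibility is preserved under such transfers (up to adjusting constants), both limit manifolds must agree as subsets of $M$ near $x$; intersecting with $Z$ or $Z'$ then gives the required inclusion $W^{s/u}(x,Z) \subset V^{s/u}(x, Z')$. Statement (c) then follows from (a): in the common chart $\Psi_y$, the sets $V^s(x,Z)$ and $V^u(y,Z')$ are genuine $s$- and $u$-admissible graphs, hence meet at a single point by the standard transversality argument (implicit function / Banach fixed point in the $C^0$ space of graphs).

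The hard part will be (2)(b), which requires a careful comparison of two stable admissible manifolds through the same point but constructed in different charts. The cleanest route is to verify that the coordinate change $\Psi_y^{-1} \circ \Psi_x$ sends $s$-admissibility at $\Psi_x$ to $s$-admissibility at $\Psi_y$ (with slightly weakened constants absorbed by the slack in the admissibility definition), and then invoke uniqueness of the attractor of the stable graph transform to conclude the two manifolds coincide. Everything else is essentially bookkeeping on pseudo-orbits and the Shadowing Lemma.
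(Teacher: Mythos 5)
Your proposal takes essentially the same approach the paper intends: the paper disposes of this lemma in one sentence, saying part (1) follows from the Markov structure of $\Sigma$ and part (2) from the fine control on Lyapunov charts inside $[-Q,Q]^2$, and your unfolding of the bracket via $\pi\circ\sigma=f\circ\pi$ (including the gluing adjustment so that $v_1$ is the prescribed vertex, which is the genuinely nontrivial step and uses the edge hypothesis $v_0\overset{\ve}{\to}v_1$ together with uniqueness in the Shadowing Lemma) and your use of the near-identity coordinate change $\Psi_y^{-1}\circ\Psi_x$ are exactly those two ideas spelled out. The only place that deserves a sharper look is the preliminary scale comparison $4Q<\delta(\ve)$ — both quantities tend to $0$ with $\ve$, so this is a calibration between the size $Q=\ve^{3/\beta}$ of the chart domains and the modulus of continuity of $x\mapsto C(x)$; it holds once the constants in the construction are chosen consistently, but it is worth flagging rather than asserting that $4Q$ is "much less than $\delta$" without comment.
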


Part (1) also follows from the Markov structure of $\Sigma$, while part (2) follows
from the fine control we have on the Lyapunov charts inside each rectangle $[-Q,Q]^2$.
Lemma \ref{Lemma-UH-compatibility} allows us to consider Smale brackets of different intersecting
rectangles.

\medskip
\noindent
{\sc Step 3 (Bowen-Sina{\u\i} refinement).} We repeat ipsis literis Step 3 performed in Subsection
\ref{Subsec-successive-diffeo}. The resulting partition $\mathfs R$ is a Markov partition.
By Subsection \ref{Subsec-MP-implies}, we obtain a symbolic model $(\widehat \Sigma,\widehat\sigma,\widehat\pi)$,
where $(\widehat\Sigma,\widehat\sigma)$ is the TMS defined by the graph
$\widehat{\mathfs G}=(\widehat V,\widehat E)$ with vertex set $\widehat V=\mathfs R$ and
edge set $\widehat E=\{R\to S:f(R^*)\cap S^*\neq\emptyset\}$.
Let $\widehat\pi:\Sigma\to\Omega(f)$ be defined for $\underline{R}=\{R_n\}_{n\in\Z}\in\Sigma$
by
$$
\{\widehat\pi(\underline{R})\}:=\bigcap_{n\geq 0} f^n(R_{-n})\cap\cdots\cap f^{-n}(R_n)
=\bigcap_{n\geq 0} \overline{f^n(R_{-n})\cap\cdots\cap f^{-n}(R_n)}.
$$
The $\widehat\pi$ is a finite-to-one surjection that
is one-to-one on the set $\{x\in \Omega(f):f^n(x)\in \bigcup_{R\in\mathfs R}R^*,\forall n\in\Z\}$.

\begin{remark}
The method of pseudo-orbits also works for uniformly expanding maps, in which case
the TMS is one-sided. To do this, assume for simplicity that $f:M\to M$ satisfies
$d(f(x),f(y))\geq \kappa^{-1}d(x,y)$. Define Lyapunov
charts simply by $\Psi_x:={\rm exp}_x$, then
define an edge $\Psi_x\overset{\ve}{\to}\Psi_y$ iff $d(f(x),y)\ll 1$, and prove that each
pseudo-orbit $\{\Psi_{x_n}\}_{n\geq 0}$ shadows a single point $x\in M$.
Now implement Steps 1--3.
\end{remark}

\subsubsection{Bowen relation}\label{Subsub-Bowen-relation}
We explain why $\widehat\pi$ is finite-to-one.
The proof follows Bowen \cite{Bowen-Regional-Conference}, and is
sometimes referred as the {\em diamond argument}, as 
justified by Figure \ref{figure-UH-diamond}. See also \cite[Lemma 6.7]{Adler-Survey}.
Bowen's idea was to investigate the quotient map $\pi:\Sigma\to M$.
For us, this argument will be extremely useful to prove, in Section \ref{Section-MP-NUH}, that the
coding obtained for nonuniformly hyperbolic maps is finite-to-one. For uniformly hyperbolic
maps, this argument is not essential, but we stress that the method is interesting in its own and
introduces a relation, nowadays called {\em Bowen relation}, that precisely characterizes the
loss of injectivity of $\pi$.

\medskip
Consider the triple $(\widehat\Sigma,\widehat\sigma,\widehat\pi)$ as above.
Define a relation in $\mathfs R$ by $R\sim S$ iff $R\cap S\neq\emptyset$.
Assume that $R\sim S$. If $x\in R$ and $y\in S$, let $[x,y]$ be their Smale bracket,
which is well-defined by part (2)(c) of Lemma \ref{Lemma-UH-compatibility}.
Let $\un R=\{R_n\}_{n\in\Z},\un S=\{S_n\}_{n\in\Z}\in\Sigma$.

\medskip
\noindent{\sc Bowen relation:} 
We say that $\un R\approx\un S$ if $R_n\sim S_n$ for all $n\in\Z$.

\medskip
This clearly defines an equivalence relation on $\Sigma$, with
$\widehat\pi(\un R)=\widehat\pi(\un S)$ iff $\un R\approx \un S$. 
Let $N=\#\mathfs R$.

\begin{lemma}\label{Lemma-Bowen-relation}
The following holds for all $\ve>0$ small enough.
\begin{enumerate}[{\rm (1)}]
\item If $R_n\to \cdots\to R_m$ and $S_n\to\cdots\to S_m$ are paths on $\widehat{\mathfs G}$
s.t. $R_n=S_n$, $R_m=S_m$ and $R_k\sim S_k$ for $k=n,\ldots,m$, then $R_k=S_k$ for $k=n,\ldots,m$.
\item $\widehat\pi$ is everywhere at most $N^2$-to-one, i.e. for every $x$ we
have $\#\widehat\pi^{-1}(x)\leq N^2$.
\end{enumerate}
\end{lemma}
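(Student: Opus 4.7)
The plan is to prove Part (1) by a diamond-style argument built on Smale brackets and the Markov property, then derive Part (2) by a pigeonhole estimate using the finiteness of $\mathfs R$ together with Part (1).

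For Part (1), I would set $p := \widehat\pi(\underline R)$ and $q := \widehat\pi(\underline S)$, so $f^k(p) \in R_k$ and $f^k(q) \in S_k$ for every $k$. The hypothesis $R_n = S_n$ puts $f^n(p)$ and $f^n(q)$ in a common rectangle, so I can form the Smale bracket $z := [f^n(p), f^n(q)] \in R_n$, where $[\cdot,\cdot]$ refers to the product structure inherited by $\mathfs R$ from $\mathfs Z$ (Proposition \ref{Prop-UH-Z}(2) together with Lemma \ref{Lemma-UH-compatibility}(2)). The compatibility statement in Lemma \ref{Lemma-UH-compatibility}(1) and the symbolic Markov property of Proposition \ref{Prop-UH-Z}(3) propagate the bracket: for $n \leq k \leq m$, $f^{k-n}(z)$ equals the Smale bracket of $f^k(p) \in R_k$ and $f^k(q) \in S_k$ taken in the overlap region $R_k \cap S_k$, where this overlap is nonempty by the hypothesis $R_k \sim S_k$ and the overlapping-chart statements of Lemma \ref{Lemma-UH-compatibility}(2)(b)(c) guarantee that the stable fibre through $f^k(p)$ and the unstable fibre through $f^k(q)$ intersect at a unique point lying in both rectangles. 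Thus $f^k(z) \in R_k \cap S_k$ for all intermediate $k$. At the terminal step $f^m(z) \in R_m = S_m$, which anchors the induction. Because the orbit of $z$ has a unique Markov itinerary through $\mathfs R$ (by uniqueness of the symbolic coding modulo boundary ambiguity, where the anchoring at $n$ and $m$ removes the ambiguity), the Markov property forces the symbol $R_k$ in $\underline R$ and the symbol $S_k$ in $\underline S$ to be the same rectangle at every $k \in [n,m]$.

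For Part (2), fix $x \in M$ and observe that any two elements $\underline R, \underline S \in \widehat\pi^{-1}(x)$ automatically satisfy $f^k(x) \in R_k \cap S_k$ for every $k$, hence $\underline R \approx \underline S$. An immediate consequence of Part (1) is that the coincidence set
$$
A(\underline R, \underline S) := \{k \in \mathbb Z : R_k = S_k\}
$$
is convex in $\mathbb Z$: whenever it contains two indices $n < m$, every intermediate integer also lies in it. The plan is to define $\Phi : \widehat\pi^{-1}(x) \to \mathfs R \times \mathfs R$ by $\Phi(\underline R) := (R_0, R_1)$, and show that $\Phi$ is injective; since $|\mathfs R| = N$ this yields $\#\widehat\pi^{-1}(x) \leq N^2$ at once. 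Injectivity of $\Phi$ reduces to: if $R_0 = S_0$ and $R_1 = S_1$, then $\underline R = \underline S$. One exploits finiteness of $\mathfs R$ and the Markov property to produce agreement indices arbitrarily far to the right and to the left of $0$ (using a Ramsey/pigeonhole pairing in $\mathfs R \times \mathfs R$ applied to the joint sequence $(R_k, S_k)_{k \geq 0}$, combined with the Markov transition structure to transfer such coincidences into equalities $R_k = S_k$); each new agreement index extends $A(\underline R, \underline S)$ by Part (1), and in the limit it must fill all of $\mathbb Z$.

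The main obstacle will be Part (1), and specifically the verification that the iterated bracket $f^k(z)$ indeed belongs to both $R_k$ and $S_k$ rather than just to their symbolic envelope — this requires transferring the overlapping-chart properties of Lemma \ref{Lemma-UH-compatibility}(2) from $\mathfs Z$ down to $\mathfs R$, and carefully tracking how the Bowen-Sinaĭ refinement refines fibres without breaking the Markov property. Once Part (1) is settled, the combinatorics for Part (2) are routine: even if the recurrent-coincidence argument I sketched needs adjustment, a direct inductive application of the Markov property starting from $R_0 = S_0$ and $R_1 = S_1$, using uniqueness of forward/backward Markov continuations that are consistent with the orbit of $x$, should yield $\underline R = \underline S$ and close the proof.
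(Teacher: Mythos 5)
Your outline correctly identifies the diamond argument with Smale brackets as the key mechanism, but both parts have gaps at the decisive steps. For Part (1), you take $p = \widehat\pi(\underline R)$ and $q = \widehat\pi(\underline S)$, but Part (1) is about \emph{finite} paths, and more importantly these points only satisfy $f^k(p)\in R_k$, $f^k(q)\in S_k$ (closures). The paper instead picks $x,y$ with $f^k(x)\in R_k^*$ and $f^k(y)\in S_k^*$ for $k=n,\ldots,m$, which is possible precisely because $R_k\to R_{k+1}$ means $f(R_k^*)\cap R_{k+1}^*\neq\emptyset$. This is not a cosmetic difference: what forces $R_k=S_k$ is $f^k(z)\in R_k^*\cap S_k^*$ (a nonempty intersection of \emph{interiors}), since rectangles in a Markov partition are only disjoint up to boundary. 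Your phrase ``unique Markov itinerary modulo boundary ambiguity, where the anchoring at $n$ and $m$ removes the ambiguity'' is exactly the content that needs proof, and you leave it unjustified. The actual argument propagates the \emph{stable} Markov property forward from $n$ (keeping $f^k(z)$ in the interior of $W^s(f^k(x),R_k)$, hence in $R_k^*$) and the \emph{unstable} Markov property backward from $m$ (giving $f^k(z)\in S_k^*$), then invokes disjointness; neither direction alone suffices.

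For Part (2), the map $\Phi(\underline R)=(R_0,R_1)$ need not be injective: Part (1) only says the coincidence set $\{k: R_k=S_k\}$ is an interval, and $\{0,1\}$ is already an interval, so two codings of $x$ agreeing at positions $0,1$ may well disagree elsewhere. Your pigeonhole sketch on the joint sequence $(R_k,S_k)_{k\geq0}$ produces a recurrent joint symbol $(R_k,S_k)=(R_{k'},S_{k'})$, not an equality $R_k=S_k$, so it does not extend the coincidence interval; and the fallback ``uniqueness of forward/backward Markov continuations consistent with the orbit of $x$'' is false whenever $\widehat\pi$ is not injective at $x$ (then the orbit meets rectangle boundaries and the next symbol is genuinely ambiguous). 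The correct pigeonhole is: if $\#\widehat\pi^{-1}(x)>N^2$, choose $L$ so large that the restrictions of these pre-images to $[-L,L]$ are pairwise distinct, then pigeonhole the $>N^2$ pairs $(R_{-L},R_L)\in\mathfs R\times\mathfs R$ to find two pre-images $\underline R\neq\underline S$ with $R_{-L}=S_{-L}$ and $R_L=S_L$ but $(R_{-L},\ldots,R_L)\neq(S_{-L},\ldots,S_L)$; since $f^k(x)\in R_k\cap S_k$ gives $R_k\sim S_k$ throughout, this contradicts Part (1).
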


\begin{proof}
The original reference is \cite[pp. 13--14]{Bowen-Regional-Conference}.
As mentioned by Bowen himself, part (2) was pointed out by Brian Marcus.
Write $A=R_n=S_n$ and $B=R_m=S_m$. For part (1), choose $x,y$ s.t.
$f^k(x)\in R_k^*$ and $f^k(y)\in S_k^*$ for $k=n,\ldots,m$. Define 
$z$ by the equality $f^n(z)=[f^n(x),f^n(y)]$. Since $R_k\sim S_k$, we have $f^k(z)=[f^k(x),f^k(y)]$ for $k=n,\ldots,m$.
Noting that $f^n(x),f^n(y)\in A^*$ and $f^m(x),f^m(y)\in B^*$, we have that $f^n(z)\in A^*$
and $f^m(z)\in B^*$. Now we use the Markov property:
\begin{enumerate}[$\circ$]
\item The Markov property for the stable direction at $f^n(x)\in A$ implies that $f^k(z)\in W^s(f^k(x),R_k)$
for $k=n,\ldots,m$. Indeed, 
we can prove inductively that $f^k(z)\in (W^s(f^k(x),R_k))^*$, the interior of $W^s(f^k(x),R_k)$
in the relative topology of $R_k$. In particular, $f^k(z)\in R_k^*$ for $k=n,\ldots,m$.
\item Applying the same argument for the unstable direction of $f^m(y)\in B$, we obtain that 
$f^k(z)\in S_k^*$ for $k=n,\ldots,m$.
\end{enumerate}
\begin{figure}[hbt!]
\centering
\def\svgwidth{12.5cm}
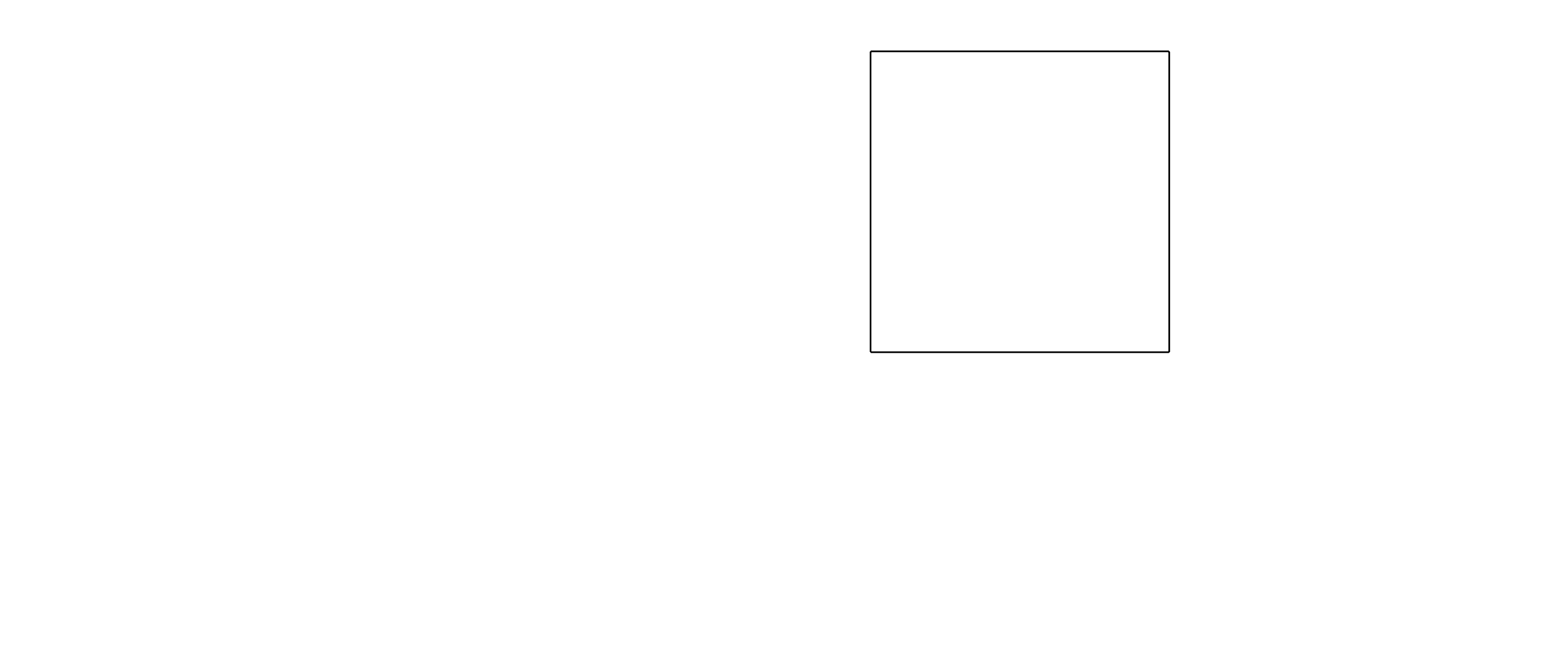
\caption{The diamond argument.}\label{figure-UH-diamond}
\end{figure}
Hence $f^k(z)\in R_k^*\cap S_k^*$ and so $R_k=S_k$, which proves (1). Now we prove (2). If some $x$ has more
than $N^2$ pre-images, then there are two of them, say $\un R$ and $\un S$, and two indices
$n<m$ s.t. $(R_n,\ldots,R_m)\neq (S_n,\ldots,S_m)$ with $R_n=S_n$ and $R_m=S_m$.
Since $f^k(x)\in R_k\cap S_k$, we have $R_k\sim S_k$ for $k=n,\ldots,m$.
This contradicts part (1).
\end{proof}

\section{Symbolic dynamics for nonuniformly hyperbolic systems}\label{Section-MP-NUH}

\medskip
We finally arrive at the core of the discussion, showing how to employ the method of pseudo-orbits
for nonuniformly hyperbolic systems. As mentioned in the introduction, Katok was the first
to use the theory of pseudo-orbits for hyperbolic measures \cite{Katok-IHES}, and constructed the nowadays
called {\em Katok horseshoes}. Restricted to $C^{1+\beta}$ surface diffeomorphisms,
his construction provides finite Markov partitions that approximate the topological entropy. 
For that, he used Pesin theory on subsets where the parameters vary continuously.
This approach is not genuinely nonuniformly hyperbolic,
because it discards regions with bad behavior of such parameters. In this section,
we explain how this difficulty was solved by Sarig \cite{Sarig-JAMS}.
The starting step is to control the hyperbolicity parameters more effectively,
as explained in Section \ref{Section-NUH-systems}. Now, we will use it to develop
a finer theory of pseudo-orbits, that in particular provides symbolic models for nonuniformly hyperbolic
systems. The idea for construction of the symbolic model is similar to Bowen's method
described in Subsection \ref{Subsec-pseudo-orbits}, but instead of Lyapunov charts
we use (double) Pesin charts as vertices of the TMS. In order to code all points with some nonuniform
hyperbolicity, invariably we will need countably many such charts. Hence, while for uniformly hyperbolic
systems the TMS has finitely many states, now it will have countably many.

\medskip
In the sequel, we will restrict ourselves to $C^{1+\beta}$ surface diffeomorphisms.
Later, we explain how to perform the construction in other settings,
which include higher dimensional diffeomorphisms, flows, and billiard maps.
We will emphasize five main ingredients in the proof:
\begin{enumerate}[$\circ$]
\item $\ve$--overlap.
\item $\ve$--double charts.
\item Coarse graining.
\item Improvement lemma.
\item Inverse theorem.
\end{enumerate}
The first two are discussed in the next subsection, and the others in the subsequent subsections.

\subsection{Preliminaries}\label{Subsec-NUH-preliminaries}

Let $M$ be a closed connected
smooth Riemannian surface, let $f:M\to M$ be a $C^{1+\beta}$ diffeomorphism, and
fix $\chi>0$. Recall from Subsection \ref{Subsec-NUH-locus} the definitions of
the nonuniformly hyperbolic locus ${\rm NUH}_\chi$ and Pesin charts $\Psi_x$.
One important part of the construction will be to restrict the domains of Pesin charts, tuning
them properly.
Since we want to end up with countably many of them, we choose their sizes from a countable set. 
Fix $\ve>0$ small enough, and let $I_\ve:=\{e^{-\frac{1}{3}\ve n}:n\geq 0\}$. We redefine $Q(x)$ as below.

\medskip
\noindent
{\sc Parameter $Q(x)$:} For each $x\in{\rm NUH}_\chi$, define $Q(x)$ to be the largest element of $I_\ve$ that
is $\leq \ve^{3/\beta}\|C(f(x))^{-1}\|^{-12/\beta}_{\rm Frob}$.

\medskip
In other words, we truncate $Q(x)$ to $I_\ve$.
Now, define the parameters $q,q^s,q^u$ and the nonuniformly hyperbolic locus ${\rm NUH}_\chi^*$
as in Subsections \ref{Subsec-q} and \ref{Subsec-q^s-q^u}. Observe that
$q,q^s,q^u\in I_\ve$.
To obtain a finite-to-one coding, we need a recurrence assumption on
the parameter $q$, so we define a subset of ${\rm NUH}_\chi^*$ as follows.

\medskip
\noindent
{\sc The nonuniformly hyperbolic locus ${\rm NUH}_\chi^\#$:}
$$
{\rm NUH}_\chi^\#=\left\{x\in {\rm NUH}_\chi^*: \limsup_{n\to+\infty}q(f^n(x))>0\text{ and }
\limsup_{n\to-\infty}q(f^n(x))>0\right\}.
$$

\medskip
It is important to notice that this recurrence assumption is harmless for measures, since an
analogue of Lemma \ref{Lemma-same-measures-1} holds: if $\mu$ is an $f$--invariant probability measure supported
on ${\rm NUH}_\chi^*$, then it is supported on ${\rm NUH}_\chi^\#$. This follows
from the Poincar\'e recurrence theorem (we leave the details to the reader). 
The main result we want to discuss is the following.

\begin{theorem}\label{Sarig}
Let $f:M\to M$ be as above. For every $\chi>0$, there exists a TMS $(\Sigma,\sigma)$
and a H\"older continuous map $\pi:\Sigma\to M$ s.t.:
\begin{enumerate}[$(1)$]
\item $\pi\circ \sigma=f\circ\pi$.
\item $\pi[\Sigma^\#]={\rm NUH}_\chi^\#$.
\item The restriction $\pi\restriction_{\Sigma^\#}:\Sigma^\#\to {\rm NUH}_\chi^\#$ is finite-to-one.
\end{enumerate}
\end{theorem}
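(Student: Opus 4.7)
The plan is to extend Bowen's method of pseudo-orbits (Subsection \ref{Subsec-pseudo-orbits}) to the nonuniformly hyperbolic setting, using the fine Pesin theory of Section \ref{Section-NUH-systems} to control the degeneration of chart sizes. The main conceptual novelty is that vertices of the coding graph will no longer be Pesin charts, but \emph{$\ve$-double charts} $\Psi_x^{p^s,p^u}$, where $p^s,p^u\in I_\ve$ are overestimates with $p^s\leq q^s(x)$ and $p^u\leq q^u(x)$. Two such double charts \emph{$\ve$-overlap} when the centers are close, the invariant directions are almost parallel at the scale measured by $C(x)$, and the parameters are comparable up to a factor $e^\ve$. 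An edge $\Psi_x^{p^s,p^u}\overset{\ve}{\to}\Psi_y^{q^s,q^u}$ is then defined by the nearest-neighbor conditions $\Psi_{f(x)}\overset{\ve}{\approx}\Psi_y$ and $\Psi_{f^{-1}(y)}\overset{\ve}{\approx}\Psi_x$ together with the greedy propagation $q^s=\min\{e^\ve p^s,\,Q(y)\}$ and $p^u=\min\{e^\ve q^u,\,Q(x)\}$ motivated by Lemma \ref{q^s}(2). Along any resulting $\ve$-pseudo-orbit $\un v$, the analogues of Theorems \ref{Thm-non-linear-Pesin} and \ref{Thm-NUH-graph-transform} yield well-defined contracting graph transforms, producing admissible manifolds $V^s[\un v],V^u[\un v]$ that intersect at a single shadowing point, which we take to be $\pi(\un v)$.

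The \emph{coarse graining} step then produces a countable vertex set $\mathfs A$: fix a countable dense subset of $M$ together with a discretization of the Frobenius norms of $C(\cdot)^{\pm 1}$ and of angles, and retain only those double charts whose data are compatible with the prescribed parameters $p^s,p^u$. Let $(\Sigma,\sigma)$ be the TMS on the graph with vertex set $\mathfs A$ and edge set the $\ve$-edges above; then $\pi\circ\sigma=f\circ\pi$ follows from a Shadowing Lemma proved exactly as in the uniformly hyperbolic case, and H\"older continuity of $\pi$ follows from exponential contraction of graph transforms, proving (1). For $\pi[\Sigma^\#]\supset {\rm NUH}_\chi^\#$, given $x\in{\rm NUH}_\chi^\#$ the recurrence $\limsup_{n\to\pm\infty}q(f^n(x))>0$ forces the triple $(f^n(x),q^s(f^n(x)),q^u(f^n(x)))$ to visit a compact set infinitely often in both directions; coarse graining then provides a vertex that repeats infinitely often on each side, producing a pseudo-orbit in $\Sigma^\#$ that codes $x$.

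The hard part is finite-to-one, and this is where the most delicate work lies. It proceeds in three stages. First, the \emph{Improvement Lemma} asserts that for every $\un v\in\Sigma^\#$ and $x=\pi(\un v)$, the overestimates $p_n^s,p_n^u$ are in fact comparable (up to $e^\ve$) to the true $q^s(f^n(x)),q^u(f^n(x))$: the recurrence of $\un v$ pins down the infimum defining $q^s$ and $q^u$, and incidentally forces $x\in{\rm NUH}_\chi^\#$, giving the reverse inclusion $\pi[\Sigma^\#]\subset{\rm NUH}_\chi^\#$ and hence (2). Second, the \emph{Inverse Theorem} upgrades this to rigidity: if $\un v,\un w\in\Sigma^\#$ satisfy $\pi(\un v)=\pi(\un w)$, then at each index $n$ the two double charts have centers within distance $\ll p_n^s+p_n^u$, essentially equal invariant directions, and parameters differing by at most $e^\ve$. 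Third, I would refine the cover $\mathfs Z=\{Z(v):v\in\mathfs A\}$ to a countable Markov partition via a Bowen-Sina{\u\i} refinement as in Subsection \ref{Subsub-MP-pseudo-orbits}, producing $(\widehat\Sigma,\widehat\sigma,\widehat\pi)$; the diamond argument of Lemma \ref{Lemma-Bowen-relation} then applies locally because the Inverse Theorem forces only finitely many vertices of $\widehat\Sigma$ to sit above each $Z\in\mathfs Z$ meeting a given point, bounding the number of $\widehat\pi$-preimages and, by pulling back through the refinement, the number of $\pi$-preimages.

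The main obstacle throughout is the Improvement Lemma: without it, the overestimates $p^s,p^u$ are genuinely non-canonical and the coding would be hopelessly infinite-to-one, since a single orbit admits uncountably many compatible pseudo-orbits simply by varying the overestimates along its tail. The entire argument thus turns on exploiting the recurrence built into both $\Sigma^\#$ (vertices repeating infinitely often) and ${\rm NUH}_\chi^\#$ (parameters not decaying to zero along the orbit), combined with the greedy propagation rule, to force the overestimates to saturate to the correct values in the limit. Once this rigidity is in place, the passage to a Markov partition and the diamond argument proceed by essentially the same geometry as in the uniformly hyperbolic case.
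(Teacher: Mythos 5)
Your proposal follows the same architecture as the paper's proof (double charts, greedy propagation, coarse graining, Improvement Lemma, Inverse Theorem, Bowen--Sina{\u\i} refinement, diamond argument), so it is not a different route. But two of the steps you describe contain concrete errors that would derail the proof as written.

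First, the greedy propagation in (GPO2) is reversed. For an edge $\Psi_x^{p^s,p^u}\overset{\ve}{\to}\Psi_y^{q^s,q^u}$ the stable scale must be \emph{pulled back} from the target of the edge, because the stable graph transform $\mathfs F^s$ maps admissible curves at $\Psi_y$ to admissible curves at $\Psi_x$; dually, the unstable scale is pushed forward. The correct rule is therefore
$p^s=\min\{e^\ve q^s, Q(x)\}$ and $q^u=\min\{e^\ve p^u, Q(y)\}$,
matching the recursion $q^s(x)=\min\{e^\ve q^s(f(x)),Q(x)\}$ of Lemma~\ref{q^s}(2). You wrote instead $q^s=\min\{e^\ve p^s,Q(y)\}$ and $p^u=\min\{e^\ve q^u,Q(x)\}$, which propagates stable scales forward and unstable scales backward; with that rule the scales do \emph{not} track the actual domains of definition of the graph transforms, and the Inverse Theorem's comparison $p^s_n/q^s(f^n(x))=e^{\pm\text{const}}$ fails. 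Relatedly, there is no a priori comparison $p^s\leq q^s(x)$: the definition of an $\ve$-double chart only requires $p^s,p^u\leq Q(x)$, and the parameters are neither systematically over- nor underestimates --- the comparison to $q^{s/u}(f^n(x))$ is an \emph{output} of the Inverse Theorem, obtained only for $\un v\in\Sigma^\#$.

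Second, and more seriously, you have conflated the Improvement Lemma with part (4) of the Inverse Theorem, and in doing so have dropped the actual content of the lemma. The Improvement Lemma does not assert that the $p^s_n,p^u_n$ are comparable to $q^s(f^n(x)),q^u(f^n(x))$. It is a quantitative statement about how the \emph{ratio} $s(\cdot)/s(\text{center})$ improves under one application of the stable graph transform: if $\xi\geq\sqrt{\ve}$ and $\tfrac{s(f(y))}{s(x_1)}=e^{\pm\xi}$, then $\tfrac{s(y)}{s(x_0)}=e^{\pm(\xi-Q(x_0)^{\beta/4})}$, with a symmetric statement for $u$ under the inverse. This is what allows one to bootstrap from the \emph{relevance} of a recurring symbol $v$ (so that some $V^s\in\mathfs M^s_v$ contains a point $y$ with $s(y)<\infty$) to the conclusion $s(x)<\infty$ for $x=\pi(\un v)$: pulling $V^s$ back through infinitely many graph transforms, the ratio is improved at each step until it lands in $[e^{-\sqrt\ve},e^{\sqrt\ve}]$, and the $C^1$ convergence of representing functions then transfers finiteness to the limit. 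Without this mechanism you cannot prove $s(x),u(x)<\infty$, hence cannot prove $\pi[\Sigma^\#]\subset{\rm NUH}_\chi$, and the inclusion (2) and the $s,u$-control in the Inverse Theorem both collapse. Saying that ``the recurrence pins down the infimum defining $q^s$ and $q^u$'' names the desired conclusion, but the ratio-improvement estimate is the engine that produces it and is entirely absent from your argument.

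Finally, a small point of bookkeeping: the finite-to-one statement is proved for the refined coding $\widehat\pi:\widehat\Sigma^\#\to{\rm NUH}_\chi^\#$, and this $(\widehat\Sigma,\widehat\sigma,\widehat\pi)$ \emph{is} the triple of the theorem. The coarse coding $\pi:\Sigma\to M$ is genuinely infinite-to-one and remains so; nothing is ``pulled back through the refinement.'' One must instead check separately that $\widehat\pi[\widehat\Sigma^\#]=\pi[\Sigma^\#]$, which requires showing that every $\un R\in\widehat\Sigma^\#$ comes from some $\un v\in\Sigma^\#$ with $\widehat\pi(\un R)=\pi(\un v)$.
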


Remember the definition of the recurrent set $\Sigma^\#$ in the beginning of Part \ref{Part-2}.
Theorem \ref{Sarig} does not rely on any measure, and instead provides a single
symbolic model that codes all $\chi$--hyperbolic measures at the same time.
Theorem \ref{Sarig} is a strengthening of Theorem 1.3 established by
Sarig in \cite{Sarig-JAMS}. The main difference between Theorem \ref{Sarig} and 
\cite[Thm. 1.3]{Sarig-JAMS} is that Sarig's construction relies on Lyapunov regularity, and as a consequence
he only obtains an inclusion of the form $\pi[\Sigma^\#]\supset {\rm NUH}_\chi^\#$.
But performing the same arguments
of his proof inside the nonuniformly hyperbolic loci we have defined here provides the above
statement. This observation grew from the ongoing work with Buzzi and Crovisier \cite{BCL},
in which we need a more intrinsic construction to make it work for three dimensional flows. 
The proof of Theorem \ref{Sarig} makes essential use of the low dimension of $M$:
since the bundles $E^s,E^u$ are one-dimensional, we are able to apply
arguments of bounded distortion. If $M$ has dimension larger than two, then
$E^s,E^u$ can both have dimension larger than one, and there is no a priori reason for them to satisfy
bounded distortion estimates. Nevertheless, building on his previous work \cite{Ben-Ovadia-2019},
Ben Ovadia was able to obtain a result similar to Theorem \ref{Sarig} that works in any
dimension \cite{Ben-Ovadia-codable}, see more in Subsection \ref{Subsec-NUH-high-dim}.

\subsubsection{$\ve$--overlap of Pesin charts}

We start establishing an analogue of Theorem \ref{Thm-UH-pseudo-orbit}.
In the uniformly hyperbolic situation, the map $x\mapsto \Psi_x$ is continuous,
hence the norm of $\Psi_y^{-1}\circ\Psi_x$ can be controlled
by $d(x,y)$. For nonuniformly hyperbolic systems, the maps $x\in{\rm NUH}_\chi\mapsto e^s_x,e^u_x$
are not necessarily continuous, so even though $d(x,y)\ll 1$ we can still have
$\Psi_{y}^{-1}\circ \Psi_{x}$ with large norm, if the behavior of $C(x)$
and $C(y)$ are very different. Therefore, we only allow overlaps when, in addition 
to taking nearby points, their matrices $C$ are close.
For the definition, we allow Pesin charts to have different domains: for each $\eta\in I_\ve$,
define $\Psi_x^\eta:=\Psi_x\restriction_{[-\eta,\eta]^2}$.

\medskip
\noindent
{\sc $\ve$--overlap:} Two Pesin charts $\Psi_{x_1}^{\eta_1},\Psi_{x_2}^{\eta_2}$ are said to
{\em $\ve$--overlap} if $\tfrac{\eta_1}{\eta_2}=e^{\pm\ve}$ and
$d(x_1,x_2)+\|\widetilde{C(x_1)}-\widetilde{C(x_2)}\|<(\eta_1\eta_2)^4$.
When this happens, we write $\Psi_{x_1}^{\eta_1}\overset{\ve}{\approx}\Psi_{x_2}^{\eta_2}$.

\medskip
This notion was introduced in \cite{Sarig-JAMS}. It constitutes the first main ingredient
in the proof of Theorem \ref{Sarig}. The definition is strong enough to guarantee that the hyperbolicity parameters
of $x_1$ and $x_2$ are almost the same, see \cite[Lemma 3.3]{Sarig-JAMS} or
\cite[Prop. 3.4]{Lima-Matheus}.
Now we are able to recover Theorem \ref{Thm-UH-pseudo-orbit}.

\begin{theorem}\label{Thm-non-linear-Pesin-2}
The following holds for all $\ve>0$ small enough. If $\Psi_{f(x)}^{\eta}\overset{\ve}{\approx}\Psi_{y}^{\eta'}$,
then $f_{x,y}$ is well-defined on $[-10Q(x),10Q(x)]^2$ and can be written as
$f_{x,y}(v_1,v_2)=(Av_1+h_1(v_1,v_2),Bv_2+h_2(v_1,v_2))$ where: 
\begin{enumerate}[{\rm (1)}]
\item $|A|,|B^{-1}|<e^{-\chi}$, cf. Theorem \ref{Thm-non-linear-Pesin}.
\item $\|h_i\|_{1+\beta/3}<\ve$ for $i=1,2$.
\end{enumerate}
If $\Psi_{f^{-1}(y)}^{\eta'}\overset{\ve}{\approx}\Psi_x^{\eta}$ then a
similar statement holds for $f_{x,y}^{-1}$.
\end{theorem}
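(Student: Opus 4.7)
The plan is to reduce this to Theorem \ref{Thm-non-linear-Pesin} by factoring $f_{x,y}$ through the non-overlap representation, writing
$$
f_{x,y}\;=\;g\circ f_x,\qquad g\;:=\;\Psi_y^{-1}\circ\Psi_{f(x)},
$$
and exploiting the $\ve$-overlap hypothesis to show that the transition map $g$ is an extremely small $C^{1+\beta/2}$ perturbation of the identity. Almost all the work is concentrated in estimating $g$; the structural conclusion then follows by direct substitution.

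First I would control $g$. The $\ve$-overlap condition gives $d(f(x),y)+\|\widetilde{C(f(x))}-\widetilde{C(y)}\|<(\eta\eta')^4$, and this controls the two building blocks of $g$: the value $g(0)=C(y)^{-1}(\exp{y}^{-1}(f(x)))$ is tiny because $d(f(x),y)$ is tiny, and the linear part $dg_0$ (which after parallel transport reduces to $C(y)^{-1}\circ C(f(x))$) is close to the identity because $\|\widetilde{C(f(x))}-\widetilde{C(y)}\|$ is tiny. The $\beta/2$--H\"older estimate on $dg$ away from the origin follows from the H\"older regularity of the exponential maps and their inverses, exactly as in the proof of Theorem \ref{Thm-Lyapunov-chart}. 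The aggressive quartic exponent in $(\eta\eta')^4$ is designed precisely so that the unboundedness of $\|C(y)^{-1}\|$, which by the definition of $Q$ is at most a fixed negative power of $\eta'$, is comfortably absorbed, yielding $\|g-\mathrm{Id}\|_{C^{1+\beta/2}}\ll\ve$ with a bound independent of the quality of hyperbolicity at $x,y$.

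Next I would apply Theorem \ref{Thm-non-linear-Pesin} to write
$$
f_x(v)\;=\;(Av_1+\widetilde h_1(v),\,Bv_2+\widetilde h_2(v)),
$$
with $|A|,|B^{-1}|<e^{-\chi}$ and $\|\widetilde h_i\|_{1+\beta/2}<\ve$, and combine it with the decomposition $g(w)=(w_1+g_1(w),w_2+g_2(w))$, where $\|g_i\|_{1+\beta/2}\ll\ve$ by the previous step. The substitution $f_{x,y}=g\circ f_x$ yields
$$
f_{x,y}(v)\;=\;\bigl(Av_1+\widetilde h_1(v)+g_1(f_x(v)),\,Bv_2+\widetilde h_2(v)+g_2(f_x(v))\bigr),
$$
so the desired $A,B$ are inherited unchanged from Theorem \ref{Thm-non-linear-Pesin} and one sets $h_i(v):=\widetilde h_i(v)+g_i(f_x(v))$. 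Well-definedness on $[-10Q(x),10Q(x)]^2$ follows because the size matching $\eta/\eta'=e^{\pm\ve}$ together with the image bound on $f_x$ from Theorem \ref{Thm-non-linear-Pesin} place the image of $f_x\restriction_{[-10Q(x),10Q(x)]^2}$ comfortably inside the domain of $g$.

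The delicate point I expect to be the main obstacle is the bound $\|h_i\|_{1+\beta/3}<\ve$. The contribution from $\widetilde h_i$ is immediate via the embedding $C^{1+\beta/2}\hookrightarrow C^{1+\beta/3}$, but $\|g_i\circ f_x\|_{C^{1+\beta/3}}$ must be estimated using a composition inequality of the form $\|F\circ G\|_{1+\beta/3}\leq\mathrm{const}\cdot\|F\|_{1+\beta/2}\cdot(1+\|G\|_{C^1})^{1+\beta/3}$. Here $\|f_x\|_{C^1}$ is uniformly bounded (since by Lemma \ref{Lemma-linear-reduction-2} and Theorem \ref{Thm-non-linear-Pesin} it is a small perturbation of a diagonal matrix with entries of bounded modulus), and the strict smallness $\|g_i\|_{1+\beta/2}\ll\ve$ produced by the quartic exponent in the overlap condition is exactly what absorbs all multiplicative constants arising from this composition estimate; this is the purpose of paying the $\beta/2\to\beta/3$ loss of regularity. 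The statement for $f_{x,y}^{-1}$ follows by the symmetric argument applied to $f^{-1}$, using the hypothesis $\Psi_{f^{-1}(y)}^{\eta'}\overset{\ve}{\approx}\Psi_x^{\eta}$ and swapping the roles of stable and unstable directions.
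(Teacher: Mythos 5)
Your proposal is correct and follows the standard decomposition $f_{x,y}=\bigl(\Psi_y^{-1}\circ\Psi_{f(x)}\bigr)\circ f_x$, which is precisely the argument in the cited Proposition 3.4 of Sarig and which the survey itself sketches when introducing $\ve$--overlap in the uniformly hyperbolic case. Two small points to tidy: the composition inequality you invoke must also carry a term proportional to $\mathrm{Hol}_{\beta/3}(f_x')$, which is harmless since Theorem \ref{Thm-non-linear-Pesin} already makes that H\"older seminorm small; and given this survey's convention $Q(y)=\ve^{3/\beta}\|C(f(y))^{-1}\|^{-12/\beta}_{\rm Frob}$, the quantity directly bounded by a negative power of $\eta'$ is $\|C(f(y))^{-1}\|$, from which $\|C(y)^{-1}\|$ is then controlled via the uniform bound on $\|C(\cdot)^{-1}\|/\|C(f(\cdot))^{-1}\|$ coming from $\|df^{\pm1}\|$ being bounded on the compact surface $M$.
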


This is \cite[Proposition 3.4]{Sarig-JAMS}.

\subsubsection{$\ve$--double charts}

Having a good notion of overlap between Pesin charts, now we want to 
define graph transforms. The approach will be similar to Subsection \ref{Subsec-NUH-graph},
when we defined stable and unstable graph transforms using different scales $q^s$ and
$q^u$. In the referred subsection, we also gave a dynamical explanation
of the recursive equations in Lemma \ref{q^s}(2) that $q^s,q^u$ satisfy.
To extend this to Pesin charts, we consider two scales for each chart, one that controls the stable direction
and another  that controls the unstable direction. Hence we {\em do not} work with Pesin charts alone,
but instead consider different objects, called {\em double charts}, and use them to define
stable and unstable graph transforms. This idea, also introduced in \cite{Sarig-JAMS}, is the 
second main ingredient in the proof of Theorem \ref{Sarig}.

\medskip
\noindent
{\sc $\ve$--double chart:} An {\em $\ve$--double chart} is a pair of Pesin charts
$\Psi_x^{p^s,p^u}=(\Psi_x^{p^s},\Psi_x^{p^u})$ where $p^s,p^u\in I_\ve$
with $0<p^s,p^u\leq Q(x)$.

\medskip
Intuitively, just like $q^{s/u}(x)$ are choices for the sizes of local stable/unstable
manifolds at $x$, the parameters $p^s/p^u$ represent candidates for the sizes of
local stable/unstable manifolds of pseudo-orbits. To make sense of this, let us first define
transitions between $\ve$--double charts.

\medskip
\noindent
{\sc Edge $v\overset{\ve}{\rightarrow}w$:} Given $\ve$--double charts $v=\Psi_x^{p^s,p^u}$
and $w=\Psi_y^{q^s,q^u}$, we draw an edge from $v$ to $w$ if the following conditions are
satisfied:
\begin{enumerate}[iii\,]
\item[(GPO1)] $\Psi_{f(x)}^{q^s\wedge q^u}\overset{\ve}{\approx}\Psi_y^{q^s\wedge q^u}$
and $\Psi_{f^{-1}(y)}^{p^s\wedge p^u}\overset{\ve}{\approx}\Psi_x^{p^s\wedge p^u}$.
\item[(GPO2)] $p^s=\min\{e^\ve q^s, Q(x)\}$ and $q^u=\min\{e^\ve p^u, Q(y)\}$.
\end{enumerate}

\medskip
Condition (GPO1) allows to represent $f$ nearby $x$ by Pesin charts
at $x$ and $y$, and similarly for $f^{-1}$.
Condition (GPO2) is a greedy algorithm that chooses the local hyperbolicity parameters
as largest as possible, and is the counterpart of Lemma \ref{q^s}(2) for pseudo-orbits.

\medskip
\noindent
{\sc $\ve$--generalized pseudo-orbit ($\ve$--gpo):} An {\em $\ve$--generalized pseudo-orbit}
is a sequence $\un{v}=\{\Psi_{x_n}^{p^s_n,p^u_n}\}_{n\in\Z}$ of $\ve$--double charts
s.t. $\Psi_{x_n}^{p^s_n,p^u_n}\overset{\ve}{\rightarrow}\Psi_{x_{n+1}}^{p^s_{n+1},p^u_{n+1}}$
for all $n\in\Z$.

\medskip
This definition is much stronger than the one given in Subsection \ref{Subsec-pseudo-orbits}.
Observe that if $\un v$ is an $\ve$--gpo then by (GPO2) we have that
$$
p^s_0=\inf\{e^{\ve n}Q(x_n):n\geq 0\} \ \text{ and }\ p^u_0=\inf\{e^{\ve n}Q(x_{-n}):n\geq 0\}.
$$
These equations are very similar to the definitions of $q^{s/u}$, see Subsection \ref{Subsec-q^s-q^u}.

\subsubsection{Graph transforms}

To finally define graph transforms, it remains to strengthen the notion of admissibility.
Let $v=\Psi_x^{p^s,p^u}$ be an $\ve$--double chart.

\medskip
\noindent
{\sc Admissible manifolds:} An {\em $s$--admissible manifold} at $v$ is a set
of the form $V^s=\Psi_x\{(t,F(t)):|t|\leq p^s\}$, where $F:[-p^s,p^s]\to\R$ is a $C^{1+\beta/3}$ function s.t.:
\begin{enumerate}
\item[(AM1)] $|F(0)|\leq 10^{-3}(p^s\wedge p^u)$.
\item[(AM2)] $|F'(0)|\leq \tfrac{1}{2}(p^s\wedge p^u)^{\beta/3}$.
\item[(AM3)] $\|F'\|_{C^0}+{\rm Hol}_{\beta/3}(F')\leq\tfrac{1}{2}$ where the norms are taken in $[-p^s,p^s]$.
\end{enumerate}
Similarly, a {\em $u$--admissible manifold at $v$} is a set
of the form $V^u=\Psi_x\{(G(t),t):|t|\leq p^u\}$ where $G:[-p^u,p^u]\to\R$ is a $C^{1+\beta/3}$ function
satisfying (AM1)--(AM3), where the norms are taken in $[-p^u,p^u]$.

\medskip
Note that $p^{s/u}$ control the domains of the representing functions, and $p^s\wedge p^u$ controls
their behaviour at 0. Let $\mathfs M^s_v,\mathfs M^u_v$ be the space of all $s,u$--admissible manifolds at
$v$, which are metric spaces with the same metrics as before. For each edge $v\overset{\ve}{\to}w$, 
define the stable graph transform $\mathfs F^s_{v,w}:\mathfs M^s_w\to\mathfs M^s_v$ and
the unstable graph transform $\mathfs F^u_{v,w}:\mathfs M^u_v\to\mathfs M^u_w$ as before.
An analogue of Theorem \ref{Thm-UH-graph-pseudo-orbit} holds, and we
can similarly define stable and unstable manifolds for every 
$\ve$--gpo $\un v$.

\medskip
\noindent
{\sc Stable/unstable manifolds:} The {\em stable manifold} of an $\ve$--gpo $\un v=\{v_n\}_{n\in\Z}$
is the unique $s$--admissible manifold
$V^s[\un v]\in \mathfs M^s_{v_0}$ defined by
$$
V^s[\un v]:=\lim_{n\to\infty}(\mathfs F^s_{v_0,v_1}\circ\cdots\circ \mathfs F^s_{v_{n-1},v_n})[V_n]
$$
for some (any) sequence $\{V_n\}_{n\geq 0}$ with $V_n\in\mathfs M^s_{v_n}$.
The {\em unstable manifold} of $\un v$ is the unique $u$--admissible manifold
$V^u[\un v]\in \mathfs M^u_{v_0}$ defined by
$$
V^u[\un v]:=\lim_{n\to-\infty}(\mathfs F^u_{v_{-1},v_0}\circ\cdots\circ \mathfs F^u_{v_n,v_{n+1}})[V_n]
$$
for some (any) sequence $\{V_n\}_{n\leq 0}$ with $V_n\in\mathfs M^u_{v_n}$.

\medskip
These manifolds are genuine Pesin invariant manifolds,
see \cite[Prop. 6.3]{Sarig-JAMS}. In particular, if $y,z\in V^s[\un v]$ then $\tfrac{s(y)}{s(z)}=e^{\pm\text{const}}$,
and similarly for $V^u[\un v]$.
We point out that if $F$ is the representing function of
$V^s[\un v]$ and $F_n$ is the representing function of 
$(\mathfs F^s_{v_0,v_1}\circ\cdots\circ \mathfs F^s_{v_{n-1},v_n})[V_n]$ for $n\geq 0$, then
$\|F-F_n\|_{C^1}\to 0$ as $n\to\infty$, and the same holds for the representing function of $V^u[\un v]$.
This follows from the Arzel\`a-Ascoli theorem, since the $C^{1+\beta/3}$ norm of
representing functions is uniformly bounded, see e.g. part (2) in the proof of \cite[Prop. 4.15]{Sarig-JAMS}.
We also define shadowing.

\medskip
\noindent
{\sc Shadowing:} We say that $\un v=\{\Psi_{x_n}^{p^s_n,p^u_n}\}_{n\in\Z}$ {\em shadows} $x$
if $f^n(x)\in \Psi_{x_n}([-p^s_n\wedge p^u_n,p^s_n\wedge p^u_n]^2)$ for all $n\in\Z$.

\medskip
The Shadowing Lemma is still valid, again with $\{x\}=V^s[\un v]\cap V^u[\un v]$,
see \cite[Thm 4.16(1)]{Sarig-JAMS}.

\subsection{Coarse graining}

The third main ingredient in the proof of Theorem \ref{Sarig} consists
on choosing countably many charts that shadow all orbits of interest.
For uniformly hyperbolic systems, a sufficiently dense set of points is enough.
For nonuniformly hyperbolic systems, the construction is more elaborate. Firstly, the definition of
$\ve$--overlap depends on $\|\widetilde{C(x)}-\widetilde{C(y)}\|$, which depends
on a comparison between $s(x),u(x),\alpha(x)$ and $s(y),u(y),\alpha(y)$.
Secondly, nearest neighbor conditions do not follow from control at $x$ and $y$.
Fortunately, all parameters involved in the construction belong to a precompact set, so
there are countable dense subsets.

\begin{theorem}\label{Thm-coarse-graining}
For all $\ve>0$ sufficiently small, there exists a countable set $\mathfs A$ of $\ve$--double charts
with the following properties:
\begin{enumerate}[{\rm (1)}]
\item {\sc Discreteness}: For all $t>0$, the set $\{\Psi_x^{p^s,p^u}\in\mathfs A:p^s,p^u>t\}$ is finite.
\item {\sc Sufficiency:} If $x\in {\rm NUH}_\chi^\#$ then there is an $\ve$--gpo $\un v\in{\mathfs A}^{\Z}$
that shadows $x$.
\item {\sc Relevance:} For all $v\in \mathfs A$ there is an $\ve$--gpo $\un{v}\in\mathfs A^\Z$
with $v_0=v$ that shadows a point in ${\rm NUH}_\chi^\#$.
\end{enumerate}
\end{theorem}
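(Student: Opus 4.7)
The plan is to construct $\mathfs A$ as a countable dense subset of the natural parameter space of $\ve$-double charts, then prune to enforce relevance. The scales $p^s,p^u$ already lie in the countable set $I_\ve$, so the only continuous data to discretize are the base point $x\in M$ and the linear map $C(x)$. First, for each $t\in I_\ve$, consider the admissible $\ve$-double charts $\Psi_x^{p^s,p^u}$ with $p^s\wedge p^u\geq t$. The defining inequality $p^s\wedge p^u\leq Q(x)\leq\ve^{3/\beta}\|C(f(x))^{-1}\|_{\rm Frob}^{-12/\beta}$ bounds $\|C(f(x))^{-1}\|$ in terms of $t$ alone, and $\|C(x)\|_{\rm Frob}\leq 1$ by Lemma \ref{Lemma-linear-reduction-2}. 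Hence the data $(x,\widetilde{C(x)},p^s,p^u)$ lies in a relatively compact subset of $M\times\{\text{linear maps}\}\times I_\ve\times I_\ve$. Choose a finite $t^{8}$-dense subfamily $\mathfs A_t$ with respect to the distance $d(x_1,x_2)+\|\widetilde{C(x_1)}-\widetilde{C(x_2)}\|$, and set $\mathfs A_0:=\bigcup_{n\geq 0}\mathfs A_{e^{-n\ve/3}}$. Each $\mathfs A_t$ being finite yields the discreteness property for $\mathfs A_0$.

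For sufficiency, fix $x\in{\rm NUH}_\chi^\#$, let $x_n=f^n(x)$, and choose scales $p^s_n,p^u_n\in I_\ve$ satisfying the greedy identities $p^s_n=\min\{e^\ve p^s_{n+1},Q(x_n)\}$ and $p^u_n=\min\{e^\ve p^u_{n-1},Q(x_n)\}$; positivity along both tails follows from the recurrence hypothesis on $q(f^n(x))$ via an analysis parallel to Lemma \ref{q^s}. The sequence $\widehat v_n:=\Psi_{x_n}^{p^s_n,p^u_n}$ satisfies (GPO1)--(GPO2) tautologically, since $f(x_n)=x_{n+1}$ makes the two Pesin charts compared in (GPO1) literally identical, and it shadows $x$. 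Now replace each $\widehat v_n$ by $v_n\in\mathfs A_0$ with the same scales and within distance $(p^s_n\wedge p^u_n)^{8}$. The $\ve$-overlap inequality has slack of order $(p^s\wedge p^u)^{4}$, and the large exponent $12/\beta$ in $Q$ ensures that $Q(y_n)$ agrees with $Q(x_n)$ after $I_\ve$-truncation, so both (GPO1) and (GPO2) survive; shadowing survives because the perturbation is far smaller than the window $p^s_n\wedge p^u_n$. The defining recurrence of ${\rm NUH}_\chi^\#$ provides $\delta>0$ with $p^s_n\wedge p^u_n\geq\delta$ for infinitely many $n\to\pm\infty$; since $\mathfs A_\delta$ is finite, some $v\in\mathfs A_0$ recurs infinitely often on each side, placing $\un v\in\Sigma^\#$.

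For relevance, prune by setting
\[
\mathfs A:=\bigl\{v\in\mathfs A_0:v=v_0 \text{ for some }\ve\text{-gpo in }\mathfs A_0^{\Z}\text{ shadowing a point of }{\rm NUH}_\chi^\#\bigr\}.
\]
Sufficiency persists because every $\ve$-gpo constructed above already lies in $\mathfs A^{\Z}$, discreteness is inherited from $\mathfs A_0$, and relevance holds by definition.

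The main obstacle is verifying that the discretization mesh is compatible with the coupled conditions: $Q(x_n)$ depends on $C(x_{n+1})$, (GPO1) couples consecutive charts via two nearest-neighbor conditions, and the bracketed scale $p^s\wedge p^u$ (rather than $p^s$ or $p^u$ individually) controls the overlap tolerance. This requires quantitative perturbation estimates showing that $s,u,\alpha,Q,q^s,q^u$ are Lipschitz in $(x,C(x))$ at the relevant scale, so that the hierarchy of exponents ($4$ in overlap, $8$ in mesh, $12/\beta$ in $Q$) absorbs all compounding errors. Once this input is granted, which is essentially the content of \cite[Lemma 3.3]{Sarig-JAMS}, the combinatorial bookkeeping above goes through.
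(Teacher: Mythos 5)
Your strategy runs parallel to the paper's --- precompact stratification of the parameter space, a countable $\ve$--dense net inside each stratum, then pruning to the relevant sub-alphabet --- but you discretize the wrong object, and the resulting gap is real. You record only $(x,\widetilde{C(x)},p^s,p^u)$ and pick $y_n$ so that $d(x_n,y_n)+\|\widetilde{C(x_n)}-\widetilde{C(y_n)}\|$ is small. To verify (GPO1) for the discretized sequence you must compare $\Psi_{f(y_n)}$ with $\Psi_{y_{n+1}}$, hence control $\|\widetilde{C(f(y_n))}-\widetilde{C(f(x_n))}\|$; nothing in your discretization gives this, because $z\mapsto C(z)$ is only measurable on ${\rm NUH}_\chi$, not continuous. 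You flag this as ``the main obstacle'' and claim it reduces to Lipschitz estimates that are ``essentially the content of \cite[Lemma 3.3]{Sarig-JAMS}'' --- but that lemma compares the parameters of $x_1$ and $x_2$ given a \emph{single} $\ve$--overlap $\Psi_{x_1}^{\eta_1}\overset{\ve}{\approx}\Psi_{x_2}^{\eta_2}$; it says nothing about how $C(f(y))$ varies with $(y,C(y))$. The same problem reappears in (GPO2): to keep the scales $p^s_n,p^u_n$ when passing from $x_n$ to $y_n$, you need $Q(y_n)$ to match $Q(x_n)$ after truncation to $I_\ve$, but $Q$ is built from $\|C(f(\cdot))^{-1}\|_{\rm Frob}$, which is precisely the quantity your mesh does not control.

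The paper avoids the issue by discretizing a richer object. It records the whole triple
$\Gamma(x)=\bigl((f^{-1}(x),x,f(x)),\,(C(f^{-1}(x)),C(x),C(f(x))),\,Q(x)\bigr)$,
stratifies $Y=\{\Gamma(x):x\in{\rm NUH}_\chi^*\}$ into precompact pieces $Y_{\un\ell}$ by bounds on all three $\|C(f^i(x))^{-1}\|$ ($i=-1,0,1$), and requires the finite nets $Y_{\un\ell}(j)$ to approximate position, transported matrix, \emph{and} $Q$ (to multiplicative precision $e^{\pm\ve/3}$) simultaneously at $f^{-1}(x),x,f(x)$. With that, the bound on $\|\widetilde{C(f(y_n))}-\widetilde{C(f(x_n))}\|$ is built into the choice of $y_n$ by fiat, and (GPO1)--(GPO2) follow from two triangle inequalities and the fact that the greedy algorithm forces the indices $j_n,j_{n+1}$ of consecutive strata to be close. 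Your argument could possibly be repaired by proving the propagation estimate you invoke --- it is plausible on the bounded strata, since the recursions determining $s(f(x)),u(f(x)),\alpha(f(x))$ from $s(x),u(x),\alpha(x),df_x$ are explicit --- but that would be a genuine addition to the argument, not a citation, and the paper's choice of $\Gamma$ renders it unnecessary.
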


The original statement is \cite[Thm 4.16]{Sarig-JAMS}.
The discreteness property (1) says that Pesin blocks require only finitely many $\ve$--double charts, while the relevance property (3) guarantees that none of the $\ve$--double charts is redundant.

\begin{proof}[Sketch of proof.]
Define $X:=M^3\times {\rm GL}(2,\R)^3\times (0,1]$.
For each $x\in{\rm NUH}_\chi^*$, let $\Gamma(x)=(\un x,\un C,\un Q)\in X$ with
\begin{align*}
\un x=(f^{-1}(x),x,f(x)),\ \un C=(C(f^{-1}(x)),C(x),C(f(x))),\ \un Q=Q(x).
\end{align*}
Let $Y=\{\Gamma(x):x\in{\rm NUH}_\chi^*\}$. For each triple
$\un{\ell}=(\ell_{-1},\ell_0,\ell_1)\in\N_0^3$, define
$$
Y_{\un \ell}:=\left\{\Gamma(x)\in Y:e^{\ell_i}\leq\|C(f^i(x))^{-1}\|<e^{\ell_i+1},-1\leq i\leq 1
\right\}.
$$
Then $Y=\bigcup_{\un\ell\in\N_0^3}Y_{\un\ell}$, and each
$Y_{\un\ell}$ is precompact in $X$ by definition. For each $j\geq 0$, choose a finite set
$Y_{\un\ell}(j)\subset Y_{\un\ell}$ s.t. for every $\Gamma(x)\in Y_{\un\ell}$
there exists $\Gamma(y)\in Y_{\un\ell}(j)$ s.t.:
\begin{enumerate}[{\rm (a)}]
\item $ d(f^i(x),f^i(y))+\|\widetilde{C(f^i(x))}-\widetilde{C(f^i(y))}\|<e^{-8(j+2)}$ for $-1\leq i\leq 1$.
\item $\tfrac{Q(x)}{Q(y)}=e^{\pm \ve/3}$.
\end{enumerate}
We define the countable set of $\ve$--double charts as follows.

\medskip
\noindent
{\sc The alphabet $\mathfs A$:} Let $\mathfs A$ be the countable family of $\Psi_x^{p^s,p^u}$ s.t.:
\begin{enumerate}[i i)]
\item[(CG1)] $\Gamma(x)\in Y_{\un\ell}(j)$ for some
$(\un\ell,j)\in\N_0^3\times \N_0$.
\item[(CG2)] $0<p^s,p^u\leq Q(x)$ and $p^s,p^u\in I_\ve$.
\item[(CG3)] $e^{-j-2}\leq p^s\wedge p^u\leq e^{-j+2}$.
\end{enumerate}

\medskip
This alphabet satisfies (1) and (2) but not necessarily (3).
We can easily reduce it to a sub-alphabet $\mathfs A'$ satisfying (1)--(3) as follows.
Call $v\in\mathfs A$ relevant if there is $\un v\in\mathfs A^\Z$ with $v_0=v$ s.t. $\un{v}$ shadows
a point in ${\rm NUH}_\chi^\#$. Since ${\rm NUH}_\chi^\#$ is $f$--invariant, every $v_i$ is relevant.
Then $\mathfs A'=\{v\in\mathfs A:v\text{ is relevant}\}$ is discrete
because $\mathfs A'\subset\mathfs A$, it is sufficient and relevant by definition.
\end{proof}

Referring to the steps of Subsection \ref{Subsub-MP-pseudo-orbits},
we have just completed Step 1. Now let $\mathfs G=(V,E)$ be the oriented graph with vertex set
$V=\mathfs A$ and
edge set $E=\{v\overset{\ve}{\to}w\}$, and 
let $(\Sigma,\sigma)$ be the TMS generated by $\mathfs G$.
The proof of sufficiency actually gives more:
if $x\in {\rm NUH}_\chi^\#$ then there is a {\em recurrent} $\ve$--gpo $\un v\in\Sigma^\#$
that shadows $x$. By the Shadowing Lemma, $\pi:\Sigma\to M$ defined by $\{\pi(\un v)\}:=V^s[\un v]\cap V^u[\un v]$
is an infinite-to-one extension of $f$ s.t. $\pi[\Sigma^\#]\supset {\rm NUH}_\chi^\#$.

\subsection{Improvement lemma}\label{Subsec-Improvement}

The fourth main ingredient in the proof of Theorem \ref{Sarig}
is an important fact, that will imply the reverse inclusion $\pi[\Sigma^\#]\subset{\rm NUH}_\chi^\#$
as well as the Inverse Theorem in the next subsection.
We start observing that points of $\pi[\Sigma^\#]$ do have stable and unstable directions. 
If $\un v$ is an $\ve$--gpo, then for $x\in V^u[\un v]$ we can take 
$e^u_x$ to be (any of) the tangent vector to $V^u[\un v]$ at $x$.
This direction indeed contracts in the past and expands in the future,
see \cite[Prop. 6.3(2)]{Sarig-JAMS} and claim (i) in the proof of \cite[Proposition 6.5]{Sarig-JAMS}.
Similarly, for $x\in V^s[\un v]$ we can take (any of) the tangent vector to $V^s[\un v]$ at $x$
to be the stable direction $e^s_x$. 

\medskip
Fix $\un v\in\Sigma^\#$, and let $x=\pi(\un v)$.
Since $\{x\}=V^s[\un v]\cap V^u[\un v]$, $e^s_x,e^u_x$ are defined. 
If we prove that $s(x),u(x)<\infty$, then (NUH3) holds and automatically (NUH1)--(NUH2) hold as well,
implying that $x\in{\rm NUH}_\chi$. The proof that $s(x),u(x)<\infty$ is very delicate,
since a priori
there is no reason for $x$ to have a hyperbolic behaviour as good as the behaviour of the 
centers of the $\ve$--double charts of $\un v$. But the notions of $\ve$--overlap
and admissibility are so strong that indeed $s(x),u(x)<\infty$.
The proof of this fact relies on a general philosophy that $f$ improves smoothness
along the unstable direction, and $f^{-1}$ improves smoothness along the stable direction.
In terms of graph transforms, the ratios of $s,u$ parameters improve. We call this an {\em improvement lemma}.
The heuristics for such improvement can be explained as follows.
Assume that $v\overset{\ve}{\to}w$, where $v=\Psi_{x_0}^{p^s_0,p^u_0}$
and $w=\Psi_{x_1}^{p^s_1,p^u_1}$, let $V^s\in\mathfs M^s_w$ and $\widetilde V^s=\mathfs F^s_{v,w}[V^s]$,
and fix a point $y\in \widetilde V^s$. 
In particular $f(y)\in V^s$, see Figure \ref{figure-improvement}.
\begin{figure}[hbt!]
\centering
\def\svgwidth{12cm}
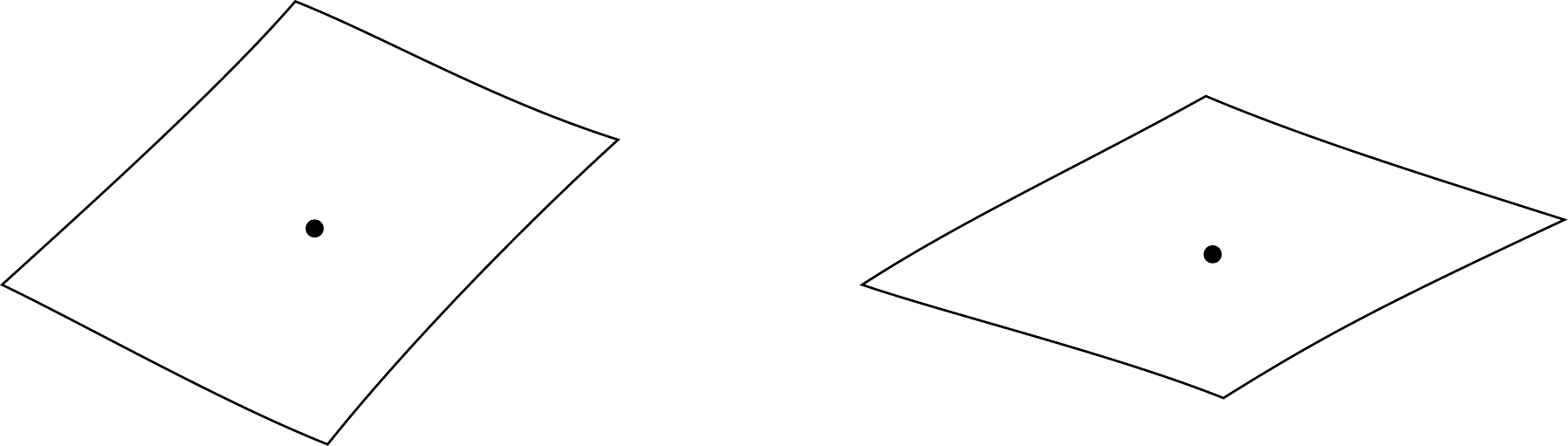
\caption{Improvement Lemma: the graph transform improves ratios.}\label{figure-improvement}
\end{figure}
Assuming that $s(y)<\infty$, we want to compare the ratios $\tfrac{s(f(y))}{s(x_1)}$ and $\tfrac{s(y)}{s(x_0)}$.
Proceeding as in the proof of Lemma \ref{Lemma-linear-reduction},
we have $s(y)^2=2+Cs(f(y))^2$, where $C=\|df e^s_y\|^2 e^{2\chi}$. By the $\ve$--overlap,
we also have $s(x_0)^2\approx 2+Cs(x_1)^2$, and so
$\tfrac{s(y)^2}{s(x_0)^2}\approx\tfrac{2+Cs(f(y))^2}{2+Cs(x_1)^2}$. If the initial ratio
is $\tfrac{s(f(y))^2}{s(x_1)^2}=K\gg 1$, then the new ratio is 
$\tfrac{s(y)^2}{s(x_0)^2}\approx\tfrac{2+KCs(x_1)^2}{2+Cs(x_1)^2}<K$.
The same occurs if $K\ll 1$, in which case the new ratio becomes $>K$.

\begin{lemma}[Improvement Lemma]
The following holds for $\ve>0$ small enough. For $\xi\geq\sqrt{\ve}$, if
$\tfrac{s(f(y))}{s(x_1)}=e^{\pm\xi}$, then $\tfrac{s(y)}{s(x_0)}=e^{\pm(\xi-Q(x_0)^{\beta/4})}$. 
\end{lemma}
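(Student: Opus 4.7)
The plan is to start from a simple recursive identity for the parameter $s$ and promote it to an affine contraction on the ratio in question, along the lines of the heuristic sketched just before the statement. First, splitting the defining series of $s$ between the $n=0$ term and the rest and using that the stable direction is invariant up to sign under $df$ (so $df\, e^s_z = \pm \|df\, e^s_z\|\cdot e^s_{f(z)}$), one obtains the exact recursion
$$s(z)^2 = 2 + e^{2\chi}\|df\, e^s_z\|^2\, s(f(z))^2$$
for every point $z$ admitting a stable direction. I would apply this to $z = y$ (where $e^s_y$ is supplied by the tangent to $\widetilde V^s$) and to $z = x_0$, obtaining two parallel identities that differ only in the coefficient $\alpha_z := e^{2\chi}\|df\, e^s_z\|^2$ and in the terminal $s$-value.

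The second step is to match the two identities up to a controlled multiplicative error. On the one hand, the $\ve$-overlap from (GPO1) forces $\|\widetilde{C(f(x_0))} - \widetilde{C(x_1)}\|$ to be at most a high power of $p^s_1 \wedge p^u_1$; reading this off the columns of $C$ yields $s(f(x_0)) = e^{\pm O(\ve)}\, s(x_1)$. On the other hand, $y \in \widetilde V^s \subset \Psi_{x_0}([-p^s_0, p^s_0]^2)$ gives $d(y, x_0) \leq 2 p^s_0 \leq 2 Q(x_0)$; combining the $C^{1+\beta}$ regularity of $f$ with the $C^{1+\beta/3}$ control of the tangent field of $\widetilde V^s$ from (AM3) yields $\alpha_y/\alpha_{x_0} = 1 \pm \mathfrak{C}\, Q(x_0)^{\beta/3}$ for some $\mathfrak{C} = \mathfrak{C}(f)$. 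Setting $A := \alpha_{x_0}\, s(x_1)^2$ and $K := (s(f(y))/s(x_1))^2 = e^{\pm 2\xi}$ and dividing the two recursions yields
$$\frac{s(y)^2}{s(x_0)^2} = \bigl(1 \pm O(\ve + Q(x_0)^{\beta/3})\bigr)\,\frac{2 + KA}{2 + A}.$$

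The third step is a quantitative contraction analysis. The map $\phi(K) = \frac{2 + KA}{2 + A} = (1-\rho) + \rho K$ with $\rho := \frac{A}{2+A} \in [0,1)$ is affine with fixed point $1$, so it contracts the distance to $1$ by the factor $\rho$. In logarithmic form, if $\log K = \pm 2\xi$ then $\tfrac{1}{2}\log \phi(K)$ has absolute value at most $\xi - g(\rho,\xi)$ for an explicit gain $g$ which is positive for $\xi > 0$ and comparable to $(1-\rho)\,\xi$ for small $\xi$.

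The main obstacle -- and the heart of the proof -- is verifying that the contraction gain $g(\rho,\xi)$ dominates the multiplicative error $O(\ve + Q(x_0)^{\beta/3})$ by at least $Q(x_0)^{\beta/4}$ uniformly for $\xi \geq \sqrt{\ve}$. This requires a case split between how bad the hyperbolicity at $x_1$ can be (which inflates $A$ and pushes $\rho$ toward $1$, weakening the contraction) and how large $Q(x_0)$ can be (which inflates the error). When $A$ is moderate, $(1-\rho)$ is bounded below and the contraction alone beats the error since $\xi \geq \sqrt{\ve}$; when $A$ is large, the bound $s(x_1) \leq \|C(x_1)^{-1}\|_{\rm Frob}$ combined with the definition $Q(x) = \ve^{3/\beta}\|C(f(x))^{-1}\|^{-12/\beta}$ and the greedy rule (GPO2) force $Q(x_0)$ to be correspondingly small, so that the error $Q(x_0)^{\beta/3}$ is negligible relative to the target $Q(x_0)^{\beta/4}$. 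The exponent $\beta/4$ in the target (strictly smaller than $\beta/3$ in the error) is precisely the safety margin needed to glue the two cases together. Taking square roots then yields the asserted bound $s(y)/s(x_0) = e^{\pm(\xi - Q(x_0)^{\beta/4})}$.
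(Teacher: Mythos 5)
Your plan follows the same route as the paper (which delegates the proof to \cite[Lemma 7.2]{Sarig-JAMS}): the exact recursion $s(z)^2 = 2 + \alpha_z\, s(f(z))^2$ with $\alpha_z = e^{2\chi}\|df\, e^s_z\|^2$, the matching of the two instances of the recursion via the $\ve$--overlap and the $C^{1+\beta}$ regularity, and the affine contraction $\phi(K)=\tfrac{2+KA}{2+A}=(1-\rho)+\rho K$ with $\rho=\tfrac{A}{2+A}$. That architecture is correct and is exactly the one used in the literature.

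Two quantitative steps in the case analysis are gapped, however. First, the bound $s(f(x_0))/s(x_1)=e^{\pm O(\ve)}$ is much too crude to close the argument: since $Q(x_0)$ has no positive lower bound over ${\rm NUH}_\chi^*$, the target $Q(x_0)^{\beta/4}$ can be far smaller than $\ve$, and an $O(\ve)$ error would swamp it. What saves the argument is that (GPO1) actually gives $\|\widetilde{C(f(x_0))}-\widetilde{C(x_1)}\|<(q^s\wedge q^u)^8$, so after accounting for the Lipschitz constant $s(f(x_0))\leq\|C(f(x_0))^{-1}\|_{\rm Frob}$ of $t\mapsto 1/t$ the ratio error is $O\bigl(Q(x_0)^{\,8-\beta/12}\bigr)$, which is uniformly $\ll Q(x_0)^{\beta/4}$ for $\beta\in(0,1)$. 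You state the tight bound in words but then discard it for the looser $O(\ve)$; the tight bound must be carried through. Second, in the case $A$ large you verify only that the error $Q(x_0)^{\beta/3}$ is negligible against the target $Q(x_0)^{\beta/4}$; you do not verify that the \emph{weakened} gain, of order $(1-\rho)\xi\asymp \xi/A$, still exceeds $Q(x_0)^{\beta/4}$. This is not automatic and needs the computation: since $\alpha_{x_0}$ is uniformly bounded one has $A\lesssim \|C(x_1)^{-1}\|_{\rm Frob}^2\approx \bigl(\ve^{3/\beta}/Q(x_0)\bigr)^{\beta/6}=\ve^{1/2}Q(x_0)^{-\beta/6}$, so for $\xi\geq\sqrt{\ve}$ the gain is $\gtrsim \sqrt{\ve}/A\gtrsim Q(x_0)^{\beta/6}$, and $Q(x_0)^{\beta/6}>Q(x_0)^{\beta/4}$ with margin $Q(x_0)^{-\beta/12}\geq \ve^{-1/4}$, so the gain dominates the target once $\ve$ is small. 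Both omissions are fixable with ingredients already at your disposal, but as written the case analysis does not close.
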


Hence the ratio improves whenever it is outside $[e^{-\sqrt{\ve}},e^{\sqrt{\ve}}]$,
see \cite[Lemma 7.2]{Sarig-JAMS} for a proof. Here is the first main consequence of this important lemma.

\begin{corollary}\label{Corollary-finite-s}
$\pi[\Sigma^\#]\subset{\rm NUH}_\chi$, i.e. if $x=\pi(\un v)$ for $\un v\in\Sigma^\#$, then $s(x),u(x)<+\infty$.\end{corollary}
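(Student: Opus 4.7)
The strategy is to exploit the recurrence of $\underline v \in \Sigma^\#$ together with iterated backward applications of the Improvement Lemma. Write $y_n := f^n(x)$. The symbolic Markov property implies that $\sigma^n(\underline v)$ shadows $y_n$, so $e^s_{y_n}, e^u_{y_n}$ are well-defined as the tangent vectors to $V^s[\sigma^n(\underline v)]$ and $V^u[\sigma^n(\underline v)]$ at $y_n$. As noted just before the corollary, once we establish (NUH3) for $x$, conditions (NUH1)--(NUH2) follow automatically (past expansion of $e^s_x$ and future expansion of $e^u_x$ come from transversality of the admissible manifolds and the contraction properties of the graph transforms). Hence the task reduces to proving $s(x), u(x) < \infty$.

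Since $\underline v \in \Sigma^\#$, there exist double charts $v^+ = \Psi_{x^+}^{p^s_+,p^u_+}$ and $v^- = \Psi_{x^-}^{p^s_-,p^u_-}$ and sequences $n_k \to +\infty$, $m_k \to -\infty$ with $v_{n_k} = v^+$ and $v_{m_k} = v^-$; in particular $s(x_{n_k}) = s(x^+) =: s_+$ and $u(x_{m_k}) = u(x^-) =: u_-$ are finite constants independent of $k$. Since a priori $s(y_n)$ may be infinite, we work with the monotone truncations
$$
s^{(N)}(y)^2 := 2\sum_{j=0}^{N} e^{2j\chi}\|df^j e^s_y\|^2, \qquad u^{(N)}(y)^2 := 2\sum_{j=0}^{N} e^{2j\chi}\|df^{-j} e^u_y\|^2,
$$
which satisfy the same one-step recursion $s^{(N)}(y)^2 = 2 + e^{2\chi}\|df\, e^s_y\|^2 \cdot s^{(N-1)}(f(y))^2$ that underlies the proof of the Improvement Lemma. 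The proof of that lemma used only this recursion together with the closeness of hyperbolicity parameters forced by $\ve$--overlap, so it transfers verbatim to the truncated setting: writing $\xi_n := |\log(s^{(M-n)}(y_n)/s(x_n))|$ along an edge $v_n \overset{\ve}{\to} v_{n+1}$, the hypothesis $\xi_{n+1} \geq \sqrt\ve$ yields $\xi_n \leq \xi_{n+1} - Q(x_n)^{\beta/4}$, while in the regime $\xi_{n+1} < \sqrt\ve$ the same one-step estimate forces $\xi_n \leq \sqrt\ve + O(\ve)$, so the ``good regime'' is backward-invariant up to a small error.

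Now fix $k$ large and set $M = n_k$. At time $M$ we have $s^{(0)}(y_M) = \sqrt 2$ and $s(x_M) = s_+$, hence $\xi_M \leq \log(s_+/\sqrt 2)$, independently of $k$. Iterating the truncated Improvement Lemma backwards from $n = M$ down to $n = 0$, the ratio $\xi_n$ either stays in the bad regime and strictly decreases at each step, or enters the good regime and remains bounded by $\sqrt\ve + O(\ve)$; in either case, $\xi_0 \leq \max\{\log(s_+/\sqrt 2), \sqrt\ve + O(\ve)\}$ uniformly in $k$. Therefore $s^{(n_k)}(y_0) \leq e^{\xi_0} s(x_0)$ is uniformly bounded, and letting $k \to \infty$ in the monotone limit $s(x) = \lim_N s^{(N)}(y_0)$ yields $s(x) < \infty$. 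The argument for $u(x)$ is symmetric: one anchors at $m_k$ where $u^{(0)}(y_{m_k})/u(x_{m_k}) = \sqrt 2/u_-$ and iterates the analogous forward-in-time Improvement Lemma for $u$ from $n = m_k$ up to $n = 0$. The main technical point requiring care is the verification of the Improvement Lemma for truncated sums, and especially the control in the borderline regime $\xi \sim \sqrt\ve$ that is not explicitly covered by the lemma as stated; once this is confirmed, the conclusion $s(x), u(x) < \infty$, and thus $x \in {\rm NUH}_\chi$, follows.
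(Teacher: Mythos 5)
Your argument is correct and takes a genuinely different route from the paper. The paper's proof (summarized after the corollary, following \cite[\S 7.1]{Sarig-JAMS}) invokes \emph{relevance} of the alphabet: since $v$ is relevant, there is an $s$--admissible manifold $V^s$ at $v$ on which $s(\cdot)$ is finite (this uses the bounded distortion property along $V^s[\un v]$ established in \cite[Prop.\ 6.3]{Sarig-JAMS}); one then pulls $V^s$ back through the graph transforms, applies the Improvement Lemma as stated, and finally passes to the limit via the $C^1$ convergence of the representing functions of $\widetilde V^s_k$ to that of $V^s[\un v]$. You instead replace all of this with the monotone truncations $s^{(N)}$: since $s^{(N)}(y)$ is automatically finite for finite $N$, you need neither relevance nor bounded distortion nor $C^1$ convergence of admissible manifolds; you simply track the single ratio $\xi_n$ along the orbit and close with monotone convergence. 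The trade-off is that your route rests on a \emph{truncated} Improvement Lemma that is not literally the statement in the text. You correctly identify this as the step needing care, and it does hold: the one-step recursion $s^{(N+1)}(y)^2 = 2 + e^{2\chi}\|df\,e^s_y\|^2\, s^{(N)}(f(y))^2$ has exactly the same algebraic form as the untruncated recursion, and the $\ve$--overlap comparison $s(x_0)^2\approx 2 + e^{2\chi}\|df\,e^s_{x_0}\|^2\,s(x_1)^2$ involves only the (finite) chart centers, so the map $K\mapsto\tfrac{2+Ka}{2+a}$ contracts toward $1$ exactly as in the heuristic sketch given before the lemma. One small improvement to your write-up: in the regime $\xi_{n+1}<\sqrt\ve$ you can apply the lemma with $\xi=\sqrt\ve$ to get $\xi_n\leq\sqrt\ve - Q(x_n)^{\beta/4}\leq\sqrt\ve$, so the good regime is preserved exactly and no $O(\ve)$ slack is needed. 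In short: your proof is sound, more self-contained than the paper's, but shifts the technical burden onto verifying a truncated variant of the Improvement Lemma rather than citing the machinery around admissible manifolds.
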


The proof can be found in \cite[\S 7.1]{Sarig-JAMS}. We summarize it as follows.
Let $v_{n_k}=v$ for infinitely many $n_k>0$. Since $v=\Psi_z^{p^s,p^u}$ is relevant, there is
$V^s\in \mathfs M^s_v$ s.t. $s(y)<\infty$ for every $y\in V^s$. Starting with $V_{n_k}=V^s$,
pull it back through the graph transforms to get $\widetilde V^s_k\in\mathfs M^s_{v_0}$.
If $k$ is large enough, then the original ratio $\tfrac{s(y)}{s(z)}$ passed through sufficiently many
improvements $Q(z)$ so that $\tfrac{s(w_k)}{s(x_0)}=e^{\pm \xi}$ for all
$w_k\in\widetilde V^s_k$, for some fixed $\xi\geq\sqrt{\ve}$.
Since the representing functions of $\widetilde V^s_k$ converge to the representing
function of $V^s[\un v]$ in the $C^1$ norm, we conclude that $s(x)<\infty$. Similarly, 
we obtain that $u(x)<\infty$.

\subsection{Inverse theorem}\label{Subsec-Inverse-Thm}

The fifth and final main ingredient in the proof of Theorem \ref{Sarig} is the Inverse Theorem.
To understand its importance, recall that once we have constructed 
an infinite-to-one coding $\pi$, the next step is to  
apply a Bowen-Sina{\u\i} refinement to the Markov cover $\mathfs Z$ induced by $\pi$.
Since $\Sigma$ has countably many states,
the Markov cover is countable, so that after refining the resulting partition could be
uncountable\footnote{For example, the refinement of the dyadic
intervals in $[0,1]$ is the point partition.}. One condition that guarantees an yet countable
refinement is {\em local finiteness:} $\mathfs Z$ is locally finite if
every $Z\in\mathfs Z$ only intersects finitely many others $Z'\in\mathfs Z$.
The understanding of intersections $Z\cap Z'$ comes from an {\em inverse problem}:
if $\pi(\un v)=x$, how is $\un v$ defined in terms of $x$?
The next theorem (essentially) answers this question.

\begin{theorem}[Inverse Theorem]\label{Thm-inverse}
Let $\un v=\{\Psi_{x_n}^{p^s_n,p^u_n}\}_{n\in\Z}\in\Sigma^\#$, and let
$\pi(\un v)=x$. Then the following holds for all $n\in\Z$:
\begin{enumerate}[{\rm (1)}]
\item {\sc Control of $x$:} ${\rm dist}(x_n,f^n(x))<{\rm const}$.
\item {\sc Control of $\alpha$:} $\tfrac{\sin\alpha(x_n)}{\sin\alpha(f^n(x))}=e^{\pm{\rm const}}$.
\item {\sc Control of $s,u$:} $\tfrac{s(x_n)}{s(f^n(x))}=\tfrac{u(x_n)}{u(f^n(x))}=e^{\pm{\rm const}}$.
\item {\sc Control of $p^s,p^u$:} $\tfrac{p^s_n}{q^s(f^n(x))}=\tfrac{p^u_n}{q^u(f^n(x))}=e^{\pm{\rm const}}$.
\end{enumerate}
In particular $x\in{\rm NUH}_\chi^\#$, and so $\pi[\Sigma^\#]={\rm NUH}_\chi^\#$.
\end{theorem}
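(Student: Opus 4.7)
The plan is to prove the four bullets in the order (1), (3), (2), (4), and then deduce $x\in{\rm NUH}^\#_\chi$. Claim~(1) is immediate from the Shadowing Lemma: since $f^n(x)\in\Psi_{x_n}([-p_n,p_n]^2)$ with $p_n=p^s_n\wedge p^u_n\leq Q(x_n)\leq \ve^{3/\beta}$ and $\Psi_{x_n}$ is $2$-Lipschitz, $d(x_n,f^n(x))\leq 2\sqrt{2}\,\ve^{3/\beta}$.

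The core is claim~(3), which extends the scheme of Corollary~\ref{Corollary-finite-s} to quantitative two-sided bounds. Since $\un v\in\Sigma^\#$, there is a vertex $v$ with $v_{n_k}=v$ for some sequence $n_k\to+\infty$. By the Relevance clause in Theorem~\ref{Thm-coarse-graining}, there is an $\ve$-gpo through $v$ shadowing a point of ${\rm NUH}^\#_\chi$, hence an admissible manifold $V^s_\star\in\mathfs M^s_v$ whose $s$-parameter along the fibre is comparable to $s$ at the centre of $v$. Setting $V_{n_k}=V^s_\star$ and pulling back by the stable graph transforms produces a sequence of admissible manifolds converging in $C^1$ to $V^s[\un v]$. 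The Improvement Lemma forces the ratio $s(\cdot)/s(x_n)$ either already to lie in $[e^{-\sqrt\ve},e^{\sqrt\ve}]$ or to decrease by at least $Q(x_n)^{\beta/4}$ at each pull-back; because the path from $v_{n_k}$ back to $v_0$ is arbitrarily long as $k\to\infty$, the limiting ratio $s(x)/s(x_0)$ is uniformly bounded. Applying the same argument to $\sigma^n\un v\in\Sigma^\#$ yields $s(f^n(x))/s(x_n)=e^{\pm{\rm const}}$ for every $n$; the mirror argument, using recurrence in the past and the unstable graph transforms, handles $u$.

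For~(2), the tangent vector $e^s_{f^n(x)}$ to $V^s[\sigma^n\un v]$ at $f^n(x)$ corresponds, in the chart $\Psi_{x_n}$, to a vector of slope at most $\tfrac12$ by (AM3); hence it sits within a bounded angular deviation of $e^s_{x_n}$. The same holds for $e^u_{f^n(x)}$ via $V^u[\sigma^n\un v]$. Since $\Psi_{x_n}$ is $2$-bi-Lipschitz and $C(x_n)$ distorts angles only by factors controlled by $s(x_n), u(x_n), \alpha(x_n)$, which are already under control by~(3), a direct trigonometric computation gives $\sin\alpha(x_n)/\sin\alpha(f^n(x))=e^{\pm{\rm const}}$.

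Claim~(4) compares two greedy recursions: (GPO2) gives $p^s_n=\min\{e^\ve p^s_{n+1},Q(x_n)\}$ and the mirror identity for $p^u_n$, while Lemma~\ref{q^s}(2) gives the analogous recursions for $q^{s/u}(f^n(x))$ with $Q$ evaluated at $f^n(x)$. Parts~(1) and~(3), together with the definition of $Q$, yield $Q(x_n)/Q(f^n(x))=e^{\pm{\rm const}}$, so the two recursions are term-wise comparable and the infima defining $p^s_n$ and $q^s(f^n(x))$ match up to a uniform multiplicative constant, proving~(4). Combining~(3), (4), and the recurrence $\un v\in\Sigma^\#$ gives $\limsup_{n\to\pm\infty}q(f^n(x))>0$, so $x\in{\rm NUH}^\#_\chi$; together with the Sufficiency clause of Theorem~\ref{Thm-coarse-graining} this yields $\pi[\Sigma^\#]={\rm NUH}^\#_\chi$. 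The main obstacle is step~(3): the Improvement Lemma only buys an incremental gain of size $Q(x_n)^{\beta/4}$, which may be tiny, and the recurrence of $\un v$ must be exploited carefully to ensure enough gain accumulates along long paths to produce constants independent of~$n$.
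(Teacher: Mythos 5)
Your proposal follows essentially the same route as the paper: claim (1) directly from shadowing, claim (3) via the Improvement Lemma and the recurrence hypothesis $\un v\in\Sigma^\#$ (exactly as in Corollary \ref{Corollary-finite-s}), and claim (4) by comparing the greedy infima defining $p^{s/u}_n$ and $q^{s/u}(f^n(x))$ via the induced bound $\tfrac{Q(x_n)}{Q(f^n(x))}=e^{\pm{\rm const}}$. The final deduction $x\in{\rm NUH}^\#_\chi$ by exhibiting a fixed vertex $v$ returning infinitely often, so that $q(f^{n_k}(x))$ is bounded below by a fixed multiple of $p^s\wedge p^u$, is also how the paper closes the argument.

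Three small corrections to the logical architecture. First, your claim that (2) ``uses (3)'' is spurious and slightly circular (you invoke $\alpha(x_n)$, which is what (2) is meant to control). The paper proves (2) without (3): both $\alpha(x_n)$ and $\alpha(f^n(x))$ are read off through the same chart $\Psi_{x_n}$, so whatever angular distortion $C(x_n)$ introduces cancels; the only thing one needs is that $F',G'$ are small at the intersection point (from (AM2)--(AM3) and the proximity to the origin, i.e.\ claim (1)), so the tangent vectors to $V^s,V^u$ are, in the chart, close to the coordinate axes. Second, the $Q$-comparison in (4) requires both (2) and (3), not (1) and (3): since $\|C(y)^{-1}\|_{\rm Frob}=\sqrt{s(y)^2+u(y)^2}/|\sin\alpha(y)|$, you need control of $\sin\alpha$ as well as of $s,u$. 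Third, in (3) the correct mechanism is not that the path $v_{n_k}\to\cdots\to v_0$ is long, but that it contains at least $k$ returns to the fixed vertex $v=\Psi_z^{p^s,p^u}$; each return contributes a gain of at least $Q(z)^{\beta/4}$ (a fixed positive quantity), and this is what makes the accumulated gain diverge as $k\to\infty$ even though $Q(x_j)^{\beta/4}$ can be arbitrarily small along the rest of the path. You correctly flag this as the main obstacle, but the phrase ``the path is arbitrarily long'' misses the precise source of the gain.
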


The above theorem is not the original reference \cite[\S 6]{Sarig-JAMS}, since it did not
make use of $q,q^s,q^u$.
Instead, the original statement considered two $\ve$--gpo's $\un v,\un w\in\Sigma^\#$ and compared
their parameters directly. The constant appearing in the theorem
is of the order of $\sqrt[3]{\ve}$.
The assumption $\un v\in\Sigma^\#$ is essential to guarantee parts (3) and (4), since the proof
uses that the trajectories visit a Pesin block infinitely often.
Theorem \ref{Thm-inverse} states that each coordinate of $\un v$
is uniquely defined ``up to bounded error''.
Below we explain how to get the estimates for $n=0$.

\medskip
\noindent
{\sc Control of $x$ and $\alpha$.} Let $F,G$ be the representing functions of $V^s,V^u$. Since $F(0),G(0)\approx 0$, the graphs of $F,G$
intersect close to the origin $(0,0)$. Applying $\Psi_{x_0}$, we conclude that
$d(x_0,x)\ll 1$.
To control the angle, we use that $\|F'\|_{C^0},\|G'\|_{C^0}\ll  1$ and so the graphs of $F,G$ 
intersect almost perpendicularly. Applying $\Psi_{x_0}$, we get that
$\alpha(x_0)\approx \alpha(x)$, see Figure \ref{figure-IT-angle}.
\begin{figure}[hbt!]
\centering
\def\svgwidth{8cm}
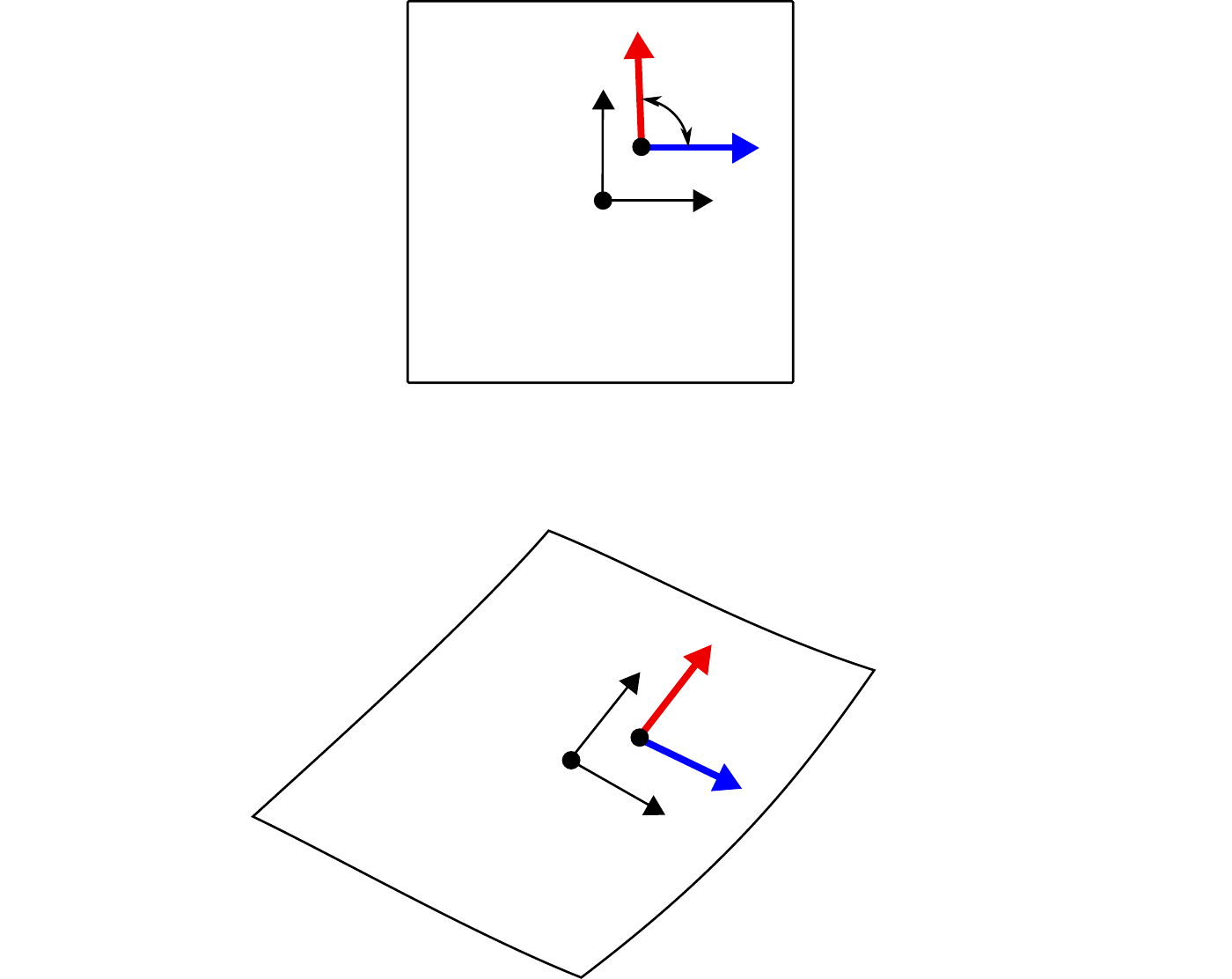
\caption{Control of $x$ and $\alpha$.}\label{figure-IT-angle}
\end{figure}

\medskip
\noindent
{\sc Control of $s,u$.} The proof is identical to the proof of Corollary \ref{Corollary-finite-s}.

%

\medskip
Observe that parts (2)--(3) imply that $\tfrac{Q(x_n)}{Q(f^n(x))}=e^{\pm{\rm const}}$ for all $n\in\Z$.

\medskip
\noindent
{\sc Control of $p^s,p^u$.} We prove the first estimate $\tfrac{p^s_0}{q^s(x)}=e^{\pm{\rm const}}$
(the second is analogous). The idea is to observe that $p^s_0$ and $q^s(x)$
are defined as infima of comparable sequences. Indeed, by definition we have
$$
q^s(x)=\inf\{e^{\ve n}Q(f^n(x)):n\geq 0\}
$$
and by (GPO2) we have
$$
p^s_0=\inf\\\{e^{\ve n}Q(x_n):n\geq 0\}.
$$
Since $\tfrac{Q(x_n)}{Q(f^n(x))}=e^{\pm{\rm const}}$ for all $n\geq 0$,
it follows that $\tfrac{p^s_0}{q^s(x)}=e^{\pm{\rm const}}$.

\medskip
The above argument differs from the original one, and does not require any maximality
assumption on $q^s$. Indeed, it comes for free from the recurrence of $\un v$.

%
%
%

\subsection{Bowen-Sina{\u\i} refinement}

Local finiteness allows us to refine the Markov cover of ${\rm NUH}_\chi^\#$
and obtain a countable Markov partition. As mentioned in Subsection \ref{Subsec-UH-MP},
the notion of Markov cover/partition for nonuniformly hyperbolic systems will be weaker than
the one for uniformly hyperbolic systems, since we are not able to control the topology of the rectangles.
Indeed, due to Theorem \ref{Thm-inverse}, the Markov cover is the projection of
the canonical partition of $\Sigma^\#$ (instead of $\Sigma$) into cylinders at the zeroth position,
hence rectangles will not have the regularity property.
This loss of topological control compels us to perform the refinement in a more abstract way,
and then check that the resulting partition still generates a finite-to-one symbolic extension.
The reader who is already comfortable with the
uniformly hyperbolic situation will not see much difficulty in the adaptations. 

\medskip
\noindent
{\sc The Markov cover $\mathfs Z$:} Let $\mathfs Z:=\{Z(v):v\in\mathfs A\}$, where
$$
Z(v):=\{\pi(\un v):\un v\in\Sigma^\#\text{ and }v_0=v\}.
$$

\medskip
Restricted to $\Sigma^\#$, cylinders are neither closed nor empty, so
the same occurs to $Z(v)$. Nevertheless,
$s$/$u$--fibres are still well-defined in $\mathfs Z$.

\begin{proposition}\label{Prop-NUH-Z}
The following are true.
\begin{enumerate}[{\rm (1)}]
\item {\sc Covering property:} $\mathfs Z$ is a cover of ${\rm NUH}_\chi^\#$.
\item {\sc Product structure:} For every $Z\in\mathfs Z$ and every $x,y\in Z$, the intersection
$W^s(x,Z)\cap W^u(y,Z)$ consists of a single point, and this point belongs to $Z$.
\item {\sc Symbolic Markov property:} If $x=\pi(\un v)$ with $\un v=\{v_n\}_{n\in\Z}\in\Sigma^\#$, then
$$
f(W^s(x,Z(v_0)))\subset W^s(f(x),Z(v_1))\, \text{ and }\, f^{-1}(W^u(f(x),Z(v_1)))\subset W^u(x,Z(v_0)).
$$
\item {\sc Local finiteness:} For every $Z\in\mathfs Z$, the set $\{Z'\in\mathfs Z:Z\cap Z'\neq\emptyset\}$
is finite.
\end{enumerate}
\end{proposition}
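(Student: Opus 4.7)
The plan is to establish the four parts separately. Parts (1)--(3) are formal consequences of the properties of $\pi$ and the structure of $\Sigma$ established earlier, while part (4), local finiteness, is the substantive statement and the main obstacle; it follows from the Inverse Theorem combined with the discreteness of the alphabet $\mathfs A$.

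For (1), the covering property amounts to the equality $\pi[\Sigma^\#] = {\rm NUH}_\chi^\#$. The inclusion $\supset$ is the strengthened sufficiency statement from Theorem \ref{Thm-coarse-graining}(2), which guarantees that every $x \in {\rm NUH}_\chi^\#$ is shadowed by some recurrent $\ve$--gpo $\un v \in \Sigma^\#$, so that $x = \pi(\un v) \in Z(v_0)$. The reverse inclusion is already contained in Theorem \ref{Thm-inverse}.

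For (2) and (3), I would follow the concatenation argument used in the uniformly hyperbolic case (Proposition \ref{Prop-UH-Z}). Given $x, y \in Z = Z(v)$, write $x = \pi(\un v)$ and $y = \pi(\un w)$ with $v_0 = w_0 = v$. Define $\un u$ by $u_n = v_n$ for $n \geq 0$ and $u_n = w_n$ for $n \leq 0$; the shared zeroth coordinate makes all required edges valid, and the recurrence of the future of $\un v$ and the past of $\un w$ places $\un u$ in $\Sigma^\#$. Since $V^s[\un u]$ depends only on the future and $V^u[\un u]$ only on the past, we get $\pi(\un u) = V^s[\un v] \cap V^u[\un w]$, a single point lying in $Z \cap W^s(x,Z) \cap W^u(y,Z)$. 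The Markov property (3) is an analogous concatenation applied to $x$ and a point $y \in W^s(x, Z(v_0))$: using $y \in V^s[\un v]$, the gpo $\un w'$ with future from $\un v$ and past from $\un w$ satisfies $\pi(\un w') = y$, so $\sigma(\un w') \in \Sigma^\#$ has zeroth coordinate $v_1$, giving $f(y) \in Z(v_1)$; and $V^s[\sigma(\un w')] = V^s[\sigma(\un v)]$ yields $f(y) \in W^s(f(x), Z(v_1))$. The unstable statement is symmetric, using $\sigma^{-1}$.

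For (4), fix $Z = Z(v)$ with $v = \Psi_{x_0}^{p^s, p^u}$, and let $Z' = Z(v')$ with $v' = \Psi_{x'_0}^{p'^s, p'^u}$ be any element with $Z \cap Z' \neq \emptyset$. Choose $x \in Z \cap Z'$, and write $x = \pi(\un v) = \pi(\un v')$ with $v_0 = v$ and $v'_0 = v'$ for suitable $\un v, \un v' \in \Sigma^\#$. Applying part (4) of the Inverse Theorem (Theorem \ref{Thm-inverse}) at $n=0$ to both gpos gives
$$
\frac{p'^s}{p^s} \;=\; \frac{p'^s / q^s(x)}{p^s / q^s(x)} \;=\; e^{\pm \mathrm{const}}, \qquad \frac{p'^u}{p^u} \;=\; e^{\pm \mathrm{const}},
$$
with the constant universal (of order $\sqrt[3]{\ve}$). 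Hence $p'^s$ and $p'^u$ are bounded below by a positive constant $t = t(v) > 0$. The discreteness property of $\mathfs A$ in Theorem \ref{Thm-coarse-graining}(1) then ensures that only finitely many $v' \in \mathfs A$ satisfy $p'^s, p'^u > t$, which completes the proof.
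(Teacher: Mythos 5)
Your proof is correct and takes essentially the same approach as the paper: parts (1)--(3) are handled by the concatenation argument inherited from the uniformly hyperbolic case (Proposition \ref{Prop-UH-Z}), while part (4) is deduced by using the Inverse Theorem to compare the $p^s,p^u$ parameters of overlapping rectangles against the reference quantities $q^s(x),q^u(x)$ at a common point, and then invoking the discreteness property of $\mathfs A$ from Theorem \ref{Thm-coarse-graining}(1). You also correctly identify Theorem \ref{Thm-inverse}(4) (control of $p^s,p^u$) as the relevant clause for part (4), rather than part (3) which controls $s,u$.
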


Parts (1)--(3) are analogues of Proposition \ref{Prop-UH-Z}. The novelty is part (4),
which follows from Theorem \ref{Thm-coarse-graining}(1) and Theorem \ref{Thm-inverse}(3):
if $v=\Psi_x^{p^s,p^u}$ and $w=\Psi_y^{q^s,q^u}$ satisfy $Z(v)\cap Z(w)\neq\emptyset$
then $\tfrac{p^s}{q^s}=\tfrac{p^u}{q^u}=e^{\pm{\rm const}}$, hence
$$
\#\{Z(w)\in\mathfs Z:Z(v)\cap Z(w)\neq\emptyset\}\leq \#\{w\in\mathfs A:q^s,q^u\geq e^{-{\rm const}}(p^s\wedge p^u)\}<\infty.
$$  
Inside each $Z\in\mathfs Z$, define the Smale bracket $[\cdot,\cdot]_Z$ as before. 
Lemma \ref{Lemma-UH-compatibility} remains valid, see \cite[Lemmas 10.7, 10.8, 10.10]{Sarig-JAMS}.
We now refine $\mathfs Z$. For $Z,Z'\in\mathfs Z$ s.t. $Z\cap Z'\neq\emptyset$, 
let $\mathfs E_{ZZ'}=$ cover of $Z$ by rectangles:
\begin{align*}
E^{su}_{Z,Z'}&=\{x\in Z: W^s(x,Z)\cap Z'\neq\emptyset,
W^u(x,Z)\cap Z'\neq\emptyset\}\\
E^{s\emptyset}_{Z,Z'}&=\{x\in Z: W^s(x,Z)\cap Z'\neq\emptyset,
W^u(x,Z)\cap Z'=\emptyset\}\\
E^{\emptyset u}_{Z,Z'}&=\{x\in Z: W^s(x,Z)\cap Z'=\emptyset,
W^u(x,Z)\cap Z'\neq\emptyset\}\\
E^{\emptyset\emptyset}_{Z,Z'}&=\{x\in Z: W^s(x,Z)\cap Z'=\emptyset,
W^u(x,Z)\cap Z'=\emptyset\}.
\end{align*}
The above definition is simpler than the one for uniformly hyperbolic systems, since
we do not take relative interiors nor closures.
Let $\mathfs R$ be the partition that refines all of $\mathfs E_{ZZ'}$.
Again due to Theorem \ref{Thm-coarse-graining}(1) and Theorem \ref{Thm-inverse}(3),
$\mathfs R$ and $\mathfs Z$ satisfy two additional local finiteness properties:
\begin{enumerate}[$\circ$]
\item For all $R\in\mathfs R$, the set $\{Z\in\mathfs Z:Z\supset R\}$ is finite.
\item For all $Z\in\mathfs Z$, the set $\{R\in\mathfs R:R\subset Z\}$ is finite.
\end{enumerate}
Inside each $R\in\mathfs R$, the Smale brackets $[\cdot,\cdot]_Z$ do not depend on $Z$, 
hence we can define $[\cdot,\cdot]$ on $R$.

\begin{lemma}\label{Lemma-MP-NUH}
$\mathfs R$ is a Markov partition:
\begin{enumerate}[{\rm (1)}]
\item {\sc Product structure:} If $x,y\in R\in\mathfs R$ then $[x,y]\in R$.
\item {\sc Markov property:} if $R,S\in\mathfs R$ and if $x\in R,f(x)\in S$ then
$$
f(W^s(x,R))\subset W^s(f(x),S)\text{ and }f^{-1}(W^u(f(x),S))\subset W^u(x,R).
$$
\end{enumerate}
\end{lemma}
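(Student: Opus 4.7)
The plan is to exploit the two local finiteness properties stated just before the lemma in order to write each $R\in\mathfs R$ as a \emph{finite} intersection of the pieces $E^{ab}_{Z,Z'}$, and then reduce both claims to two small facts: (i) Smale brackets preserve the labels $a,b\in\{s,u,\emptyset\}$; (ii) $f$ and $f^{-1}$ map $s$- and $u$-fibres of the cover $\mathfs Z$ compatibly, via the symbolic Markov property in Proposition \ref{Prop-NUH-Z}(3) and the compatibility from (the nonuniformly hyperbolic analogue of) Lemma \ref{Lemma-UH-compatibility}(1).

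First I would fix $R\in\mathfs R$ and pick any $x_0\in R$. By local finiteness, only finitely many $Z\in\mathfs Z$ contain $x_0$, and each such $Z$ meets only finitely many $Z'\in\mathfs Z$. For every such pair $(Z,Z')$, let $a(Z,Z'),b(Z,Z')\in\{s,\emptyset\}\times\{u,\emptyset\}$ be the unique label with $x_0\in E^{a(Z,Z')\,b(Z,Z')}_{Z,Z'}$. Then
\[
R=\bigcap_{Z\ni x_0}\ \bigcap_{Z'\cap Z\neq\emptyset} E^{a(Z,Z')\,b(Z,Z')}_{Z,Z'}\cap Z
\]
as a finite intersection. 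The product structure of each $Z$ (Proposition \ref{Prop-NUH-Z}(2)) already gives $[x,y]_Z\in Z$ for $x,y\in R\subset Z$, and the remark before the lemma says $[x,y]_Z$ does not depend on the choice of $Z\supset R$; call it $[x,y]$. Because $[x,y]\in W^s(x,Z)\cap W^u(y,Z)$, we have $W^s([x,y],Z)=W^s(x,Z)$ and $W^u([x,y],Z)=W^u(y,Z)$, so the $s$-label of $[x,y]$ in $\mathfs E_{Z,Z'}$ is read off from $x$ and the $u$-label is read off from $y$. Since $x,y\in R$ share the same labels as $x_0$, the point $[x,y]$ also has these labels; hence $[x,y]\in R$. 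This settles (1).

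For (2), suppose $x\in R$ and $f(x)\in S$, and let $y\in W^s(x,R)$. By definition $W^s(x,R)\subset W^s(x,Z)$ for any $Z\in\mathfs Z$ with $R\subset Z$. Choose $\un v=\{v_n\}\in\Sigma^\#$ with $\pi(\un v)=x$ and $v_0\ni x$. The symbolic Markov property gives $f(y)\in W^s(f(x),Z(v_1))$; in particular $f(y)$ lies in the same $s$-fibre of $Z(v_1)$ as $f(x)$. It remains to show $f(y)\in S$, i.e.\ that $f(x)$ and $f(y)$ share the same labels $a(Z,Z'),b(Z,Z')$ for every pair $(Z,Z')$ with $Z\ni f(x)$ and $Z'\cap Z\neq\emptyset$. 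The $s$-label is automatic since $W^s(f(y),Z)=W^s(f(x),Z)$ (both equal the $s$-fibre of $Z(v_1)$ restricted to $Z$, using the overlapping charts property of Lemma \ref{Lemma-UH-compatibility}(2)(b)). The $u$-label requires comparing $W^u(f(x),Z)\cap Z'$ with $W^u(f(y),Z)\cap Z'$; applying $f^{-1}$ and using the symbolic Markov property in the $u$-direction, these match a comparison between $W^u(x,Z(v_0))\cap f^{-1}(Z')$ and $W^u(y,Z(v_0))\cap f^{-1}(Z')$, and the latter two are equal because $y\in W^s(x,R)$ forces $x,y$ to share $u$-labels. The unstable statement is proved the same way applied to $f^{-1}$.

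The main obstacle will be step (2): keeping careful track of the identifications between fibres of \emph{different} elements of $\mathfs Z$ and the images of fibres under $f^{\pm 1}$. The essential inputs are the compatibility of Smale brackets with $f$ and the overlapping charts property, both of which are the nonuniform analogues of Lemma \ref{Lemma-UH-compatibility} and are stated (in the references) as Lemmas 10.7--10.10 of \cite{Sarig-JAMS}. Once these are in hand, the bookkeeping above becomes routine.
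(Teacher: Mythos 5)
Your plan for part (1) is essentially right: fixing $x_0\in R$, local finiteness lets you write $R$ as a finite intersection of label sets, the Smale bracket $[x,y]$ inherits its $s$-label from $x$ and its $u$-label from $y$, and since $x,y\in R$ share all labels, so does $[x,y]$. (One small point you leave tacit: to conclude $[x,y]$ lies in exactly the same elements $Z\in\mathfs Z$ as $x_0$, you are implicitly using that for $z\in Z$ one has $z\in Z'$ if and only if $z\in E^{su}_{Z,Z'}$; this follows from the product structure of $Z'$ and the compatibility of brackets, and is worth saying explicitly.)

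Part (2) is where the argument breaks down. To show $f(y)\in S$ you must verify two things: (a) $f(y)$ lies in exactly the same elements $Z\in\mathfs Z$ as $f(x)$; (b) $f(x)$ and $f(y)$ carry the same label in every $\mathfs E_{Z,Z'}$. You tacitly assume (a) when you write $W^s(f(y),Z)$, but (a) is itself a nontrivial claim that you do not establish. More seriously, your argument for the $u$-label in (b) does not go through as written. You propose to compare $W^u(f(x),Z)\cap Z'$ and $W^u(f(y),Z)\cap Z'$ by applying $f^{-1}$ and reducing to a comparison of $W^u(x,Z(v_0))\cap f^{-1}(Z')$ and $W^u(y,Z(v_0))\cap f^{-1}(Z')$, concluding from ``$x,y$ share $u$-labels.'' But the $u$-labels of $x$ and $y$ tell you only that $W^u(x,Z(v_0))$ and $W^u(y,Z(v_0))$ meet (or miss) the same elements of $\mathfs Z$; they tell you nothing about intersections with $f^{-1}(Z')$, which is generally \emph{not} an element of $\mathfs Z$. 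In addition, the symbolic Markov inclusion $f^{-1}\bigl(W^u(f(x),Z(v_1))\bigr)\subset W^u(x,Z(v_0))$ is only an inclusion, is only stated for the specific chart $Z(v_1)$ (not an arbitrary $Z\ni f(x)$), and does not immediately pass through the intersection with $Z'$. The actual argument in \cite{Sarig-JAMS} handles this by working directly with $\ve$-gpo's and the compatibility lemmas (Lemmas 10.7--10.10 there), not by pulling sets back under $f^{-1}$; filling in the bookkeeping along your proposed route is not routine, and as stated this step would fail.
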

 

Let $\widehat{\mathfs G}=(\widehat V,\widehat E)$ be the graph with $\widehat V=\mathfs R$ and
$\widehat E=\{R\to S:f(R)\cap S\neq\emptyset\}$ (compare this definition with the one
given in Step 3 of Subsection \ref{Subsub-MP-pseudo-orbits}).
Let $(\widehat\Sigma,\widehat\sigma)$ be the TMS defined by $\widehat{\mathfs G}$,
and define $\widehat\pi:\Sigma\to M$ by
$$
\{\widehat\pi(\underline{R})\}:=\bigcap_{n\geq 0}\overline{f^n(R_{-n})\cap\cdots\cap f^{-n}(R_n)}.
$$
In comparison to the previous constructions, we take closures because the $R_n$'s are not
necessarily closed. A priori, the image of $\widehat{\pi}$ could be much bigger than
the image of $\pi$. Fortunately, this is not the case: for each $\un R\in\widehat\Sigma^\#$,
there is an $\ve$--gpo $\un v\in\Sigma^\#$ s.t. $\widehat\pi(\un R)=\pi(\un v)$, see the proof
of \cite[Thm 12.5]{Sarig-JAMS}. Therefore $\widehat\pi[\widehat\Sigma^\#]=\pi[\Sigma^\#]={\rm NUH}_\chi^\#$.

\medskip
We point out that $\widehat\pi$
is compatible with the Smale brackets in $\Sigma$ and $\mathfs R$.
More specifically, for $\un R=\{R_n\}_{n\in\Z},\un S=\{S_n\}_{n\in\Z}\in\Sigma$ with $R_0=S_0$,
let $\un U=[\un R,\un S]$ where $\un U=\{U_n\}_{n\in\Z}$ is defined by
$$
U_n=
\left\{
\begin{array}{rl}
R_n& \text{, if }n\geq 0\\
S_n& \text{, if }n\leq 0.\\
\end{array}
\right.
$$
Then $\widehat\pi([\un R,\un S])=[\widehat\pi(\un R),\widehat\pi(\un S)]$.
This is \cite[Lemma 4.4]{BLPV}, and it is used to study the simplicity of generic
fiber-bunched cocycles over nonuniformly hyperbolic diffeomorphisms.

\subsection{Affiliation and Bowen relation}

We investigate how $\widehat\pi:\widehat\Sigma^\#\to{\rm NUH}_\chi^\#$
loses injectivity.
In the uniformly hyperbolic situation, we saw in Subsection \ref{Subsub-Bowen-relation}
that this is characterized by the Bowen relation. Sarig was able to obtain a similar
characterization \cite{Sarig-JAMS}, which was further explored
by Boyle and Buzzi \cite{Boyle-Buzzi}.

\medskip
\noindent
{\sc Affiliation:} Two rectangles $R,S\in\mathfs R$ are called {\em affiliated} if
there exist $Z,Z'\in\mathfs Z$ s.t. $Z\supset R$, $Z'\supset S$ and $Z\cap Z'\neq\emptyset$.
If  this occurs, we write $R\sim S$. See Figure \ref{figure-affiliation}.

\begin{figure}[hbt!]
\centering
\def\svgwidth{5cm}
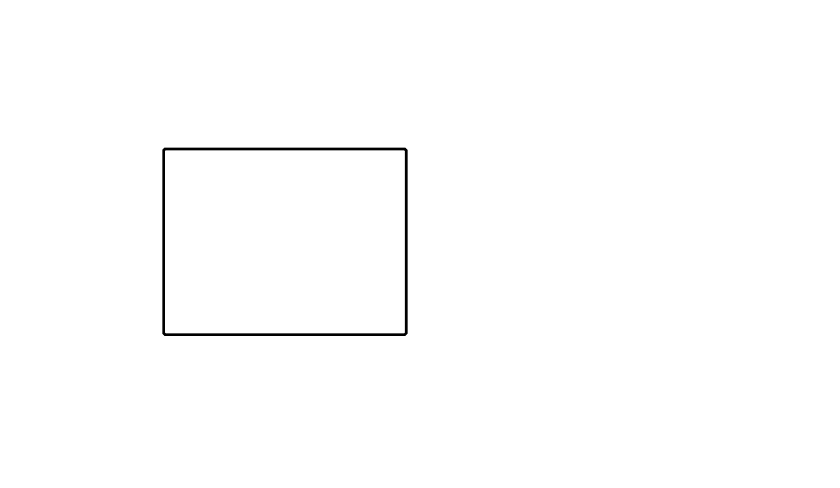
\caption{The affiliation property $R\sim S$: it might occur even when $R\cap S=\emptyset$.}\label{figure-affiliation}
\end{figure}

\medskip
Affiliation is more complicated than mere nonempty intersection, and it arises 
from the need of taking closures in the definition of $\widehat\pi$.
If $R\sim S$ as above, then Lemma \ref{Lemma-UH-compatibility} implies that
we can take Smale brackets between points of $Z$ and $Z'$.

\medskip
\noindent{\sc Bowen relation:} 
We say that $\un R\approx\un S$ iff $R_n\sim S_n$ for all $n\in\Z$.

\medskip
The following result was implicit in \cite{Sarig-JAMS}, as explained
in \cite[\S 8.3]{Boyle-Buzzi}.

\begin{lemma}
If $\un R,\un S\in\widehat\Sigma^\#$, then $\widehat\pi(\un R)=\widehat\pi(\un S)$ iff
$\un R\approx\un S$.
\end{lemma}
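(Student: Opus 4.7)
My plan is to handle the two directions separately: the forward direction is essentially built into the construction of $\widehat\pi$, while the reverse direction is a nonuniform adaptation of the diamond argument of Lemma \ref{Lemma-Bowen-relation}.

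For $(\Rightarrow)$, I will use the structural fact already invoked to establish $\widehat\pi[\widehat\Sigma^\#]=\pi[\Sigma^\#]$: every $\un R\in\widehat\Sigma^\#$ is produced from an $\ve$-gpo $\un v\in\Sigma^\#$ with $\pi(\un v)=\widehat\pi(\un R)$, and by the Bowen--Sina{\u\i} refinement one has $R_n\subset Z(v_n)$ for every $n\in\Z$. Suppose now $\widehat\pi(\un R)=\widehat\pi(\un S)=x$ and pick corresponding coding sequences $\un v,\un w\in\Sigma^\#$. Shift-invariance of $\pi$ gives $f^n(x)=\pi(\sigma^n\un v)\in Z(v_n)$ and $f^n(x)=\pi(\sigma^n\un w)\in Z(w_n)$, since $(\sigma^n\un v)_0=v_n$ and $(\sigma^n\un w)_0=w_n$. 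Hence the pair $(Z(v_n),Z(w_n))$ witnesses $R_n\sim S_n$ for every $n$, i.e.\ $\un R\approx\un S$.

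For $(\Leftarrow)$, assume $\un R\approx\un S$, set $x=\widehat\pi(\un R)$, $y=\widehat\pi(\un S)$, and fix coding $\ve$-gpo's $\un v,\un w\in\Sigma^\#$ as above. For each $n$, the affiliation $R_n\sim S_n$ supplies $Z_n,Z_n'\in\mathfs Z$ with $R_n\subset Z_n$, $S_n\subset Z_n'$ and $Z_n\cap Z_n'\neq\emptyset$; in particular $f^n(x)\in Z_n$ and $f^n(y)\in Z_n'$. The compatibility of Smale brackets across overlapping elements of $\mathfs Z$ (the nonuniform analog of Lemma \ref{Lemma-UH-compatibility}(2), used in the proof of Proposition \ref{Prop-NUH-Z}(2)) then allows me to define the bracket $z_n:=[f^n(x),f^n(y)]$. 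The symbolic Markov property (Proposition \ref{Prop-NUH-Z}(3)) applied along $\un v$ and $\un w$ propagates this identity under iteration, giving $z_{n+1}=f(z_n)$, so $z:=z_0$ satisfies $f^n(z)=[f^n(x),f^n(y)]$ for every $n$. The overlapping-chart property (nonuniform analog of Lemma \ref{Lemma-UH-compatibility}(2)(b)) places $z\in V^s[\un v]\cap V^u[\un w]$. Running the diamond argument with the recurrence of $\un v$ along $n_k\to+\infty$ and of $\un w$ along $n_\ell\to-\infty$, together with the Shadowing-Lemma identities $\{x\}=V^s[\un v]\cap V^u[\un v]$ and $\{y\}=V^s[\un w]\cap V^u[\un w]$, one concludes $z=x=y$.

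The hardest step will be closing this diamond. A priori $z$ only lies on one of the four admissible manifolds relevant to the pair $(\un v,\un w)$, and since rectangles of $\mathfs R$ are not topologically regular in the nonuniform setting one cannot use the "interior" argument of Lemma \ref{Lemma-Bowen-relation}(1) verbatim. Instead, the argument must be carried out entirely through $s/u$-fibres and admissible manifolds, where the coupling between the $Z$-rectangles along the two orbits is controlled by local finiteness of $\mathfs Z$ (Proposition \ref{Prop-NUH-Z}(4)) and the uniform comparisons of $q^s,q^u$ and admissible-manifold sizes supplied by the Inverse Theorem (Theorem \ref{Thm-inverse}(3)--(4)). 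It is precisely this combination that lets the recurrence in $\Sigma^\#$ upgrade $z\in V^s[\un v]\cap V^u[\un w]$ to the two further inclusions needed to invoke the Shadowing Lemma and collapse $x$, $y$, and $z$ to a single point.
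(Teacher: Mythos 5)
The paper itself does not supply a proof of this lemma; it attributes the statement to Sarig's original paper (where it is implicit) and to Boyle--Buzzi \S8.3, so there is no in-text argument to compare against. Judged on its own merits, your forward direction is correct: shift-equivariance of $\pi$ and the structural fact $R_n\subset Z(v_n)$ for the coding $\ve$-gpo immediately produce the witnessing pair $(Z(v_n),Z(w_n))$. Your backward direction also starts correctly, by forming the bracket orbit $z_n=[f^n(x),f^n(y)]$ and propagating it via the symbolic Markov property so that $z_{n+1}=f(z_n)$ for all $n$.

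The mechanism you offer for the last step, however, is a genuine gap. You propose to ``upgrade $z\in V^s[\un v]\cap V^u[\un w]$ to the two further inclusions $z\in V^u[\un v]$, $z\in V^s[\un w]$ and then invoke the Shadowing Lemma,'' but there is no route from $z\in V^s[\un v]\cap V^u[\un w]$ to $z\in V^u[\un v]$ short of already knowing $z=x$: the horizontal extent $p^s_m$ of the manifolds $V^s[\sigma^m\un v]$ exceeds in general the shadowing scale $p^s_m\wedge p^u_m$, so the backward orbit of $z$ need not shadow $\un v$ in the technical sense, and hence the local unstable manifold of $z$ need not coincide with $V^u[\un v]$. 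The correct closing argument is more direct and does not pass through such inclusions. The Markov propagation already gives $f^m(z)\in V^s[\sigma^m\un v]$ for \emph{all} $m\in\Z$ (not just $m\ge 0$), and the same holds for $f^m(x)$ because $x=\pi(\un v)$. Since each $V^s[\sigma^m\un v]$ has chart diameter $O(\ve^{3/\beta})$ while $f^{-1}$ expands distances along $s$-admissible manifolds by a factor $\ge e^{\chi-O(\ve)}$ at every step, one gets $d(z,x)\le O(\ve^{3/\beta})\,e^{-(\chi-O(\ve))|m|}$ for all $m\le 0$, forcing $z=x$; the symmetric forward expansion along $V^u[\sigma^m\un w]$ forces $z=y$. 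The Inverse Theorem and the recurrence hypothesis $\un v,\un w\in\Sigma^\#$ are used to \emph{validate} the Markov propagation (the brackets and scales along $\un v$ and $\un w$ must line up), not to produce extra containments — and it is correct, as you sensed, that nothing here can reproduce the interior argument of Lemma \ref{Lemma-Bowen-relation}(1), which has no nonuniform analogue.
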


Now we apply the diamond argument, as in Subsection \ref{Subsub-Bowen-relation},
by choosing $n<m$ s.t. the rectangle configuration $(R,Z,S,Z')$
of Figure \ref{figure-affiliation} is that same at positions $n$ and $m$.
Introduce
$$
N(R):=\#\{(S,Z')\in\mathfs R\times\mathfs Z:R\sim S\text{ and } Z'\supset S\}.
$$
The local finiteness of $\mathfs R$ and $\mathfs Z$ imply that $N(R)<\infty$ for all $R\in\mathfs R$.
For fixed $R,S\in\mathfs R$ and $n<m$, the pigeonhole principle implies that if there are more than $N(R)N(S)$
paths $R_n\to\cdots\to R_m$ s.t. $R_n\sim R$ and $R_m\sim S$, then two of them have the same 
rectangle configuration at iterates $n$ and $m$, as expressed below.

\begin{theorem}
Let $\un R\in\widehat\Sigma^\#$, with $R_n=R$ for infinitely many $n<0$ and $R_m=S$ for infinitely
many $m>0$, and let $x=\widehat\pi(\un R)$. Then $\#[\widehat\pi^{-1}(x)\cap \widehat\Sigma^\#]\leq N(R)N(S)$.
 In particular, the restriction
$\widehat\pi\restriction_{\widehat\Sigma^\#}:\widehat\Sigma^\#\to{\rm NUH}_\chi^\#$ is finite-to-one.
\end{theorem}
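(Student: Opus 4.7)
The plan is to run a diamond argument in the spirit of Lemma \ref{Lemma-Bowen-relation}(2), but with \emph{affiliation} in place of mere intersection and with an extra pigeonhole step to deal with the fact that $\mathfs R$ is countable. Throughout, write $x = \widehat\pi(\un R)$, and for any $\un R' \in \widehat\pi^{-1}(x) \cap \widehat\Sigma^\#$ recall that $\un R' \approx \un R$ by the preceding lemma.

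First I would set up the notion of \emph{signature} at a position. For each $n \in \Z$ with $R_n = R$ and each $\un R' \in \widehat\pi^{-1}(x) \cap \widehat\Sigma^\#$, pick (arbitrarily, say via a fixed choice function) a $Z'_n(\un R') \in \mathfs Z$ with $Z'_n(\un R') \supset R'_n$ witnessing the affiliation $R \sim R'_n$. The signature $(R'_n, Z'_n(\un R'))$ then lies in the set $\{(T,Z') : R \sim T,\ Z' \supset T\}$, which by definition has cardinality $N(R)$. Identically, at any $m$ with $R_m = S$ the signature lies in a set of size $N(S)$, so for any fixed pair $n<m$ with $R_n=R$ and $R_m=S$ the joint signature map
\[
\Phi_{n,m} : \widehat\pi^{-1}(x) \cap \widehat\Sigma^\# \longrightarrow \{N(R) \text{ pairs}\} \times \{N(S) \text{ pairs}\}
\]
has codomain of size $N(R)\,N(S)$.

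Next I would carry out a \emph{double pigeonhole}. Suppose for contradiction that $\#[\widehat\pi^{-1}(x) \cap \widehat\Sigma^\#] > N(R)N(S)$. Since there are infinitely many admissible pairs $(n,m)$ with $n<0<m$, $R_n=R$, $R_m=S$, and only finitely many unordered pairs $\{\un R', \un R''\}$ of distinct preimages, there is a fixed pair $\un R' \ne \un R''$ and a sequence $(n_\ell, m_\ell)$ with $n_\ell \to -\infty$, $m_\ell \to +\infty$, such that $\Phi_{n_\ell,m_\ell}(\un R') = \Phi_{n_\ell,m_\ell}(\un R'')$ for every $\ell$. In particular $R'_{n_\ell} = R''_{n_\ell}$ and $R'_{m_\ell} = R''_{m_\ell}$.

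The heart of the argument—and the step I expect to be the main obstacle—is the NUH analog of Lemma \ref{Lemma-Bowen-relation}(1): if $\un R', \un R'' \in \widehat\Sigma^\#$ both project to $x$ and coincide at two indices $n<m$, then $R'_k = R''_k$ for every $k \in [n,m]$. In contrast to the UH case, the rectangles of $\mathfs R$ need not be closed and interiors behave badly, so one cannot immediately say ``$f^k(x)$ lies in both interiors, hence the rectangles coincide.'' Instead I would exploit that $\mathfs R$ is a genuine partition, together with the Markov property of Lemma \ref{Lemma-MP-NUH} and the compatibility of Smale brackets in $\mathfs Z$ (Lemma \ref{Lemma-UH-compatibility}). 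The idea is to apply the symbolic Markov property inductively from both endpoints: the stable fibre through $x$ in $R'_n = R''_n$ is mapped into both $W^s(f(x),R'_{n+1})$ and $W^s(f(x),R''_{n+1})$, and symmetrically for the unstable fibre pushed back from $R'_m = R''_m$. Intersecting these using the product structure forces $x$ to sit in the same partition element at every intermediate step, and since the $R'_k, R''_k$ are the (unique) elements of $\mathfs R$ containing this symbolic trace, one concludes $R'_k = R''_k$ for $n \le k \le m$. This is the step that requires the most care, because one must phrase every statement in a way compatible with the closure in the definition of $\widehat\pi$ and with the affiliation relation.

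Finally, applying the above to $(n_\ell, m_\ell)$ for every $\ell$ yields $R'_k = R''_k$ for every $k$ in the intervals $[n_\ell,m_\ell]$, whose union is all of $\Z$. Hence $\un R' = \un R''$, contradicting our choice. Therefore $\#[\widehat\pi^{-1}(x)\cap\widehat\Sigma^\#] \le N(R)\,N(S)$. The ``in particular'' statement follows because the local finiteness of $\mathfs R$ and $\mathfs Z$ established after Proposition \ref{Prop-NUH-Z} gives $N(R), N(S) < \infty$.
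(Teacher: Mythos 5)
Your plan follows the paper's own strategy -- a diamond argument with a pigeonhole on ``rectangle configurations'' $(R'_n,Z'_n)$ at far-out positions -- and your decision to build the signature out of pairs in $\mathfs R\times\mathfs Z$ rather than rectangles alone is exactly the correction of \cite{Lima-Sarig} to \cite[Thm 12.8]{Sarig-JAMS}: without the $Z$'s, the Smale brackets needed in the diamond argument are not well-defined, since affiliation $R\sim S$ (unlike mere intersection) carries no canonical choice of overlapping Markov cover elements.

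However, the double pigeonhole step as written has a genuine gap. You begin by supposing $\#[\widehat\pi^{-1}(x)\cap\widehat\Sigma^\#]>N(R)N(S)$ and then claim there are ``only finitely many unordered pairs $\{\un R',\un R''\}$ of distinct preimages'' so that some pair recurs over infinitely many $(n_\ell,m_\ell)$. But the contradiction hypothesis only gives you \emph{more than} $N(R)N(S)$ preimages, not finitely many; if the preimage set were infinite the second pigeonhole has no basis. The fix is cheap: first pass to any $N(R)N(S)+1$ preimages, so the number of unordered pairs is $\binom{N(R)N(S)+1}{2}<\infty$, and then your argument runs. Alternatively you can avoid the second pigeonhole entirely, as the paper effectively does: among your $N(R)N(S)+1$ preimages, choose $M$ so that any two of them differ somewhere in $[-M,M]$ (possible since they are distinct bi-infinite sequences), then pick a \emph{single} pair $n<-M<M<m$ with $R_n=R$, $R_m=S$, apply the pigeonhole once to get two preimages with the same signature at $n$ and $m$, and let the diamond argument on $[n,m]$ produce the contradiction. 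On the diamond argument itself: you are right that it is the delicate step, and you are right to worry about closures. The phrase ``the $R'_k,R''_k$ are the unique elements of $\mathfs R$ containing this symbolic trace'' needs care, because $\widehat\pi$ only places $f^k(x)$ in $\overline{R'_k}$, not in $R'_k$. What saves the argument is that $\un R'$ corresponds (by the construction of $\widehat\pi$ on $\widehat\Sigma^\#$) to a recurrent $\ve$--gpo $\un v'$ with $\pi(\un v')=x$, so $f^k(x)$ genuinely lies in $Z(v'_k)\supset R'_k$; the equality of the $Z$'s in the signature is precisely what lets you bracket inside a common cover element via Lemma~\ref{Lemma-UH-compatibility} and then push through $\mathfs R$ being a partition. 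Your sketch gestures at this but does not spell out where the $Z$'s enter the diamond -- making that explicit is where the remaining work lies.
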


The original proof \cite[Thm 12.8]{Sarig-JAMS} has a small error that was corrected
in \cite{Lima-Sarig}. This concludes the proof of Theorem \ref{Sarig}.

\subsection{Higher dimensions}\label{Subsec-NUH-high-dim}

Recently,  building on his previous work \cite{Ben-Ovadia-2019},
Ben Ovadia obtained a higher dimensional version of Theorem \ref{Sarig} \cite{Ben-Ovadia-codable}.
Regarding the five main ingredients, the first three can be adapted to higher dimensions,
with many extra technical difficulties, see \cite[Sections 2 and 3]{Ben-Ovadia-2019}.
But the improvement lemma and inverse theorem require
different approaches. Indeed, each $x=\pi(\un v)$ has many parameters
$S(x,v)$ and $U(x,w)$, and in principle it is not clear to which reference
parameters they should be compared with. Let us explain how to do the estimates
for $S(x,v)$. Let $V^s[\un v]=\Psi_{x_0}\{(t,F(t)):t\in[-p^s_0,p^s_0]^{d_s}\}$ be the stable
manifold of $\un v=\{\Psi_{x_0}^{p^s_n,p^u_n}\}_{n\in\Z}\in\Sigma^\#$.
Let $x\in V^s[\un v]$, with $x=\Psi_{x_0}(\overline{x})=\Psi_{x_0}(\overline t,F(\overline t))$.
The candidate for stable subspace at $x$ is $\widetilde E^s_x=(d\Psi_{x_0})_{\overline x}\left[H^s_{\overline x}\right]$,
where $H^s_{\overline x}$ is the subspace
tangent to $\{(t,F(t)):t\in[-p^s_0,p^s_0]^{d_s}\}$ at $\overline{x}$. Observing that the stable subspace
at $x_0$ is $(d\Psi_{x_0})_0[\R^{d_s}\times\{0\}]$, consider a linear transformation
$H^s_{\overline x}\to \R^{d_s}\times\{0\}$ given by the derivative of the projection
$(t,F(t))\mapsto (t,0)$ at $\overline{x}=(\overline t,F(\overline t))$. Applying $d\Psi_{x_0}$,
we obtain a linear transformation $\Xi_x:\widetilde E^s_x\to E^s_{x_0}$.
Since the graph of $F$ is almost horizontal, $\Xi_x$ is almost an isometry.
The improvement lemma
can be stated as follows: if $v\in \widetilde E^s_{f(y)}$ satisfies
$\tfrac{S(f(y),v)}{S(x_1,\Xi_{f(y)}v)}={\rm exp}\left[\pm\xi\right]$ for some $\xi\geq\sqrt{\ve}$,
then $\tfrac{S(y,w)}{S(x_0,\Xi_{y}w)}={\rm exp}\left[\pm(\xi-\tfrac{1}{6}Q(x_0)^{\beta/6})\right]$
for $w=df^{-1}_{f(y)}v$, see \cite[Lemma 4.6]{Ben-Ovadia-2019}. This improvement lemma
implies two properties:
\begin{enumerate}[$\circ$]
\item $\pi[\Sigma^\#]\subset {\rm NUH}_\chi$, see Claim 1 in the proof of \cite[Lemma 4.7]{Ben-Ovadia-2019}.
\item Inverse theorem, see \cite[Section 4]{Ben-Ovadia-2019}.
\end{enumerate}

\subsection{Uniform hyperbolicity versus nonuniform hyperbolicity}

Below we summarize the main differences between the constructions of symbolic models
for uniformly hyperbolic and nonuniformly hyperbolic systems.\\

{\tiny 
\begin{center}
\begin{tabular}{|p{1.8cm}|c|c|}
\hline & & \\
& {\bf UNIFORMLY HYPERBOLIC} & {\bf NON-UNIFORMLY HYPERBOLIC} \\ 
& & \\ \hline \hline
& & \\ 
Coding & All points & ${\rm NUH}_\chi^\#$ \\
& & \\ \hline
& & \\ 
Chart & Lyapunov chart: uniform size & Pesin chart: size $Q$ with $\tfrac{1}{n}\log Q(f^n(x))\to 0$ \\ 
& & \\ \hline
& & \\ 
Vertices & Finite number of Lyapunov charts & Countable number of $\ve$--double charts \\ 
& & \\ \hline
& & \\ 
Edges & $\Psi_x\to \Psi_y$: $f(x)\approx y$ and $f^{-1}(y)\approx x$&
$\Psi_x^{p^s,p^u}\overset{\ve}{\to}\Psi_y^{q^s,q^u}$: (GPO1) and (GPO2)\\
& & \\ \hline
& & \\ 
Rep. function & $F\in C^1$ s.t. $F(0)\approx 0$ and $\|F'\|_{C^0}\approx 0$& $F\in C^{1+\beta/3}$ with (AM1)--(AM3)\\
& & \\ \hline
& & \\ 
$\pi:\Sigma\to M$ & $\{\pi(\un v)\}=V^s[\un v]\cap V^u[\un v]$ & Same \\
& & \\ \hline
& & \\ 
Cover $\mathfs Z$ & $Z(v)=\{\pi(\un v):v_0=v\}$ closed sets & $Z(v)=\{\pi(\un v):\un v\in\Sigma^\#,v_0=v\}$ \\
& & \\ \hline
& & \\ 
Refinement & Relative interiors and closures & Set-theoretical refinement\\
& & \\ \hline
& & \\ 
Partition $\mathfs R$ & Markov with regular rectangles & Markov w/o control of relative interiors \\
& & \\ \hline
& & \\
Graph $(\widehat V,\widehat E)$ & $\widehat V=\mathfs R$, $\widehat E=\{R\to S$: $f(R^*)\cap S^*\neq\emptyset\}$ 
& $\widehat V=\mathfs R$, $\widehat E=\{R\to S$: $f(R)\cap S\neq\emptyset\}$ \\
& & \\ \hline
& & \\
$\widehat\pi:\widehat \Sigma\to M$ & $\{\widehat\pi(\un R)\}=\bigcap_{n\geq 0}f^n(R_{-n})\cap\cdots\cap f^{-n}(R_n)$
& $\{\widehat\pi(\un R)\}=\bigcap_{n\geq 0}\overline{f^n(R_{-n})\cap\cdots\cap f^{-n}(R_n)}$ \\
& & \\ \hline
& & \\
Finite-to-one & $\#\widehat\pi^{-1}(x)\leq (\#\mathfs R)^2$, $\forall x\in M$ & 
$\#[\widehat\pi^{-1}(x)\cap \widehat \Sigma^\#]<\infty$, $\forall x\in M$ \\ 
& & \\ \hline
\end{tabular}
\end{center}
}

\subsection{Surface maps with discontinuities}

Now we explain the constructions in the contexts of Sections \ref{Section-NUH-dis1},
\ref{Section-NUH-dis2}. Let us start with surface maps with discontinuities 
and bounded derivative. As in the previous section, take $I_\ve:=\{e^{-\frac{1}{3}\ve n}:n\geq 0\}$
and truncate $Q(x)$ to $I_\ve$, then consider $\ve$--overlap
and $\ve$--gpo as in Subection \ref{Subsec-NUH-preliminaries}. To prove
Theorem \ref{Thm-coarse-graining}, we need to control more parameters: 
for $\un{\ell}=(\ell_{-1},\ell_0,\ell_1)$ and $\un{k}=(k_{-1},k_0,k_1)$, define
$$
Y_{\un \ell,\un k}:=\left\{\Gamma(x)\in Y:
\begin{array}{cl}
e^{\ell_i}\leq\|C(f^i(x))^{-1}\|<e^{\ell_i+1},&-1\leq i\leq 1\\
e^{-k_i-1}\leq d(f^{i}(x),\mathfs D)<e^{-k_i},&-1\leq i\leq 1\\
\end{array}
\right\}.
$$
Using these precompact sets, proceed as in the proof of Theorem \ref{Thm-coarse-graining}.
Theorem \ref{Thm-inverse} works without modification, as well as Theorem \ref{Sarig}.

\medskip
As mentioned in Section \ref{Section-NUH-dis1}, the prototypical examples for surface maps with discontinuities
and bounded derivative are Poincar\'e return
maps of three dimensional flows with positive speed. Let $N$ be a three dimensional closed
Riemannian manifold, let $X$ be a $C^{1+\beta}$ vector field on $N$ s.t. $X(p)\neq 0$ for all $p\in N$,
and let $\varphi=\{\varphi^t\}_{t\in\R}$ be the flow generated by $X$. 
In Section \ref{Section-NUH-dis1}, we constructed a global Poincar\'e section $M$, equal to the
finite union of transverse discs,
s.t. the return time function $\mathfrak t:M\to (0,\infty)$ is
bounded away from zero and infinity. Then the Poincar\'e return map $f:M\to M$ has discontinuities
and bounded derivative.

\medskip
Now we relate the hyperbolic properties of $\vf$ and $f$.
Let $\widetilde\chi>0$. It is possible to define a nonuniformly hyperbolic locus ${\rm NUH}_{\widetilde\chi}(\vf)$
for $\vf$, similar to the definition in page \pageref{Def-NUH}. This is part of an ongoing
project with Buzzi and Crovisier \cite{BCL}. Let ${\rm NUH}_\chi$ be the nonuniformly hyperbolic locus of $f$.
If $x\in M\cap {\rm NUH}_{\widetilde\chi}(\vf)$, then $x$ satisfies (NUH1)--(NUH3) for  
$\chi:=\widetilde\chi \inf(\mathfrak t)$. Indeed, the flow trajectory of $x$ spends at least time 
$\inf(\mathfrak t)$ between visits to $M$, see \cite[Lemma 2.6]{Lima-Sarig}.
To prove that $x\in{\rm NUH}_\chi$, it remains to check (NUH4). 
Here we encounter a problem: the section $M$ could be chosen in
such a bad way that every trajectory of $f$ converges exponentially fast to $\mathfs D$, and so
(NUH4) never holds. To bypass this difficulty, we need to choose $M$ carefully so that most
$\vf$--trajectories define $f$--trajectories satisfying (NUH4).
Unfortunately, we do not know how to construct such section. What we know, and this is done 
in \cite{Lima-Sarig}, is to construct one section for each fixed measure (more generally,
for a countable set of fixed measures). Fortunately, this is enough for many applications. 

\medskip
Let $\mu$ be a $\widetilde\chi$--hyperbolic probability measure for $\vf$,
and let $\nu$ be its projection to $M$, which is $\chi$--hyperbolic for $f$.
The goal is to choose $M$ so that $\nu$ is $f$--adapted. 
Consider a 1--parameter family of global Poincar\'e sections $\{M_r\}$, by changing the radii 
of each disc of $M$. More specifically, let $M_r=D_r(p_1)\cup\cdots\cup D_r(p_\ell)$, $r\in [a,b]$,
s.t. each $M_r$ is a global Poincar\'e section for $\vf$ and $\mathfrak t_r:M_r\to(0,\infty)$
is uniformly bounded away from zero and infinity. Let $f_r:M_r\to M_r$ be the Poincar\'e return map,
and let $\nu_r$ be the projection of $\mu$ to $M_r$. The next result
is \cite[Thm 2.8]{Lima-Sarig}.

\begin{theorem}\label{Thm-adapted-section}
For Lebesgue-a.e. $r\in [a,b]$, the measure $\nu_r$ is $f_r$--adapted.
\end{theorem}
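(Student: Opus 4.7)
The plan is to apply a Fubini argument over the parameter $r\in[a,b]$. First I would observe that $\nu_r$ being $f_r$--adapted amounts to the finiteness of $\int_{M_r}|\log d(x,\mathfs D_r)|\,d\nu_r(x)$, so it suffices to show that the double integral
\[
\int_a^b\int_{M_r}|\log d(x,\mathfs D_r)|\,d\nu_r(x)\,dr
\]
is finite; by Fubini the inner integral is then finite for Lebesgue--a.e. $r$, which is the claim. Using that $\mathfrak t_r\geq t_*>0$ and the suspension relation $\mu=\int_{M_r}\int_0^{\mathfrak t_r(x)}\delta_{\vf^t(x)}\,dt\,d\nu_r(x)$, I would pass from $\nu_r$ to $\mu$ via the backward first--hit map $\pi_r\colon N\to M_r$, using the inequality $\int g\,d\nu_r\leq \tfrac{1}{t_*}\int g\circ\pi_r\,d\mu$ for nonnegative $g$, and then swap the order of integration. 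The task thus reduces to showing that
\[
\Phi(y):=\int_a^b |\log d(\pi_r(y),\mathfs D_r)|\,dr
\]
is bounded uniformly in $y\in N$.

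Next I would establish the geometric heart of the argument: for a suitably chosen family $\{M_r\}$, the set $E_\delta(y):=\{r\in[a,b]:d(\pi_r(y),\mathfs D_r)<\delta\}$ has Lebesgue measure $O(\delta)$, uniformly in $y$. The point is that $d(\pi_r(y),\mathfs D_r)<\delta$ forces the flow segment through $y$, within a time window of length $\leq 2\sup\mathfrak t_r$, to pass within distance comparable to $\delta$ of some boundary component $\partial D_r(p_i)$. Since each $\partial D_r(p_i)$ sweeps a neighborhood of $p_i$ transversally as $r$ varies, with normal speed bounded away from zero, each such potential encounter contributes an $r$--interval of length $O(\delta)$; and only boundedly many encounters are possible per orbit segment because the return time is bounded below. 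Once this estimate is in place, the layer--cake formula yields
\[
\Phi(y)\leq C+\int_0^\infty |E_{e^{-s}}(y)|\,ds\leq C+C'\int_0^\infty e^{-s}\,ds<\infty,
\]
uniformly in $y$, and integrating against $\mu$ closes the Fubini loop.

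The main obstacle is the geometric transversality estimate, which is not automatic. It requires choosing the parametrization of the family $\{M_r\}$ so that the discs $D_r(p_i)$ grow monotonically and each $\partial D_r(p_i)$ has nonvanishing speed under $\partial_r$, uniformly in $i$ and over the ambient region visited by orbits of $\mu$. One must also handle the compatibility between the intrinsic distance on $M_r$ (used to define $\mathfs D_r$) and the ambient distance on $N$, and rule out pathologies where the orbit of $y$ grazes $\partial M_r$ at a shallow angle or enters a region where distinct discs $D_r(p_i),D_r(p_j)$ could interact; the regularity, disjointness and partial--order conditions built into the definition of a proper section should suppress these pathologies and allow the transversality constants to be taken uniform. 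With that estimate in hand, the remaining ingredients --- Fubini, the $\nu_r/\mu$ comparison via $\pi_r$, and the layer--cake bound --- are routine.
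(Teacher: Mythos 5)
Your proposal is logically sound in outline but takes a genuinely different — and more ambitious — route from the paper's. You aim directly at the $L^1$ condition $\int|\log d(\cdot,\mathfs D_r)|\,d\nu_r<\infty$, use the layer-cake formula, and push the Fubini integral back to the flow-invariant measure $\mu$ via the parameter-dependent first-hit map $\pi_r$. The paper instead \emph{fixes} the reference dynamics once and for all: it works throughout with the largest section $M_b$ and its return map $f_b$, defines $A_\alpha(r)=\{x\in M_b:\exists\ \text{infinitely many }n\text{ with }d(f_b^n(x),\mathfs D_r)<e^{-\alpha|n|}\}$, applies Fubini to the indicator $1_{A_\alpha(r)}(x)=1_{I_\alpha(x)}(r)$, and shows for each fixed $x$ that $\mathrm{Leb}[I_\alpha(x)]=0$ by Borel--Cantelli: the point $f_b^n(x)$ is fixed as $r$ varies, so the set of $r$ with $d(f_b^n(x),\mathfs D_r)<e^{-\alpha|n|}$ is contained in a union of at most $\ell$ intervals of length $\lesssim e^{-\alpha|n|}$, whose lengths sum over $n\in\Z$. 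This proves the qualitative statement $\nu_r[A_\alpha(r)]=0$ for a.e.\ $r$, which in the reference's setting (bounded $\|df^{\pm1}\|$) is what is needed.

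The comparison is instructive. The paper's device of fixing $f_b$ and letting only the obstacle $\mathfs D_r$ move is the essential simplification you are missing: in your formulation the point $\pi_r(y)$ itself jumps discontinuously as $r$ varies (whenever the backward first-hit disc changes), so the set $E_\delta(y)=\{r:d(\pi_r(y),\mathfs D_r)<\delta\}$ is harder to control than a transversal sweep of a fixed boundary past a fixed point. The paper's Borel--Cantelli argument also sidesteps your need for a \emph{uniform-in-$y$} bound $|E_\delta(y)|\le C\delta$: it requires the transversality estimate only for each fixed orbit segment, with the summation in $n$ handled automatically by $\sum_n e^{-\alpha|n|}<\infty$. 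Your uniform estimate is genuinely delicate (you correctly flag the grazing and disc-switching pathologies as the obstacle), and is in fact stronger than what the paper proves. If you want to salvage your route, you should (i) replace $\pi_r$ with a fixed reference section as the paper does, and (ii) be aware that the uniformity you need may require more hypotheses on the family $\{M_r\}$ than the paper uses. As written, the geometric heart of your proof is stated but not carried out, and the route you chose makes it harder than necessary.
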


\begin{proof}
Let $\mathfs D_r$ be the discontinuity set of $f_r$.
It is enough to show that
$$
\nu_r\biggl\{x\in M_r: \liminf\limits_{|n|\to\infty}\frac{1}{|n|}\log d(f^n_r(x),\mathfs D_r)<0\biggr\}=0\textrm{  for  a.e. $r\in[a,b]$.}
$$
For $\alpha>0$, let
$$
A_\alpha(r):=\left\{x\in M_b:\exists \textrm{ infinitely many }n\in\Z\textrm{ s.t. } 
\tfrac{1}{|n|}\log d(f^{n}_b(x),\mathfs D_r)<-\alpha\right\}.
$$
It is enough to prove that $\nu_b[A_\alpha(r)]=0$ for  Lebesgue-a.e. $r\in[a,b]$.
Let $I_\alpha(x):=\{a\leq r\leq b:x\in A_\alpha(r)\}$.
Since $1_{A_\alpha(r)}(x)=1_{I_\alpha(x)}(r)$, by Fubini's Theorem we have
$
\int_a^b \nu_b[A_\alpha(r)]dr=\int_{M_b}\mathrm{Leb}[I_\alpha(x)]d\nu_b(x)$,
so it is enough to prove that 
$$
\mathrm{Leb}[I_\alpha(x)]=0\textrm{ for all }x\in M_b.
$$
The set $I_\alpha(x)$ is contained in the $\limsup$ of intervals $\{I_n\}_{n\in\Z}$
with $|I_n|\approx e^{-\alpha |n|}$. Since $\sum_{n\in\Z}e^{-\alpha |n|}<\infty$, by the Borel-Cantelli lemma
we get that $\mathrm{Leb}[I_\alpha(x)]=0$.
\end{proof}

Combining Theorems \ref{Sarig} and \ref{Thm-adapted-section}, we obtain the following result,
proved in \cite{Lima-Sarig} (see page \pageref{Def-TMF} for the definition of TMF).

\begin{theorem}\label{Lima-Sarig}
Let $\vf:N\to N$ be as above. For each $\chi$--hyperbolic measure $\mu$, there is a
TMF $(\Sigma_r,\sigma_r)$ and $\pi_r:\Sigma_r\to N$ H\"older continuous s.t.:
\begin{enumerate}[$(1)$]
\item $\pi_r\circ \sigma_r^t=\vf^t\circ\pi_r$ for all $t\in\R$.
\item $\pi_r[\Sigma^\#_r]$ has full $\mu$--measure.
\item $\pi_r$ is finite-to-one on $\pi_r[\Sigma_r^\#]$.
\end{enumerate}
\end{theorem}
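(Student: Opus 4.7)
The plan is to combine the discontinuity map machinery of Section \ref{Section-NUH-dis1} with the section-selection result Theorem \ref{Thm-adapted-section}, and then suspend. The output is divided into four stages.

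First, given the $\chi$--hyperbolic flow-invariant measure $\mu$, I would build a one-parameter family $\{M_r\}_{r\in[a,b]}$ of global Poincar\'e sections as sketched in Section \ref{Section-NUH-dis1}, with return time functions $\mathfrak t_r$ uniformly bounded away from $0$ and $\infty$. Apply Theorem \ref{Thm-adapted-section} to select a radius $r_0\in[a,b]$ such that the projection $\nu:=\nu_{r_0}$ of $\mu$ to $M:=M_{r_0}$ is $f$--adapted, where $f:=f_{r_0}$ is the Poincar\'e return map. A standard argument (flow hyperbolicity pushes to section hyperbolicity; see the computation cited from \cite[Lemma 2.6]{Lima-Sarig}) shows that $\nu$ is $\chi'$--hyperbolic for $\chi':=\chi\inf(\mathfrak t_{r_0})>0$. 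Thus $\nu$ is a simultaneously $\chi'$--hyperbolic and $f$--adapted probability measure, so by Lemma \ref{Lemma-same-measures-2} applied to the discontinuous setting, $\nu$ is carried by ${\rm NUH}_{\chi'}^*$ and, by the Poincar\'e-recurrence argument mentioned before Theorem \ref{Sarig}, by ${\rm NUH}_{\chi'}^\#$.

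Second, invoke the version of Theorem \ref{Sarig} that holds for surface maps with discontinuities and bounded derivative (Section \ref{Section-NUH-dis1}, with the alphabet constructed via the refined precompact sets $Y_{\underline\ell,\underline k}$ that also control $d(f^i(x),\mathfs D)$). This gives a TMS $(\Sigma,\sigma)$ and a H\"older continuous $\pi:\Sigma\to M$ with $\pi\circ\sigma=f\circ\pi$ whose restriction to $\Sigma^\#$ is finite-to-one onto ${\rm NUH}_{\chi'}^\#\supset \operatorname{supp}(\nu)$ modulo a $\nu$--null set; in particular $\nu[\pi(\Sigma^\#)]=1$.

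Third, I would suspend. Set $r:=\mathfrak t_{r_0}\circ\pi$. Since $\mathfrak t_{r_0}$ is H\"older continuous on each continuity component of $f$ and $\pi$ is H\"older continuous and lands inside continuity components for symbols in $\Sigma^\#$ (by the sufficiency/relevance structure of $\mathfs A$), the function $r$ is H\"older continuous with $0<\inf r\leq \sup r<\infty$. Form the topological Markov flow $(\Sigma_r,\sigma_r)$ as on page~\pageref{Def-TMF}, and define
\[
\pi_r:\Sigma_r\to N,\qquad \pi_r(\underline v,t):=\vf^t(\pi(\underline v)).
\]
The identification $(\underline v,r(\underline v))\sim(\sigma(\underline v),0)$ is respected because $\vf^{\mathfrak t_{r_0}(\pi(\underline v))}\pi(\underline v)=f(\pi(\underline v))=\pi(\sigma\underline v)$, and by construction $\pi_r\circ\sigma_r^t=\vf^t\circ\pi_r$ for every $t\in\mathbb R$. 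H\"older continuity of $\pi_r$ follows from the H\"older continuity of $\pi$, $r$, and $\vf^t$ on bounded time intervals.

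Fourth, verify the remaining two properties. For (2), the canonical suspension identification $M\times[0,\mathfrak t_{r_0})\hookrightarrow N$ realizes $\mu$ as the normalization of $\nu\otimes {\rm Leb}$; since $\pi[\Sigma^\#]$ has full $\nu$--measure and $\Sigma_r^\#=\{(\underline v,t):\underline v\in\Sigma^\#\}$, it follows by Fubini that $\pi_r[\Sigma_r^\#]$ has full $\mu$--measure. For (3), if $\pi_r(\underline v,s)=\pi_r(\underline w,t)$ with $(\underline v,s),(\underline w,t)\in\Sigma_r^\#$, then reducing $s,t$ to the fundamental domain forces $s=t$ and $\pi(\underline v)=\pi(\underline w)$, and finite-to-oneness of $\pi\restriction_{\Sigma^\#}$ from Theorem \ref{Sarig} bounds $\#\pi_r^{-1}(\vf^t(\pi(\underline v)))\cap \Sigma_r^\#$ by the corresponding bound for $\pi$.

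The main obstacle I expect is stage one, specifically the interplay between measure-theoretic adaptedness and the section choice: Theorem \ref{Thm-adapted-section} is a measure-by-measure statement, so this theorem is intrinsically stated for a single (or countable set of) fixed measure $\mu$. Any attempt to produce one universal section would fail, and the technical care in the discontinuous variant of Theorem \ref{Sarig} (in particular, ensuring that both the alphabet $\mathfs A$ of $\ve$--double charts and the inverse theorem accommodate the extra distance-to-$\mathfs D$ parameter in $Y_{\underline\ell,\underline k}$) is where all the real work sits; the suspension stage is then essentially formal.
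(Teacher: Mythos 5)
Your proposal is correct and follows essentially the same route the paper sketches: select an $f$-adapted section for $\mu$ via Theorem \ref{Thm-adapted-section}, apply the discontinuous bounded-derivative variant of Theorem \ref{Sarig} to the Poincar\'e return map, suspend with roof $r=\mathfrak t\circ\pi$, and transfer the full-measure and finite-to-one properties from $\Sigma^\#$ to $\Sigma_r^\#$. The paper states the result as a corollary of Theorems \ref{Sarig} and \ref{Thm-adapted-section} without spelling out the suspension bookkeeping you carry out in your third and fourth stages, and you correctly identify the measure-dependent section choice and the extra distance-to-$\mathfs D$ control in the alphabet as the places where the real work lives.
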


\medskip
Now consider surface maps with discontinuities and unbounded derivative.
In some sense, the definition of $Q(x)$ given in page \pageref{Def-dis2-Q}
allows to concentrate the difficulty in this single parameter,
but the statements need to be reproved using this new definition. 
Let us see how to get Theorem \ref{Thm-coarse-graining}. Remember from Subsection \ref{Subsec-NUH-dis2-definitions}
that $f$ is well-behaved inside each ball $D_x=B(x,\mathfrak r(x))$. 
For $t>0$, let $M_t=\{x\in M: d(x,\mathfs D)\geq t\}$.
Since $M$ has finite diameter (we are even assuming it is smaller than one), each $M_t$
is precompact.
Fix a countable open cover $\mathfs P=\{D_i\}_{i\in\N_0}$ of $M\backslash\mathfs D$ s.t.:
\begin{enumerate}[$\circ$]
\item $D_i:=D_{z_i}=B(z_i,\mathfrak r(z_i))$ for some $z_i\in M$.
\item For every $t>0$, $\{D\in\mathfs P:D\cap M_t\neq\emptyset\}$ is finite.
\end{enumerate}
For $\un{\ell}=(\ell_{-1},\ell_0,\ell_1)$, $\un{k}=(k_{-1},k_0,k_1)$, 
$\un a=(a_{-1},a_0,a_1)$, define
$$
Y_{\un \ell,\un k,\un a}:=\left\{\Gamma(x)\in Y:
\begin{array}{cl}
e^{\ell_i}\leq\|C(f^i(x))^{-1}\|<e^{\ell_i+1},&-1\leq i\leq 1\\
e^{-k_i-1}\leq d(f^{i}(x),\mathfs D)<e^{-k_i},&-1\leq i\leq 1\\
f^i(x)\in D_{a_i},&-1\leq i\leq 1\\
\end{array}
\right\},
$$
then proceed as in the proof of Theorem \ref{Thm-coarse-graining}.

\medskip
Another feature that requires a better control is bounded distortion inside 
each $V^{s/u}[\un v]$. This is proved in \cite[Proposition 6.2]{Lima-Matheus}.
In summary, under finer analysis, it is possible to prove all that is needed
to obtain Theorem \ref{Sarig}. This is \cite[Theorem 1.3]{Lima-Matheus}, and establishes
problem \#17 in Bowen's notebook \cite{Bowen-notebook}.
The proof actually works under greater generality
that covers not only billiard maps but also some situations where the derivative of $M$
and the behavior of ${\rm exp}_x$ is more complicated,
for instance when $R,\nabla R,\nabla^2 R,\nabla^3R$ grow at most polynomially
fast with respect to the distance to $\mathfs D$, e.g. when $M$ is a moduli space
of curves equipped with the Weil-Petersson metric, see \cite{Burns-Masur-Wilkinson}.

\part{Applications}

There are two canonical applications of Markov partitions:
\begin{enumerate}[$\circ$]
\item Estimating the number of closed orbits.
\item Establishing ergodic properties of equilibrium measures.
\end{enumerate}
Indeed, the Markov partition generates a finite-to-one extension of ${\rm NUH}_\chi^\#$, and
it is possible to lift measures without increasing their entropy.

\medskip
Let $X$ be a set, $\mathfs A$ a sigma-algebra, and $T:X\to X$ a measurable $\Z$ or $\R$--action.
Given a $T$--invariant probability measure $\mu$, let $h_\mu(T)$ denote its
{\em Kolmogorov-Sina{\u\i} entropy}. Given two such systems $(X,\mathfs A,T)$ and $(Y,\mathfs B,S)$,
let $\pi:(X,\mathfs A)\to(Y,\mathfs B)$ be a {\em surjective} measurable extension map,
i.e. $\pi\circ T=S\circ \pi$. Assume that $\pi^{-1}(y)$ is finite for {\em all} $y\in Y$.

\medskip
\noindent
{\sc Projection of measure:} If $\widehat\mu$ is a $T$--invariant probability measure on $(X,\mathfs A)$,
then its projection $\mu=\widehat\mu\circ\pi^{-1}$ is an $S$--invariant probability measure on $(Y,\mathfs B)$
s.t. $h_\mu(S)=h_{\widehat\mu}(T)$.

\medskip
Indeed, by the Abramov-Rokhlin formula, $h_{\widehat\mu}(T)-h_\mu(S)$ is equal to the average entropy on the
fibers. Since each of them is finite, they do no carry entropy. See \cite{einsiedler2011entropy}
for a proof of the Abramov-Rokhlin formula. In general, it is much harder to lift measures 
without increasing the entropy. In our setting, we can do this.

\medskip
\noindent
{\sc Lift of measure:} If $\mu$ is an $S$--invariant probability measure on $(Y,\mathfs B)$,
then
$$
\widehat\mu=\int_Y \tfrac{1}{|\pi^{-1}(y)|}\left(\sum_{x\in\pi^{-1}(y)}\delta_x\right) d\mu(y)
$$
is a $T$--invariant probability measure on $(X,\mathfs A)$ s.t. $h_\mu(S)=h_{\widehat\mu}(T)$.

\medskip
Firstly, one has to check that $\widehat\mu$ is well-defined, see e.g.
\cite[Prop. 13.2]{Sarig-JAMS}. The preservation of entropy follows again from the Abramov-Rokhlin formula.
In particular, if the variational principle holds in $(X,\mathfs A,T)$ and $(Y,\mathfs B,S)$, then
their topological entropies coincide. 

\medskip
The above conclusions also hold if there are sets
$X^\#\subset X$ and $Y^\#\subset Y$ s.t.:
\begin{enumerate}[$\circ$]
\item The restriction $\pi\restriction_{X^\#}:X^\#\to Y^\#$ is finite-to-one.
\item Every $T$--invariant probability measure is supported
on $X^\#$, and every $S$--invariant probability measure is supported on $Y^\#$.
\end{enumerate}
It is in this way that invariant measures for nouniformly hyperbolic systems relate with invariant measures
for symbolic spaces. In the next two section we explain some of the applications.

\section{Estimates on the number of closed orbits}\label{Section-periodic}

Given two sequences $\{a_n\}_{n\geq 1},\{b_n\}_{n\geq 1}$,
let us write $a_n\asymp b_n$ if there are constants $C,n_0>1$
s.t. $C^{-1}\leq \tfrac{a_n}{b_n}\leq C$ for all $n\geq n_0$, and 
$a_n\sim b_n$ if $\lim_{n\to\infty}\tfrac{a_n}{b_n}=1$.
Assume that $f:M\to M$ is a transitive uniformly hyperbolic diffeomorphism,
and let $(\Sigma,\sigma,\pi)$ be a symbolic model.
Given $n\geq 1$, let ${\rm Per}_n(f),{\rm Per}_n(\sigma)$ denote the number of
periodic orbits of period $n$ for $f,\sigma$ respectively,
and write $h=h_{\rm top}(f)=h_{\rm top}(\sigma)$.
Recall that $\Sigma$ has finitely many states.
Also $(\Sigma,\sigma)$ is transitive, and if $f$ is topologically mixing then
$(\Sigma,\sigma)$ is as well \cite[Proposition 30]{Bowen-MP-Axiom-A}. 
Since $\pi$ is finite-to-one (indeed, bounded-to-one), ${\rm Per}_n(f)\asymp{\rm Per}_n(\sigma)$.
If $p\geq 1$ is the period of the TMS, then
${\rm Per}_{pn}(\sigma)\sim p e^{pnh}$, see e.g. \cite[Observation 1.4.3]{Kitchens-Book}.
Hence ${\rm Per}_{pn}(f)\asymp e^{pnh}$. If $f$ is topologically mixing,
then  ${\rm Per}_{n}(f)\asymp e^{nh}$.

\medskip
Now let $f:M\to M$ be nonuniformly hyperbolic, where $M$ can 
have any dimension. Since $\pi$ is finite-to-one, there is a constant $C>0$ s.t.
${\rm Per}_{n}(f)\geq C\times {\rm Per}_{n}(\sigma)$ for all $n\geq 1$. 
The reverse inequality might not hold, because $\pi$ only codes
trajectories inside ${\rm NUH}_\chi$,
and $f$ can have many more (even uncountably many) periodic orbits outside of ${\rm NUH}_\chi$. 
Even if we only count isolated periodic orbits, the growth rate of ${\rm Per}_n(f)$
can be superexponential \cite{Kaloshin-Super-Exponential}.

\medskip
For a TMS with countably many states, good estimates
on ${\rm Per}_{n}(\sigma)$ are related to the existence of measures of maximal entropy, as observed by
Gurevi{\v{c}} \cite{Gurevich-Topological-Entropy,Gurevich-Measures-Of-Maximal-Entropy}.
He showed that every transitive TMS admits at most one measure of maximal entropy, and such measure
exists iff there is $p\geq 1$ s.t. for every vertex $v$ it holds
$$
\#\{\un v\in\Sigma: \sigma^{pn}(\un v)=\un v,v_0=v\}\asymp e^{pnh_{\rm max}},
$$
where $h_{\rm max}=h_{\rm max}(\sigma)=\sup\{h_\nu(\sigma):\nu\text{ is $\sigma$--invariant probability measure on }\Sigma\}$.
We add the assumption that $f$ has a $\chi$--hyperbolic measure of maximal entropy, in which case
$h=h_{\rm top}(f)=h_{\rm max}(\sigma)$. Measures of maximal entropy exist for $C^\infty$ 
diffeomorphisms \cite{Newhouse-Entropy}, although they might not be hyperbolic (e.g.
the product of an Anosov diffeomorphism and an irrational rotation).
For surface diffeomorphism with positive topological entropy, ergodic measures of maximal entropy are
$\chi$--hyperbolic for every $\chi<h_{\rm top}(f)$, by the Ruelle inequality.

\medskip
Assuming that $f$ possesses a $\chi$--hyperbolic measure of maximal entropy,
repeat the arguments used for uniformly hyperbolic systems inside a transitive component
of $(\Sigma,\sigma)$. If $p\geq 1$ is the period of this component, then
${\rm Per}_{pn}(\sigma)\asymp e^{pnh}$, and so
${\rm Per}_{pn}(f)\geq C\times e^{pnh}$ for all $n\geq n_0$. This was proved
by Sarig in dimension two \cite[Theorem 1.1]{Sarig-JAMS} and by Ben Ovadia
in higher dimension \cite[Theorem 1.4]{Ben-Ovadia-2019}.

\medskip
Let us make some comments on the period $p$. Assume that $f$ is topologically mixing.
Recently, Buzzi was able to use the finite-to-one coding to define a one-to-one coding. Together
with a precise counting on the TMS, he concluded that
${\rm Per}_{n}(f)\geq C\times e^{nh}$ for all $n\geq n_0$ \cite{Buzzi-2019}.
His proof works whenever the smooth system has a finite-to-one coding with some
extra properties, which are true for diffeomorphisms \cite{Sarig-JAMS,Ben-Ovadia-2019},
and billiard maps \cite{Lima-Matheus}.

\medskip
Now let $\vf:M\to M$ be a flow and $(\Sigma_r,\sigma_r)$ be a TMF. Given $T>0$,
let ${\rm Per}_T(\vf)$ and ${\rm Per}_T(\sigma_r)$ denote
the number of closed trajectories of $\vf$ and $\sigma_r$, respectively, with minimal period $\leq T$.
Note that $(\un v,t)\in{\rm Per}_T(\sigma_r)$ iff there is a minimal period $n\geq 1$ s.t.
$\un v\in{\rm Per}_n(\sigma)$ and $r_n(\un v)\leq T$, hence estimating ${\rm Per}_T(\sigma_r)$ is more
complicated. For uniformly hyperbolic flows, there are precise estimates:
\begin{enumerate}[$\circ$]
\item Geodesic flows on closed hyperbolic surfaces: 
in constant curvature, Huber proved that ${\rm Per}_T(\vf)\sim e^{T}/T$ \cite{Huber-Closed-Geodesics}.
In variable curvature, Sina{\u \i} gave the first estimates \cite{Sinai-Closed-Geodesics}, which were
later significantly sharpened by Margulis \cite{Margulis-Closed-Orbits}, who proved that
${\rm Per}_T(\vf)\sim Ce^{Th}/T$ where $C=1/h$ (C. Toll, unpublished).
\item Axiom A flows: Bowen proved ${\rm Per}_T(\vf)\asymp e^{Th}/T$ \cite{Bowen-Closed-Geodesics}.
If the flow is topologically weak mixing, Parry and Pollicott proved that
${\rm Per}_T(\vf)\sim e^{Th}/Th$ \cite{Parry-Pollicott-PNT}, and Pollicott and Sharp found 
an estimate for the error term \cite{Pollicott-Sharp-Error-Term}.
\end{enumerate}
We also mention a result for manifolds with Gromov hyperbolic fundamental group (e.g.
manifolds that admit a metric with Anosov geodesic flow).
Knieper and Coornaert counted free homotopy classes of closed geodesics estimating
the growth rate of conjugacy classes in the fundamental group \cite{Knieper-Archiv,Knieper-Coornaert}.

\medskip
Now consider nonuniformly hyperbolic flows. For geodesic flows in nonpositively curved rank one manifolds,
the following are known:
\begin{enumerate}[$\circ$]
\item Knieper showed that $\pi_0(T)\asymp e^{Th}/T$,
where $\pi_0(T)$ counts the homotopy classes of simple closed geodesics with
length less than $T$ \cite{Knieper-Closed-Orbits,Knieper-Handbook-Chapter}.
\item For certain metrics constructed by Donnay \cite{Donnay} and Burns and Gerber \cite{Burns-Gerber},
${\rm Per}_T(\vf)\sim e^{Th}/Th$ \cite{Weaver}.
\end{enumerate}
For the flows in Theorem \ref{Lima-Sarig},
if there exists a measure of maximal entropy then there is $T_0>0$ s.t.
${\rm Per}_T(\vf)\geq C\times e^{Th}/T$ for all $T\geq T_0$ \cite[Thm 8.1]{Lima-Sarig}.
This estimate strengthens Katok's bound $\liminf_{T\to\infty}\frac{1}{T}\log{\rm Per}_T(\vf)\geq h$,
see \cite{Katok-IHES,Katok-Closed-Geodesics}. 
The proof in \cite{Lima-Sarig} uses a dichotomy for TMF, see \cite[Theorem 4.6]{Ledrappier-Lima-Sarig}.

\medskip
We end this section mentioning some results for two-dimensional billiard maps. As seen in
Subsection \ref{Subsec-NUH-dis2-definitions}, every billiard map preserves
an invariant Liouville measure $\mu_{\rm SRB}$.
Using the countable Markov partition constructed in \cite{Bunimovich-Chernov-Sinai},
Chernov proved that $\liminf \tfrac{1}{n}\log{\rm Per}_n(f)\geq h_{\mu_{\rm SRB}}(f)$ \cite[Corollary 2.4]{Chernov-91}. 
Better estimates can be obtained using measures of maximal entropy.
Recently, Baladi and Demers gave sufficient conditions for periodic Lorentz gases (Sina{\u\i} billiards
with non-intersecting scatterers) to have measures of maximal entropy \cite{Baladi-Demers}.
This occurs when the billiard map satisfies two properties:
\begin{enumerate}[(1)]
\item Finite horizon: there is no trajectory that makes only tangential collisions.
\item $h_*>s_0\log 2$. 
\end{enumerate}
The second assumption requires some explanation.
Part of their work consists on defining a topological entropy $h_*$ for finite horizon Lorentz gases, which
is an upper bound for all metric entropies \cite[Thm 2.3(4)]{Baladi-Demers}.
Fixing an angle $\theta_0\approx \tfrac{\pi}{2}$ and $n_0>0$, let $s_0\in (0,1)$ be the smallest
number s.t. any orbit of length $n_0$ has at most $s_0n_0$ collisions with $|\theta|>\theta_0$. 
Under conditions (1)--(2), there is an $f$--adapted
measure $\mu_*$ s.t. $h_{\mu_ *}(f)=h_ *$ \cite[Thm 2.4]{Baladi-Demers}.
Using \cite{Lima-Matheus} and \cite{Buzzi-2019}, it follows that ${\rm Per}_n(f)\geq C\times e^{nh_*}$
for $n$ sufficiently large.

\medskip
Here are some examples of billiards satisfying the conditions (1)--(2) of
\cite{Baladi-Demers}.
If $\mathcal K_{\min}$ be the minimum curvature of the scatterer boundaries and $\tau_{\min}$ be
the minimum free flight time, then $h_*>\log(1+2\mathcal K_{\min}\tau_{\min})$.
Consider the two-parameter family $(r,R)$ of Lorentz gases in $\T^2$ with two discs as scatterers,
one centered at the origin $(0,0)$ with radius $R$ and the other at $(\tfrac{1}{2},\tfrac{1}{2})$
with radius $r$, see figure \ref{figure-lorentz}.
\begin{figure}[hbt!]
\centering
\def\svgwidth{7cm}
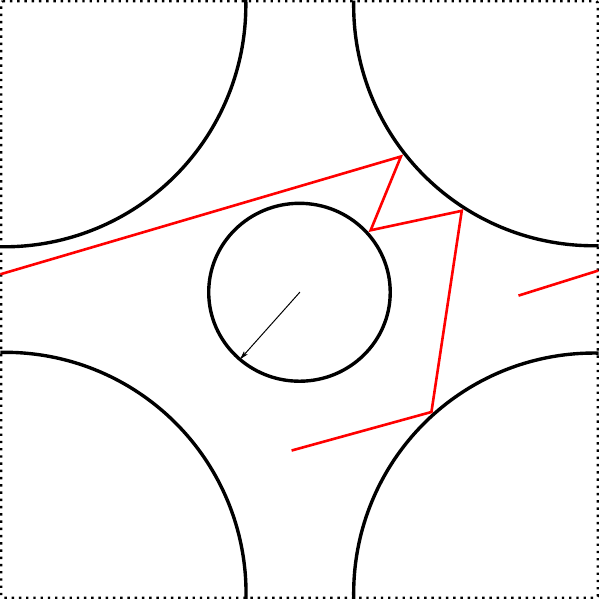
\caption{If $r,R$ are chosen inside a specific polygon in the parameter space then 
there is an $f$--adapted measure of maximal entropy, and
${\rm Per}_n(f)\geq C\times e^{nh_*}$ for all large $n$.}\label{figure-lorentz}
\end{figure}
Baladi and Demers found a domain in the parameter space for which
$\log(1+2\mathcal K_{\min}\tau_{\min})\geq \tfrac{1}{2}\log 2\geq s_0\log 2$,
hence \cite[Thm 2.4]{Baladi-Demers} applies.
There are also numerical experiments dealing with scatterers located in a triangular lattice
indicating that $h_*>s_0\log 2$ whenever the scatterers do not intersect and the billiard
has finite horizon \cite{Gaspard-Baras}.

\section{Equilibrium measures}

Let $(Y,S)$, where $Y$ is a complete metric separable space and $S:Y\to Y$ is
continuous, and let $\psi:Y\to\R$ be a continuous potential.
The following definitions are standard.

\medskip
\noindent
{\sc Topological pressure:} The {\em topological pressure} of $\psi$ is
$P_{\rm top}(\psi):=\sup\{h_\mu(S)+\int\psi d\mu\}$,
where the supremum ranges over all $S$--invariant probability measures
for which $\int\psi d\mu$ makes sense and $h_\mu(S)+\int\psi d\mu\neq \infty-\infty$.

\medskip
\noindent
{\sc Equilibrium measure:} An {\em equilibrium measure} for $\psi$ is an $S$--invariant probability 
measure $\mu$ s.t. $P_{\rm top}(\psi)=h_\mu(S)+\int\psi d\mu$.


\medskip
A special case occurs when $\psi\equiv 0$: equilibrium measures are measures of maximal entropy.
If $\pi:(X,T)\to (Y,S)$ is finite-to-one, then equilibrium measures for
$\psi$ lift to equilibrium measures for $\widehat\psi=\psi\circ\pi$.
If $\pi$ is H\"older continuous, then $\widehat\psi$ is H\"older
whenever $\psi$ is. In our context, we can apply the thermodynamical formalism for H\"older
continuous potentials in TMS to obtain ergodic properties of equilibrium measures of
H\"older continuous potential in uniformly and nonuniformly hyperbolic systems.

\medskip
Since a transitive TMS with finitely many states has a unique measure of maximal entropy
\cite{Parry-Intrinsic}, then every uniformly hyperbolic
transitive diffeomorphism has a unique measure of maximal entropy \cite{Bowen-MP-Axiom-A},
equal to the projection of the measure of maximal entropy in $(\Sigma,\sigma)$.
Prior to this, Gurevi{\u{c}} obtained some partial results, using the work
of Sina{\u\i}, and of Berg\footnote{Berg proved that for hyperbolic toral automorphisms the Haar
measure is the only measure of maximal entropy \cite{Berg-convolutions}.}.
Bowen also showed that every H\"older continuous potential
has a unique equilibrium measure \cite{Bowen-unique-equilibrium}, and it is
either Bernoulli or Bernoulli times a period \cite{Bowen-Bernoulli}.

\medskip
Using the same analogy, Bowen and Ruelle proved that H\"older continuous potentials on uniformly
hyperbolic flows have unique equilibrium measures \cite{Bowen-Ruelle-SRB}. 
In this case, equilibrium measures of $(\Sigma_r,\sigma_r)$ are related to equilibrium measures of
$(\Sigma,\sigma)$, see \cite[Prop. 3.1]{Bowen-Ruelle-SRB}.

\medskip
For nonuniformly hyperbolic $C^{1+\beta}$ surface diffeomorphisms,
Sarig proved that each H\"older continuous potential has at most countably many ergodic
hyperbolic equilibrium measures \cite[Thm 1.2]{Sarig-JAMS}, and each of them is either Bernoulli
or Bernoulli times a period \cite{Sarig-Bernoulli-JMD}. The proof uses that for topologically
transitive TMS each H\"older continuous potential has at most one equilibrium measure \cite{Buzzi-Sarig}, and 
different topologically transitive subgraphs of a TMS have disjoint vertex sets.
The same holds for higher dimensional diffeomorphisms \cite{Ben-Ovadia-2019},
and for three dimensional flows \cite{Lima-Sarig}.  In the flow case, each such equilibrium measure
is either Bernoulli or Bernoulli times a rotation \cite{Ledrappier-Lima-Sarig}.
Since geodesic flows cannot have rotational components (they are a particular case of Reeb flows),
the following corollary holds:
if $S$ is a closed smooth orientable Riemannian surface
with nonpositive and non-identically zero curvature, then the geodesic flow
of $S$ is Bernoulli with respect to its (unique) measure of maximal entropy, see
\cite[Corollary 1.3]{Ledrappier-Lima-Sarig}.

\medskip
Let us mention some results on the uniqueness of measures of maximal entropy for
nonuniformly hyperbolic geodesic flows. The uniqueness referred in the previous paragraph
follows from the work of Knieper, who proved it for geodesic flows on
closed rank one manifolds \cite{Knieper-Rank-One-Entropy}, and also
for geodesic flows on symmetric spaces of higher rank \cite{Knieper-Higher-rank}.
Gelfert and Ruggiero proved the uniqueness for geodesic flows on surfaces without focal
points and genus greater than one \cite{Gelfert-Ruggiero}.
Burns, Climenhaga, Fisher, and Thompson proved the uniqueness of many equilibrium states
(including some multiples of the geometric potential and the zero potential) of geodesic flows on rank one manifolds
\cite{Burns-et-al}, and there is a recent preprint that obtains similar results for geodesic flows on
surfaces without focal points \cite{Chen-Kao-Park}. There is also a recent preprint
that proves the uniqueness of the measure of maximal entropy for geodesic flows on surfaces without conjugate
points \cite{Climenhaga-Knieper-War}.

\medskip
Uniqueness of measures of maximal entropy for $C^\infty$ transitive
surface diffeomorphisms with positive topological entropy has recently been obtained. Essentially, the results of \cite{Sarig-JAMS}
were not able to give uniqueness because it was not clear how to overrule the possibility that two measures
in the surface lift to two different transitive components of the TMS.
This difficulty was solved by Buzzi, Crovisier, and Sarig \cite{Buzzi-Crovisier-Sarig}, who showed
that if two measures have positive entropy and are homoclinically related (a notion introduced in \cite{RH-RH-T-U})
then they can be lifted to a same transitive component of the TMS. This can be regarded as a
version of \cite[Proposition 30]{Bowen-MP-Axiom-A}. They also prove that if the diffeomorphism is $C^\infty$,
then all measures of maximal entropy are homoclinically related (this uses Yomdin's theory), 
hence there is a unique measure of maximal entropy. It would be interesting to obtain similar
results for three dimensional flows and billiard maps.

\bibliographystyle{alpha}
\bibliography{bibliography}{}

\end{document}